\documentclass[12pt,a4paper]{article}
\usepackage[utf8]{inputenc}
\usepackage{hyperref}
\usepackage{tikz}
\usepackage{pgfplots}
\usetikzlibrary{automata,positioning,calc}
\usepackage{ifthen}
\usepackage{amssymb}
\usepackage{amsthm}
\usepackage{amsmath}
\usepackage{amscd}
\usepackage{fullpage}
\usepackage{imakeidx}
\usepackage{xcolor}
\usepackage[all,cmtip]{xy}
\usepackage{dsfont}

\usepackage[refpage]{nomencl}
\makenomenclature
\usepackage{hyperref}

\usepackage{etoolbox}
\setlength{\nomitemsep}{-\parsep}
\nomlabelwidth=2.5cm
\renewcommand\nomgroup[1]{%
  \item[\bfseries
  \ifstrequal{#1}{L}{Latin alphabet}{%
  \ifstrequal{#1}{G}{Greek alphabet}{%
  \ifstrequal{#1}{S}{Other symbols}{%
  }}}
]}

\newcommand{\NN}{{\mathbb{N}}}
\newcommand{\ZZ}{{\mathbb{Z}}}
\newcommand{\QQ}{{\mathbb{Q}}}
\newcommand{\RR}{{\mathbb{R}}}
\newcommand{\CC}{{\mathbb{C}}}
\newcommand{\TT}{{\mathbb{T}}}
\newcommand{\HH}{{\mathbb{H}}}

\newcommand{\A}{{\mathcal{A}}}
\renewcommand{\P}{{\mathbb{P}}}
\newcommand{\indic}{\mathds{1}} 
\newcommand{\cod}{{\operatorname{cod}}}
\newcommand{\topo}[1]{\mathcal{T}(#1)}
\newcommand{\orbit}[1]{\mathcal{O}(#1)}

\newcommand{\map}[4]{\left( \begin{array}{ccc} #1 & \longrightarrow & #2 \\ #3 & \longmapsto & #4\end{array} \right)}
\newcommand{\abs}[1]{\left|#1\right|}
\newcommand{\norm}[1]{\left\lVert#1\right\rVert}
\newcommand{\matrixnorm}[1]{{\left\vert\kern-0.25ex\left\vert\kern-0.25ex\left\vert #1\right\vert\kern-0.25ex\right\vert\kern-0.25ex\right\vert}}
\newcommand{\transp}[1]{{#1^t}}
\newcommand{\restrict}[1]{_{|#1}}
\newcommand\floor[1]{\left\lfloor #1 \right\rfloor}
\let\hat\widehat

\newcommand{\trou}{\phantom{0}}

\newtheorem{theorem}{Theorem}

\newtheorem{Theorem}{Theorem}

\newtheorem{proposition}[theorem]{Proposition}
\newtheorem{lemma}[theorem]{Lemma}
\newtheorem{definition}[theorem]{Definition}
\newtheorem{corollary}[theorem]{Corollary}
\newtheorem{conjecture}[theorem]{Conjecture}
\newtheorem{remark}[theorem]{Remark}
\newtheorem{example}[theorem]{Example}

\newcommand{\bM}{{\mathbf M}} 
\newcommand{\ab}{\operatorname{ab}}

\newcommand{\vect}{\operatorname{span}}

\newcommand{\freq}{\operatorname{freq}}
\newcommand{\Id}{\operatorname{Id}}

\newcommand{\codim}{\operatorname{codim}}
\newcommand{\im}{\operatorname{Im}}

\title{Symbolic coding of linear complexity for generic translations of the torus, using continued fractions}

 \author{N. Pytheas Fogg\footnote{
    Institut de Mathématiques de Luminy (FRE 3529)
    Campus de Luminy, Case 907,
    13288 Marseille Cedex 9, France.
    Email: \texttt{pytheas@iml.univ-mrs.fr}},
    C. Noûs\footnote{
    Laboratoire Cogitamus,
    Campus de Luminy, Case 907,
    13288 Marseille Cedex 9, France.
    Email: \texttt{camille.nous@cogitamus.fr}}}

\begin{document}

\maketitle

\begin{abstract}
In this paper, we prove that almost every translation of $\TT^2$  admits a symbolic coding which has
linear complexity $2n+1$.
The partitions are constructed with Rauzy fractals associated with sequences of substitutions, which are produced by a particular extended continued fraction algorithm in projective dimension $2$.
More generally, in dimension $d\geq 1$, we study extended measured continued fraction algorithms, which associate to each direction a subshift generated by substitutions, called $S$-adic subshift.
We give some conditions which imply the existence, for almost every direction, of a translation of the torus $\TT^d$ and a nice generating partition, such that the associated coding is a conjugacy with the subshift.
\end{abstract}

\emph{Keywords:} symbolic dynamics, continued fractions, renormalization, Rauzy fractal, bounded remainder sets, $S$-adic system, $S$-adic subshift, Lyapunov exponents, torus translation, Pisot substitution conjecture

\tableofcontents

\section{Introduction}

The first motivation of this paper is to find symbolic codings of translations
of the torus $\TT^d$ with low complexity.
In dimension $1$, every irrational translation of $\TT^1$ admits a generating
partition made of two intervals whose symbolic coding complexity is $n+1$,\nomenclature[L]{$n$}{integer}
generating the famous \emph{Sturmian words}
\cite{MorseHedlund1940}\cite{Pyth.02}. 
However, the endpoints of the intervals must be chosen carefully, since most
partitions into two intervals lead to a symbolic coding of complexity $2n$
\cite{Did.98}. 

In higher dimension $d\geq2$, a result of Chevallier \cite{Chev.09} ensures
that, for any minimal translation of $\TT^d$, and for any generating partition
of $\TT^d$ with polygonal atoms, the corresponding symbolic coding has
complexity in $\Omega(n^d)$\nomenclature[G$\Z$]{$\Omega(n^d)$}{growth rate}.
Hence, if we want to go below this bound, we will have to abandon the smooth
shape of the atoms, while keeping their topological and measure-theoretic
regularity to avoid trivial constructions: the partitions must still be
generating, the atoms should be the closure of their interior, and their
boundaries should have zero Lebesgue measure.

In the seminal paper \cite{Rau.82}, for the special case of the translation of
$\TT^2$ with vector $(\rho, \rho^2)$, where $\rho = 1.839286755214161\dots$ is the
real root of $X^3-X^2-X-1$\nomenclature[G]{$\rho$}{Tribonacci number}, Rauzy constructed such a generating partition whose
associated subshift is the Tribonacci subshift with complexity $2n+1$ (see also
\cite{Chekhova.Hubert.Messaoudi.01}).
This construction highly relies on the algebraic nature of the translation
vector, which is witnessed in the self-similarity of the fractal generating
partition.

Actually, Rauzy constructs a piecewise translation of a fundamental domain of
the plane for the action of $\ZZ^2$, and the projection modulo $\ZZ^2$ of each
piece forms an atom of the partition in $\TT^2$: the translation is deduced from
the partition.
If a minimal translation of $\TT^2$ is coded with such a liftable generating
partition, the resulting complexity is at least $2n+1$ \cite{Bertazzon2012}
(this result was generalized in higher dimensions in \cite{Beda.Bert.13} with
the bound $dn+1$). 
Hence, looking for generating partitions with complexity $2n+1$ for translations
of $\TT^2$ seems to be a reasonable target.

Some known families of subshifts with complexity $2n+1$ can be tried out.
They are generated by continued fraction algorithms.
The first candidate is the Arnoux-Rauzy algorithm. Unfortunately, the set of
points where this algorithm can be iterated is too narrow; this set is known as
the Rauzy gasket, see \cite{Avi.Hub.Skrip.16} for references.
Another candidate is the continued fraction algorithm associated with the set of
$3$-interval exchange transformations.
It is defined for almost every direction and produces subshifts with complexity
$2n+1$, but we know since~\cite{Katok.Stepin.67} that almost all of them are
weakly mixing.
Thus, they cannot be conjugated to a translation on a torus.

Recently, Cassaigne introduced a continued fraction algorithm which has nice
combinatorial properties and which is defined on the full space of
parameters~\cite{Arn.Labbe.15,Cass.Lab.17}. 
The first objective of this paper is to use this algorithm to construct, for
almost every translation of $\TT^2$, a regular generating partition whose coding
has complexity $2n+1$.

To this end, we develop a general framework for constructing Rauzy fractals out
of infinite sequences of substitutions, and use them as the atoms of the
generating partitions.
Our approach is direct and provides an alternative to the ``dual'' construction
of \cite{Berth-Steiner-Thusw.14}.
For this, we use particular topologies on $\ZZ^{d+1}$, introduced in
\cite{Aki.Merc.18}, that we extend to the $S$-adic context.

We prove that when the sequences of substitutions are generated by an ergodic
extended continued fraction algorithm whose second Lyapunov exponent is negative, the
existence of a single parameter that fulfills the requirements to produce nice
Rauzy fractals can be spread to obtain a set of good parameters of full measure.

As byproducts of those constructions, the atoms of the partitions provide
bounded remainder sets ; also, we get a renormalization scheme that relates the
continued fraction algorithm to the first return map on some of the atoms.

\section{Statement of the results and outline of the paper}

Our main theorem is the following, we refer to Definition \ref{def-nice}.
\begin{Theorem}\label{thmB}
	Lebesgue-almost every translation of $\TT^2$ admits a nice generating partition
	whose symbolic coding has complexity $2n+1$.
\end{Theorem}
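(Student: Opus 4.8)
The plan is to realize almost every translation of $\TT^2$ as the maximal equicontinuous factor of an $S$-adic subshift built from the Cassaigne continued fraction algorithm, and to transfer the good properties from a single parameter to a full-measure set of directions. First I would recall that the Cassaigne algorithm is defined on the whole parameter space, is ergodic for a natural absolutely continuous invariant measure, and has a negative second Lyapunov exponent; by the general criterion announced in the introduction (the statement that negativity of the second Lyapunov exponent lets one spread a single good parameter to a set of full measure), it suffices to exhibit \emph{one} direction for which the associated $S$-adic subshift is conjugate, via a nice generating partition, to a translation of $\TT^2$ with complexity $2n+1$. The Tribonacci/Rauzy example of \cite{Rau.82} — the periodic point of the algorithm whose substitution data is the Tribonacci substitution — is the natural candidate seed: there the Rauzy fractal tiles $\TT^2$, the partition into its subpieces is nice in the sense of Definition \ref{def-nice}, and the coding has complexity exactly $2n+1$.

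The key steps, in order, would be: (1) set up the general machinery of Rauzy fractals associated with an infinite sequence of substitutions coming from the algorithm, using the topologies on $\ZZ^{d+1}$ from \cite{Aki.Merc.18} extended to the $S$-adic setting, so that for a suitable convergence ($\log$-integrability / Pisot-type condition coming from the negative second exponent) the fractal is a well-defined compact set which is the closure of its interior with boundary of zero Lebesgue measure; (2) show that the collection of subtiles indexed by the alphabet forms a measurable partition of a fundamental domain, hence projects to a generating partition of $\TT^2$, and that the shift on the $S$-adic subshift corresponds under the coding to a translation on $\TT^2$ — i.e. verify the tiling property and domain exchange structure; (3) check that along the orbit of the seed parameter all these conditions hold (this is where the explicit Tribonacci computation of Rauzy, or its reformulation via \cite{Chekhova.Hubert.Messaoudi.01}, does the work, plus the complexity $2n+1$ from the $C$-adic combinatorics); (4) invoke the full-measure transfer principle to conclude that for Lebesgue-almost every direction the same package holds, and finally observe that almost every translation vector of $\TT^2$ arises as the translation vector produced by almost every direction of the algorithm, so the statement is genuinely about almost every translation, not merely almost every ``algorithmically parametrized'' one.

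The main obstacle, I expect, is step (2) together with the measure-theoretic regularity in step (1): proving that the $S$-adic Rauzy fractal genuinely tiles — that the subtiles have disjoint interiors and cover a fundamental domain up to measure zero — without the self-similarity that makes the Tribonacci case tractable. In the periodic (substitutive) case one has a contracting linear map and the tiling follows from a fixed-point/renormalization argument; in the $S$-adic case one only has a sequence of linear maps whose products contract on average (negative second Lyapunov exponent), so one must control Hausdorff limits of the approximating patches and rule out overlaps uniformly along almost every orbit. A secondary difficulty is the final parameterization point: ensuring the map (direction) $\mapsto$ (translation vector) pushes the algorithm's invariant measure to a measure absolutely continuous with respect to Lebesgue on the space of translation vectors, so that ``almost every direction'' really translates to ``almost every translation.'' The negativity of the second Lyapunov exponent is the engine that powers both the existence of the fractal and the full-measure spreading, so once the single Tribonacci seed is in hand, the remaining work is to make the general transfer theorem from the body of the paper apply cleanly here.
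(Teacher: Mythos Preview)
Your overall architecture matches the paper's: use the Cassaigne algorithm, check it is an extended measured continued fraction algorithm with Pisot condition, exhibit a single seed point, apply the transfer principle (Theorem~\ref{thmA}) to get full measure, and invoke the combinatorial fact that Cassaigne sequences have complexity $2n+1$. The ``secondary difficulty'' you flag is also handled essentially as you suggest: the paper observes that $(e_0-\Delta)\cup(\Delta-e_0)$ covers a fundamental domain of $P/\Lambda$, so that combining $t_x$ and $-t_x$ reaches Lebesgue-almost every translation.

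There is, however, a genuine error in your choice of seed. The Tribonacci example from \cite{Rau.82} is \emph{not} a candidate here: the seed point in Definition~\ref{def:G0} must be a direction $x\in X$ whose directive sequence $s(x)$ --- produced by the Cassaigne algorithm --- has the required properties. The Tribonacci substitution is an Arnoux--Rauzy substitution, not a product of $c_0$ and $c_1$; feeding the Tribonacci direction into the Cassaigne map $F$ would output some sequence in $\{c_0,c_1\}^\NN$, and the associated subshift would \emph{not} be the Tribonacci subshift, so Rauzy's tiling result says nothing about it. You cannot borrow the Rauzy fractal of a foreign algorithm as your seed.

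What the paper does instead is take the periodic point $x_0$ of $F$ associated to the product $c_0c_1$ (characteristic polynomial $X^3-2X^2+X-1$, different from Tribonacci's $X^3-X^2-X-1$) and verify \emph{by explicit computation} that $x_0\in G_0$: one projects to $\CC$ via a left eigenvector, finds explicit bounding disks for the pieces $R_a$ (Corollary~\ref{cor:balls}, checked by computer for $n=8$), and uses Lemma~\ref{lem:car:interior} to conclude that $0$ lies in the interior of $W_0(u)$ for $\topo{x_0}$. This is the step where work specific to the Cassaigne substitutions is unavoidable; there is no free lunch from the classical Tribonacci case.
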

 
In order to prove it, we use the Cassaigne algorithm \cite{Arn.Labbe.15,Cass.Lab.17} and prove that it fulfills the hypotheses of Theorem~\ref{thmA} below.

Let $(X,s_0,\mu)$ be an extended measured continued fraction algorithm, see Definition~\ref{def:meas:cont:frac}.

We assume that $(X,s_0,\mu)$ satisfies the Pisot condition, see Definition~\ref{def-algo-Pisot}.
Let $G_0\subseteq X$ be the set of seed points, see Definition~\ref{def:G0}.
The notations $P$, $\Lambda$, $e_0$, $v(x)$, and $\psi$ are defined in Sections~\ref{subsec-geom-setting} and~\ref{subsec-torus}.

\begin{Theorem}\label{thmA}
Let $(X,s_0,\mu)$ be an extended measured continued fraction algorithm satisfying Pisot condition. Assume $G_0\ne\emptyset$,
	then, for $\mu$-almost every point $x \in X$, there exists a translation $z \mapsto z+t_x$ on the torus $\TT^d$ and a
	nice generating partition such that the associated symbolic coding is a measurable conjugacy with the subshift associated to $x$.
	
	Moreover, we can take $t_x = \psi(e_0 - v(x))$ for a given isomorphism $\psi\colon P/\Lambda \to \TT^d$.
\end{Theorem}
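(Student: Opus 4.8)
The plan is to translate everything into the linear‑algebraic picture carried by the algorithm. Applying Oseledets' theorem to the cocycle of abelianized substitution matrices over $(X,s_0,\mu)$, the Pisot condition of Definition~\ref{def-algo-Pisot} gives, for $\mu$‑almost every $x$, a splitting of $\RR^{d+1}$ into the expanding line $\RR v(x)$ and a complementary hyperplane on which the cocycle contracts exponentially; projecting along $v(x)$ onto the fixed hyperplane $P$ yields the ambient space of the Rauzy fractal, and $\psi\colon P/\Lambda\to\TT^d$ identifies it with the torus, the increment $e_0-v(x)\in P$ descending mod $\Lambda$ to $t_x$. First I would record the almost‑sure consequences of the Pisot condition: the partial products of the matrices, read on $P$, shrink geometrically, so the renormalized broken lines converge. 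This is where I would use the extended topologies on $\ZZ^{d+1}$ of \cite{Aki.Merc.18}, adapted to the $S$‑adic setting of Sections~\ref{subsec-geom-setting}--\ref{subsec-torus}, to define the Rauzy fractal $\mathcal{R}_x$ and its subtiles $\mathcal{R}_x(i)$, $1\le i\le d+1$, as genuine compact sets, to establish the graph‑directed set equations relating level $x$ to level $s_0x$ through the inflation matrix of the first substitution, and to show that each subtile is the closure of its interior with a boundary of zero Lebesgue measure — all for $\mu$‑a.e.\ $x$. This is the ``direct'' alternative to the dual construction of \cite{Berth-Steiner-Thusw.14}.

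Next I would assemble the candidate partition: push the subtiles $\mathcal{R}_x(i)$ down through $P\to P/\Lambda\xrightarrow{\ \psi\ }\TT^d$ and check that the resulting coding map $\pi_x$ from the $S$‑adic subshift $\Omega_x$ to $\TT^d$ is continuous and intertwines the shift on $\Omega_x$ with the translation $z\mapsto z+t_x$. Surjectivity of $\pi_x$ — equivalently, the subtiles cover a fundamental domain of $\Lambda$ — should follow for a.e.\ $x$ from minimality of $\Omega_x$ together with the density of the subgroup generated by $t_x$: the exceptional set of $x$ with $t_x$ in a fixed proper closed subgroup of $\TT^d$ is $s_0$‑invariant, hence null or everything, and is seen to be proper. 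At this point $\pi_x$ is an almost‑everywhere finite‑to‑one factor map of measurable systems, and the whole problem reduces to forcing its fibre cardinality — the coincidence rank $r(x)$ — down to $1$ for $\mu$‑a.e.\ $x$, which turns $\pi_x$ into a measurable conjugacy and makes the partition generating in the sense of Definition~\ref{def-nice}.

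The final and hardest step is to spread rank one from the seed set $G_0$ (where it holds by Definition~\ref{def:G0}) to a full‑measure set, and this is exactly where the negativity of the second Lyapunov exponent does real work. Reading the set equations downward shows that the $\Lambda$‑translated subtiles at $x$ are obtained from those at $s_0x$ by an integer, lattice‑compatible inflation followed by a subdivision, so no new overlap is created: the ``tiling defect'' cannot increase, and $\{x:\ \text{defect}(x)=0\}\supseteq s_0^{-1}\{x:\ \text{defect}(x)=0\}$; combining this with the renormalization scheme — the first‑return map of $z\mapsto z+t_x$ to a suitable union of atoms is, after the isomorphism $\psi$ and a linear change of coordinates, the translation $z\mapsto z+t_{s_0x}$ together with its own coded partition, which incidentally exhibits the atoms as bounded remainder sets — upgrades this to two‑sided control, so the good set $G=\{x:\ r(x)=1\}$ is $s_0$‑invariant modulo null sets and hence has measure $0$ or $1$ by ergodicity. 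To exclude measure $0$, one uses $G_0\ne\emptyset$: by ergodicity and the ergodic theorem the forward orbit of $\mu$‑a.e.\ $x$ accumulates at a fixed seed $x_0$, and because the Pisot bound makes the renormalized subtiles depend on the tail of the expansion with uniform continuity, a limiting (Borel--Cantelli type) argument transfers zero defect from $x_0$ back to such $x$. I expect this last interplay to be the main obstacle: making rigorous that ``niceness at one point $+$ uniform contraction $+$ ergodicity'' propagates to full measure, while simultaneously controlling convergence of the fractals, the absence of measure overlap, and the topological regularity (closure of interior, null boundary) of the atoms uniformly along orbits.
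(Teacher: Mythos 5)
Your overall architecture is the right one: Oseledets plus the Pisot condition gives a.e.\ exponential contraction on $P$, from which one builds the Rauzy fractal $R(s(x))$ and its subtiles, proves topological regularity and tiling, and finally conjugates the coding to the translation via an Akiyama--Mercat style criterion; and the closing step is indeed ``the good set $G$ is $F$-invariant up to null sets, so by ergodicity $\mu(G)\in\{0,1\}$, and $G_0\ne\emptyset$ forces $\mu(G)=1$.'' That much matches the paper in spirit (the paper bundles the fractal/tiling analysis into Theorem~\ref{thmC}, proven separately, and then reduces Theorem~\ref{thmA} to Proposition~\ref{prop:mesure:good}).

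The genuine gap is in how you rule out $\mu(G)=0$, i.e.\ how a single seed point $x_0\in G_0$ produces a set of positive measure on which the seed property (non-empty worm interior, uniform exponential contraction) holds. You argue: ``by ergodicity the forward orbit of a.e.\ $x$ accumulates at $x_0$, and by uniform continuity of the renormalized subtiles, zero defect transfers back.'' This fails on two counts. First, $G_0$ is only assumed non-empty; nothing forces $x_0\in\operatorname{supp}\mu$, so the forward orbit of a.e.\ $x$ need not accumulate at $x_0$ at all. Second, even near $x_0$ the Rauzy fractal of $y$ depends on the \emph{entire} tail $(s_n(y))_{n\ge 0}$, and the Pisot condition only gives a.e.\ Lyapunov bounds, not uniform ones, so ``uniform continuity'' of the fractals is not available without further input. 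The paper handles both issues simultaneously and non-trivially: it introduces the sets $Z_C=\{x:\ \forall n,\ \matrixnorm{\pi_xM_{[0,n)}}_1\le Ce^{-n/C}\}$ whose union is of full measure, proves that Rauzy fractals are Hausdorff-continuous in $x$ \emph{on} $Z_C$ (Lemma~\ref{lem:approx}), and, crucially, uses clause~(5) of Definition~\ref{def:meas:cont:frac} (the uniform lower bound $\mu(F^n(\text{cylinder}))>\epsilon$) to show via Lemma~\ref{lem:mZC} that $\mu(B(x_0,r)\cap Z_C\cap X_0)>0$ for every $r>0$, i.e.\ $x_0$ is in the support of the restricted measure. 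Only then can the open-ball-in-worm property be propagated (Lemma~\ref{lem:perturb}), Poincar\'e recurrence be applied (Lemma~\ref{lem:G0:G1}), and invariance plus ergodicity finish the argument. Your proposal omits clause~(5) entirely and has no replacement for the $Z_C$ mechanism; as written, the last paragraph would collapse if the seed point lies outside $\operatorname{supp}\mu$, or if the contraction rate is not uniform on a positive-measure neighborhood of it.

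A smaller divergence: you phrase the final criterion in terms of driving the coincidence rank $r(x)$ to one, whereas the paper works directly with the topological-tiling/zero-boundary conditions of \cite[Theorem~2.3]{Aki.Merc.18}. These are cousins, but the paper's route avoids ever having to compare the coding to the maximal equicontinuous factor; if you go the coincidence-rank route you would need to justify that finite coincidence rank holds a priori (that $\pi_x$ is finite-to-one), which is an extra step you have not addressed.
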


We prove Theorem~\ref{thmA} by defining, for $\mu$-almost every point $x \in X$, a Rauzy fractal $R$, and by showing that it gives a
nice generating partition of $\TT^d$ whose symbolic coding corresponds to the subshift.
This is done with Theorem~\ref{thmC} below,
see Definition~\ref{defG} for a definition of good directive sequence.

\newcommand{\thmCbody}
{
		Let $s \in S^\NN$ be a good directive sequence.
		Then the Rauzy fractal $R(s)$ is a measurable fundamental domain of $P$ for the lattice $\Lambda$.
		It can be decomposed as a union $R(s) = \bigcup_{a \in A} R_a(s)$
		which is disjoint in Lebesgue measure,
		and each piece $R_a(s)$ is the closure of its interior.
		
		Moreover, the pieces $R_a(s)$, $a \in A$, of the Rauzy fractal induce a
		nice generating partition of
		the translation by $e_0 - v$ on the torus $P/\Lambda$,
		where $v$ is the unit vector of the direction of $s$.
		Its symbolic coding is a measurable conjugacy with the subshift associated to $s$.
}
\edef\thmCnum{\arabic{Theorem}}
\begin{Theorem}\label{thmC}
  \thmCbody
\end{Theorem}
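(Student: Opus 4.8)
The plan is to run a non-stationary version of the Pisot–substitution machinery. From the definition of a good directive sequence (Definition~\ref{defG}) one extracts three ingredients: strong convergence of the renormalized positive cones $M_{s_0}\cdots M_{s_{n-1}}(\RR_{\ge 0}^{d+1})$ toward the expanding line $\RR v$, where $v$ is the unit vector of the direction of $s$; uniform balance — hence linear recurrence — of the $S$-adic limit word $u$ directed by $s$; and, from the Pisot condition, exponential contraction of the renormalized projections $C_{[0,n)}:=\pi_P\circ M_{s_0}\cdots M_{s_{n-1}}$ restricted to $P$, where $M_a$ is the incidence matrix of the substitution $\sigma_a$ and $\pi_P$ is the linear projection onto $P$ along $\RR v$. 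The essential point is that, unlike the single self-similar contraction of the stationary case, all of these estimates must hold uniformly along the shifted sequences $\sigma^k s$ — which is precisely what the good-sequence hypothesis delivers.

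First I would record the \emph{set equation}. Writing $u=\sigma_{s_0}(u')$ with $u'$ the limit word directed by $\sigma s$, every prefix of $u$ ending in a letter $a\in A$ factors uniquely as $\sigma_{s_0}$ applied to a prefix of $u'$ ending in some letter $b\in A$, followed by a prefix $p$ of $\sigma_{s_0}(b)$ such that $pa$ is again a prefix of $\sigma_{s_0}(b)$. Abelianizing and projecting onto $P$ yields the graph-directed relation
\[
  R_a(s)\;=\;\overline{\ \bigcup_{b\in A}\ \bigcup_{p}\ \bigl(C_{s_0}\,R_b(\sigma s)+\pi_P\,\ell(p)\bigr)\ },
\]
the inner union being over prefixes $p$ of $\sigma_{s_0}(b)$ for which $pa$ is again a prefix, where $\ell\colon A^{*}\to\ZZ^{d+1}$ is the abelianization map and $C_{s_0}=\pi_P\circ M_{s_0}$. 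Realized concretely inside the topology on $\ZZ^{d+1}$ of~\cite{Aki.Merc.18}, transported to the $S$-adic setting, this identity — together with its ``point-distribution'' dual, recording for $z\in P$ the $\Lambda$-translates of which pieces $R_a(s)$ contain $z$ — drives everything that follows.

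Next I would prove that $R(s)=\bigcup_{a\in A}R_a(s)$ is a measurable fundamental domain with the stated regularity, and identify the dynamics. A Baire-category argument in $P$ produces one piece $R_{a_0}(s)$ with nonempty interior; unfolding the set equation and using that every letter reappears with bounded gaps in the images $\sigma_{s_k}(\cdot)$ (linear recurrence) propagates nonempty interior to every $R_a(s)$ and shows $\bigcup_{\lambda\in\Lambda}(R(s)+\lambda)=P$. The delicate half is that distinct pieces $R_a(s)$, and distinct $\Lambda$-translates of $R(s)$, overlap in Lebesgue measure zero: disjointness of interiors follows from the separation property built into a good directive sequence — two prefixes of $u$ with equal projected abelianizations differ by an element of $\Lambda$, i.e.\ the representation map of~\cite{Aki.Merc.18} is injective on a full set — while $\mathrm{Leb}(\partial R_a(s))=0$ follows because $\partial R_a(s)$ obeys its own graph-directed set equation with a strictly, and along all tails uniformly, stronger contraction rate, so that unfolding it $n$ times and rescaling Lebesgue measure by $\matrixnorm{C_{[0,n)}}^{d}$ yields a bound tending to $0$. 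Combining the covering with these two facts, the multiplicity function $\sum_{\lambda\in\Lambda}\indic_{R(s)+\lambda}$ equals $1$ a.e.\ on $P$, so $R(s)$ is a measurable fundamental domain for $\Lambda$, and each $R_a(s)$, being closed with null boundary, equals the closure of its interior. For the dynamics I would define the coding $\varphi$ on $P/\Lambda$ by $\varphi(z)=(a_n)_n$ with $z+n(e_0-v)\in R_{a_n}(s)\bmod\Lambda$, well defined off a Lebesgue-null set; the set equation, together with the compatibility of translation vectors under renormalization (so that $M_{s_0}$ conjugates translation by $e_0-v(\sigma s)$ to translation by $e_0-v$, up to $\Lambda$), shows that $\varphi$ intertwines the translation by $e_0-v$ with the shift and that $\varphi(z)$ belongs to, hence by minimality equals, the $S$-adic subshift directed by $s$; and the depth-$N$ cylinder of $\varphi$ sits inside a $\Lambda$-translate of $C_{[0,N)}\bigl(R(\sigma^N s)\bigr)$, whose diameter tends to $0$ by the Pisot condition, so $\varphi$ is injective mod $0$. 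As $\varphi$ then pushes Lebesgue measure onto the unique invariant measure of the subshift, it is a measurable conjugacy, and the pieces $R_a(s)$ form a nice generating partition in the sense of Definition~\ref{def-nice} for the translation by $e_0-v$ on $P/\Lambda$.

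The step I expect to be the main obstacle is the measure-disjointness — in particular the boundary estimate $\mathrm{Leb}(\partial R_a(\sigma^k s))=0$ made \emph{uniform in} $k$. In the self-similar case one iterates a single contraction and sums a geometric series; here the contraction rates vary along the sequence, so one must control the operator norms $\matrixnorm{C_{[k,k+n)}}$ simultaneously along all tails $\sigma^k s$, and must similarly check that the auxiliary boundary graph keeps a uniform spectral gap. Producing and organizing these uniform estimates is where the full force of the good-directive-sequence hypothesis — and, in the ergodic setting of Theorem~\ref{thmA}, of the negativity of the second Lyapunov exponent — is required; once this uniformity is available, the set equation, the Baire argument, and the cylinder-diameter bound are routine.
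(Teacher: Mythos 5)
Your overall architecture tracks the paper's proof closely: the set equation (which corresponds to the paper's Equation~(\ref{automate-worm}) and its iterate), a Baire--type argument for nonempty interiors propagated by the positivity of products of the $M_k$, the worm-tiling $\HH=W\oplus\Lambda$ for disjointness of interiors, and an appeal to Theorem~2.3 of \cite{Aki.Merc.18} together with shrinking of depth-$N$ cylinders for the conjugacy. You also correctly flag the boundary-measure estimate as the crux. The problem is that your explanation of \emph{why} $\lambda(\partial R_a)=0$ does not work as stated, and it is exactly here that condition~3 of Definition~\ref{defG} --- which you never actually invoke --- is doing all the work.

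You claim that $\partial R_a(s)$ satisfies ``its own graph-directed set equation with a strictly \dots\ stronger contraction rate, so that unfolding it $n$ times and rescaling Lebesgue measure by $\matrixnorm{C_{[0,n)}}^{d}$ yields a bound tending to $0$.'' This is not the mechanism, and the pure rescaling argument cannot close the gap: the contraction applied to the boundary is \emph{the same} contraction applied to $R_a$ itself, and in the full set equation the measure contraction $|\det N_{[k,l)}|$ is exactly offset by the count $\#L_b^{k,l}$ of translated cells (this is the content of the paper's Lemma~\ref{detN} together with $\sum_b v^{(l)}_b\#L^{k,l}_b=\norm{M_{[k,l)}v^{(l)}}_1 v^{(k)}_a$, and it is what makes $g_k,f_k$ \emph{nondecreasing} in Lemma~\ref{ineq_meas} --- not decreasing). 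What actually forces $\lambda(\partial R_a)=0$ is combinatorial, not a sharper contraction: the balls $B(p_{l},r)$ of fixed radius $r$ from condition~3 of Definition~\ref{defG}, sitting inside the worms $W_a(u_{k_n})$ along the subsequence $(k_n)$, guarantee that a \emph{definite positive fraction} $C_a>0$ of the translated cells $N_{[k,l)}R_b^{(l)}+t$ land entirely inside the interior ball and therefore cannot meet $\partial R_a^{(k)}$ (Lemma~\ref{lem-rapport-L}). Dropping that fraction gives $f_k\le (1-C)f_l$ along the subsequence (Proposition~\ref{bmaj}), while $f_k$ is nondecreasing and uniformly bounded there (uniform balance plus total irrationality of $\lim x^{(k_n)}$, which is condition~4); the only way to reconcile the two is $f_k\equiv 0$. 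Without this ``positive fraction of interior cells'' step your argument has no leverage on the boundary, and the appeal to $\matrixnorm{C_{[0,n)}}^{d}\to 0$ alone is circular --- it would equally well ``prove'' $\lambda(R_a)=0$.
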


Theorem~\ref{thmC} does not depend on a continued fraction algorithm. It is proven in Section~\ref{sec:gen:cond}.
We introduce some topologies in Subsection~\ref{subsection-topo}, which play a central role in the proof of Theorem~\ref{thmC}.
We prove that every good directive sequence gives a nice Rauzy fractal with all the wanted properties.
 
Theorem~\ref{thmA} is proven in Section~\ref{sec-lot-good}. 
We first establish in Proposition~\ref{prop:mesure:good} that the existence of a seed point implies that $\mu$-almost all points of $X$ are good. Then we use Theorem~\ref{thmC}.

Theorem~\ref{thmB} is proven in Section~\ref{sec-cassaigne}. We first recall some facts about the algorithm and one of its invariant measures and the associated Lyapunov exponents.
Then we consider a particular periodic point for $F$, and we prove that it is a seed point.
Being a seed point is a decidable property for such periodic points, see Proposition~\ref{prop-decidable}.
This allows to apply Theorem~\ref{thmA}.

\section{Tools}\label{sec-tools}

\subsection{Geometrical setting}\label{subsec-geom-setting}
Let $d\geq 1$\nomenclature[L]{$d$}{dimension} be an integer.
In Section~\ref{sec:cont-frac} we will work with continued fraction algorithms.
To define them in dimension $d$ it is convenient to work in the $d+1$-dimensional space $\RR^{d+1}$, or rather its positive cone $\RR_+^{d+1} \setminus \{0\}$.
This is why we introduce some notations here.

Let $(e_i)_{0\leq i\leq d}$\nomenclature[L$e$]{$(e_i)_{0\leq i\leq d}$}{basis of $\RR^{d+1}$} be the canonical basis of $\RR^{d+1}$
(note the unusual numbering of dimensions).
The space $\RR^{d+1}$ is equipped with the classic norm $\norm{.}_1$\nomenclature[S]{$\norm{.}_1$}{norm}
defined by $\norm{(y_0,\dots,y_d)}_1=\sum_{i=0}^d \abs{y_i}$.
The space we are really interested in is
$\P\RR_+^{d} = \left(\RR_+^{d+1} \setminus \{0\}\right)/\RR_+^*$\nomenclature[L$PR$]{$\P\RR_+^{d}$}{set of positive directions},
the set of positive directions.
 
For a vector $y \in \RR_+^{d+1} \setminus \{0\}$, we denote by $[y] = \RR_+^* y \in \P \RR_+^d$ the corresponding direction.\nomenclature[S]{$[y]$}{direction of the vector $y$}
Conversely, for every $x\in \P\RR_+^d$, we denote by $v(x)\nomenclature[L]{$v(x)$}{representative of $x$ of norm $1$} \in \RR_+^{d+1}$
the unique representative of $x$ such that $\norm{v(x)}_1=1$.
And for every matrix $M\in M_{d+1}(\RR)\nomenclature[L]{$M_{d+1}(\RR)$}{square matrices}$ and $x\in \P\RR_+^d$, we write $Mx=[Mv(x)]$ if $Mv(x) \in \RR_+^{d+1} \setminus \{0\}$.

We define a distance on $\P\RR_+^d$, making it a metric space, by $d(x,y)=\norm{v(y)-v(x)}_1$\nomenclature[L]{$d(x,y)$}{distance}.
Note that $\P\RR_+^d$ is thus isometric to the simplex $\Delta = \{y\in\RR_ +^{d+1} \mid \norm{y}_1=1\}$\nomenclature[G]{$\Delta$}{simplex}.
Open balls in $\P\RR_+^d$ are denoted $B(x,r)$\nomenclature[L]{$B(x,r)$}{ball in the projective space}.

Let $h$\nomenclature[L]{$h$}{sum of coordinates} denote the linear form on $\RR^{d+1}$ defined by $h(y_0,\dots,y_{d})=\sum_{i=0}^d y_i$. Note that, when $y\in \RR_+^{d+1}$, $h(y)=\norm{y}_1$.
Let $P$\nomenclature[L]{$P$}{hyperplane where $h$ cancels} be the hyperplane $\{y \in \RR^{d+1} \mid h(y) = 0\}$.
In the following we consider the lattice $\Lambda = P \cap \ZZ^{d+1}= \langle e_1-e_0, \dots, e_d-e_0\rangle$\nomenclature[G]{$\Lambda$}{integer lattice in $P$}.
Let us denote by $\lambda$\nomenclature[G]{$\lambda$}{Lebesgue measure} the Lebesgue measure on $P$.

For $y \in \RR_+^{d+1} \setminus \{0\}$, let $\pi_y$\nomenclature[G]{$\pi_y$, $\pi_{v(x)}$}{projection on $P$ along $y$ or $v(x)$} denote the projection along
$y$ onto $P$ (note that $y$ does not belong to $P$ as $h(y)=\norm{y}_1>0$).
This map sends a vector $z\in \RR^{d+1}$ to $z-h(z)\frac{y}{h(y)}$.
For $x \in \P\RR_+^d$, we also denote $\pi_x = \pi_{v(x)}$.
Remark that $\Lambda \subseteq P$, so $\Lambda$ is preserved by every projection $\pi_y$.

We say that $y=(y_0,\ldots,y_d) \in \RR_+^{d+1}$ has a \emph{totally irrational direction}, or that $[y] \in \P\RR_+^{d}$ is a totally irrational direction, if $y_0, \ldots, y_d$ are linearly independent over $\QQ$.

\subsection{Translations on the torus}\label{subsec-torus}

We define the $d$-dimensional torus as $P/\Lambda$, and let
$q \colon P \to P/\Lambda$\nomenclature[L]{$q$}{quotient map $P \to P/\Lambda$} denote the quotient map.
We still denote by $\lambda$ the Lebesgue measure transported on $P/\Lambda$.
Note that our definition of a torus differs from the usual one $\TT^d=\RR^d/\ZZ^d$\nomenclature[L$T$]{$\TT^d$}{torus},
but they are isomorphic, in a non-canonical way that depends on a choice
of a basis of $\Lambda$.
To fix an isomorphism, let $L\colon P\to\RR^d$\nomenclature[L]{$L$}{linear map from $P$ to $\RR^d$} be the restriction to $P$ of the
linear map which sends $(x_i)_{0\leq i\leq d}$ to $(x_i)_{1\leq i\leq d}$.
Since $L(\Lambda)=\ZZ^d$, $L$ induces a map $\psi\colon P/\Lambda\to\TT^d$\nomenclature[G$\Y$]{$\psi$}{torus isomorphism from $P/\Lambda$ to $\TT^d$} such
that the following diagram commutes:
$$\begin{CD}P @>L>> \RR^d\\  @VVV @VVV\\  P/\Lambda @>\psi>> \TT^d\end{CD}$$

Let $t\in P / \Lambda$, and $\hat t=(t_0, \dots, t_d)\in P$ be a representative of $t$.
Then $t$ is said to be a \emph{totally irrational vector} if $\hat t+e_0$ has a totally irrational direction,
i.e., if $1, t_1, \dots, t_d$ are linearly independent over $\QQ$.
Note that totally irrational vectors should not be confused with totally irrational directions.

For $t\in P / \Lambda$, we consider the associated translation\nomenclature[L]{$T_t$}{translation of the torus by vector $t$}
$$T_t = \map{P / \Lambda}{P / \Lambda}{z}{z+t}$$
We recall that a translation $T_t$ is \emph{minimal} if, and only if, $t$ is a totally irrational vector \cite{Tab.95}.

If $x\in \P\RR_+^d$ is a direction, we denote $T_x = T_{\pi_x(e_0)+\Lambda}$\nomenclature[L]{$T_x$}{translation of the torus associated with the direction $x$}.

Finally remark that any isomorphism $\psi\colon P/\Lambda \to \TT^d$ preserves Lebesgue measure (up to a multiplicative constant) and totally irrational vectors.
This will be used in Theorem~\ref{thmA}.
\subsection{Words}

For a fixed $d \geq 1$, we define the \emph{alphabet} $A$\nomenclature[L]{$A$}{alphabet} as the finite set
$A=\{0,\dots,d\}$. Its elements are called \emph{letters}.
A \emph{finite word} is an element of the monoid $A^* = \bigcup_{n\in\NN}A^n$\nomenclature[L]{$A^*$}{finite words}.
The \emph{length} of a finite word $u\in A^n$ is denoted by $|u|=n$\nomenclature[S]{$\abs{u}$}{length of a word}.
The set of non-empty words is the semigroup $A^+ =\bigcup_{n\geq 1}A^n$.
An \emph{infinite word} is an element of $A^\NN$\nomenclature[L]{$A^\NN$}{infinite words}.
A \emph{word} can be finite or infinite.

The set of words $A^*\cup A^\NN$ is endowed with the topology of coordinatewise
convergence.

When a word $u$\nomenclature[L]{$u$}{word} can be written as a product of three words $pfs$,
$p$ is called a \emph{prefix} of~$u$,
$f$ is called a \emph{factor} of~$u$, 
$s$ is called a \emph{suffix} of~$u$, 
and the length of $p$ is called an \emph{occurrence} of~$f$ in~$u$.
The number of occurrences of a finite word $f$ in a word $u$ is denoted by
$\abs{u}_f$\nomenclature[S]{$\abs{u}_f$}{number of occurences of $f$ in $u$}.

The \emph{complexity} of an infinite word $u$ is the map $p\colon\NN\to\NN$\nomenclature[L]{$p(n)$}{complexity function} which
associates to any integer $n$ the number of factors of $u$ of length $n$.

A \emph{substitution} is an element $\sigma$\nomenclature[G]{$\sigma$}{generic substitution} of $\hom(A^*,A^*)$\nomenclature[L$hom$]{$\hom(A^*,A^*)$}{substitutions}: for all finite
words $u,v \in A^*$, we have $\sigma(uv)=\sigma(u)\sigma(v)$.
A substitution is characterized by the images of letters.
A \emph{non-erasing substitution} is an element $\sigma$ of $\hom(A^+,A^+)$: it is a substitution that maps every letter to non-empty words.\nomenclature[L$hom$]{$\hom(A^+,A^+)$}{non-erasing substitutions}

The \emph{abelianization map} is the monoid morphism $\ab \colon A^* \to \ZZ^{d+1}$\nomenclature[L$ab$]{$\ab$}{abelianization} such
that $\ab(a)=e_a$ for every letter $a$ in $A$
(recall that $(e_a)_{a\in A}$ is the canonical basis of $\RR^{d+1}$).
We use the same notation for the map from $\hom(A^*,A^*)$ to $M_{d+1}(\ZZ)$ such
that $\ab(\sigma)\ab(w)=\ab(\sigma(w))$ for a substitution $\sigma$ and a word
$w$.
A substitution $\sigma$ is said to be \emph{unimodular} if
$\abs{\det(\ab(\sigma))}=1$.

The action of a non-erasing substitution $\sigma$ can be extended to infinite words by the
limit procedure:
$$\sigma(u) = \lim_{\substack{\vphantom{0} u = ps \\ \abs{p}\to\infty}} \sigma(p)$$
An infinite word $u$ is a \emph{fixed point} of $\sigma$ if $\sigma(u)=u$. An infinite word $u$ is a \emph{periodic point} of $\sigma$ is there exists an integer $p\geq 1$ such that $\sigma^p(u)=u$.

For an integer $k\geq 1$, an infinite word $u$ is \emph{$k$-balanced} if for any
two factors $v$, $w$ of $u$ of the same length, and any $a\in A$, we have
$\abs{\abs{v}_a-\abs{w}_a}\leq k$. An infinite word is \emph{balanced} if it is
\emph{$k$-balanced} for some integer $k$.

For an infinite word $u$, the (possibly undefined) \emph{frequency vector} of $u$ is\nomenclature[L$freq$]{$\freq(u)$}{frequency vector}
$$\freq(u) = \lim_{\substack{\vphantom{0} u = ps \\ \abs{p}\to\infty}} \frac{\ab(p)}{\abs{p}} \in \RR^{d+1}.$$
When this limit exists, we say that $u$ admits a frequency vector.
This is in particular the case if $u$ is balanced (see Proposition~\ref{prop-bal-freq}) or if it is an element of a uniquely ergodic subshift.

For a non-empty finite word $w\in A^+$, we denote by $w^\omega$ the infinite word $\lim_{n\to\infty} w^n$\nomenclature[G$\z$]{$w^\omega$}{periodic word}.

Finally we define the \emph{shift map} $T$\nomenclature[L]{$T$}{shift map} on $A^\NN$
that maps an infinite word $u$ to its suffix $Tu$ such that $u=aTu$ with $a\in A$.
Remark that with the coordinatewise topology, the shift map $T$ is continuous, and a subset $X\subseteq A^\NN$ is called a \emph{subshift} if $X$ is closed and shift-invariant.

The \emph{orbit} of $u \in A^\NN$ is the set $\orbit{u} = \{T^n u \mid n \in \NN \}$\nomenclature[L$O$]{$\orbit{u}$}{orbit} and
the \emph{subshift generated} by $u$ is its orbit closure $\Omega_u =
\overline{\orbit{u}}$\nomenclature[G$\Z$]{$\Omega_u$}{subshift generated by $u$}.
To a finite factor $w$ of $u$ we associate the \emph{cylinder}
$[w]=\{x\in\Omega_u\mid \exists s \in \Omega_u, x = ws\}$\nomenclature[S]{$[w]$}{cylinder}.

\subsection{Symbolic coding}\label{subsec-coding}

A \emph{measured topological dynamical system} is a triple $(X,T,\mu)$ such that
$X$\nomenclature[L]{$X$}{base set of a dynamical system} is a compact topological space, 
$\mu$\nomenclature[G]{$\mu$}{Borel measure} is a finite Borel measure, 
and $T\colon X\to X$ is a $\mu$-almost everywhere continuous map such that 
$\mu(T^{-1}(B)) = \mu(B)$ for any Borel set $B$ of $X$.

Given a measured topological dynamical system $(X,T,\mu)$ and a measurable
partition $(P_i)_{i \in I}$ of $X$, 
we associate the \emph{coding} $\cod\colon X \to
I^\NN$\nomenclature[L$cod$]{$\cod$}{symbolic coding} defined by $\cod(y)=(i_n)_{n\in\NN}$ and $\forall n \in \NN$, ${T}^ny\in
P_{i_n}$.
The map $\cod$ is a \emph{symbolic coding} of the system $(X,T,\mu)$ and the
closure of $\cod(X)$ defines a subshift over the alphabet $I$.
A \emph{generating partition} of the map $T$ is a measurable partition whose
coding is injective $\mu$-almost everywhere.

The atoms of the partitions we will construct will not be smooth, but they will
keep some topological and measure-theoretic regularity: a generating partition
$(P_i)_{i\in I}$ of $X$ is \emph{regular} if every set $\overline P_i$ is the
closure of its interior and if the boundary of each $P_i$ has measure zero.

A measurable subset $A$ of $X$ is said to be a \emph{bounded remainder set} for
the map $T$ if there exists a constant $K$ such that, for $\mu$-almost every
$x$ in $X$ and every integer $N$,
$$
	\abs{\sum_{n=0}^{N-1} \indic_{A}(T^n(x)) - N\frac{\mu(A)}{\mu(X)}}\leq K,
$$
where $\indic_{A}$\nomenclature[S]{$\indic_{A}$}{indicator function} is the indicator function of the subset $A$.
As we shall see, the atoms of the generating partition we will construct are
bounded remainder sets.

Now, let $T_x$ be a translation of the torus $P/\Lambda$. The triple $(P/\Lambda,
T_x, \lambda)$ is a measured topological dynamical system, where $\lambda$
denotes the Lebesgue measure inherited from $P$.
The generating partitions we will construct on $P/\Lambda$ actually come from a
piecewise translation of a measurable fundamental domain of $P$ for the action of
$\Lambda$:
a finite measurable partition $(P_i)_{i \in I}$ of $P/\Lambda$ is said to be \emph{liftable} with
respect to the translation $T_t\colon z\mapsto z+t$ of $P/\Lambda$ if there exists:
\begin{itemize}
  \item a measurable fundamental domain $D \subseteq P$ for the action of $\Lambda$
  \item a measurable partition $(D_i)_{i \in I}$ of $D$
  \item some vectors $(t_i)_{i\in I}$ in $P^I$
\end{itemize}
such that for every $i$ in $I$:
\begin{itemize}
  \item $D_i+t_i \subseteq D$
  \item $q(D_i) = P_i$
  \item $q(t_i) = t$
\end{itemize}

The map $E=\map{D}{D}{y}{y+t_i\mbox{ if }y\in D_i}$\nomenclature[L]{$E$}{domain exchange} is called a \emph{piecewise
translation} or a \emph{domain exchange}, and is measurably conjugated to the
translation $T_t$ via the quotient map $q : P \to P/\Lambda$.

\begin{definition}\label{def-nice}
A finite measurable partition $(P_i)_{i\in I}$ of $P/\Lambda$ is said to be
a \emph{nice generating partition} with respect to the translation
$T_t\colon z\mapsto z+t$ of
$P/\Lambda$ if it is generating, regular, liftable, and
every $P_i$ is a bounded remainder set.
\end{definition}

\subsection{\texorpdfstring{$S$}{S}-adic systems and \texorpdfstring{$S$}{S}-adic subshifts} \label{ss:Sadic}

Let $S \subseteq \hom(A^+,A^+)$\nomenclature[L]{$S$}{finite set of substitutions} be a finite set
of non-erasing substitutions on the alphabet $A$.

An \emph{$S$-adic system} is a shift-invariant subset of $S^\NN$.
Note that we do not impose that $S$-adic systems are topologically closed.
For instance, in Section~\ref{exemple-1d-sturmien} we will consider
$S=\{\tau_0,\tau_1\}$ and the $S$-adic system
$\{(s_k)\in S^\NN \mid \text{each element of $S$ occurs infinitely often in }(s_k)\}$.

An element $s=(s_k)$\nomenclature[L$s_k$]{$(s_k)$}{directive sequence} of an $S$-adic system is called a \emph{directive sequence}.
\begin{definition}[$S$-adic subshift]
Let $s$ be a directive sequence. Then the \emph{$S$-adic subshift}
associated with $s$ is the subshift $\Omega_s$\nomenclature[G$\Z$]{$\Omega_s$}{$S$-adic subshift} defined as follows.
Let first $L\subseteq A^*$ be the language of all factors of
finite words of the form $s_{[0,n)}(a)$ for all $n\in\NN$ and $a\in A$,
where $s_{[k,n)} = s_k \circ \dots \circ s_{n-1}$\nomenclature[L$s_k$]{$s_{[k,n)}$}{product of substitutions}.
Then $\Omega_s$ is the set of
infinite words $w\in A^\NN$ such that all factors of $w$ are in $L$.
\end{definition}

$S$-adic subshifts were introduced by Ferenczi \cite{Fer.96}, where he proves
that every word of linear complexity is an element of some $S$-adic subshift
in an $S$-adic system with some additional conditions.
This notion has been used in many places thereafter.
We refer to \cite{Dur.Ler.Rich.13} and \cite{Bert.Delec.14} for reference.

\begin{remark}
There are alternative ways to define subshifts from a directive sequence.
One is to consider the set
$$\Omega'_s = \bigcap_{n\in\NN}\{T^k s_{[0,n)}(w)\mid w\in A^\NN, k\in\NN\}.$$
$\Omega'_s$ is the set of words that are infinitely desubstituable by $s$,
and it always holds that $\Omega_s\subseteq\Omega'_s$.

Another way is to first define an infinite word $u$ by starting from
a fixed letter $a\in A$ and taking a limit point
of the sequence of finite words
$$s_0(a), s_0(s_1(a)), s_0(s_1(s_2(a))), \dots$$
then consider the subshift generated by $u$ (i.e., the smallest closed subset
of $A^\NN$ invariant by the shift and containing $u$), which is a subset
of $\Omega_s$.
\end{remark}

Here, we will let the directive sequence act on sequences of infinite words, each
word representing a scale on which the corresponding substitution acts.

A \emph{word sequence} is an element $u = (u_k)$ of ${(A^\NN)}^\NN$\nomenclature[L]{$u=(u_k)$}{word sequence, usually fixed point}.

Directive sequences act naturally on word sequences as follows:

    $$\map{S^\NN\times{(A^\NN)}^\NN}{{(A^\NN)}^\NN}{(s,u)}{\map{\NN}{A^\NN}{k}{s_k(u_{k+1})}}$$

\begin{definition} \label{def:fixed:point}
	A \emph{fixed point} of a directive sequence $s$ is a fixed point
	for the above action, that is, a word sequence $u$ satisfying:
		\[
			\forall k \in \NN,\ s_k(u_{k+1}) = u_k.
		\]
\end{definition}

Example~\ref{exple-sturmien} gives an example of fixed point of a directive sequence.

Directive sequences always admit fixed points.
Indeed, choose a letter $a\in A$ and for each $n\in\NN$, consider the word sequence $u^{(n)}=(u_k^{(n)})_{k\in\NN}$ defined by $u_k^{(n)}=a^\omega$ when $k\geq n$ and $u_k^{(n)}=s_{[k,n)}(a^\omega)$ when $k<n$,
where $s_{[k,n)} = s_k \circ \dots \circ s_{n-1}$.
Then let $u$ be a limit point of this sequence of word sequences when $n$ tends to infinity, in the compact space ${(A^\NN)}^\NN$ (with the coordinatewise topology).
This $u$ is a fixed point of $s$.

Fixed points of a directive sequence are not unique in general.

This generalizes the notion of fixed point and the notion of periodic point for a single substitution $\sigma$.
Let $\sigma^\omega$\nomenclature[G]{$\sigma^\omega$}{constant directive sequence} denote the constant directive sequence with all terms equal to $\sigma$. Similarly, for $v\in A^\NN$, let $v^\omega$ denote the word sequence with all terms equal to $v$.

\begin{lemma}\label{lem-lien-pt-fixes}
	Let $\sigma \in S$ be a substitution. We have
	\begin{itemize}
		\item $v \in A^\NN$ is a fixed point of $\sigma$ if, and only if, $v^\omega \in {(A^\NN)}^\NN$ is a fixed point of $\sigma^\omega$, 
		\item if $u \in {(A^\NN)}^\NN$ is a fixed point of $\sigma^\omega$, then $u_0$ is a periodic point of $\sigma$.
	\end{itemize}
\end{lemma}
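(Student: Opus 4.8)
The plan is to prove the two bullet points essentially by unwinding the definitions, together with a short compactness/limit argument in the second part.

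For the first bullet point, both directions are immediate translations of the defining equations. If $v\in A^\NN$ satisfies $\sigma(v)=v$, then the word sequence $u=v^\omega$ has $u_k=v$ for all $k$, and the fixed-point condition for $\sigma^\omega$ reads $s_k(u_{k+1})=u_k$, i.e.\ $\sigma(v)=v$, which holds; conversely, if $v^\omega$ is a fixed point of $\sigma^\omega$, then in particular the equation at index $k=0$ gives $\sigma(v)=v$. So this part is a one-line verification in each direction; I would just write it out.

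For the second bullet point, let $u=(u_k)$ be a fixed point of $\sigma^\omega$, so $\sigma(u_{k+1})=u_k$ for every $k$. Iterating from index $0$ gives $u_0=\sigma^n(u_n)$ for every $n\ge 1$. The difficulty is that the $u_n$ need not stabilize, so $u_0$ need not be an exact fixed point of $\sigma$; we only want that it is a \emph{periodic} point, i.e.\ $\sigma^p(u_0)=u_0$ for some $p\ge 1$. The standard argument: since $A$ is finite, there are finitely many letters, so among the first letters $a_n$ of the words $u_n$ (for $n\ge 0$) two must coincide, say $a_m=a_{m+p}$ with $p\ge 1$; more robustly, consider the first letters $b_k$ of $u_k$ for all $k$ and use the pigeonhole principle on the infinitely many indices to extract $k<k'$ with $b_k=b_{k'}$, and set $p=k'-k$. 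From $\sigma(u_{j+1})=u_j$ one sees that the first letter of $u_j$ is the first letter of $\sigma(u_{j+1})$, hence is determined by the first letter of $u_{j+1}$ via the map $a\mapsto(\text{first letter of }\sigma(a))$ on the finite set $A$; this map has an eventually periodic orbit, and by pushing the index up we may assume $b_0$ itself is periodic with some period $p\ge1$, i.e.\ $b_0=b_p$, and moreover $u_0$ and $u_p$ both begin with $b_0$. Then I would argue that $\sigma^p(u_0)=u_0$: indeed $u_0=\sigma^p(u_p)$, and it remains to identify $u_p$ with $u_0$. This is where a limit argument is cleanest: from $u_0=\sigma^{np}(u_{np})$ for all $n$, and the fact that $\sigma^{np}(a)$ has length tending to infinity (for any letter $a$ occurring in the relevant words, using non-erasingness and that the orbit of $b_0$ under $a\mapsto\sigma(a)$'s first letter revisits $b_0$), the word $u_0$ is the limit $\lim_{n\to\infty}\sigma^{np}(b_0)$, which is exactly the periodic point of $\sigma$ of period (dividing) $p$ starting with $b_0$; hence $\sigma^p(u_0)=u_0$.

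The main obstacle is the bookkeeping in the second bullet: making sure one extracts a genuine period $p\ge1$ such that the first letter of $u_0$ is fixed by $(\text{first letter of }\sigma^p(\cdot))$ \emph{and} such that $\sigma^p$ is growing on that letter, so that the limit $\lim_n \sigma^{np}(b_0)$ exists and equals $u_0$. Non-erasingness of $\sigma$ guarantees $|\sigma(w)|\ge|w|$; to get strict growth one uses that $b_0$ lies on a cycle of the first-letter map and that, along that cycle, $\sigma$ cannot act as a letter-to-letter permutation on all the words involved unless the $u_k$ are already eventually periodic — and in the exceptional bounded-length case the sequence $(u_k)$ is eventually periodic outright, giving $\sigma^p(u_0)=u_0$ directly. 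I would handle the generic growing case with the limit argument above and dispatch the degenerate case separately; neither case is deep, but the case split is the part that needs care to state correctly.
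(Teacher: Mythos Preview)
Your approach matches the paper's: the first bullet is a direct unwinding of definitions, and for the second bullet you track the first letters $b_k$, observe they form a periodic sequence, and in the growing case recover $u_0$ as the limit $\lim_n\sigma^{np}(b_0)$, which is then a $\sigma^p$-fixed point. This is exactly what the paper does.

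Two remarks on the details. First, you write ``by pushing the index up we may assume $b_0$ itself is periodic''. No pushing is needed: since $b_k=f(b_{k+1})$ with $f(a)=\text{(first letter of }\sigma(a))$, every $b_k$ lies in $\bigcap_n f^n(A)$, on which $f$ is a bijection of a finite set; hence the backward orbit $(b_k)$ is genuinely periodic from $k=0$. This matters because you cannot replace $u_0$ by some $u_K$ without changing the statement.

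Second, your handling of the non-growing case is where the argument is thin. You assert that in the bounded case ``the sequence $(u_k)$ is eventually periodic outright''. This is in fact true, but it is not immediate, and your justification (``$\sigma$ cannot act as a letter-to-letter permutation \ldots unless the $u_k$ are already eventually periodic'') does not prove it. The paper's route is cleaner: when $|\sigma^n(b_0)|$ stays bounded one has $|\sigma(b_k)|=1$ for all $k$, hence $(Tu_k)$ is again a fixed point of $\sigma^\omega$; one then iterates the whole argument on $(Tu_k)$ and takes the least common multiple of the periods obtained. If the iteration never exits the bounded case, every letter of every $u_k$ lies in the finite set $A''$ on which $\sigma$ acts as a permutation, so $\sigma^{|A''|!}$ fixes $u_0$. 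Your claim about $(u_k)$ can be established this way, but as written it is an unproven assertion rather than a dispatch of the degenerate case.
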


\begin{proof}
	The first point is clear.
	Let $u$ be a fixed point of $\sigma^\omega$, and for all $n \in \NN$, let $a_n \in A$ be the first letter of $u_n$.
	Then, the sequence $a_n$ is periodic, with a period $p \leq d+1$ since $a_{n}$ is completely determined by $a_{n+1}$.
	Now, if $\lim_{n \to \infty} \abs{\sigma^n(a_0)} = \infty$, then $\sigma^{np}(a_0)$ converges as $n$ tends to infinity to the word $u_0 = u_{p}$, so $u_0$ is a periodic point of $\sigma$.
	Otherwise, we have for all $n \in \NN$, $\sigma^n(a_0) = a_n$,
	and the directive sequence $(T u_n)_{n \in \NN}$ is also a fixed point of $\sigma^\omega$,
	where $T\colon A^\NN \to A^\NN$ is the shift map.
	If we iterate the argument and take the least common multiple of the periods obtained, it gives a period $p$ for which $u_0$ is a fixed point of $\sigma^p$. 
\end{proof}

\begin{definition}
	For a fixed point $u \in {(A^\NN)}^\NN$ of a directive sequence $s$,
	we define the subshift $\Omega_u$ as the subshift $\Omega_{u_0}$, that is
	the smallest closed subset of $A^\NN$ invariant by the shift and containing $u_0$.
\end{definition}

\begin{definition}
	We say that a directive sequence $s \in S^\NN$ is \emph{primitive} if
	\[
		\forall k,\ \exists n \geq k,\ \forall a \in A,\ s_{[k,n)}(a) \text{ contains every letter}.
	\]
	It is equivalent to
	$
		\forall k,\ \exists n \geq k,\ \ab(s_{[k,n)}) > 0.
	$
\end{definition}

\begin{definition} \label{def:everywhere:growing}
	We say that a directive sequence $s \in S^\NN$ is \emph{everywhere growing} if for all $a \in A$, we have
	\[
		\lim_{n \to \infty} \abs{s_{[0,n)}(a)} = \infty.
	\]
	It is equivalent to say that the $1$-norm of each column of the matrix $\ab(s_{[0,n)})$ tends to infinity.
\end{definition}

Remark that if a directive sequence $s$ is primitive, then for all $k \in \NN$, and all $a \in A$,
we have $\abs{s_{[k,n)}(a)} \xrightarrow[n \to \infty]{} \infty$. In particular, $s$ is everywhere growing.

\begin{proposition}\label{prop-omega-u-omega-s}
	Let $s \in S^\NN$ be a primitive directive sequence.
	Then the subshift $\Omega_s$ is minimal.
	In particular, for every fixed point $u \in {(A^\NN)}^\NN$ of $s$, we have
	$
		\Omega_u = \Omega_s.
	$
	Thus, $\Omega_u$ does not depend on the choice of the fixed point $u$.
\end{proposition}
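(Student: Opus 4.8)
I would prove the two assertions, minimality of $\Omega_s$ and $\Omega_u=\Omega_s$ for every fixed point $u$, in the order: (a) the language $L$ of $\Omega_s$ is uniformly recurrent; (b) $u_0\in\Omega_s$; (c) conclude. The two features of primitivity I would lean on are that for every $k$ there are arbitrarily large $n$ with $\ab(s_{[k,n)})>0$ (``positive windows''), and that primitivity implies everywhere growing, so that $\min_{a\in A}|s_{[0,n)}(a)|\to\infty$.

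First I would show that $L$ is \emph{uniformly recurrent}, i.e.\ for each $v\in L$ there is $N_v$ such that every word of $L$ of length $\geq N_v$ contains $v$ as a factor. Choose $m,a$ with $v$ a factor of $s_{[0,m)}(a)$ and a positive window $\ab(s_{[m,n)})>0$; then for every letter $b$ the word $s_{[0,n)}(b)=s_{[0,m)}(s_{[m,n)}(b))$ contains $s_{[0,m)}(a)$, hence contains $v$. Set $R=\max_b|s_{[0,n)}(b)|$ and $Q=\max_{p<n,\,c\in A}|s_{[0,p)}(c)|$, and take $N_v=\max(3R,\,Q+1)$. If $w'\in L$ with $|w'|\geq N_v$, write $w'$ as a factor of some $s_{[0,p)}(c)$; since $|w'|>Q$, necessarily $p\geq n$, and writing $s_{[0,p)}(c)=\prod_i s_{[0,n)}(b_i)$ one sees that a factor of length $\geq 3R$ of this concatenation of blocks of length $\leq R$ must contain a full block $s_{[0,n)}(b_i)$, which contains $v$. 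This gives uniform recurrence of $L$. A routine consequence is that every $w\in\Omega_s$ has language exactly $L$ (the inclusion $L_w\subseteq L$ is the definition of $\Omega_s$; conversely, the length-$N_v$ prefix of $w$ is a word of $L$ of length $\geq N_v$, hence contains any prescribed $v\in L$), so $\overline{\orbit w}$ has the same language as $\Omega_s$ and therefore equals $\Omega_s$: thus $\Omega_s$ is minimal, granting it is non-empty.

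Next I would show $u_0\in\Omega_s$, i.e.\ every factor of $u_0$ lies in $L$; this also yields $\Omega_s\neq\emptyset$. Fix a factor $v$ of $u_0$ and an occurrence of it starting at some position $p$. Using the fixed-point relations $s_k(u_{k+1})=u_k$ we have $u_0=s_{[0,m)}(u_m)$ for every $m$, so $u_0$ decomposes as a concatenation of blocks $s_{[0,m)}(c)$ with $c$ the letters of $u_m$. Choosing $m$ with $\min_{a\in A}|s_{[0,m)}(a)|\geq p+|v|$ (possible since $s$ is everywhere growing), the first block $s_{[0,m)}(c_1)$ of this decomposition already covers positions $0,\dots,p+|v|-1$, so $v$ is a factor of $s_{[0,m)}(c_1)$ with $c_1\in A$, hence $v\in L$. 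Therefore $u_0\in\Omega_s$, and combining with minimality of $\Omega_s$ we obtain $\Omega_u=\Omega_{u_0}=\overline{\orbit{u_0}}=\Omega_s$; the right-hand side does not involve $u$, so it is independent of the chosen fixed point.

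\textbf{Main obstacle.} The step I expect to require the most care is the uniform recurrence of $L$: a word of $L$ is a priori only ``witnessed'' as a factor of $s_{[0,p)}(c)$ at some uncontrolled depth $p$, so one needs the auxiliary bound $Q$ to force the witnessing depth above a positive-window level $n$, after which the bounded block lengths at level $n$ finish the argument. Everything else is comparatively soft: non-emptiness drops out of $u_0\in\Omega_s$ (or, alternatively, from König's lemma applied to the factorial language $L$, which contains arbitrarily long words by everywhere growing), and the ``factor near the start of $u_0$'' argument for $u_0\in\Omega_s$ needs nothing beyond the fact that primitivity makes the substituted images of letters grow.
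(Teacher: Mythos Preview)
Your proof is correct and follows essentially the same strategy as the paper: use a positive window $\ab(s_{[m,n)})>0$ together with the block decomposition $s_{[0,p)}(c)=\prod_i s_{[0,n)}(b_i)$ to show that any sufficiently long word of $L$ contains a prescribed $v\in L$; the paper phrases this directly as ``every prefix of $w\in\Omega_s$ is a factor of every $w'\in\Omega_s$'' rather than via uniform recurrence of $L$, and shows $u_0\in\Omega_s$ via $u_0=\lim_n s_{[0,n)}(a_n)$ instead of your first-block argument, but these are cosmetic differences. If anything, your auxiliary bound $Q$ forcing the witnessing depth $p\geq n$ is more careful than the paper's unadorned ``necessarily $k\geq N$''.
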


\begin{proof}
	Let $w$ and $w' \in \Omega_s$ be two words of the subshift.
	Let $p$ be a prefix of $w$.
	Then, there exists $n \in \NN$ and $a \in A$ such that $p$ is a factor of $s_{[0,n)}(a)$.
	Using the primitivity, let $N \geq n$ such that $\ab(s_{[n,N)}) > 0$.
	Now, take a factor $f$ of $w'$ of length at least $2 \max_{c \in A} \abs{s_{[0,N)}(c)}$.
	There exists $k \in \NN$ and $b \in A$ such that $f$ is a factor of $s_{[0,k)}(b)$.
	Necessarily, we have $k \geq N$, and $s_{[0,k)}(b)$ is a concatenation of words $s_{[0,N)}(c)$, for each letter $c$ of $s_{[N,k)}(b)$.
	Hence, there exists $c \in A$ such that $s_{[0,N)}(c)$ is a factor of $f$.
	Then, the letter $a$ appears in the word $s_{[n,N)}(c)$.
	So $p$ is a factor of $s_{[0,n)}(a)$ which is a factor of $s_{[0,N)}(c)$ which is a factor of $f$ which is a factor of $w'$.
	We conclude that for every $w, w' \in \Omega_s$, every prefix of $w$ is a factor of $w'$,
	thus the subshift $\Omega_s$ is minimal.
	
	To end the proof, remark that for any fixed point $u$ of $s$, we have $u_0 \in \Omega_s$ since we have $\lim_{n \to \infty} s_{[0,n)}(a_n) = u_0$, where $a_n$ is the first letter of $u_n$.
	Hence, by minimality we get $\Omega_u = \Omega_s$.
\end{proof}

Let us give an example of a fixed point of a directive sequence.
The reader will recognize that each $u_{k}$ is a Sturmian word, see \cite{Pyth.02}.
\begin{example}\label{exple-sturmien}
	Let $S = \{\tau_0, \tau_1\}$, with $\tau_0 = \left\{\begin{array}{ccl} 0 &\mapsto& 0 \\ 1 &\mapsto& 01 \end{array} \right.,
				\tau_1 = \left\{\begin{array}{ccl} 0 &\mapsto& 10 \\ 1 &\mapsto& 1 \end{array} \right.$,
	and let us consider the directive sequence $s = \tau_0 \tau_1 \tau_1 \tau_0 \tau_0 \tau_1 \tau_0 \tau_0 \tau_1 ...$.
    Then, there exists a fixed point $u \in {(\{0,1\}^\NN)}^\NN$ of $s$ beginning with
	\begin{align*}
		u_{0} &=& 01010010100101010010100101001010100101001010010101... \\
		u_{1} &=& 11011011101101101110110110111011011101101101110110... \\
		u_{2} &=& 10101101010110101011010110101011010101101011010101... \\
		u_{3} &=& 00100010001001000100010010001000100010010001000100... \\
		u_{4} &=& 01001001010010010100100100101001001010010010010100... \\
		u_{5} &=& 10101101011010101101011010101101011010101101011010... \\
		u_{6} &=& 00100100010010001001000100100100010010001001000100... \\
	\end{align*}
\end{example}

Fixed points encompass both the time and scale dynamics in a single object.
In the context of this paper, the time dynamics will correspond to the action of
the translation on the torus and the scale dynamics will correspond to the action
of the continued fraction algorithm on the space of translations.
Symbolically, the shift map on $\Omega_{u_0}$ encodes the time dynamics, while
shifting the fixed point $(u_k)\mapsto (u_{k+1})$ corresponds to accelerating
the time dynamics.

In the example \ref{exple-sturmien} above, we can vizualize how fixed points
grasp the the multi-scale structure of the dynamical system with the following
alignment:

{
    \let~\trou 
	\begin{align*}
		u_{0} &=& 01010010100101010010100101001010100101001010010101... \\
		u_{1} &=& 1~1~01~1~01~1~1~01~1~01~1~01~1~1~01~1~01~1~01~1~1~... \\
		u_{2} &=& 1~0~~1~0~~1~1~0~~1~0~~1~0~~1~1~0~~1~0~~1~0~~1~1~0~... \\
		u_{3} &=& 0~~~~0~~~~1~0~~~~0~~~~0~~~~1~0~~~~0~~~~0~~~~1~0~~~... \\
		u_{4} &=& 0~~~~1~~~~~~0~~~~0~~~~1~~~~~~0~~~~0~~~~1~~~~~~0~~~... \\
		u_{5} &=& 1~~~~~~~~~~~0~~~~1~~~~~~~~~~~0~~~~1~~~~~~~~~~~1~~~... \\
		u_{6} &=& 0~~~~~~~~~~~~~~~~0~~~~~~~~~~~~~~~~1~~~~~~~~~~~0~~~... \\
	\end{align*}
}

As we will see in section \ref{sec-renormalization}, when the substitutions enjoy
some recognizability properties, the scale dynamics corresponds to inducing on
some atoms of the partition.

Rokhlin towers and ordered Bratteli diagrams are other combinatorial objects
that account for the multi-scale structure of dynamical systems.
An ordered Bratteli diagram $\mathcal{B}$ can be associated to a directive
sequence $(s_k)$ \cite{Durand.10}.\nomenclature[L$B$]{$\mathcal{B}$}{Bratteli diagram}
When $(s_k)$ is everywhere growing, the minimal infinite paths of $\mathcal{B}$
are in bijective correspondance with the fixed points $(u_k)$ of $(s_k)$: the
$k$th edge of the infinite path is encoded by the first letter of the word
$u_k$.

\subsection{Matrices}
To each substitution $\sigma$ is associated a matrix $\ab(\sigma)$.
To obtain precise results, we need to recall some facts about matrices.
Recall that $\RR^{d+1}$ is equipped with the norm $\norm{.}_1$.
The operator norm of a matrix $M\in M_{d+1}(\RR)$ is defined by
$$\matrixnorm{M}_1=\sup_{v\in \RR^{d+1}\setminus\{0\}}\frac{\norm{Mv}_1}{\norm{v}_1}.$$
\nomenclature[S]{$\matrixnorm{M}_1$}{operator norm}

Moreover we also define a semi-norm for a subspace $V$:
$$\matrixnorm{M\restrict{V}}_1=\sup_{v\in V\setminus\{0\}}\frac{\norm{Mv}_1}{\norm{v}_1}.$$
\nomenclature[S]{$\matrixnorm{M\restrict{V}}_1$}{operator semi-norm} 
Finally we write $M>0$ if every coefficient of $M$ is positive.

Given a directive sequence $s$, we define $M_{k}(s) = \ab(s_k)$\nomenclature[L]{$M_k(s)$}{$k$-th matrix of $s$},
denoted simply by $M_k$ when there is no ambiguity on what is the directive sequence.
We use the classical notation $M_{[k,n)} = M_k \dots M_{n-1}$\nomenclature[L]{$M_{[k,n)}$}{product $M_k\dots M_{n-1}$}.

A matrix $M\in M_{d+1}(\RR)$ is said to be \emph{Pisot} if it has non-negative integer entries, its dominant eigenvalue is simple and all other eigenvalues have absolute values less than one. A substitution $\sigma$ is said to be \emph{Pisot} if the matrix $\ab(\sigma)$ is Pisot.

\subsection{Topologies on the integer half-space and worms}\label{subsection-topo}

Let us define the \emph{integer half-space}:
$\HH = \{z \in \ZZ^{d+1} \mid h(z) \geq 0\}$\nomenclature[L$H$]{$\HH$}{integer half-space}
and for $i\geq 0$, $\HH_i = \{z \in \ZZ^{d+1} \mid h(z)=i\}=\Lambda+ie_0$.
The following two definitions are crucial in the rest of the paper.      
\begin{definition}\label{def-topo-paul}
    For any fixed $x\in \P\RR_+^{d}$, we define the topology
    $\topo{x}$\nomenclature[L$T$]{$\topo{x}$}{topology on $\HH$} on $\HH$: a subset $V \subseteq \HH$ is \emph{open} if there exists
    an open set $U\subseteq P$ such that $V = \pi_x^{-1}(U)\cap \HH$. 
\end{definition}    
\nomenclature[L]{$U$}{open subset of $P$}
\nomenclature[L]{$V$}{open subset of $\HH$ for some topology $\topo{x}$}
Remark that $\topo{x}$ is the finest topology on $\HH$ that makes $\pi_x\colon\HH\to P$ continuous.
It is metrizable if, and only if, $x\cap\HH=\{0\}$,
which is the case if $x$ is a totally irrational direction.     
     
We introduce the notion of \emph{worm}: 
\begin{definition} \label{def:worm}
        Given an infinite word $u\in A^{\NN}$, its \emph{worm} is the set
		$$W(u) = \{\ab(p) \mid p \text{ prefix of } u\} \subseteq \NN^{d+1}
		\subseteq \HH.$$\nomenclature[L]{$W(u)$}{worm}

	For a worm and a letter $a$ we can define the subsets
	$$W_a(u)= \{\ab(p) \mid pa \text{ prefix of } u\}=W(u)\cap(W(u)-e_a).$$\nomenclature[L]{$W_a(u)$}{subset of a worm}
\end{definition}

\begin{remark}
	The subsets $W_a(u)$, $a \in A$, form a partition of $W(u)$: $W(u) = \sqcup_{a \in A} W_a(u)$.
\end{remark}

An example of worm is depicted in Figure~\ref{fig:worm}.

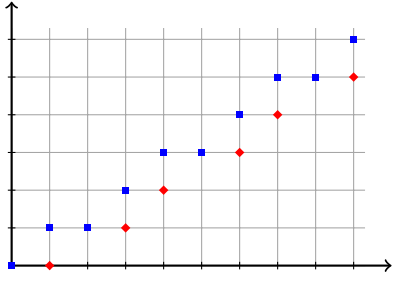
\begin{figure}[ht]
    \centering
	\begin{tikzpicture}[scale=.5, xscale=1, yscale=1]
		\draw [gray!70] (0,0) grid (9.3,6.3);
		\draw[->, thick] (0,0) -- (10,0);
		\draw[->, thick] (0,0) -- (0,7);
		\foreach \y in {1,2,3,4,5,6}:
			\draw (.1,\y) -- (-.1,\y); 
		\foreach \x in {1,2,3,4,5,6,7,8,9}:
			\draw (\x, .1) -- (\x, -.1); 
		\def\d{.125}
		\def\dd{.0884}
		\foreach \x/\y/\c in {0/0/0, 1/0/1, 1/1/0, 2/1/0, 3/1/1, 3/2/0, 4/2/1, 4/3/0, 5/3/0, 6/3/1, 6/4/0, 7/4/1, 7/5/0, 8/5/0, 9/5/1, 9/6/0}
		{
			\ifthenelse{\c = 0}
			{
				\fill[blue] (\x-\dd, \y-\dd) -- (\x-\dd, \y+\dd) -- (\x+\dd, \y+\dd) -- (\x+\dd, \y-\dd)
			}{
				\fill[red] (\x, \y-\d) -- (\x+\d,\y) -- (\x,\y+\d) -- (\x-\d,\y)
			};
		}
	\end{tikzpicture}
    \caption{Worm of the word $u = (01001)^\omega$. $W_0(u)$ in blue and $W_1(u)$ in red.} \label{fig:worm}
\end{figure}

\begin{lemma}[tiling]\label{lem:worm:tiling}
  A worm $W$ tiles the integer half-space by translations: $\HH = W \oplus \Lambda$.
\end{lemma}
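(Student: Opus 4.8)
\textbf{Proof plan for Lemma~\ref{lem:worm:tiling}.}

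The plan is to show both that the translates $W+\ell$, $\ell\in\Lambda$, are pairwise disjoint and that their union is all of $\HH$. For the covering part, I would use the fact that $\Lambda$ acts on each slice $\HH_i=\Lambda+ie_0$ simply transitively by translation, so it suffices to prove that $W$ meets every slice $\HH_i$ in exactly one point. Let $u$ be the infinite word defining $W$. The prefixes of $u$ are linearly ordered by length, and $h(\ab(p))=\abs{p}$ for every prefix $p$, since $h$ sums the coordinates and $\ab(p)$ records letter counts. Because $u$ is infinite and non-erasing substitutions are not in play here (a worm is just attached to an arbitrary infinite word), for each $i\in\NN$ there is exactly one prefix $p$ of $u$ with $\abs{p}=i$, namely the prefix of length $i$. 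Hence $W\cap\HH_i$ is the singleton $\{\ab(p_i)\}$ where $p_i$ is that prefix. This immediately gives both statements: every $z\in\HH$ lies in some slice $\HH_i$, and $z-\ab(p_i)\in\Lambda$ since both lie in $\HH_i$ and $\HH_i=\Lambda+ie_0$; and if $\ab(p_i)+\ell=\ab(p_j)+\ell'$ with $\ell,\ell'\in\Lambda$, then applying $h$ forces $i=j$, hence $\ell=\ell'$.

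Concretely, the two things to verify are: (i) $\HH=\bigcup_{i\geq 0}\HH_i$ is a partition, which is immediate from the definition $\HH_i=\{z\mid h(z)=i\}$ together with $h(z)\geq 0$ for $z\in\HH$ and $h(\ZZ^{d+1})=\ZZ$; and (ii) $\HH_i=\Lambda+ie_0$, which holds because $h(ie_0)=i$ so $ie_0\in\HH_i$, and $z\mapsto z-ie_0$ is a bijection $\HH_i\to\HH_0=\{z\in\ZZ^{d+1}\mid h(z)=0\}=P\cap\ZZ^{d+1}=\Lambda$. With these in hand, the direct sum decomposition $\HH=W\oplus\Lambda$ follows formally: the map $(w,\ell)\mapsto w+\ell$ from $W\times\Lambda$ to $\HH$ is well-defined, and it is a bijection because on each slice it restricts to the bijection $\{\ab(p_i)\}\times\Lambda\to\HH_i$, $\ell\mapsto\ab(p_i)+\ell$.

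I do not expect a serious obstacle here; the only point requiring a moment's care is making explicit why $W$ hits each slice exactly once, i.e.\ that an infinite word has exactly one prefix of each length and that $h\circ\ab$ computes the length. Everything else is bookkeeping about the coset structure of $\ZZ^{d+1}$ modulo $\Lambda$ via the linear form $h$. One could phrase the whole argument in one line as: the fibers of $h\restrict{\HH}$ are exactly the cosets of $\Lambda$, and $W$ is a section of $h\restrict{\HH}$.
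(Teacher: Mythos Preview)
Your proof is correct and follows essentially the same approach as the paper: the paper's proof is a three-sentence version of yours, observing that the $\Lambda$-orbits in $\HH$ are the cosets $\HH_i$ and that a worm meets each coset exactly once. Your proposal simply unpacks these claims in detail, including the one-line summary at the end which matches the paper's argument almost verbatim.
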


Figure~\ref{fig:H:worm} shows an example of such a tiling by the worm $(01001)^\omega$, for $d=1$.

\begin{proof}
  
The lattice $\Lambda$ acts on $\HH$ by translation.
The orbits of this action are the cosets $\HH_i$.
A worm intersects each coset exactly once.
\end{proof}

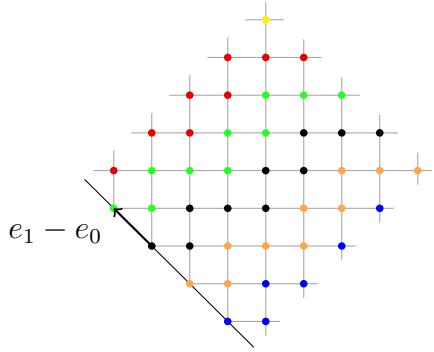
\begin{figure}[ht]
    \centering
	\begin{tikzpicture}[scale=.5]
		\begin{scope}
			\clip[rotate around={-45:(0,0)}] (-2.5,0) rectangle (3.8,6.7);
			\draw [gray!70] (-9,-9) grid (9,9);
		\end{scope}
		\draw[rotate around={-45:(0,0)}] (-2.5,0) -- (3.8,0);
		\begin{scope}
			\clip[rotate around={-45:(0,0)}] (-2.5,-.2) rectangle (3.8,6.7);
			\foreach \x/\y in {0/0, 1/0, 1/1, 2/1, 3/1, 3/2, 4/2, 4/3, 5/3, 6/3}:
			{
				\foreach \n/\c in {-3/yellow, -2/red!90!black, -1/green!85, 0/black, 1/orange!70, 2/blue}:
					\fill[\c] (\x+\n,\y-\n) circle (.1);
			};
		\end{scope}
    \draw[->, thick] (0,0) -- (-1,1) node[below left] {$e_1-e_0$};
	\end{tikzpicture}
    \caption{Tiling of $\HH$ by a worm, by translation by the group $\Lambda = \langle e_1-e_0\rangle$} \label{fig:H:worm}
\end{figure}

\begin{lemma}[automatic balance] \label{lem:worm:balance}
If a worm $W(u)$ has non-empty interior for some topology $\topo{x}$ with
$x\in\P\RR_+^{d}$, then $\pi_x(W(u))$ is bounded.
\end{lemma}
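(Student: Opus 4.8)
The idea is to read off the boundedness of $\pi_x(W(u))$ from the fact that its ``level pieces'' return to a fixed small ball with bounded gaps. Recall from Lemma~\ref{lem:worm:tiling} that a worm meets each coset $\HH_i=\Lambda+ie_0$ exactly once; since the only prefix of $u$ of length $i$ is $p_i$, this means $W(u)\cap\HH_i=\{\ab(p_i)\}$, so, writing $c_i:=\pi_x(\ab(p_i))$, we have $\pi_x(W(u))=\{c_i\mid i\geq 0\}$ and it suffices to bound the sequence $(c_i)$.

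First I would unfold the hypothesis. If $W(u)$ has non-empty $\topo{x}$-interior, there is an open set $U\subseteq P$ and a point $z_0\in W(u)$ with $\pi_x(z_0)\in U$ and $\pi_x^{-1}(U)\cap\HH\subseteq W(u)$; fixing $\delta>0$ with $B(y_0,\delta)\subseteq U$, where $y_0:=\pi_x(z_0)$, we get $\pi_x^{-1}(B(y_0,\delta))\cap\HH\subseteq W(u)$. Now for any index $i$ such that the coset $\pi_x(\HH_i)=\Lambda+i\,\pi_x(e_0)$ meets $B(y_0,\delta)$, the set $\pi_x^{-1}(B(y_0,\delta))\cap\HH_i$ is non-empty, is contained in $W(u)\cap\HH_i=\{\ab(p_i)\}$, hence equals $\{\ab(p_i)\}$; therefore $c_i\in B(y_0,\delta)$. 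Let $R_\delta\subseteq\NN$ be the set of such ``good'' indices; equivalently $i\in R_\delta$ iff $q(i\,\pi_x(e_0))\in\bar B$, where $\bar B:=q(B(y_0,\delta))$ is open because $q$ is an open map. Note that $z_0$ sits at some level $\ell_0:=h(z_0)$ and $y_0\in\pi_x(\HH_{\ell_0})$, so $\ell_0\in R_\delta$ and $q(y_0)\in\bar B$.

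The key step is that $R_\delta$ is syndetic. The point $q(i\,\pi_x(e_0))$ is exactly the $i$-th iterate of $q(0)$ under the translation $T_x$ of $P/\Lambda$. Its orbit closure $H:=\overline{\{\,q(i\,\pi_x(e_0))\mid i\in\ZZ\,\}}$ is a compact subgroup of the torus $P/\Lambda$ on which $T_x$ acts as translation by a topological generator, hence as a minimal homeomorphism; in particular every forward $T_x$-orbit in $H$ is dense. Since $\bar B\cap H$ is a non-empty (it contains $q(y_0)=q(\ell_0\,\pi_x(e_0))$) open subset of $H$, compactness gives $N\in\NN$ with $\bigcup_{m=0}^{N}T_x^{-m}(\bar B\cap H)=H$, and evaluating at $q(i\,\pi_x(e_0))$ shows that $\{i,i+1,\dots,i+N\}\cap R_\delta\neq\emptyset$ for every $i$. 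Finally, given an arbitrary $i\geq 0$, pick $j\in R_\delta$ with $|i-j|\leq N$; since $\ab(p_i)-\ab(p_j)=\pm\,\ab(w)$ for a word $w$ of length $|i-j|$ and $h(\ab(p_i)-\ab(p_j))=i-j$, we have $c_i-c_j=(\ab(p_i)-\ab(p_j))-(i-j)v(x)$, whose $\norm{\cdot}_1$ is at most $2|i-j|\leq 2N$. Hence $\norm{c_i}_1\leq\norm{y_0}_1+\delta+2N$ for all $i$, so $\pi_x(W(u))$ is bounded.

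I expect the syndeticity step to be the only real obstacle: one has to notice that the argument must not assume $x$ totally irrational (so that $\topo{x}$ may fail to be metrizable and $T_x$ need not be minimal on the whole torus), and instead work on the orbit closure $H$, where $T_x$ is always minimal, observing that $\bar B$ automatically meets $H$ because $y_0$ is $\pi_x$ of a worm point lying at a definite level. Everything else — the per-level ``exactly one point'' fact from the tiling lemma, and the ``bounded steps'' estimate $\norm{c_i-c_j}_1\leq 2|i-j|$ — is elementary.
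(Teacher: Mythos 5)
Your proof is correct and follows essentially the same route as the paper's: both reduce to the per-level uniqueness from the tiling lemma, derive syndeticity of the ``good'' levels from minimality of $T_x$ on the orbit closure, and conclude with a one-step-per-level bound on the projected worm. The only difference is cosmetic — you make the orbit-closure/compact-subgroup reasoning explicit, whereas the paper simply asserts that $T_x$ acts minimally on every orbit closure.
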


\begin{proof}
Let $U$ be an open subset of $P$ such that $\emptyset \neq \pi_x^{-1}(U)\cap \HH \subseteq W(u)$.
Up to restricting it, we assume that $U$ is included in an open ball $B(0,M)$ for some $M>0$.

Let us consider the translation by $\pi_x(e_0)$ modulo $\Lambda$ of the
$d$-dimensional torus $P/\Lambda$:

        $$T_{x} = T_{\pi_x(e_0)+\Lambda} = \map{P/\Lambda}{P/\Lambda}{z}{z+\pi_x(e_0)}$$

For any integer $i$, $\pi_x^{-1}(U)$ intersects $\HH_i$ if, and only if,
$T_{x}^i(0)$ belongs to the open subset $U+\Lambda$ of the torus.
The translation $T_{x}$ acts minimally on every orbit closure.
By hypothesis, the open set $U+\Lambda$ intersects the orbit of $0$, hence
$T_{x}^i(0)$ belongs to $U+\Lambda$ for $i$ in a syndetic subset of $\NN$:
$\exists K\geq 1, \forall i\in\NN, \exists 0\leq k\leq K, T_{x}^{i+k}(0) \in U+\Lambda$.
If, for each integer $i$, we denote by $w_i$ the single element of
$W(u)\cap\HH_i$, we have $\norm{w_{i+1}-w_i}_1=1$.
Hence, any point of the worm $W(u)$ is at distance at most $K$ of a point of
$\pi_x^{-1}(U)$ (see Figure \ref{fig:escape}).

\begin{figure}[h]
\centering{
%% Creator: Inkscape inkscape 0.92.4, www.inkscape.org
%% PDF/EPS/PS + LaTeX output extension by Johan Engelen, 2010
%% Accompanies image file '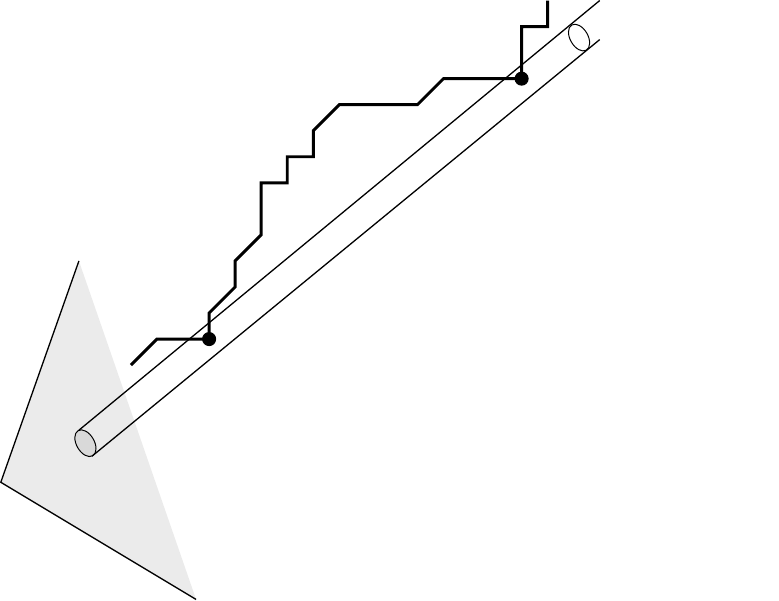' (pdf, eps, ps)
%%
%% To include the image in your LaTeX document, write
%%   \input{<filename>.pdf_tex}
%%  instead of
%%   \includegraphics{<filename>.pdf}
%% To scale the image, write
%%   \def\svgwidth{<desired width>}
%%   \input{<filename>.pdf_tex}
%%  instead of
%%   \includegraphics[width=<desired width>]{<filename>.pdf}
%%
%% Images with a different path to the parent latex file can
%% be accessed with the `import' package (which may need to be
%% installed) using
%%   \usepackage{import}
%% in the preamble, and then including the image with
%%   \import{<path to file>}{<filename>.pdf_tex}
%% Alternatively, one can specify
%%   \graphicspath{{<path to file>/}}
%% 
%% For more information, please see info/svg-inkscape on CTAN:
%%   http://tug.ctan.org/tex-archive/info/svg-inkscape
%%
\begingroup%
  \makeatletter%
  \providecommand\color[2][]{%
    \errmessage{(Inkscape) Color is used for the text in Inkscape, but the package 'color.sty' is not loaded}%
    \renewcommand\color[2][]{}%
  }%
  \providecommand\transparent[1]{%
    \errmessage{(Inkscape) Transparency is used (non-zero) for the text in Inkscape, but the package 'transparent.sty' is not loaded}%
    \renewcommand\transparent[1]{}%
  }%
  \providecommand\rotatebox[2]{#2}%
  \newcommand*\fsize{\dimexpr\f@size pt\relax}%
  \newcommand*\lineheight[1]{\fontsize{\fsize}{#1\fsize}\selectfont}%
  \ifx\svgwidth\undefined%
    \setlength{\unitlength}{224.20109065bp}%
    \ifx\svgscale\undefined%
      \relax%
    \else%
      \setlength{\unitlength}{\unitlength * \real{\svgscale}}%
    \fi%
  \else%
    \setlength{\unitlength}{\svgwidth}%
  \fi%
  \global\let\svgwidth\undefined%
  \global\let\svgscale\undefined%
  \makeatother%
  \begin{picture}(1,0.77076091)%
    \lineheight{1}%
    \setlength\tabcolsep{0pt}%
    \put(0,0){\includegraphics[width=\unitlength,page=1]{escape.pdf}}%
    \put(0.18517229,0.05766422){\color[rgb]{0,0,0}\makebox(0,0)[lt]{\lineheight{1.25}\smash{\begin{tabular}[t]{l}$P$\end{tabular}}}}%
    \put(0,0){\includegraphics[width=\unitlength,page=2]{escape.pdf}}%
    \put(0.04918968,0.17689564){\color[rgb]{0,0,0}\makebox(0,0)[lt]{\lineheight{1.25}\smash{\begin{tabular}[t]{l}$U$\end{tabular}}}}%
    \put(0.78176419,0.74084518){\color[rgb]{0,0,0}\makebox(0,0)[lt]{\lineheight{1.25}\smash{\begin{tabular}[t]{l}$\pi_x^{-1}(U)$\end{tabular}}}}%
    \put(0.57985685,0.3758588){\color[rgb]{0,0,0}\makebox(0,0)[lt]{\lineheight{1.25}\smash{\begin{tabular}[t]{l}$\leq K$\end{tabular}}}}%
  \end{picture}%
\endgroup%

\caption{A worm can not escape too far between consecutive interior points}
\label{fig:escape}
}
\end{figure}

Since the direction $x$ is in $\P\RR_+^{d+1}$, the projection
$\pi_x$ is $2$-Lipschitz.
Hence, $\pi_x(W(u))\subseteq B(0,M+2K)$, which concludes the proof.

\end{proof}

\begin{lemma}[uniform automatic balance] \label{lem:worm:balance:uniform}
	If there exists a ball $B$ of $P$, a sequence of directions $x^{(n)}\in\P\RR_+^{d}$ such that $x^{(n)} \xrightarrow[n \to \infty]{} x^\infty$ with $x^\infty$ a totally irrational direction, and a sequence of worms $W(u_n)$ such that $\forall n \in \NN$, $\pi_{x^{(n)}}^{-1}(B) \cap \HH \subseteq W(u_n)$, then $\pi_{x^{(n)}}(W(u_n))$ is uniformly bounded for $n\in\NN$.
\end{lemma}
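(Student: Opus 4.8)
The plan is to argue by contradiction, running the mechanism of Lemma~\ref{lem:worm:balance} ``with a parameter'': if some $\pi_{x^{(n)}}(W(u_n))$ escaped to infinity, I would locate inside the corresponding orbit of the rotation $T_{x^{(n)}}$ an arbitrarily long run of consecutive times that avoids the open set $q(B)$, and then let $n\to\infty$ to produce a point of the torus $P/\Lambda$ whose entire $T_{x^\infty}$-orbit (forward or backward) avoids $q(B)$ --- which is impossible, because $x^\infty$ is totally irrational and hence $T_{x^\infty}$ is a minimal rotation.

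First I would set up the dictionary from the proof of Lemma~\ref{lem:worm:balance}. For each $n$ and $i\in\NN$, let $w^{(n)}_i$ be the unique point of $W(u_n)\cap\HH_i$, i.e. the $\ab$-image of the length-$i$ prefix of $u_n$; consecutive points differ by a canonical basis vector $e_a$, so $\norm{w^{(n)}_{i+1}-w^{(n)}_i}_1=1$ and, since each $\pi_{x^{(n)}}$ is $2$-Lipschitz, the projected path $i\mapsto\pi_{x^{(n)}}(w^{(n)}_i)$ advances by at most $2$ per step. Since $w^{(n)}_i\in\Lambda+ie_0$ and $\pi_{x^{(n)}}(e_0)=e_0-v(x^{(n)})$, one gets $q(\pi_{x^{(n)}}(w^{(n)}_i))=T_{x^{(n)}}^{\,i}(0)$ on the torus, and the hypothesis $\pi_{x^{(n)}}^{-1}(B)\cap\HH\subseteq W(u_n)$ gives, exactly as in Lemma~\ref{lem:worm:balance}, the equivalence $T_{x^{(n)}}^{\,i}(0)\in q(B)\iff\pi_{x^{(n)}}(w^{(n)}_i)\in B$; consequently $\norm{\pi_{x^{(n)}}(w^{(n)}_i)}_1\leq M$ at every time $i$ with $T_{x^{(n)}}^{\,i}(0)\in q(B)$, where $M$ is a fixed constant with $\norm{y}_1\leq M$ for all $y\in B$. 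I treat only the case where each $\pi_{x^{(n)}}^{-1}(B)\cap\HH$ is non-empty --- equivalently, the $T_{x^{(n)}}$-orbit of $0$ meets $q(B)$ --- which holds in the intended applications and without which the hypothesis controls nothing.

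Now, assuming the conclusion fails, I would pass to a subsequence carrying points $w^{(n)}=w^{(n)}_{i_n}\in W(u_n)$ with $\norm{\pi_{x^{(n)}}(w^{(n)}_{i_n})}_1\to\infty$, and for each $n$ choose a time $j_n$ with $T_{x^{(n)}}^{\,j_n}(0)\in q(B)$ that is as close to $i_n$ as possible. The step bound yields $\norm{\pi_{x^{(n)}}(w^{(n)}_{i_n})}_1\leq 2\abs{i_n-j_n}+M$, so $\abs{i_n-j_n}\to\infty$; after a further subsequence I may assume $j_n\leq i_n$ for all $n$ (the case $j_n\geq i_n$ being symmetric, with $T_{x^\infty}$ replaced by $T_{x^\infty}^{-1}$). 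By minimality of $\abs{i_n-j_n}$, the orbit of $0$ avoids $q(B)$ at every time in $(j_n,i_n]$; that is, $p_n:=T_{x^{(n)}}^{\,j_n}(0)\in q(B)$ while $T_{x^{(n)}}^{\,m}(p_n)\notin q(B)$ for $1\leq m\leq i_n-j_n$. Passing to one last subsequence so that $p_n\to p_\infty$ in the compact torus, and using that $v(x^{(n)})\to v(x^\infty)$ together with continuity of the group operation, I get, for each fixed $m$, $T_{x^{(n)}}^{\,m}(p_n)\to T_{x^\infty}^{\,m}(p_\infty)$, since $p_n\to p_\infty$ and the translation vector $m(e_0-v(x^{(n)}))$ tends to $m(e_0-v(x^\infty))$. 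As $i_n-j_n\to\infty$ and the complement of the open set $q(B)$ is closed, this forces $T_{x^\infty}^{\,m}(p_\infty)\notin q(B)$ for every $m\geq 1$. But $x^\infty$ totally irrational means $\pi_{x^\infty}(e_0)+\Lambda$ is a totally irrational vector: writing $v(x^\infty)=(v_0,\dots,v_d)$, the tuples $(v_0,\dots,v_d)$ and $(1,v_1,\dots,v_d)$ are related by an invertible $\QQ$-linear change of coordinates (because $v_0+v_1+\dots+v_d=1$), so one is $\QQ$-linearly independent iff the other is. Hence $T_{x^\infty}$ is a minimal rotation, its half-orbits are dense, and they meet the non-empty open set $q(B)$: contradiction.

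The one genuinely delicate point is the last limiting step: one must not hope for uniformity in $m$ of the convergence $T_{x^{(n)}}^{\,m}\to T_{x^\infty}^{\,m}$ (this is false, as the translation vectors $m(e_0-v(x^{(n)}))$ drift apart as $m$ grows), but instead exploit that the avoidance window $i_n-j_n$ diverges, so that plain pointwise convergence for each fixed $m$ already transports the whole obstruction to $T_{x^\infty}$ and contradicts minimality. Everything else --- the ``closest entry time'' construction, the harmless dichotomy $j_n\le i_n$ versus $j_n\ge i_n$, and the observation that the hypothesis is non-vacuous precisely when the orbit of $0$ visits $q(B)$ --- is routine.
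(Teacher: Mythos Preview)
Your proof is correct, but it takes a genuinely different route from the paper's. The paper argues \emph{directly}: by minimality of $T_{x^\infty}$, every point of $q(\overline{B})$ returns to the concentric half-ball $q(B')$ within a uniform time $K$; then for $n$ large enough that $d(x^{(n)},x^\infty)\le M/(2K)$, the same iterate lands in $q(B)$ under $T_{x^{(n)}}$, so returns to $q(B)$ are $K$-syndetic uniformly in $n$, and the worm is trapped in $B(p,M+2K)$; the finitely many remaining $n$ are handled by the non-uniform Lemma~\ref{lem:worm:balance}. Your argument is instead a compactness/contradiction scheme: extract ever-growing gaps, pass to a limit point $p_\infty$ whose forward (or backward) $T_{x^\infty}$-orbit avoids $q(B)$, and contradict minimality.

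Both rely on the same key fact (minimality of $T_{x^\infty}$), but the paper's approach yields an explicit bound and isolates a concrete $n_0$, while yours is softer and avoids the half-ball perturbation bookkeeping. Your observation about the pointwise-in-$m$ convergence being sufficient is exactly the right emphasis. The caveat you flag---that the hypothesis is vacuous when $\pi_{x^{(n)}}^{-1}(B)\cap\HH=\emptyset$---is a genuine edge case in the lemma as stated; note that the paper's proof has the same implicit assumption (its appeal to Lemma~\ref{lem:worm:balance} for small $n$ also needs non-empty interior), and in any case it is automatic for all large $n$ since the $T_{x^\infty}$-orbit of $0$ meets $q(B)$ by minimality and $T_{x^{(n)}}^i(0)\to T_{x^\infty}^i(0)$ for each fixed $i$.
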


\begin{proof}
Following the proof of the previous lemma, we consider the translation $T_{x^\infty}$
by $\pi_{x^\infty}(e_0)+\Lambda$ on the torus $P/\Lambda$.
Let $M > 0$ and $p$ such that $B=B(p,M)$, and define $B'=B(p,M/2)$.
Since $x^\infty$ is a totally irrational direction, the translation
$T_{x^\infty}$ acts minimally on the whole torus $P/\Lambda$.
Hence, there exists a constant $K$
such that for all $x \in B$, there exists $0 < i \leq K$
such that $T_{x^\infty}^i(x) \in B'$.
If we take $n_0$ such that for all $n \geq n_0$,
$d(x^{(n)}, x^\infty)\leq\frac{M}{2K}$,
then we have that for all $y \in B$, there exists $0 < i \leq K$ such that
for all $n \geq n_0$, $T_{x^{(n)}}^i(y) \in B$.
Similarly, there exists $0 < j \leq K$ such that 
for all $n \geq n_0$, $T_{x^{(n)}}^{-j}(y) \in B$.
Hence, for all $n \geq n_0$, $\pi_{x^{(n)}}(W(u_n)) \subseteq B(p, M+2K)$.
The result follows since $W(u_n)$ is bounded for $n<n_0$ by Lemma~\ref{lem:worm:balance}.
\end{proof}

The following lemma is useful to propagate non-emptiness of the interior.

\begin{lemma} \label{lem:O:matrix}
	Let $W \subseteq \HH$, let $x\in\P\RR_+^{d}$ be a totally irrational direction, and let $M \in GL_{d+1}(\ZZ) \cap M_{d+1}(\NN)$.
	If $W$ has non-empty interior for the topology $\topo{x}$, then $M W$ has non-empty interior for the topology $\topo{Mx}$.
\end{lemma}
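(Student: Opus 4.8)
The plan is to compare the two projections $\pi_x$ and $\pi_{Mx}$ through $M$, and to run the argument far out in $\HH$ (large values of $h$). First, $Mx$ is again a totally irrational direction: if $c\in\QQ^{d+1}$ satisfies $\sum_j c_j(Mv(x))_j = 0$, then $(\transp{M}c)\cdot v(x) = 0$, so $\transp{M}c = 0$ by total irrationality of $x$, hence $c=0$ since $\transp{M}$ is invertible; as $v(Mx)$ is a positive multiple of $Mv(x)$, its coordinates are $\QQ$-linearly independent. Now the key algebraic point: the matrix $M$ does not commute with the projections, and it does not even preserve $\HH$ in general (because $h\circ M\ne h$); but $M$ sends the fiber direction $\RR v(x)$ of $\pi_x$ onto $\RR\,Mv(x) = \RR\,v(Mx) = \ker\pi_{Mx}$. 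Using $z = \pi_x(z) + h(z)v(x)$ and $\pi_{Mx}(Mv(x)) = 0$, one obtains
$$\pi_{Mx}\circ M \;=\; \overline{M}\circ\pi_x \quad\text{ on }\RR^{d+1},\qquad\text{where }\ \overline{M} := \pi_{Mx}\circ M\restrict{P}\colon P\to P.$$
The map $\overline{M}$ is a linear isomorphism of $P$: it is injective, since $\overline{M}(p)=0$ forces $Mp\in\RR\,Mv(x)$, i.e.\ $p\in\RR\,v(x)$, i.e.\ $p=0$ (as $h(p)=0\ne h(v(x))=1$); and $P$ is finite-dimensional. Hence $\overline{M}$ is a linear homeomorphism of $P$.

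Next I would pick a nonempty open ball $B(p,\epsilon)\subseteq P$ with $\pi_x^{-1}(B(p,\epsilon))\cap\HH\subseteq W$ (such a ball exists inside the nonempty $\topo{x}$-open set $\operatorname{int}_{\topo{x}}(W)$), and choose $\delta>0$ with $B(\overline{M}(p),\delta)\subseteq\overline{M}(B(p,\epsilon))$. The claim is that for $i_0$ large enough,
$$\pi_{Mx}^{-1}\big(B(\overline{M}(p),\delta)\big)\cap\{z\in\HH\mid h(z)\geq i_0\}\ \subseteq\ MW.$$
Indeed, let $z'$ be in the left-hand side and set $z := M^{-1}z'\in\ZZ^{d+1}$ (this uses $M^{-1}\in GL_{d+1}(\ZZ)$). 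From $\pi_{Mx}(z') = \pi_{Mx}(Mz) = \overline{M}(\pi_x(z))\in B(\overline{M}(p),\delta)\subseteq\overline{M}(B(p,\epsilon))$ and injectivity of $\overline{M}$ we get $\pi_x(z)\in B(p,\epsilon)$. It remains to check $z\in\HH$, i.e.\ $h(z) = (h\circ M^{-1})(z')\geq 0$. Write $z' = \pi_{Mx}(z') + t\,Mv(x)$; applying $h$ gives $t = h(z')/\norm{Mv(x)}_1\geq i_0/\norm{Mv(x)}_1$, and since $(h\circ M^{-1})(Mv(x)) = h(v(x)) = 1$ we obtain
$$(h\circ M^{-1})(z') \;=\; (h\circ M^{-1})\big(\pi_{Mx}(z')\big) \;+\; t .$$
The first term stays bounded below as $\pi_{Mx}(z')$ ranges over the bounded set $B(\overline{M}(p),\delta)$, so the right-hand side is $\geq 0$ once $i_0$ is large enough, uniformly in $z'$. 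Then $z\in\pi_x^{-1}(B(p,\epsilon))\cap\HH\subseteq W$, hence $z' = Mz\in MW$, proving the claim.

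Finally I would check that the left-hand side of the claim is a nonempty $\topo{Mx}$-open subset of $\HH$, which then furnishes the desired interior point of $MW$. Since $Mx$ is totally irrational, the cosets $\Lambda + j\,\pi_{Mx}(e_0)$ for distinct $j$ are pairwise disjoint, so $\{z\in\HH\mid h(z)\geq i_0\} = \pi_{Mx}^{-1}\big(P\setminus\bigcup_{j<i_0}(\Lambda + j\pi_{Mx}(e_0))\big)\cap\HH$; hence the left-hand side equals $\pi_{Mx}^{-1}(U'')\cap\HH$ for the open set $U'' := B(\overline{M}(p),\delta)\setminus\bigcup_{j<i_0}(\Lambda+j\pi_{Mx}(e_0))$, and so is $\topo{Mx}$-open by Definition~\ref{def-topo-paul}. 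It is nonempty because $T_{Mx}$ acts minimally on $P/\Lambda$ (its translation vector $\pi_{Mx}(e_0)+\Lambda$ is totally irrational), so $\{\,i\,\pi_{Mx}(e_0)+\Lambda\mid i\geq i_0\,\}$ is dense and meets $q(B(\overline{M}(p),\delta))$, and the corresponding points of $\HH$ survive in $U''$. Therefore $MW$ has nonempty interior for $\topo{Mx}$.

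The main obstacle is exactly that $M$ is \emph{not} a homeomorphism from $(\HH,\topo{x})$ onto $(\HH,\topo{Mx})$: it carries $\HH$ onto the different half-space $\{(h\circ M^{-1})\geq 0\}$, and $M^{-1}$ need not be nonnegative, so a $\topo{x}$-open set cannot be transported to a $\topo{Mx}$-open set in one stroke. The resolution is to observe that deep inside $\HH$ the linear discrepancy $(h\circ M^{-1})\circ\pi_{Mx}$ between the two half-spaces is negligible compared with $h$, so there the commutation $\pi_{Mx}\circ M = \overline{M}\circ\pi_x$ through the genuine isomorphism $\overline{M}$ of $P$ can be used safely.
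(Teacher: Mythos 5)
Your proof is correct, and it takes a genuinely different route from the paper's. The paper fixes a bounded open $U$ with $\pi_x^{-1}(U)\cap\HH\subseteq W$, observes that $M^{-1}\HH$ is a half-space with strictly positive defining coefficients $\alpha_i$ (since $M$ is non-negative and invertible), deduces that the ``defect set'' $L=\pi_x^{-1}(U)\cap M^{-1}\HH\setminus\HH$ is finite, and then pushes $U\setminus\pi_x(L)$ forward through the identity $M\,\pi_x^{-1}(\,\cdot\,)=\pi_{Mx}^{-1}\bigl(\pi_{Mx}(M(\,\cdot\,))\bigr)$; removing finitely many points from an open set keeps it open and nonempty. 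You instead avoid the defect rather than excise it: you restrict to $\{h\geq i_0\}$ deep in $\HH$, where the linear discrepancy $(h\circ M^{-1})\circ\pi_{Mx}$ between the two half-spaces is dominated by $h$ itself, and you transport the ball through the explicit linear isomorphism $\overline{M}=\pi_{Mx}\circ M\restrict{P}$ of $P$ --- the same map the paper later calls $N_k$ in Subsection~\ref{proof-ThC-step2}, so you have in effect rediscovered that object and used it earlier. Your approach needs an extra step (checking that $\{z\in\HH\mid h(z)\geq i_0\}$ is $\topo{Mx}$-open and that the resulting set is nonempty, which you settle correctly via total irrationality of $Mx$ and minimality of $T_{Mx}$), whereas the paper's finite-set excision keeps the open set essentially intact and needs no minimality input. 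Both arguments hinge on the same geometric fact --- $M$ distorts $\HH$ only in a bounded neighborhood of its boundary --- and both are sound; the paper's is a bit more economical, yours makes the conjugating isomorphism of $P$ explicit, which is conceptually illuminating in light of its later role.
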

  
\begin{proof}
	Let $U \subseteq P$ be a bounded non-empty open subset such that $\pi_x^{-1}(U) \cap \HH \subseteq W$.
	We have $M \in GL_{d+1}(\ZZ)$, so $M^{-1} \ZZ^{d+1} = \ZZ^{d+1}$, and $M^{-1} \HH$ is a half-space
	\[
		M^{-1} \HH = \{z \in \ZZ^{d+1} \mid \sum_{i=0}^d (M z)_i \geq 0\} = \{z \in \ZZ^{d+1} \mid \sum_{i=0}^d \alpha_i z_i \geq 0\},
	\]
	for some coefficients $\alpha_i$.
        Since the matrix $M$ is non-negative and invertible, we have $\alpha_i>0$ for all $i$, so $v(x)$ is in the half-space $\{z \in \RR^{d+1} \mid \sum_{i=0}^d \alpha_i z_i > 0\}$.
        Hence, for every $t \in \RR^{d+1}$, the intersection of the line $t + \RR v(x)$ with the set
	\[
		\{ z \in \RR^{d+1} \mid \sum_{i=0}^d \alpha_i z_i \geq 0 \} \setminus \{ z \in \RR^{d+1} \mid \sum_{i=0}^d z_i \geq 0 \}
	\]
	is bounded.
	Using that moreover $U$ is bounded we get that
	\[
		L = \pi_x^{-1}(U) \cap M^{-1} \HH \setminus \HH \subseteq \ZZ^{d+1}
	\]
	is a finite set.
	Moreover, we have $\pi_x^{-1}(\pi_x(L)) \cap M^{-1} \HH = L$ since $\pi_x$ is injective on $\ZZ^{d+1}$, and we have $M \pi_x^{-1}(U \setminus \pi_x(L)) = \pi_{Mx}^{-1}(\pi_{Mx}(M(U\setminus \pi_x(L))))$,
so we have
	\begin{align*}
		M W &\supseteq M (\pi_x^{-1}(U) \cap \HH) \\
			&\supseteq M (\pi_x^{-1}(U) \cap M^{-1} \HH \setminus L) \\
			&= M (\pi_x^{-1}(U \setminus \pi_x(L))) \cap \HH \\
			&= \pi_{Mx}^{-1}(\pi_{Mx}(M(U \setminus \pi_x(L)))) \cap \HH.
	\end{align*}
	Finally $\pi_{Mx}(M(U \setminus \pi_x(L))) \neq \emptyset$ is open, so $M W$ has non-empty interior for $\topo{M x}$.
\end{proof}

We finish this subsection with a result that shows how to relate properties of the worm to some combinatorial properties of $u$:

\begin{proposition}\label{prop-bal-freq}
	An infinite word $u \in A^\NN$ is balanced if, and only, if there exists a direction
	$x \in \P\RR_+^d$
	such that $\pi_x(W(u))$ is bounded.
\end{proposition}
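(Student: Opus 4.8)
The plan is to translate the statement into a metric condition on prefixes and then settle both implications by elementary counting. Write $p_n$ for the prefix of $u$ of length $n$, so that $W(u) = \{\ab(p_n)\mid n\in\NN\}$. Since $h(\ab(p_n)) = |p_n| = n$ while $h(v(x)) = \norm{v(x)}_1 = 1$, the formula defining $\pi_x$ gives the identity
\[
  \pi_x(\ab(p_n)) = \ab(p_n) - n\,v(x).
\]
Thus ``$\pi_x(W(u))$ is bounded'' is exactly the assertion that $\ab(p_n)$ stays at bounded $\norm{\cdot}_1$-distance from the ray $n\mapsto n\,v(x)$. I will also use repeatedly that if a factor $w$ of length $\ell$ occurs at position $i$ in $u$, then $\ab(w) = \ab(p_{i+\ell}) - \ab(p_i)$, hence $\ab(w) - \ell\,v(x) = \pi_x(\ab(p_{i+\ell})) - \pi_x(\ab(p_i))$.

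For the implication ``$\pi_x(W(u))$ bounded $\Rightarrow$ $u$ balanced'', suppose $\pi_x(W(u))$ is contained in a ball of radius $M$. By the identity above, any factor $w$ of length $\ell$ satisfies $\norm{\ab(w) - \ell\,v(x)}_1 \le 2M$; comparing two factors $v,w$ of the same length $\ell$ gives $\norm{\ab(v) - \ab(w)}_1 \le 4M$, and since $\abs{|v|_a - |w|_a} \le \norm{\ab(v)-\ab(w)}_1$ for each letter $a$, the word $u$ is $\floor{4M}$-balanced, hence balanced.

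For the converse ``$u$ balanced $\Rightarrow$ bounded'', assume $u$ is $k$-balanced and, for each $a\in A$ and $n\ge 1$, let $M_a(n)$ (resp.\ $m_a(n)$) be the maximum (resp.\ minimum) of $|w|_a$ over factors $w$ of $u$ of length $n$. Splitting a length-$(m+n)$ factor into its length-$m$ prefix and length-$n$ suffix shows $M_a$ is subadditive and $m_a$ superadditive, so by Fekete's lemma $M_a(n)/n \to \inf_n M_a(n)/n =: \alpha_a$ and $m_a(n)/n$ converges to its supremum; these two limits coincide because $0 \le M_a(n) - m_a(n) \le k$ by balance. Hence $m_a(n) \le n\alpha_a \le M_a(n)$, so $\abs{|w|_a - n\alpha_a} \le k$ for every factor $w$ of length $n$ --- in particular for $w = p_n$. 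Summing over $a$ and using $\sum_a |p_n|_a = n$ gives $\sum_a \alpha_a = 1$ with each $\alpha_a \ge 0$, so $\alpha = (\alpha_a)_{a\in A} \in \Delta$ defines a direction $x = [\alpha] \in \P\RR_+^d$ with $v(x) = \alpha$, and $\norm{\pi_x(\ab(p_n))}_1 = \norm{\ab(p_n) - n\alpha}_1 \le (d+1)k$. Thus $\pi_x(W(u))$ is bounded. As a byproduct $\ab(p_n)/n \to \alpha$, so $\freq(u)$ exists and equals $\alpha$ --- the fact invoked right after the definition of the frequency vector.

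The only slightly delicate point is the two-sided Fekete argument giving a common limit $\alpha_a$ for $M_a(n)/n$ and $m_a(n)/n$; everything else is direct bookkeeping with $\norm{\cdot}_1$. In contrast to the other lemmas of this subsection, no property of the topologies $\topo{x}$ or of torus dynamics is needed here.
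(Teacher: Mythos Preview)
Your argument is correct. The implication ``bounded $\Rightarrow$ balanced'' is handled exactly as in the paper: use $\pi_x(\ab(p_n)) = \ab(p_n) - n\,v(x)$, pass to factors via differences of prefixes, and compare two factors of the same length.

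For the converse ``balanced $\Rightarrow$ bounded'' you take a genuinely different route. The paper first shows that $\bigl(\frac{1}{n}\ab(p_n)\bigr)_{n\ge 1}$ is Cauchy by cutting $p_n$ into $\lfloor n/k\rfloor$ blocks of length $k$ and a remainder, and optimising with $k=\lfloor\sqrt{n}\rfloor$; having obtained the limit $v$, it then bounds $\norm{\ab(p_n)-nv}_1$ by the telescoping doubling identity $\ab(p_n)-nv=\sum_{k\ge 0}2^{-k-1}\bigl(2\ab(p_{n2^k})-\ab(p_{n2^{k+1}})\bigr)$, each term of which has norm at most $K$. Your approach avoids both steps by working coordinate-wise: subadditivity of $M_a(n)$ and superadditivity of $m_a(n)$ let Fekete's lemma produce $\alpha_a$ directly, and the balance bound $M_a(n)-m_a(n)\le k$ together with $m_a(n)\le n\alpha_a\le M_a(n)$ immediately gives $\abs{|p_n|_a-n\alpha_a}\le k$. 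Your argument is shorter and more transparent; the paper's doubling trick yields the slightly sharper bound $\norm{\pi_v(\ab(p_n))}_1\le K$ (with $K$ the $\norm{\cdot}_1$-balance constant) instead of your $(d+1)k$, but this plays no role in the sequel.
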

\begin{proof}
	First of all, remark that a word $u$ is balanced if, and only if,
    there exists a constant $K$ such that, for any two factors $v$, $w$ of $u$,
    $\norm{\ab(v)}_1 = \norm{\ab(w)}_1 \implies \norm{\ab(v)-\ab(w)}_1 \leq K$.
	
	Assume that $\pi_x(W(u)) \subseteq B(0,L)$.
	First of all remark that for a finite word $w$ its length fulfills $\abs{w} = h(\ab(w)) = \norm{\ab(w)}_1$.
	Moreover if $p$ is a prefix of $u$, then 
	we have
	$\ab(p)-\abs{p} v(x) = \pi_x(\ab(p)) \in \pi_x(W(u))$, so we get $\norm{\ab(p)-\abs{p} v(x)}_1\leq  L$.

	Let $w$ be a factor of $u$ and let $p$ be a prefix of $u$ such that $pw$ is a prefix of $u$.
	Since $\ab(w)=\ab(pw)-\ab(p)$ and $\abs{w}=\abs{pw}-\abs{p}$ we obtain:
	\[  \norm{\ab(w)-\abs{w} v(x)}_1 \leq  \norm{\ab(pw)-\abs{pw} v(x)}_1+ \norm{\ab(p)-\abs{p} v(x)}_1 \leq 2L.\]
	Thus if we consider two factors $w_1$ and $w_2$ of $u$ of the same length, we deduce:
	\[  \norm{\ab(w_1)-\ab(w_2)}_1 \leq \norm{\ab(w_1)-\abs{w_1}v(x)}_1+\norm{\ab(w_2)-\abs{w_2}v(x)}_1  \leq 4L.\]
	Thus the word $u$ is balanced.

	Now, assume that $u$ is balanced, and let $K$ such that
	for any two factors $v$, $w$ of $u$, $\norm{\ab(v)}_1 = \norm{\ab(w)}_1\implies
	\norm{\ab(v)-\ab(w)}_1 \leq K$.
	Let $p_n$ be the prefix of $u$ of length~$n$.
	For every $k \geq 1$, by cutting $p_n$ into $\floor{\frac{n}{k}}$ parts of length $k$ and a remaining factor of length less than $k$, we get
	\[
		\norm{\ab(p_n) - \floor{\frac{n}{k}} \ab(p_k)}_1 \leq K \floor{\frac{n}{k}} + k.
	\]
	Hence, for every $N \geq n \geq 1$ and every $k \geq 1$, we have
	\begin{align*}
		\norm{ \frac{1}{n} \ab(p_n) - \frac{1}{N} \ab(p_N) }_1
		&\leq K \left( \frac{\floor{\frac{n}{k}}}{n} + \frac{\floor{\frac{N}{k}}}{N} \right) + \frac{k}{n} + \frac{k}{N} + k \abs{ \frac{1}{n} \floor{\frac{\vphantom{N}n}{k}} - \frac{1}{N} \floor{\frac{N}{k}} } \\
		& \leq \frac{2K}{k} + \frac{3k}{n}.
	\end{align*}

	Thus, by taking $k = \floor{\sqrt{n}}$, we see that $(\frac{1}{n} \ab(p_n))_{n \geq 1}$ is a Cauchy sequence, so it converges to some vector $v \in \RR_+^{d+1}$ with $\norm{v}_1 = 1$.

	Now, for every $n \in \NN$ we have
	\[
		\norm{2\ab(p_n) - \ab(p_{2n})}_1 = \norm{\ab(p_n) - \ab(q_n)}_1 \leq K,
	\]
	where $q_n$ is such that $p_{2n} = p_n q_n$.
	So, for all $n \in \NN$, we have
	\[
		\norm{\pi_v(\ab(p_n))}_1 = \norm{\ab(p_n) - n v}_1 \leq \sum_{k=0}^{\infty} \frac{1}{2^{k+1}} \norm{2\ab(p_{n 2^k}) - \ab(p_{n 2^{k+1}})}_1
								\leq \sum_{k=0}^\infty \frac{K}{2^{k+1}} = K,
	\]
	since $\lim_{k \to \infty} \frac{1}{2^k} \ab(p_{n 2^k}) = nv$.
	Hence, we have $\pi_v (W(u)) \subseteq B(0,K)$.
\end{proof}

\subsection{Worms and Dumont-Thomas numeration}

In all the following we consider $S \subseteq \hom(A^+,A^+)$ a finite set of unimodular substitutions on the alphabet $A$. 
We give a definition of the Dumont-Thomas numeration, which is a generalization, for a finite set $S$ of substitutions, of the one given for a single substitution in \cite{Dumont-Thomas}.

\begin{definition}\label{def-dt}
	The Dumont-Thomas alphabet associated to $S$ is defined as
	\[
		\Sigma=\{\ab(p) \mid \exists \sigma \in S,\ \exists a,b\in A,\ pb\text{ prefix of } \sigma(a)\} \subseteq \ZZ^{d+1}.
	\]\nomenclature[G]{$\Sigma$}{Dumont-Thomas alphabet}
	Remark that it is a finite set, since $S$ and $A$ are finite.
\end{definition}

And we introduce an automaton:
\begin{definition} \label{def:automaton}
	We call \emph{abelianized prefix automaton} of the set of substitutions $S$, the automaton $\A$\nomenclature[L$A$]{$\A$}{abelianized prefix automaton} defined by
	\begin{itemize}
		\item alphabet $\Sigma \times S$,
		\item set of states $A$,
		\item transition $a \xrightarrow{t, \sigma} b$, with $(a, t, \sigma, b) \in A \times \Sigma \times S \times A$ if, and only if, there exist $u,v \in A^*$ such that $\sigma(a) = ubv$, with $\ab(u) = t$.
	\end{itemize}
	We denotes by $a \xrightarrow{t_n, s_n} ... \xrightarrow{t_0, s_0} b \in \A$ if we have a path in the automaton: there exist states $a=a_{n+1}, a_n, ..., a_1, a_{0} = b$ such that for all $0 \leq k \leq n$, $a_{k+1} \xrightarrow{t_k, s_k} a_{k}$ is a transition in the automaton.
\end{definition}

\begin{example} \label{ex:dt}
	Let $S = \{\tau_0, \tau_1\}$, with $\tau_0 = \left\{\begin{array}{ccl} 0 &\mapsto& 0 \\ 1 &\mapsto& 01 \end{array} \right.,
		\tau_1 = \left\{\begin{array}{ccl} 0 &\mapsto& 10 \\ 1 &\mapsto& 1 \end{array} \right.$.
	Then, the Dumont-Thomas alphabet is $\Sigma = \{0, e_0, e_1\}$, and the abelianized prefix automaton $\A$ is depicted in Figure~\ref{fig:apa:ex}.
	
	For every word $u \in \{0,1\}^\NN$ we have the relations
	\begin{align*}
		W_0(\tau_0(u)) &= M_0 W_0(u) \sqcup M_0 W_1(u) \\
		W_1(\tau_0(u)) &= M_0 W_1(u) + e_0 \\
		W_0(\tau_1(u)) &= M_1 W_0(u) + e_1 \\
		W_1(\tau_1(u)) &= M_1 W_0(u) \sqcup M_1 W_1(u),
	\end{align*}
	where $e_0 = (1,0)$, $e_1 = (0,1)$, $M_0 = \ab(\tau_0)$ and $M_1 = \ab(\tau_1)$.
\end{example}

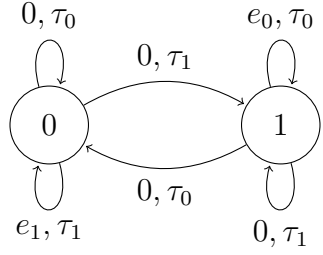
\begin{figure}[ht]
    \centering
	\begin{tikzpicture}[shorten >=1pt,node distance=2cm,auto] 
	   \node[state] (q_1)   {$1$}; 
	   \node[state] (q_0) [left=of q_1] {$0$};
		\path[->]
		(q_0) edge  [loop above] node {$0, \tau_0$} (q_0)
		      edge  [loop below] node {$e_1, \tau_1$} (q_0)
		      edge  [bend left] node {$0, \tau_1$} (q_1)
		(q_1) edge  [loop above] node {$e_0, \tau_0$} (q_1)
		      edge  [loop below] node {$0, \tau_1$} (q_1)
		      edge  [bend left] node {$0, \tau_0$} (q_0);
	\end{tikzpicture}
	\caption{Abelianized prefix automaton $\A$ for the set of substitutions $S$ of Example~\ref{ex:dt}}
	\label{fig:apa:ex}
\end{figure}

The automaton $\A$ is depicted in Figure~\ref{fig:apa:cassaigne} for the set of
Cassaigne substitutions, and in Figure~\ref{fig:apa:arnoux:rauzy} for the set of
Arnoux-Rauzy substitutions.

Remark that for every $u \in A^\NN$, $\sigma \in S$ and $a \in A$, we have the following relation
\begin{equation}\label{automate-worm}
    W_a(\sigma(u)) = \bigcup_{b \xrightarrow{t, \sigma} a} \ab(\sigma)W_b(u)  + t.
\end{equation}

If we iterate Equation~(\ref{automate-worm}), we get

\begin{lemma}[Dumont-Thomas numeration] \label{dumont:thomas}
	Let $s \in S^\NN$ be a directive sequence and consider a fixed point $u \in {(A^\NN)}^\NN$ of $s$.
	Let $b_n$ be the first letter of the word $u_n$.
	We assume that $\abs{s_{[0,n)}(b_n)} \xrightarrow[{n \to \infty}]{} \infty$.
	Then, for every $a \in A$ we have
	\[
		W_a(u_0) = \bigcup_{n \in \NN} \{ \sum_{k=0}^n M_{[0,k)} t_k \mid b_{n+1} \xrightarrow{t_n, s_n} ... \xrightarrow{t_0, s_0} a \}.
	\]
\end{lemma}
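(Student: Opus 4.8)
The plan is to prove Lemma~\ref{dumont:thomas} by iterating the recursion~(\ref{automate-worm}) and passing to the limit, using the growth hypothesis to guarantee that the union over $n$ exhausts $W_a(u_0)$.

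\textbf{Setup and the inductive relation.} First I would unfold the fixed-point equation: since $u$ is a fixed point of $s$, we have $u_0 = s_0(u_1) = s_0(s_1(u_2)) = \dots = s_{[0,n)}(u_n)$ for every $n$. Applying~(\ref{automate-worm}) with $\sigma = s_0$ and $u = u_1$ gives
\[
  W_a(u_0) = W_a(s_0(u_1)) = \bigcup_{b \xrightarrow{t_0, s_0} a} M_0 W_b(u_1) + t_0.
\]
Iterating this $n+1$ times, and using $M_{[0,k)} = M_0 \cdots M_{k-1}$, one obtains by a straightforward induction on $n$ the identity
\[
  W_a(u_0) = \bigcup \left\{ M_{[0,n+1)} W_{b_{n+1}'}(u_{n+1}) + \sum_{k=0}^{n} M_{[0,k)} t_k \;\middle|\; b'_{n+1} \xrightarrow{t_n, s_n} \dots \xrightarrow{t_0, s_0} a \right\},
\]
where the union ranges over all paths of length $n+1$ in the automaton $\A$ ending at $a$, and $b'_{n+1}$ is the starting state of the path. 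This is the ``finite-level'' Dumont-Thomas decomposition; it holds for arbitrary $u_{n+1}$, with no hypothesis needed yet.

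\textbf{Passing to the limit.} Now I would specialize to the branch where the starting letter is the actual first letter $b_{n+1}$ of $u_{n+1}$. Since $b_n$ is the first letter of $u_n = s_{[0,n)}(u_n')$... more precisely, because $u_n = s_n(u_{n+1})$, the first letter $b_n$ of $u_n$ is the first letter of $s_n(b_{n+1})$, which means there is a transition $b_{n+1} \xrightarrow{t_n, s_n} b_n$ with $t_n$ being the abelianization of the (possibly empty) prefix of $s_n(b_{n+1})$ before its first letter — hence $t_n = 0$ on that distinguished branch, but other branches contribute the other $t_k$'s. The key point: an element of $W_a(u_0)$ is $\ab(p)$ where $pa$ is a prefix of $u_0 = s_{[0,n+1)}(u_{n+1})$. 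For $n$ large enough that $|s_{[0,n+1)}(b_{n+1})| > |p|+1$ (which is possible precisely by the growth hypothesis $|s_{[0,n)}(b_n)| \to \infty$), the prefix $pa$ lies entirely within the image $s_{[0,n+1)}(b_{n+1})$ of the first letter. Then the Dumont-Thomas reading of $p$ inside $s_{[0,n+1)}(b_{n+1})$ terminates: it corresponds to a path $b_{n+1} \xrightarrow{t_n,s_n} \dots \xrightarrow{t_0,s_0} a$ with $\sum_{k=0}^n M_{[0,k)} t_k = \ab(p)$, and $0 \in W_{b_{n+1}}(u_{n+1})$ (the empty prefix). Conversely, every such path sum is visibly in $W_a(u_0)$ by the finite-level identity, taking the term $0 \in W_{b_{n+1}}(u_{n+1})$. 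Taking the union over all $n$ then gives exactly the claimed formula; the growth hypothesis ensures every element of $W_a(u_0)$ is captured at some finite level $n$.

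\textbf{Main obstacle.} The routine part is the induction establishing the finite-level identity. The delicate point — and the only place the hypothesis $|s_{[0,n)}(b_n)| \to \infty$ is used — is the claim that for each fixed prefix $p$ of $u_0$, for all sufficiently large $n$ the word $pa$ is contained in the block $s_{[0,n+1)}(b_{n+1})$, together with the uniqueness of the Dumont-Thomas reading (so that the path obtained is well-defined and its sum really equals $\ab(p)$). I would make this precise by the standard greedy/recursive decomposition argument: knowing $pa$ sits inside $s_{n}(\text{first letter of }u_{n+1})$ after desubstitution, peel off one substitution at a time, recording at each step the abelianized prefix $t_k$, and check the accumulated sum telescopes to $\ab(p)$ via $M_{[0,k)} = M_0 \cdots M_{k-1}$. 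Unimodularity of the substitutions in $S$ is not strictly needed for this lemma but keeps the $t_k$ in $\Sigma \subseteq \ZZ^{d+1}$ as intended.
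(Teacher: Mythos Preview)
Your proposal is correct and follows exactly the approach the paper indicates: the paper simply writes ``If we iterate Equation~(\ref{automate-worm}), we get'' and states the lemma without further detail, so you have supplied precisely the argument the authors leave implicit. Your identification of the growth hypothesis as the ingredient ensuring that every prefix $pa$ of $u_0$ eventually sits inside the single block $s_{[0,n+1)}(b_{n+1})$, together with $0 \in W_{b_{n+1}}(u_{n+1})$ for the converse inclusion, is the heart of the matter and is handled correctly.
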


\begin{remark}
	In the following we will use the fact that for every $a,b \in A$, in the automaton, the number of paths
	$b \xrightarrow{t_{l-1}, s_{l-1}}\dots\xrightarrow{t_k, s_k} a$
	is equal to $(M_{[k,l)})_{a,b}$.
\end{remark}

\subsection{Rauzy fractals}\label{sec:Rauzy}

We recall the following result, see \cite[Theorem 5.7]{Bert.Delec.14}.

\begin{proposition}\label{prop-uniquely-ergodic}
Let $s \in S^\NN$ be a directive sequence and let $u$ be a fixed point of $s$.
	Assume that $\bigcap_{n \in \NN} M_{[0,n)}(s) \P\RR_+^{d} = \{x\}$ and that $s$ is everywhere growing (see Definition~\ref{def:everywhere:growing}).
	Then the subshift $\Omega_s$ is uniquely ergodic, and for every word $w \in \Omega_s$, we have $\freq(w) = v(x)$.
	In particular, if $u \in {(A^\NN)}^\NN$ is a fixed point of $s$, then $\freq(u_0) = v(x)$.
\end{proposition}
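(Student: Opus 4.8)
This is \cite[Theorem~5.7]{Bert.Delec.14}, so one may simply cite it; the plan below is how I would reprove it. Everything rests on one geometric remark. Put $C_n := M_{[0,n)}(s)\,\P\RR_+^{d}$; this is a decreasing sequence of non-empty compact subsets of $\P\RR_+^{d}$ (the matrices $M_k=\ab(s_k)$ are non-negative with no zero column, the $s_k$ being non-erasing), and under the identification $\P\RR_+^{d}\cong\Delta$ via $v(\cdot)$ it is the convex hull of the normalized columns of $M_{[0,n)}(s)$. I claim $\diam(C_n)\to 0$: otherwise some $c>0$ has $\diam(C_n)\ge c$ for all $n$, and choosing $a_n,b_n\in C_n$ realizing the diameter, a subsequence converges (by compactness of $C_0$) to $a,b$ with $d(a,b)\ge c$ lying in $\bigcap_n C_n$ (each $C_n$ is closed and the sequence is decreasing), contradicting the hypothesis $\bigcap_n C_n=\{x\}$. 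So for large $n$ all columns of $M_{[0,n)}(s)$ point within $\diam(C_n)$ of $v(x)$.

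I would then obtain letter frequencies. Fix $w\in\Omega_s$. Since $\Omega_s\subseteq\Omega'_s$, $w$ is infinitely desubstituable: for each $n$ there are $v\in A^\NN$ and $k<\ell_n:=\max_{c}\abs{s_{[0,n)}(c)}$ with $w=T^k s_{[0,n)}(v)$. Consequently every factor $f$ of $w$ sits, up to a prefix and a suffix of length $<\ell_n$ each, inside a concatenation of blocks $s_{[0,n)}(v_i)\cdots s_{[0,n)}(v_j)$; hence $\ab(f)$ equals, up to an error of $\norm{\cdot}_1$-norm at most $2\ell_n$, the vector $M_{[0,n)}(s)\ab(v_i\cdots v_j)$, whose direction lies in $C_n$. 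Using $\abs{f}=h(\ab(f))$ and $v(x)\in C_n$ one gets $\norm{\ab(f)/\abs{f}-v(x)}_1\le 4\ell_n/\abs{f}+\diam(C_n)$. As $\ell_n$ depends only on $n$, letting $n$ and then $\abs{f}$ grow shows $\ab(f)/\abs{f}\to v(x)$ uniformly over all factors $f$ of all words of $\Omega_s$; in particular $\freq(w)=v(x)$ for every $w\in\Omega_s$. Finally $u_0\in\Omega_s$ (as $u_0=\lim_n s_{[0,n)}(a_n)$ with $a_n$ the first letter of $u_n$, using $\abs{s_{[0,n)}(a_n)}\to\infty$ by everywhere growing), so $\freq(u_0)=v(x)$.

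It remains to promote uniform letter frequencies to unique ergodicity, i.e.\ to the uniform convergence over $w'\in\Omega_s$ of $\frac1N\sum_{n<N}\indic_{[w]}(T^n w')$ for every cylinder $[w]$ — equivalently, the existence of $\lim_{\abs{f}\to\infty}\abs{f}_w/\abs{f}$, uniform in $f$ and independent of $f$, for every factor $w$. For $\abs{w}=1$ this is the previous step. For $\abs{w}=k$ I would pass to the length-$k$ recoding $\Omega_s^{[k]}$, the image of $\Omega_s$ under the sliding-block conjugacy $(a_i)\mapsto(a_i\cdots a_{i+k-1})$: it is again an $S$-adic subshift, over the alphabet of length-$k$ factors, whose directive sequence inherits "everywhere growing" and the cone hypothesis over that larger alphabet, so the letter-frequency statement applied to $\Omega_s^{[k]}$ yields the uniform frequency of each length-$k$ factor of $\Omega_s$, hence unique ergodicity with $[w]$ of measure the matching coordinate of the limit direction. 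The genuinely non-routine point is this reduction — realizing the $k$-block recoding of an $S$-adic subshift again as an $S$-adic subshift that preserves the hypotheses — and for its details I would invoke \cite[Theorem~5.7]{Bert.Delec.14}. (A direct alternative decomposes a long factor into $s_{[0,n)}$-blocks as above and splits $\abs{f}_w$ into occurrences internal to a block — a non-negative linear functional of $\ab(v_i\cdots v_j)$, hence controlled by $C_n$ — and occurrences straddling two consecutive blocks, which are governed by the frequencies of letter pairs; this is the same difficulty one level down, which is precisely what the recoding dispatches.)
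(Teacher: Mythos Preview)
The paper does not prove this proposition at all: it simply states it and attributes it to \cite[Theorem~5.7]{Bert.Delec.14}. Your proposal does the same citation and then supplies a correct sketch of the argument (the diameter-of-cones bound, the block decomposition for uniform letter frequencies, and the $k$-block recoding for general cylinders), so you are fully aligned with the paper's approach, just more generous.
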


Since the matrix $M_k(s)$ is invertible, remark that $\bigcap_{n \in \NN} M_{[0,n)}(s) \RR_+^{d+1}$ is a line if, and only if,
$\bigcap_{n \in \NN} M_{[k,k+n)}(s) \RR_+^{d+1}$
is a line for every $k\in\NN$.
When it is the case, we denote by $v^{(k)} \in \RR^{d+1}$ the vector such that $\norm{v^{(k)}}_1=1$ and $\RR_+ . v^{(k)} = \bigcap_{i\geq k} M_{[k,i)}(s) \RR_+^{d+1}$.

\begin{definition}\label{def-Rauzy}
	Let $u \in A^\NN$ be an infinite word admitting a frequency vector $v = \freq(u)$.
	We define $R(u)$\nomenclature[L]{$R(u), R(s)$}{Rauzy fractal} as the closure of $\pi_v W(u)\subseteq P$.
    For a letter $a \in A$, we also define $R_a(u)$\nomenclature[L]{$R_a(u)$}{atom of the Rauzy fractal} as the closure of $\pi_v W_a(u)$.
	The set $R(u)$ is called Rauzy fractal. 
    It is a generalization of the classical notion, for a fixed point of a substitution, see \cite{Rau.82,Pyth.02} for references.
\end{definition}

\begin{figure}[h]
	\centering
	\begin{tikzpicture}[scale=.7, xscale=1, yscale=1]
		\draw [gray!70] (0,0) grid (9.3,6.3);
		\draw[->, thick] (0,0) -- (10,0);
		\draw[->, thick] (0,0) -- (0,7);
		\def\d{.125}
		\def\dd{.0884}

        \draw (2,-2) -- (-3,3) node[above right] {$P$};
		\draw[->, thick] (2.5,2.5) -- node[above left] {$\pi_v$} (.4,1.1);

        \foreach \y in {1,2,3,4,5,6}{
                \draw (.1,\y) -- (-.1,\y);
                \draw[left] (0,\y) node {$\y$};
        }
		\foreach \x in {1,2,3,4,5,6,7,8,9} 
        {
		\draw (\x, .1) -- (\x, -.1);
                \draw[below] (\x,0) node {$\x$};
        };
  
        \foreach \x/\y\X\Y\c in {1/0/.4/ -.4/1, 1/1/-.2/.2/0, 2/1/.2/-.2/0, 3/2/0/0/0, 3/1/.6/-.6/1, 4/2/.4/-.4/1} {
                \draw[dashed, gray] (\x,\y) -- (\X, \Y);
                \ifthenelse{\c = 0}
                {   
                        \fill[blue] (\X-\dd, \Y-\dd) -- (\X-\dd, \Y+\dd) -- (\X+\dd, \Y+\dd) -- (\X+\dd, \Y-\dd)
                }{  
                        \fill[red] (\X, \Y-\d) -- (\X+\d,\Y) -- (\X,\Y+\d) -- (\X-\d,\Y)
                };
        };
		\foreach \x/\y/\c in {0/0/0, 1/0/1, 1/1/0, 2/1/0, 3/1/1, 3/2/0, 4/2/1, 4/3/0, 5/3/0, 6/3/1, 6/4/0, 7/4/1, 7/5/0, 8/5/0, 9/5/1, 9/6/0}
		{
			\ifthenelse{\c = 0}
			{
				\fill[blue] (\x-\dd, \y-\dd) -- (\x-\dd, \y+\dd) -- (\x+\dd, \y+\dd) -- (\x+\dd, \y-\dd)
			}{
				\fill[red] (\x, \y-\d) -- (\x+\d,\y) -- (\x,\y+\d) -- (\x-\d,\y)
			};
		}
	\draw (6,2) node[above right] {$W(u) = \textcolor{blue}{W_0(u)} \sqcup \textcolor{red}{W_1(u)}$};
        \draw (.2,-.2) node[below left] {$R(u) = \textcolor{blue}{R_0(u)} \cup \textcolor{red}{R_1(u)}$};
        \end{tikzpicture}
        \caption{The Rauzy fractal $R(u)$ as the closure of the projection of the worm $W(u)$ on the hyperplane $P$. Example for $u = (01001)^\omega$, so $v = (3/5, 2/5)$.} \label{fig:ex:rauzy}
\end{figure}
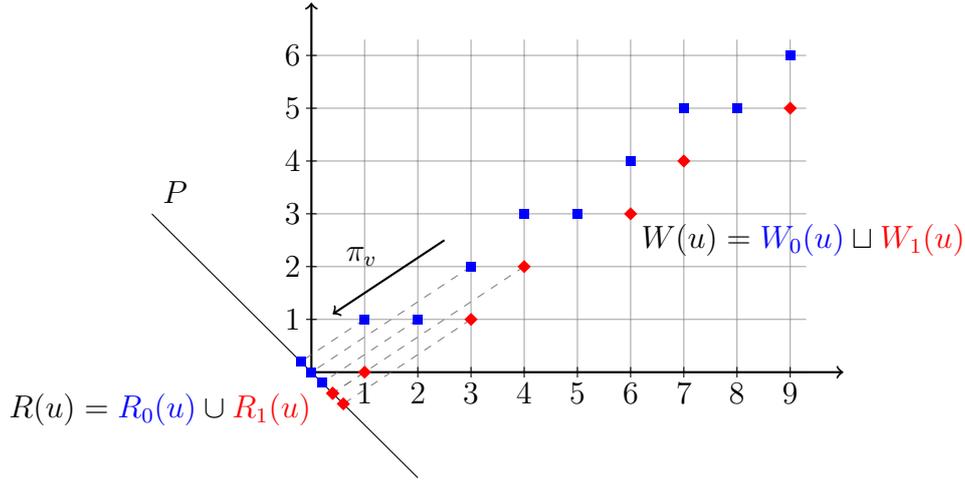

\begin{example}
	For $u = (01001)^\omega$, we have $\freq(u) = (3/5, 2/5)$.
	So we can define the Rauzy fractal by projecting on the hyperplane (i.e., line) $x+y = 0$, and we get a Rauzy fractal with only $5$ points.
	See Figure~\ref{fig:ex:rauzy}.
\end{example}

Examples of non-substitutive Rauzy fractals are drawn in Figure~\ref{fig:ex:fractals} and in Figure~\ref{fig:ex:Rauzy}.

\begin{figure}[!h]
    \centering
    \begin{tikzpicture}[scale=3]
    	\def\decx{0.334167480468748cm}
    	\def\decy{0.0228622267305872cm}
		\node (pic) at (\decx,\decy) {\includegraphics[width=5.17071473121642cm]{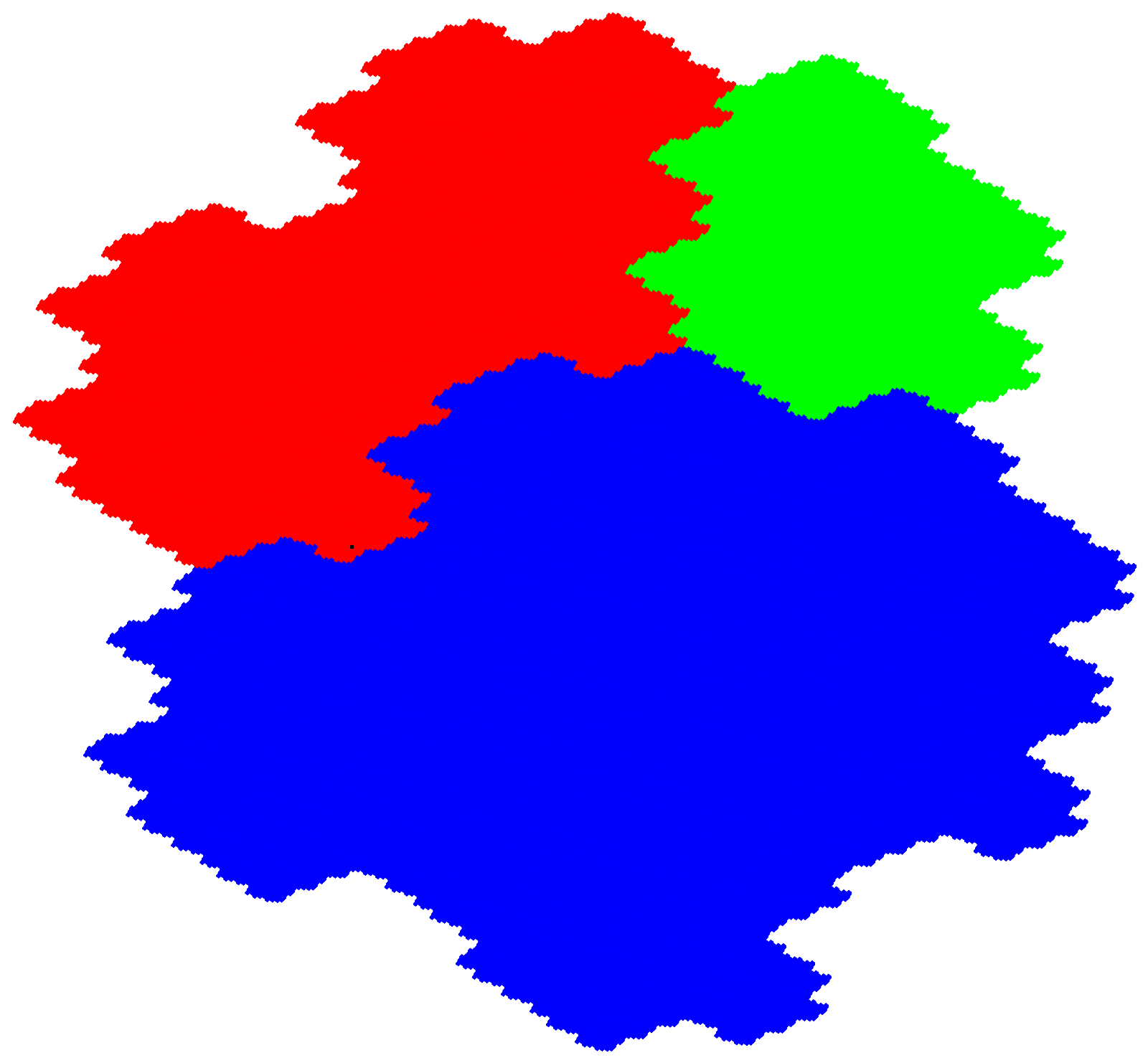}};
		\draw[->, thick] (0,0) -- (1.414213562373095cm, 0) node[below] {$e_0-e_1$};
		\draw[->, thick] (0,0) -- (-0.7071067811865475cm, 1.2247448713915894cm) node[above] {$e_2-e_0$};
		\draw[->, thick] (0,0) -- (0.7071067811865475cm, 1.2247448713915894cm) node[above] {$e_2-e_1$};
		\node[below] at (0,0) {$0$};
	\end{tikzpicture}
	\caption{Approximation of the Rauzy fractal of a directive sequence beginning with\hfill\hbox{} $c_1c_1c_0c_1c_0c_0c_0c_1c_1
c_1c_0c_1c_0c_0c_1c_0c_1c_0c_0c_0c_0c_1c_0c_1c_1c_0c_0
c_1c_1c_1c_0c_1c_1c_1c_0c_0c_0c_1c_0c_0c_0c_1c_1c_0c_0$,\hfill\hbox{}
where $c_0$ and $c_1$ are defined in Section~\ref{sec-cassaigne}.\hfill\break
Here, $v = (0.279291082100669\dots, 0.1294709739854265\dots, 0.5912379439139045\dots)$.} \label{fig:ex:Rauzy}
\end{figure}

\begin{remark} \label{rk:def:rauzy}
	If $s \in S^\NN$ is a directive sequence such that $\bigcap_{n \in \NN} M_{[0,n)}(s) \P\RR_+^d = \{x\}$, with $x$ a totally irrational direction,
	then, by Lemma~\ref{lem:primitivity} and by Proposition~\ref{prop-uniquely-ergodic}, for every fixed point $u \in {(A^\NN)}^\NN$ of $s$, the infinite word $u_0$ admits $v(x)$ as frequency vector,
	hence we can define the Rauzy fractal $R(u_0)$.

	For a given directive sequence $s$, we do not have uniqueness of fixed point $u$ in general.
	But under some assumptions it defines a unique Rauzy fractal (see Proposition~\ref{prop:R:uniqueness} below).
	When it is the case, we denote the Rauzy fractal by $R(s)$.
\end{remark}

Using Rauzy fractals, we can give a characterization of the interior of $W_a(u)$ for the topology $\topo{x}$, with the following lemma.

\begin{lemma} \label{lem:car:interior}
	For every open subset $B$ of the plane $P$,
	for every totally irrational direction $x\in\P\RR_+^d$,
	for every infinite word $u \in A^\NN$,
	and for every letter $a \in A$
	we have the equivalence between
	\begin{enumerate}
		\item $\HH \cap \pi_x^{-1}(B) \subseteq W_a(u)$,
		\item $\forall b \in A \setminus \{a\},\ \forall t \in \Lambda \setminus \{0\},\ B\cap R_b = \emptyset = B\cap (R+t)$,
	\end{enumerate}
	where $R_b$ is the closure of $\pi_x W_b(u)$ and $R$ is the closure of $\pi_x W(u)$.
	
	In particular, $p \in \HH$ is in the interior of $W_a(u)$ for the topology $\topo{x}$ if, and only if,
	\[
		\pi_x(p) \not\in \bigcup_{a \in A \setminus \{a\}} R_a \cup \bigcup_{t \in \Lambda \setminus \{0\}} R + t.
	\]
\end{lemma}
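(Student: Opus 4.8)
The plan is to unwind the definitions of the topology $\topo{x}$ and of the worm. Recall that $V \subseteq \HH$ is open for $\topo{x}$ exactly when $V = \pi_x^{-1}(U) \cap \HH$ for some open $U \subseteq P$; so $p$ is in the interior of $W_a(u)$ iff there is an open $U \ni \pi_x(p)$ in $P$ with $\pi_x^{-1}(U) \cap \HH \subseteq W_a(u)$. This reduces the ``in particular'' clause to the stated equivalence between (1) and (2), applied to small balls $B$ around $\pi_x(p)$. So the core task is to prove $(1) \Longleftrightarrow (2)$.

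For the direction $(2) \Rightarrow (1)$: assume $B \cap R_b = \emptyset$ for all $b \neq a$ and $B \cap (R+t) = \emptyset$ for all $t \in \Lambda \setminus\{0\}$. Take any $z \in \HH \cap \pi_x^{-1}(B)$. By the tiling Lemma~\ref{lem:worm:tiling}, $\HH = W(u) \oplus \Lambda$, so $z = w + t$ with $w \in W(u)$, $t \in \Lambda$. Since $\pi_x$ fixes $\Lambda$ pointwise ($\Lambda \subseteq P$), $\pi_x(w) = \pi_x(z) - t \in B - t$, i.e.\ $\pi_x(w) \in (B - t)$, equivalently $\pi_x(w) + t \in B$; but $\pi_x(w) \in R = \overline{\pi_x W(u)}$, so if $t \neq 0$ this contradicts $B \cap (R+t) = \emptyset$. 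Hence $t = 0$ and $z = w \in W(u)$, with $\pi_x(z) \in B$. Writing $W(u) = \sqcup_{b} W_b(u)$, we have $z \in W_b(u)$ for a unique $b$; then $\pi_x(z) \in B \cap R_b$, so $B \cap R_b \neq \emptyset$ forces $b = a$, i.e.\ $z \in W_a(u)$. This gives (1).

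For $(1) \Rightarrow (2)$, I argue by contraposition. If $B \cap R_b \neq \emptyset$ for some $b \neq a$, then (using that $R_b$ is the closure of $\pi_x W_b(u)$ and $B$ is open, so $B$ already meets the dense part) $B$ contains a point $\pi_x(z)$ with $z \in W_b(u) \subseteq \HH$; then $z \in \HH \cap \pi_x^{-1}(B)$ but $z \notin W_a(u)$ since the $W_c(u)$ are disjoint, contradicting (1). Similarly, if $B \cap (R+t) \neq \emptyset$ for some $t \in \Lambda\setminus\{0\}$, then $B$ meets $\pi_x W(u) + t$, so there is $w \in W(u)$ with $\pi_x(w) + t \in B$; set $z = w + t \in \HH$ (it lies in $\HH$ because $h(z) = h(w) \geq 0$, as $t \in \Lambda \subseteq P$). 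Then $\pi_x(z) = \pi_x(w) + t \in B$, so $z \in \HH \cap \pi_x^{-1}(B)$; but $z \in W_a(u)$ would force, via $\HH = W(u)\oplus\Lambda$ and $w \in W(u)$, that $t = 0$, a contradiction. So (1) fails, completing the contrapositive.

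The only delicate point is the passage from ``$B$ meets the closure $R_b$'' to ``$B$ meets $\pi_x W_b(u)$'': this is immediate since $B$ is open and $\pi_x W_b(u)$ is dense in $R_b$, but one should double-check it is used symmetrically for $R + t$ as well (it is, since $R$ is the closure of $\pi_x W(u)$ and translation by $t$ is a homeomorphism). Everything else is bookkeeping with the direct-sum decomposition of $\HH$ and the fact that $\pi_x$ acts as the identity on $\Lambda$; no real obstacle is expected.
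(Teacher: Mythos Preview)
Your proof is correct and follows essentially the same route as the paper: both arguments rest on the tiling $\HH = W(u)\oplus\Lambda$ (Lemma~\ref{lem:worm:tiling}) and the fact that an open $B$ meets a closure iff it meets the underlying set. The paper packages this as a short chain of equivalences, first rewriting $W_a(u) = \HH \setminus \bigl(\bigcup_{b\neq a} W_b(u) \cup \bigcup_{t\neq 0} (W(u)+t)\bigr)$ and then invoking injectivity of $\pi_x$ on $\HH$, whereas you do explicit element-chasing in each direction and never need to name that injectivity; this is a purely stylistic difference.
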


\begin{proof}
	We have
	\[
		W_a(u) = \HH \setminus \left( \bigcup_{b \in A\setminus \{a\}} W_b(u) \cup \bigcup_{t \in \Lambda \setminus \{0\}} W(u) + t \right),
	\]
	$\pi_x$ is injective on $\HH$, and $\pi_x(\HH)$ is dense in $P$, so we have the equivalences
	\begin{eqnarray*}
		\HH \cap \pi_x^{-1}(B) \subseteq W_a(u) &\Longleftrightarrow& B \cap \pi_x(\HH) \subseteq \pi_x \left( W_a(u) \right) \\
												&\Longleftrightarrow& B \cap \pi_x \left( \bigcup_{b \in A\setminus \{a\}} W_b(u) \cup \bigcup_{t \in \Lambda \setminus \{0\}} W(u) + t \right) = \emptyset\\
												&\Longleftrightarrow& B \cap \left( \bigcup_{b \in A\setminus \{a\}} R_b \cup \bigcup_{t \in \Lambda \setminus \{0\}} R + t \right) = \emptyset.
	\end{eqnarray*}
\end{proof}

The following proposition allows to show that the Rauzy fractal does not depend on the choice of a fixed point $u$ of $s$, and it gives a useful characterization,
with left-infinite paths in the abelianized prefix automaton.

\begin{proposition} \label{prop:R:uniqueness}
	Let $s \in S^\NN$. We assume that we have
	\begin{itemize}
		\item primitivity: $\forall n \in \NN, \exists k \geq n, M_{[n,k)} > 0$,

		\item strong convergence: $\sum_n \matrixnorm{\pi_v M_{[0,n)}}_1$ converges, for some vector $v$.
	\end{itemize}
	
	Then, for every letter $a \in A$ and every fixed point $u \in {(A^\NN)}^\NN$ of $s$, we have
	\[
		R_a(u_0) = \{ \sum_{n=0}^\infty \pi_v(M_{[0,n)} t_n) \mid ... \xrightarrow{t_n, s_n} ... \xrightarrow{t_0, s_0} a \in \A \}.
	\]
	
	In particular, the Rauzy fractal does not depend on the choice of the fixed point $u$ and is compact.
\end{proposition}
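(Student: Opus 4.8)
The plan is to prove the equality by double inclusion between $R_a(u_0)$ and the set on the right-hand side, which I denote $\mathcal R_a$: the set of all sums $\sum_{n\ge0}\pi_v(M_{[0,n)}t_n)$ over left-infinite paths $\dots\xrightarrow{t_1,s_1}a_1\xrightarrow{t_0,s_0}a_0=a$ in $\A$ whose step-$n$ transition uses the substitution $s_n$. First I would dispatch two preliminaries. Put $C=\max_{t\in\Sigma}\norm{t}_1<\infty$; then along any such path $\norm{\sum_{n>N}\pi_v(M_{[0,n)}t_n)}_1\le C\sum_{n>N}\matrixnorm{\pi_vM_{[0,n)}}_1$, which goes to $0$ by strong convergence, so each defining series converges absolutely and the partial-sum maps converge uniformly over the space of admissible paths. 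That space is a closed subset of $(A\times\Sigma)^\NN$, hence compact, so the summation map is continuous and $\mathcal R_a$ is compact (in particular closed). Also, primitivity implies $s$ is everywhere growing, so $\abs{s_{[0,n)}(b_n)}\to\infty$ with $b_n$ the first letter of $u_n$, and Lemma~\ref{dumont:thomas} gives $W_a(u_0)=\bigcup_n\{\sum_{k=0}^nM_{[0,k)}t_k\mid b_{n+1}\xrightarrow{t_n,s_n}\dots\xrightarrow{t_0,s_0}a\}$, which I will use both ways.

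For $R_a(u_0)\subseteq\mathcal R_a$: since $\mathcal R_a$ is closed it suffices to check $\pi_vW_a(u_0)\subseteq\mathcal R_a$. By the Dumont--Thomas description a point of $\pi_vW_a(u_0)$ equals $\sum_{k=0}^n\pi_v(M_{[0,k)}t_k)$ for a finite path $b_{n+1}\xrightarrow{t_n,s_n}\dots\xrightarrow{t_0,s_0}a$. Because $u_m=s_m(u_{m+1})$, the nonempty word $s_m(b_{m+1})$ begins with $b_m$, so $b_{m+1}\xrightarrow{0,s_m}b_m$ is a transition of $\A$ for every $m$; prepending these transitions for $m=n+1,n+2,\dots$ turns the finite path into an admissible left-infinite path whose associated sum is unchanged, since the appended labels vanish. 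Hence the point lies in $\mathcal R_a$.

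The reverse inclusion $\mathcal R_a\subseteq R_a(u_0)$ is where the work is. Given an admissible left-infinite path with states $(a_n)$, $a_0=a$, and sum $S$, I would fix $n$, use primitivity to pick $m>n+1$ with $M_{[n+1,m)}>0$, and invoke the remark after Lemma~\ref{dumont:thomas} ($(M_{[n+1,m)})_{a_{n+1},b_m}\ge1$) to get a path $b_m\xrightarrow{t'_{m-1},s_{m-1}}\dots\xrightarrow{t'_{n+1},s_{n+1}}a_{n+1}$ in $\A$, where $b_m$ is the first letter of the genuine fixed point word $u_m$. Concatenating this with the truncation $a_{n+1}\xrightarrow{t_n,s_n}\dots\xrightarrow{t_0,s_0}a$ produces an honest Dumont--Thomas path for $u_0$, so $P_n:=\sum_{k=0}^n\pi_v(M_{[0,k)}t_k)+\sum_{k=n+1}^{m-1}\pi_v(M_{[0,k)}t'_k)\in\pi_vW_a(u_0)$. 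Then $\norm{P_n-S}_1\le 2C\sum_{k>n}\matrixnorm{\pi_vM_{[0,k)}}_1\to0$, so $S\in\overline{\pi_vW_a(u_0)}=R_a(u_0)$. The main obstacle is exactly this splicing: realizing the sum over an abstract infinite path as a limit of genuine worm points, which needs primitivity (to graft the abstract path onto a true prefix of a fixed point) and strong convergence (to make the grafting error vanish) simultaneously; the first inclusion and the compactness bookkeeping are routine.

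Finally, the two inclusions give $R_a(u_0)=\mathcal R_a$, a set visibly determined by $s$, $v$ and $a$ alone, hence independent of the choice of fixed point $u$. Then $R(u_0)=\overline{\pi_vW(u_0)}=\bigcup_{a\in A}\overline{\pi_vW_a(u_0)}=\bigcup_{a\in A}\mathcal R_a$ is a finite union of compact sets, so compact and also independent of $u$, completing the argument.
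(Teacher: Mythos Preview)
Your proof is correct and follows essentially the same route as the paper's: both establish the two inclusions via the zero-label left extension (for $\pi_vW_a(u_0)\subseteq\mathcal R_a$) and the primitivity splice onto the genuine first-letter sequence $(b_m)$ together with the strong-convergence tail estimate (for $\mathcal R_a\subseteq R_a(u_0)$), and both obtain compactness of $\mathcal R_a$ from compactness of the path space and continuity of the summation map. The only cosmetic difference is that you front-load the compactness of $\mathcal R_a$ as a preliminary, whereas the paper proves it at the end; the logical content is identical.
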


\begin{proof}
	Let $u$ be a fixed point of $s$.
	We denote by $b_n$ the first letter of the word $u_n$.
	By Lemma~\ref{dumont:thomas}, for every letter $a \in A$, we have the equality
	\[
		W_a(u_0) = \bigcup_{n \in \NN} \{ \sum_{k=0}^n M_{[0,k)} t_k \mid b_{n+1} \xrightarrow{t_n, s_n} ... \xrightarrow{t_0, s_0} a \}. 
	\]
	Hence, we have
	\[
		R_a = \overline{\bigcup_{n \in \NN} \{ \sum_{k=0}^n \pi_v(M_{[0,k)} t_k) \mid b_{n+1} \xrightarrow{t_n, s_n} ... \xrightarrow{t_0, s_0} a \}}.
	\]
	
	Let us show one inclusion. Let $... \xrightarrow{t_n, s_n} ... \xrightarrow{t_0, s_0} a$ be a left-infinite path in the automaton $\A$.
	Let $\epsilon > 0$. By the strong convergence hypothesis, and using that the Dumont-Thomas alphabet $\Sigma$ is finite, there exists $n \in \NN$ such that
	\[
		\max_{t \in \Sigma - \Sigma} \norm{t}_1 \sum_{k=n+1}^\infty \matrixnorm{\pi_v M_{[0,k)}}_1 \leq \epsilon.
	\]
	Using primitivity, there exists a path $b_{N+1} \xrightarrow{t_N', s_N} ... \xrightarrow{t_0', s_0} a \in \A$, with $t_k' = t_k$ for every $k \leq n$.
	We have
	\[
		\norm{ \sum_{k=0}^\infty \pi_v(M_{[0,k)} t_k) - \sum_{k=0}^N \pi_v(M_{[0,k)} t_k') }_1
		\leq \max_{t \in \Sigma - \Sigma} \norm{t}_1 \sum_{k=n+1}^\infty \matrixnorm{\pi_v M_{[0,k)}}_1 \leq \epsilon.
	\]
	Since $\sum_{k=0}^N \pi_v(M_{[0,k)} t_k') \in R_a$ and since $R_a$ is closed, we deduce the inclusion 
	\[
		\{ \sum_{n=0}^\infty \pi_v(M_{[0,n)} t_n) \mid ... \xrightarrow{t_n, s_n} ... \xrightarrow{t_0, s_0} a \in \A \} \subseteq R_a.
	\]
	
	Let us show the other inclusion.
	We have the inclusion
	\[
		\bigcup_{n \in \NN} \{ \sum_{k=0}^n \pi_v(M_{[0,k)} t_k) \mid b_{n+1} \xrightarrow{t_n, s_n} ... \xrightarrow{t_0, s_0} a \}
		\subseteq \{ \sum_{n=0}^\infty \pi_v(M_{[0,n)} t_n) \mid ... \xrightarrow{t_n, s_n} ... \xrightarrow{t_0, s_0} a \in \A \}
	\]
	because for every $n \in \NN$, there exists a left-infinite path labeled by zeroes going to $b_n$ since for every $k \geq n$, $u_n = s_{[n,k)}(u_k)$.
	To end the proof, it remains to show that the set $\{ \sum_{n=0}^\infty \pi_v(M_{[0,n)} t_n) \mid ... \xrightarrow{t_n, s_n} ... \xrightarrow{t_0, s_0} a \in \A \}$ is compact.
	We define a natural distance on the set of left-infinite paths in the automaton $\A$ by taking a distance $2^{-n}$ between two paths that coincide for the last $n$ transitions.
	This distance makes the set of left-infinite paths compact, and the map sending a left-infinite path $... \xrightarrow{t_n, s_n} ... \xrightarrow{t_0, s_0} a$ to the corresponding sum $\sum_{n=0}^\infty \pi_v(M_{[0,n)} t_n)$ is continuous. So we get the compactness.
\end{proof}

The primitivity hypothesis of Proposition~\ref{prop:R:uniqueness} can be replaced by the hypothesis that $v$ has a totally irrational direction.
Indeed, we have the following lemma and remark.

\begin{lemma} \label{lem:primitivity}
	Let $v \in \RR_+^{d+1}$ having a totally irrational direction such that
	\[
		\bigcap_{n \in \NN} M_{[0,n)} \RR_+^{d+1} = \RR_+ v,
	\]
	then we have primitivity:
	\[
		\forall k \in \NN,\ \exists n \geq k,\ M_{[k,n)} > 0.
	\]
\end{lemma}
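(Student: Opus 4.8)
The plan is to argue by contradiction: suppose primitivity fails at some level $k$, i.e. for every $n \geq k$ the product $M_{[k,n)}$ has a zero entry. First I would reduce to the case $k = 0$ by noting that the hypothesis $\bigcap_n M_{[0,n)}\RR_+^{d+1} = \RR_+ v$ together with invertibility of each $M_i$ implies (as remarked just after Proposition~\ref{prop-uniquely-ergodic}) that $\bigcap_{i \geq k} M_{[k,i)}\RR_+^{d+1}$ is also a single ray $\RR_+ v^{(k)}$, and $v^{(k)} = M_{[0,k)}^{-1} v$ (up to normalization) still has a totally irrational direction since $M_{[0,k)} \in GL_{d+1}(\ZZ)$ preserves rational dependence. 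So it suffices to treat $k=0$.

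Next, the key observation is that the sets $C_n := M_{[0,n)}\RR_+^{d+1}$ form a nested decreasing sequence of closed cones whose intersection is the ray $\RR_+ v$. For each $n$, since $M_{[0,n)}$ has nonnegative integer entries and is invertible, $C_n$ is a simplicial cone spanned by the columns of $M_{[0,n)}$. If $M_{[0,n)}$ has a zero entry in row $j$, then $C_n$ is contained in the coordinate hyperplane-bounded region $\{y : y_j = 0\}$ is not quite right — rather, one column of $M_{[0,n)}$ lies in the facet $\{y_j = 0\}$ of the positive cone, so $C_n$ touches the boundary $\partial\RR_+^{d+1}$. The plan is to extract, for each $n$, an index $j_n$ and a unit vector $w_n \in C_n$ with $(w_n)_{j_n} = 0$; passing to a subsequence we may assume $j_n = j$ constant and $w_n \to w$. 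Then $w \in \bigcap_n C_n = \RR_+ v$ (because the $C_n$ are nested and closed, any limit of points eventually in all $C_m$ for $m \leq n$ lies in $C_n$ for every $n$), so $w$ is a positive multiple of $v$; but $w_j = 0$ forces $v_j = 0$, contradicting that $v$ has a totally irrational direction (all coordinates are nonzero, indeed $\QQ$-linearly independent).

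The main obstacle I anticipate is making precise the step "if $M_{[0,n)}$ has a zero entry then $C_n$ contains a nonzero vector with a vanishing coordinate." This requires being a little careful: a zero entry $(M_{[0,n)})_{j,\ell} = 0$ means the $\ell$-th column $c_\ell := M_{[0,n)}e_\ell$ satisfies $(c_\ell)_j = 0$, and $c_\ell \neq 0$ since $M_{[0,n)}$ is invertible; so $w_n := c_\ell / \norm{c_\ell}_1$ does the job directly — no convexity argument is even needed, just pick the offending column. One should also double-check the subsequence extraction is legitimate (finitely many choices of $(j,\ell)$, so by pigeonhole some pair recurs infinitely often), and that the limit argument uses closedness of each $C_n$ together with $w_m \in C_m \subseteq C_n$ for all $m \geq n$. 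With these points handled, the contradiction closes the proof.
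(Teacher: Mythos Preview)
Your proof is correct and follows essentially the same route as the paper's. The paper compresses the argument into one sentence --- asserting that since the nested intersection of the cones $M_{[0,n)}\RR_+^{d+1}$ is the totally irrational ray $\RR_+ v$, some cone must miss the boundary of $\RR_+^{d+1}$, which is equivalent to $M_{[0,n)}>0$ --- whereas you spell out the underlying compactness step explicitly (extract a boundary column, pass to a subsequence, obtain a limit in $\bigcap_n C_n$ with a vanishing coordinate).
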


\begin{proof}
	For all $k \in \NN$, we have
	\[
		\bigcap_{n \in \NN} M_{[k,n)} \RR_+^{d+1} = \RR_+ M_{[0,k)}^{-1} v,
	\]
	and $M_{[0,k)}^{-1} v$ has a totally irrational direction,
	so it is enough to prove the result for $k = 0$.
	We have $\bigcap_{n \in \NN} M_{[0,n)} \RR_+^{d+1} = \RR_+ v$,
	with $v$ having a totally irrational direction, so there exists $n \in \NN$ such that
	$M_{[0,n)} \RR_+^{d+1}$ does not meet the boundary
	$\bigcup_{i = 0}^d \RR_+^i \times \{0\} \times \RR_+^{d-i}$,
	and this is equivalent to $M_{[0,n)} > 0$.
\end{proof}

\begin{remark} \label{rem:cv:cv}
	Let $s \in S^\NN$ be a directive sequence, and $v \in \RR_+^{d+1}$.
	If $\sum \matrixnorm{\pi_v M_{[0,n)}}_1$ converges,
	then we have $\bigcap_{n \in \NN} M_{[0,n)} \RR_+^{d+1} = \RR_+ v$.
	Indeed, if we take such vector $v$ with $\norm{v}_1 = 1$, then we have
	\[
		\forall y \in \RR_+^{d+1},\ \norm{M_{[0,n)}y - \norm{M_{[0,n)}y}_1 v}_1 \leq \matrixnorm{\pi_v M_{[0,n)}}_1 \norm{y}_1 \xrightarrow[n \to \infty]{} 0.
	\]
\end{remark}

\begin{corollary} \label{cor:bounded:cover}
	Let $s \in S^\NN$ be a directive sequence such that
	the sum $\sum_n \matrixnorm{\pi_x M_{[0,n)}(s)}_1$ converges
	for a totally irrational direction $x \in \P\RR_+^d$.
	
	Then, for every letter $a \in A$ and every fixed point $u \in {(A^\NN)}^\NN$ of $s$, we have
	\[
		R_a(u_0) = \{ \sum_{n=0}^\infty \pi_x(M_{[0,n)} t_n) \mid ... \xrightarrow{t_n, s_n} ... \xrightarrow{t_0, s_0} a \in \A \}.
	\]
	In particular, we have the properties:
	\begin{itemize}
		\item the Rauzy fractal $R(s)$ and its pieces do not depend on the choice of a fixed point,
		\item $R(s)$ is bounded,
		\item $R(s)$ covers the plane: $\bigcup_{t \in \Lambda} R(s) + t = P$,
		\item $R(s)$ has non-empty interior.
	\end{itemize}
\end{corollary}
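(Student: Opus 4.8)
The plan is to derive everything from Proposition~\ref{prop:R:uniqueness}, the tiling Lemma~\ref{lem:worm:tiling}, and a Baire category argument. First I would put $v = v(x)$, so that $\pi_x = \pi_v$ by definition, $\norm{v}_1 = 1$, and $v$ has a totally irrational direction because $x$ is one. The hypothesis of the corollary is precisely that $\sum_n \matrixnorm{\pi_v M_{[0,n)}(s)}_1$ converges, which is the strong convergence assumption of Proposition~\ref{prop:R:uniqueness}. By Remark~\ref{rem:cv:cv}, this convergence also gives $\bigcap_{n\in\NN} M_{[0,n)}(s)\RR_+^{d+1} = \RR_+ v$; combined with the total irrationality of the direction of $v$, Lemma~\ref{lem:primitivity} yields primitivity, i.e.\ $\forall n\,\exists k\geq n,\ M_{[n,k)} > 0$. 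Both hypotheses of Proposition~\ref{prop:R:uniqueness} now hold, so it provides the announced formula for $R_a(u_0)$ as the set of sums $\sum_{n\geq 0}\pi_x(M_{[0,n)}t_n)$ over left-infinite paths $\dots\xrightarrow{t_n,s_n}\dots\xrightarrow{t_0,s_0}a$ in $\A$. Since this formula does not mention $u$, the fractal $R(s)$ and its pieces do not depend on the fixed point; and the proposition asserts each $R_a(s)$ is compact, so $R(s) = \bigcup_{a\in A} R_a(s)$ is compact, in particular bounded. (Here I use that $s$ primitive $\Rightarrow$ $s$ everywhere growing $\Rightarrow$ $\freq(u_0) = v(x)$ by Proposition~\ref{prop-uniquely-ergodic}, so Definition~\ref{def-Rauzy} applies and $R_a(s) = \overline{\pi_x W_a(u_0)}$.)

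For the covering property I would use the tiling $\HH = W(u_0)\oplus\Lambda$ from Lemma~\ref{lem:worm:tiling}. As $\Lambda\subseteq P$ and $\pi_x$ is the identity on $P$, applying $\pi_x$ gives $\pi_x(\HH) = \pi_x(W(u_0)) + \Lambda \subseteq R(s) + \Lambda$. On the other hand, $x$ being a totally irrational direction, the translation $T_x$ is minimal on the whole torus $P/\Lambda$, so the forward orbit $\{\,i\,\pi_x(e_0)+\Lambda\mid i\in\NN\,\}$ is dense there, and hence $\pi_x(\HH) = \bigcup_{i\geq 0}(\Lambda + i\,\pi_x(e_0))$ is dense in $P$. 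Since $R(s)$ is compact and $\Lambda$ is a discrete, hence closed, subgroup, the set $R(s)+\Lambda$ is closed; being closed and containing a dense set, it equals $P$, which is exactly $\bigcup_{t\in\Lambda}R(s)+t = P$.

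The non-empty interior then follows from Baire's theorem: $P\cong\RR^d$ is a complete metric space and $P = \bigcup_{t\in\Lambda}(R(s)+t)$ is a countable union of closed sets, so some translate $R(s)+t$ — and therefore $R(s)$ itself, translations by $\Lambda$ being homeomorphisms — has non-empty interior. I do not expect a genuine obstacle here: the only steps requiring a little care are the closedness of $R(s)+\Lambda$ (a compact set plus a closed subgroup) and the density of $\pi_x(\HH)$ in $P$ (minimality of $T_x$), and both are routine.
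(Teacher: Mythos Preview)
Your proof is correct and follows essentially the same route as the paper's: Remark~\ref{rem:cv:cv} and Lemma~\ref{lem:primitivity} to feed Proposition~\ref{prop:R:uniqueness}, then Lemma~\ref{lem:worm:tiling} plus density of $\pi_x(\HH)$ for the covering, and Baire for the interior. The only cosmetic difference is your justification that $R(s)+\Lambda$ is closed (compact plus closed subgroup) versus the paper's (the union is locally finite since $R(s)$ is bounded and $\Lambda$ is a lattice); both are valid.
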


\begin{proof}
	Thanks to Remark~\ref{rem:cv:cv} and Lemma~\ref{lem:primitivity}, we can apply Proposition~\ref{prop:R:uniqueness},
	hence we deduce the formula, the fact that the Rauzy fractal and its pieces do not depend on the choice of a fixed point, and the boundedness of $R(s)$.
	Now, for any fixed point $u \in {(A^\NN)}^\NN$ of $s$ we have $W(u_0) \oplus \Lambda = \HH$, and we have that $\pi_x(\HH)$ is dense in $P$ since $x$ is a totally irrational direction.
	Hence, we deduce that the union $\bigcup_{t \in \Lambda} R(s) + t$ is dense in $P$. Since $R(s)$ is bounded, this union is locally finite, thus locally closed.
	Hence we get the wanted covering.
	The last point is a consequence of the Baire category theorem: if the interior of $R(s)$ was empty, then the interior of the countable union $\bigcup_{t \in \Lambda} R(s) + t = P$ would be empty, which is absurd.
\end{proof}

\begin{remark}
	We emphasis the fact the the interior of $R(s)$ does not correspond to the interior of $W(u_0)$ for the topology $\topo{x}$, where $u \in {(A^\NN)}^\NN$ is a fixed point of $s$.
	For a totally irrational direction $x$, if an open set $O$ of $P$ is such that $\pi_x^{-1}(O) \cap \HH \subseteq W_a(u_0)$, then $O$ is included in the interior of $R_a(u_0)$, but the converse is false in general.
\end{remark}

\subsection{Technical lemmas} \label{sec:technical:lemmas}

This subsection contains technical lemmas that we use in our proofs.

The following lemma assumes an exponential convergence that implies the strong convergence of Proposition~\ref{prop:R:uniqueness}.
It says that this exponential convergence is invariant by the shift of the directive sequence.

\begin{lemma} \label{lem:theta2}
	Let $s$ be a directive sequence and let $x \in \P\RR_+^{d}$ be a direction. 
	Then, for every $k \in \NN$, we have the equality
	\[
		\limsup_{n \to \infty} \frac{1}{n} \ln \matrixnorm{\pi_x M_{[0,n)}}_1 = \limsup_{n \to \infty} \frac{1}{n} \ln \matrixnorm{\pi_{x^{(k)}} M_{[k, k+n)}}_1,
	\]
	where $x^{(k)} = M_{[0,k)}^{-1} x$\nomenclature[L]{$x^{(k)}$}{$k$th element of a sequence of directions}.
	In particular, if we have that $\limsup_{n \to \infty} \frac{1}{n} \ln \matrixnorm{\pi_x M_{[0,n)}}_1 < 0$, then we have
	\[
		\forall k \in \NN,\ \exists C > 0,\ \exists n_0 \in \NN,\ \forall n \geq n_0,\ \matrixnorm{\pi_{x^{(k)}} M_{[k,k+n)}}_1 \leq e^{-n C}.
	\]
\end{lemma}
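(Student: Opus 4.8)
The plan is to relate the two $\limsup$s directly via the cocycle identity for the matrix products. The starting observation is that, by definition, $M_{[0,n)} = M_{[0,k)} M_{[k,n)}$ for all $n \geq k$, and that $x^{(k)} = M_{[0,k)}^{-1} x$ so that $v(x)$ and $M_{[0,k)} v(x^{(k)})$ are positively proportional, hence $\pi_x = \pi_{M_{[0,k)} x^{(k)}}$. The key structural fact I would isolate first is the following conjugation relation between projections: for any invertible $N \in GL_{d+1}(\RR)$ with $Ny$ in the positive cone, one has $\pi_{Ny} \circ N = N \circ \pi_y$ on the relevant domain, up to the scalar $h(Ny)/h(y)$ normalization — more precisely $\pi_{[Ny]}(Nz) = N \pi_{[y]}(z)$ when one uses the projective normalization consistently, because $N$ maps the line $\RR y$ to the line $\RR(Ny)$ and is linear. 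Applying this with $N = M_{[0,k)}$ and $y = v(x^{(k)})$ gives
\[
\pi_x M_{[0,n)} = \pi_{M_{[0,k)} x^{(k)}} M_{[0,k)} M_{[k,n)} = M_{[0,k)} \, \pi_{x^{(k)}} M_{[k,n)}.
\]

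From this identity the rest is routine operator-norm bookkeeping. Taking $\matrixnorm{\cdot}_1$ and using submultiplicativity, $\matrixnorm{\pi_x M_{[0,n)}}_1 \leq \matrixnorm{M_{[0,k)}}_1 \cdot \matrixnorm{\pi_{x^{(k)}} M_{[k,n)}}_1$, and conversely $\matrixnorm{\pi_{x^{(k)}} M_{[k,n)}}_1 \leq \matrixnorm{M_{[0,k)}^{-1}}_1 \cdot \matrixnorm{\pi_x M_{[0,n)}}_1$. Since $k$ is fixed, the two constants $\matrixnorm{M_{[0,k)}}_1$ and $\matrixnorm{M_{[0,k)}^{-1}}_1$ are finite positive numbers that disappear after taking $\frac{1}{n}\ln(\cdot)$ and letting $n \to \infty$; writing $n$ on the right as $k + m$ and noting $\frac{1}{k+m} = \frac{1}{m}(1 + o(1))$, the $\limsup$ over $m \to \infty$ of $\frac{1}{m}\ln\matrixnorm{\pi_{x^{(k)}} M_{[k,k+m)}}_1$ equals the $\limsup$ over $n \to \infty$ of $\frac{1}{n}\ln\matrixnorm{\pi_x M_{[0,n)}}_1$. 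This proves the displayed equality.

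For the ``in particular'' clause: if the common value of the $\limsup$ is some $\ell < 0$, fix $k$ and pick any $C$ with $0 < C < -\ell$. By definition of $\limsup$ applied to the right-hand side, $\frac{1}{n}\ln\matrixnorm{\pi_{x^{(k)}} M_{[k,k+n)}}_1 \leq \ell + (-\ell - C) = -C$ for all $n$ large enough, i.e. there exists $n_0$ with $\matrixnorm{\pi_{x^{(k)}} M_{[k,k+n)}}_1 \leq e^{-nC}$ for $n \geq n_0$, which is exactly the claim.

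The main (very mild) obstacle is making the conjugation identity $\pi_{[Ny]}(Nz) = N\pi_{[y]}(z)$ precise and checking its domain of validity — one must verify $M_{[0,k)} v(x^{(k)})$ indeed lies in $\RR_+^{d+1}\setminus\{0\}$ (which holds since $M_{[0,k)}$ is a non-negative integer matrix and $x^{(k)}$ is a direction, by definition of $x^{(k)} = M_{[0,k)}^{-1} x$) and that $\pi_y$ is genuinely linear on all of $\RR^{d+1}$, which it is since $\pi_y(z) = z - h(z)\frac{y}{h(y)}$ is linear in $z$. Everything else is elementary: no ergodic theory, no subadditivity argument beyond the trivial $\limsup$ manipulation, just the cocycle decomposition of the matrix product and the fact that a fixed finite prefix contributes nothing to an exponential growth rate.
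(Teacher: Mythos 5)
Your proof has a genuine gap: the conjugation identity $\pi_{[Ny]}(Nz)=N\pi_{[y]}(z)$ is \emph{false} in general, and the display
\[
\pi_x M_{[0,n)} = M_{[0,k)}\,\pi_{x^{(k)}}\,M_{[k,n)}
\]
is therefore wrong. To see why: $\pi_{[y]}(z)$ always lies in the hyperplane $P=\{h=0\}$, so $N\pi_{[y]}(z)$ lies in $N(P)$, whereas $\pi_{[Ny]}(Nz)$ lies in $P$; an invertible nonnegative integer matrix $N$ does not preserve $P$ unless $h\circ N$ is a scalar multiple of $h$. Computing both sides explicitly gives
\[
\pi_{[Ny]}(Nz)-N\pi_{[y]}(z)=\left(\frac{h(z)}{h(y)}-\frac{h(Nz)}{h(Ny)}\right)Ny,
\]
which vanishes for all $z$ only when $\transp{N}$ fixes $(1,\dots,1)$ up to scaling. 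For a concrete counterexample, take $d=1$, $N=\begin{pmatrix}1&1\\0&1\end{pmatrix}$, $y=(1,1)$, $z=(1,0)$: then $N\pi_{[y]}(z)=(0,-1/2)$ while $\pi_{[Ny]}(Nz)=(1/3,-1/3)$.

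The fix, and it is exactly what the paper does, is to conjugate not by $M_{[0,k)}$ itself but by the linear endomorphism $N$ \emph{of the hyperplane} $P$ defined by $\pi_x M_{[0,k)}=N\pi_{x^{(k)}}$, i.e.\ $N=(\pi_x M_{[0,k)})\restrict{P}$. This $N$ is well defined and invertible because $M_{[0,k)}$ sends $\ker\pi_{x^{(k)}}=\RR v(x^{(k)})$ onto $\ker\pi_x=\RR v(x)$, and the correct factorization is
\[
\pi_x M_{[0,n+k)}=N\,\pi_{x^{(k)}}\,M_{[k,n+k)},
\]
which follows from the identity $\pi_x M\pi_{M^{-1}x}=\pi_x M$ established in Lemma~\ref{lem-bon-cocycle}. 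Once you replace $M_{[0,k)}$ by this $N$ (and its operator norm on $P$), your operator-norm bookkeeping, the cancellation of the $k$-dependent constants after taking $\frac1n\ln(\cdot)$, and your proof of the ``in particular'' clause are all fine and coincide with the paper's argument.
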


\begin{proof}
	
	Let $N$ be the linear endomorphism of $P$ such that $\pi_x M_{[0,k)} = N \pi_{x^{(k)}}$.
	Remark that $N$ is invertible.
	We have the inequalities
	\[
		\matrixnorm{\pi_x M_{[0,n+k)}}_1 \leq \matrixnorm{N}_1 \matrixnorm{\pi_{x^{(k)}} M_{[k,n+k)}}_1,
	\]
	and
	\[
		\matrixnorm{\pi_{x^{(k)}} M_{[k,k+n)}}_1 \leq \matrixnorm{N^{-1}}_1 \matrixnorm{\pi_{x} M_{[0,n+k)}}_1.
	\]
	So we get the wanted equality
	\[
		\limsup_{n \to \infty} \frac{1}{n} \ln \matrixnorm{\pi_x M_{[0,n)}}_1
		= \limsup_{n \to \infty} \frac{1}{n} \ln \matrixnorm{\pi_x M_{[0,n+k)}}_1
		= \limsup_{n \to \infty} \frac{1}{n} \ln \matrixnorm{\pi_{x^{(k)}} M_{[k, k+n)}}_1.
	\]
	
	We deduce the second part of the lemma by taking
	\[
		C = - \frac{1}{2} \limsup_{n \to \infty} \frac{1}{n} \ln \matrixnorm{\pi_x M_{[k,k+n)}}_1.
	\]
\end{proof}

The remaining lemmas in this subsection are topology exercises and are not specific to our subject.

\begin{lemma} \label{lem:Baire}
    Let $B$, $C$, $D$ be open subsets of $P$.
    If $D \subseteq \overline{B}$ and $D \subseteq \overline{C}$, then $D \subseteq \overline{B \cap C}$.
\end{lemma}

\begin{proof}
Let $x \in D$.
Let $r_0>0$ small enough to have $B(x,r_0)\subseteq D$ (balls are assumed open in this proof). Let $r>0$ such that $r\leq r_0$.
\begin{itemize}
\item
As $D \subseteq \bar B$, we get that $B(x,r)\subseteq D \subseteq \bar B$.
If $B(x,r) \cap  B = \varnothing$, then $x\notin \bar B$ which is absurd.
So there exists $y\in B(x,r)\cap B$, and since these are open sets, there exists $r'>0$ such that
$B(y,r')\subseteq B(x,r)\cap B$.
\item
Also, $B(y,r')\subseteq B(x,r)\subseteq \bar C$ thus there exists $z$ and $t>0$ such that
$B(z,t)\subseteq B(y,r') \cap C$. 
\item
Finally, for all $r$ small enough we have found $z\in B \cap C$ such that $d(x,z)<r$ (since $z\in B(x,r)$).
\end{itemize}
Therefore $x\in \overline{B\cap C}$.
\end{proof}

The next technical lemmas are useful in the proof of Proposition~\ref{prop:mesure:good}.

\begin{lemma} \label{lem:ouvert:meas}
	Let $H$ be a closed subset of a metric space $X$, and let $\mu$ be a finite measure on $X$ such that $\mu(H) = 0$.
	Then for every $\epsilon > 0$ there exists an open subset $O$ such that $\mu(O) \leq \epsilon$ and $H \subseteq O$.
\end{lemma}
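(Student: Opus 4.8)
The plan is to approximate $H$ from outside by its open metric neighbourhoods and then invoke continuity of the finite measure $\mu$ from above. Concretely, for $\delta > 0$ I set
$H_\delta = \{x \in X \mid \dist(x,H) < \delta\}$, where $\dist(x,H) = \inf_{y \in H} d(x,y)$. The function $x \mapsto \dist(x,H)$ is $1$-Lipschitz, hence continuous, so each $H_\delta$ is open; moreover $H \subseteq H_\delta$ and the family is nested, $H_{\delta'} \subseteq H_\delta$ whenever $\delta' \le \delta$.

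The key observation is that $\bigcap_{n \ge 1} H_{1/n} = \overline{H} = H$: a point $x$ belongs to every $H_{1/n}$ if and only if $\dist(x,H)=0$, i.e. $x \in \overline H$, and $H$ is closed. Thus $(H_{1/n})_{n \ge 1}$ is a decreasing sequence of Borel sets with $\mu(H_1) \le \mu(X) < \infty$, so continuity of $\mu$ from above applies and yields $\mu(H_{1/n}) \xrightarrow[n \to \infty]{} \mu(H) = 0$.

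Given $\epsilon > 0$, I then pick $n$ with $\mu(H_{1/n}) \le \epsilon$ and take $O = H_{1/n}$, which is open, contains $H$, and has measure at most $\epsilon$. That finishes the argument.

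There is no genuine obstacle here; the only point worth a word is that one needs open sets to be $\mu$-measurable so that continuity from above along the decreasing sequence $H_{1/n}$ is legitimate — this holds because $\mu$ is a Borel measure, as throughout the paper. The argument is a routine instance of outer regularity of finite Borel measures on metric spaces.
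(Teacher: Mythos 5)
Your argument is correct and is essentially the paper's: both take $H_{1/n} = \{x : d(x,H) < 1/n\}$, note these are open and decrease to $H$ because $H$ is closed, and conclude via continuity of the finite measure (the paper phrases it as continuity from below for the complements $X\setminus H_n$, you as continuity from above on $H_{1/n}$, which is the same thing). No substantive difference.
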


\begin{proof}
	For $n\geq1$, let $H_n = \{x \in X \mid d(x,H) < \frac{1}{n}\}$.
	We have $\bigcup_{n \geq 1} X \setminus H_n = X \setminus H$, because $H$ is closed.
	Thus, we have $\lim_{n \to \infty} \mu(X \setminus H_n) = \mu(X \setminus H) = \mu(X)$.
	Let $\epsilon > 0$. There exists $n \geq 1$ such that $\mu(H_n) \leq \epsilon$.
	Then, the open set $O = H_n$ suits.
\end{proof}

\begin{lemma} \label{lem:ouvert:petite:mesure}
	Let $X \subseteq \P\RR_+^d$ and let $\mu$ be a probability measure on $X$.
	Let $N \subseteq X$ be the set of non totally irrational directions of $X$.
	We assume that $\mu(N) = 0$.
	Then, for every $\epsilon > 0$ there exists an open set $O$ of $X$
	such that $O$ contains all the non totally irrational directions
	and such that $\mu(O) \leq \epsilon$.
\end{lemma}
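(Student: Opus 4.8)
The plan is to reduce to Lemma~\ref{lem:ouvert:meas} by writing $N$ as a countable union of closed sets of measure zero. First I would observe that a direction $x \in \P\RR_+^d$ fails to be totally irrational precisely when there is a nonzero integer vector $a = (a_0,\dots,a_d) \in \ZZ^{d+1} \setminus \{0\}$ with $\scal{a}{v(x)} = 0$: a $\QQ$-linear dependence among the coordinates of $v(x)$ can always be rescaled to an integer one. Hence, setting $H_a = \{x \in X \mid \scal{a}{v(x)} = 0\}$ for $a \in \ZZ^{d+1}\setminus\{0\}$, we get $N = \bigcup_{a \in \ZZ^{d+1}\setminus\{0\}} H_a$. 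Each $H_a$ is closed in $X$: the map $x \mapsto v(x)$ is continuous on $\P\RR_+^d$ (it is the normalization to $\norm{\cdot}_1 = 1$), so $x \mapsto \scal{a}{v(x)}$ is continuous and $H_a$ is the preimage of $\{0\}$.

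Next, since $\mu(N) = 0$ and $H_a \subseteq N$, we have $\mu(H_a) = 0$ for every nonzero $a$. Fixing an enumeration $a^{(1)}, a^{(2)}, \dots$ of the countable set $\ZZ^{d+1}\setminus\{0\}$, I would apply Lemma~\ref{lem:ouvert:meas} to the closed subset $H_{a^{(n)}}$ of $X$ and the finite measure $\mu$ (it is a probability measure), with tolerance $\epsilon/2^n$; this yields an open subset $O_n$ of $X$ with $H_{a^{(n)}} \subseteq O_n$ and $\mu(O_n) \leq \epsilon/2^n$.

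Finally, set $O = \bigcup_{n \geq 1} O_n$. It is open as a union of open sets, it contains $\bigcup_{n \geq 1} H_{a^{(n)}} = N$, hence all non totally irrational directions of $X$, and by countable subadditivity $\mu(O) \leq \sum_{n \geq 1} \epsilon/2^n = \epsilon$, which is the assertion.

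The argument is routine; the only point requiring care is that $N$ itself is generally not closed (in fact typically dense), so one cannot invoke Lemma~\ref{lem:ouvert:meas} directly on $N$. The decomposition of $N$ into the countably many closed ``rational slices'' $H_a$ is exactly what circumvents this, and the $\epsilon/2^n$ bookkeeping assembles the pieces into a single open set of small measure.
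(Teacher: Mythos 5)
Your proof is correct and takes essentially the same route as the paper: decompose $N$ into the countably many closed ``rational slices'' $H_a$ (the paper calls them kernels of rational linear forms), apply Lemma~\ref{lem:ouvert:meas} to each with geometrically shrinking tolerances, and take the union. Your write-up is a bit more explicit about why each $H_a$ is closed (continuity of the normalization $x \mapsto v(x)$), which is a welcome detail, but there is no substantive difference.
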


\begin{proof}
	The set $N$ is the union of kernels of linear forms with rational coefficients.
	Thus it is a countable union of closed subsets.
	Let $(N_n)_{n \in \NN}$ be closed subsets such that $N = \bigcup_{n \in \NN} N_n$.
	Let $\epsilon > 0$.
	For every $n \in \NN$, let $O_n$ be an open set given by Lemma~\ref{lem:ouvert:meas} such that $\mu(O_n) \leq \frac{\epsilon}{2^{n+1}}$ and $N_n \subseteq O_n$.
	Then, the open set $O = \bigcup_{n \in \NN} O_n$ satisfies what we want: we have $N \subseteq O$ and
	\[
		\mu(O) \leq \sum_{n \in \NN} \mu(O_n) \leq \sum_{n \in \NN} \frac{\epsilon}{2^{n+1}} = \epsilon.
	\]
\end{proof}

\section{General conditions for the existence of nice Rauzy fractals} \label{sec:gen:cond}

\subsection{Statement}

\begin{definition}\label{defG}
	We say that a directive sequence $s \in S^\NN$ is \emph{good} if
	\begin{enumerate}
		\item $\limsup_{n \to \infty} \frac{1}{n} \ln \matrixnorm{\pi_x M_{[0,n)}}_1 < 0$, for some direction $x \in \P\RR_+^{d}$,

		\item The direction $x$ is totally irrational,

		\item There exists a fixed point $u \in {(A^\NN)}^\NN$ of $s$,
		an increasing sequence of integers $(k_n)_{n \in \NN}$\nomenclature[L$k$]{$(k_n)$}{integer sequence},
		and a positive radius $r > 0$
		such that
		\[
			\forall n \in \NN,\ \forall a \in A,\ \exists p \in P,
			\ \HH \cap \pi_{x^{(k_n)}}^{-1}(B(p,r)) \subseteq W_a(u_{k_n}),
		\]
		where $x^{(k_n)} = M_{[0,k_n)}^{-1} x$,
		
		\item The sequence $x^{(k_n)}$ has a limit 
		which is a totally irrational direction.
	\end{enumerate}
	By Remark~\ref{rem:cv:cv}, the direction $x$ is unique. We call it the \emph{direction of $s$}.

\end{definition}

Remark that for a good directive sequence, the Rauzy fractal does not depend on the choice of a fixed point, is compact and has non-empty interior
by Corollary~\ref{cor:bounded:cover}. We recall Theorem~\ref{thmC} that will be proven in the rest of this section:

\begin{NoHyper}
\setcounter{Theorem}{\thmCnum}
\begin{Theorem}
\thmCbody
\end{Theorem}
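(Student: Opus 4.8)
The plan is to exploit the Dumont--Thomas description of the worm (Lemma~\ref{dumont:thomas}, Corollary~\ref{cor:bounded:cover}) together with the topologies $\topo{x}$ on $\HH$, and to propagate non-emptiness of the interior along the directive sequence. Let $s$ be a good directive sequence with direction $v$, and fix a fixed point $u\in(A^\NN)^\NN$. First I would record the structural facts that come for free: by Corollary~\ref{cor:bounded:cover} the Rauzy fractal $R(s)=\bigcup_{a\in A}R_a(s)$ and its pieces are well defined independently of $u$, are compact, cover $P$ under $\Lambda$-translation, and have non-empty interior. The core of the proof is then to upgrade ``covers'' to ``tiles'': I want to show that the translates $R(s)+t$, $t\in\Lambda$, and the pieces $R_a(s)$, $a\in A$, overlap only on sets of Lebesgue measure zero, and that each $R_a(s)$ is the closure of its interior. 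For this I would use the tiling of $\HH$ by the worm (Lemma~\ref{lem:worm:tiling}) at each scale $k_n$: condition~(3) of goodness gives, for every $a$ and every $n$, a ball $B(p,r)$ with $\HH\cap\pi_{x^{(k_n)}}^{-1}(B(p,r))\subseteq W_a(u_{k_n})$, i.e.\ a genuine nonempty $\topo{x^{(k_n)}}$-open piece of the worm sitting inside $W_a(u_{k_n})$. Applying $M_{[0,k_n)}$ and Lemma~\ref{lem:O:matrix} transports this to a nonempty $\topo{x}$-open subset of $W_a(u_0)$; by the strong/exponential convergence (condition~(1)) and Lemma~\ref{lem:car:interior}, the projection $\pi_x$ of such a set is an open set avoiding $R_b(s)$ for $b\ne a$ and avoiding $R(s)+t$ for $t\in\Lambda\setminus\{0\}$. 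This is exactly the statement that the interiors of the $R_a(s)$ and of the translates $R(s)+t$ are pairwise disjoint, so combined with the covering we get a measurable fundamental domain and a measure-disjoint decomposition; that each $R_a(s)$ is the closure of its interior then follows because the interior is nonempty, $R_a(s)$ is the closure of $\pi_x W_a(u_0)$, and a boundary-measure-zero argument (Baire, Lemma~\ref{lem:Baire}) handles the closure-of-interior property.

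Next I would set up the domain exchange and the conjugacy. On the fundamental domain $R(s)\subseteq P$, define $E\colon R(s)\to R(s)$ by translating $R_a(s)$ by the vector $\pi_v(e_a)-\pi_v(e_0)=\pi_v(e_a)$ appropriately normalized so that $E$ is the first-return-type piecewise translation induced by adding $e_0-v$; concretely, the relation $W_a(u_0)=W(u_0)\cap(W(u_0)-e_a)$ and $W(u_0)+e_0$ shifted inside $\HH$ give, after projecting, that $E$ acts as $z\mapsto z+\pi_v(e_a)$ on $R_a(s)$ and that this is conjugate via $q\colon P\to P/\Lambda$ to the translation $T$ by $e_0-v$ on $P/\Lambda=\TT^d$. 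Here I would check the three bullet conditions of liftability (Definition~\ref{def-nice}): $D=R(s)$ is a measurable fundamental domain, $D_a=R_a(s)$ is the partition, $t_a=\pi_v(e_a)$ (or its class), $q(D_a)=P_a$, $q(t_a)=e_0-v$, and $D_a+t_a\subseteq D$ up to a null set — the last inclusion is precisely the worm identity $W_a(u_0)+e_a\subseteq W(u_0)$ transported to $P$. That the symbolic coding of $T$ with respect to the partition $(q(R_a(s)))_{a\in A}$ equals $\Omega_s$ comes from the fact that the itinerary of a point $\pi_v(e_0)\bmod\Lambda$ visits $q(R_a(s))$ at time $n$ iff the $n$-th letter of $u_0$ is $a$ (by construction of the worm), so the coding map sends the orbit of $0$ to $u_0$ and, by minimality (Proposition~\ref{prop-omega-u-omega-s}) and density, extends to a surjection onto $\Omega_s$; injectivity $\lambda$-a.e.\ is exactly the measure-disjointness of the atoms proved above.

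The remaining points are: that the partition is \emph{generating} (coding injective a.e.) — this is the a.e.-injectivity just mentioned, obtained from the fact that two distinct points with the same coding would have orbits confined forever to the boundary $\bigcup_t\partial(R(s)+t)\cup\bigcup_a\partial R_a(s)$, a set of measure zero and invariant enough to be swept out by minimality; that the coding is a \emph{measurable conjugacy} with $\Omega_s$ — surjectivity onto $\Omega_s$ plus a.e.-injectivity plus measure-preservation (both systems carry their unique invariant measure, using unique ergodicity from Proposition~\ref{prop-uniquely-ergodic}) gives a measurable isomorphism; and that each $R_a(s)$ is a \emph{bounded remainder set} — this is where Lemma~\ref{lem:worm:balance} (automatic balance) enters: the worm $W_a(u_0)$ has nonempty $\topo{x}$-interior, hence $\pi_x(W(u_0))$ is bounded, hence $u_0$ is balanced, and a balanced coding of a translation yields bounded remainder sets by the standard argument $\sum_{n<N}\indic_{R_a}(T^n 0)=|p_N|_a$ compared with $N\,\freq(u_0)_a=N\lambda(R_a)/\lambda(P)$. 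I expect the main obstacle to be the tiling/measure-disjointness step — turning the abstract ``nonempty interior at every scale'' (goodness, condition~3) into genuine pairwise disjointness of interiors of all $\Lambda$-translates simultaneously — which requires carefully combining Lemma~\ref{lem:O:matrix}, Lemma~\ref{lem:car:interior} and a Baire/uniform-balance argument (Lemma~\ref{lem:worm:balance:uniform}) to control the boundary over all scales at once; once tiling is established, the domain-exchange formalism and the conjugacy are comparatively routine.
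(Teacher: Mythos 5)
There is a genuine gap in your proposal, concentrated in the passage from the topological disjointness to the measure-theoretic statements. You correctly identify the route to the \emph{topological} tiling — use goodness condition~(3) at each scale $k_n$, transport a nonempty $\topo{x^{(k_n)}}$-open chunk of $W_a(u_{k_n})$ up to $W_a(u_0)$ via Lemma~\ref{lem:O:matrix}, read off disjointness through Lemma~\ref{lem:car:interior}, and combine with Lemma~\ref{lem:Baire}. This matches the paper's Step~1 (Lemmas~\ref{Wopen}, \ref{Wdense}, \ref{lem-Wintersection}). But you then write that this, ``combined with the covering,'' yields a measurable fundamental domain and a measure-disjoint decomposition, and you refer to ``a boundary-measure-zero argument (Baire, Lemma~\ref{lem:Baire}).'' That inference does not hold: Lemma~\ref{lem:Baire} is purely topological (it says $D\subseteq\overline{B\cap C}$ under hypotheses on open sets and never mentions measure), and ``empty interior of intersections'' is strictly weaker than ``measure-zero overlaps.'' One can perfectly well have a locally finite family of compacta covering $P$ whose pairwise intersections have empty interior but positive Lebesgue measure — a fat-Cantor-type boundary. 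All your later claims (that the coding is a.e.\ injective, that two points with the same coding are confined to a null set, that $\lambda(R_a)=v(x)_a\lambda(R)$ for the bounded-remainder computation) silently rest on $\lambda(\partial R_a)=0$, which you never actually establish.

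The paper spends an entire subsection (Step~2, Subsection~\ref{proof-ThC-step2}) on precisely this point, and it is not a soft argument. It introduces the renormalized boundary-measure invariants $f_k=\max_a\lambda(\partial R_a^{(k)})/v_a^{(k)}$, shows they are nondecreasing in $k$ (Lemma~\ref{ineq_meas}, via the graph-directed decomposition of $R_a^{(k)}$ and the determinant identity of Lemma~\ref{detN}), then uses condition~(3) of goodness again — this time quantitatively, counting how many copies of the shrunken pieces land inside the interior ball $B(p_l,r)$ (Lemma~\ref{lem-rapport-L}) — to obtain a strict contraction $f_k\le(1-C)f_l$ along the subsequence $(k_n)$ (Proposition~\ref{bmaj}); combined with uniform boundedness of the $R^{(k_n)}$ (Lemma~\ref{lem:worm:balance:uniform}) this forces $f_k\equiv 0$. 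That renewal/contraction mechanism is the actual content of the theorem and is absent from your plan. Your remaining steps (appeal to \cite[Theorem~2.3]{Aki.Merc.18} for the conjugacy once the boundary is null, the liftability bookkeeping via the worm identity $W_a(u_0)+e_a\subseteq W(u_0)$, and the bounded-remainder estimate via $\pi_x(\ab(p_N))\in R-R$) do match the paper's Step~3 and Conclusion, but they are all downstream of the missing boundary-measure argument.
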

\end{NoHyper}

\begin{remark}
	If we consider a directive sequence of the form $\sigma^\omega$, where
	$\sigma$ is an unimodular substitution, then we are back in the classical
	setting of the Rauzy fractal associated with a single substitution.
	In this sense, Theorem~\ref{thmC} gives a generalization
	of~\cite[Theorem~1.3.3]{Aki.Merc.18}.

	The converse of Theorem~\ref{thmC} is true for directive sequences of the form $\sigma^\omega$: if the subshift is conjugate to a translation on a torus, then $\sigma^\omega$ is good,
	where $\sigma$ is an irreducible Pisot unimodular substitution. See Subsection~\ref{sec:pisot} for more details.

	The Pisot substitution conjecture gives that for every irreducible Pisot unimodular substitution $\sigma$, the directive sequence $\sigma^\omega$ is good.
	See Subsection~\ref{sec:pisot} for more details.
\end{remark}

\subsection{Proof of Theorem~\ref{thmC}}

  In all this subsection we assume that $s$ is a good directive sequence, $x^{(k)} \in \P\RR_+^{d}$ is such that
  \[
  	\{x^{(k)}\} = \bigcap_{n \geq k} M_{[k,n)} \P\RR_+^{d},
  \]
  and $u$ is a fixed point of $s$.
  We also denote $x = x^{(0)}$. Remark that for all $k \in \NN$ we have $x^{(k)} = M_{[0,k)}^{-1} x$.
  We denote by $R^{(k)} = \overline{\pi_{x^{(k)}} W(u_k)}$ and $\forall a \in A$, $R_a^{(k)} = \overline{\pi_{x^{(k)}} W_a(u_k)}$ the Rauzy fractal uniquely defined by the good directive sequence $(s_n)_{n \geq k}$.
  
  \subsubsection{Step 1: proof that we have a topological tiling}
  
  \begin{lemma}\label{Wopen}
	For every $k \in \NN$ and every $a \in A$,
          the set $W_a(u_k)$ has non-empty interior for $\topo{x^{(k)}}$.

  \end{lemma}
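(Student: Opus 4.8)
The plan is to reduce the general statement to the single fact that is assumed in condition (3) of goodness, namely that for each letter $a$ there is a large ball $B(p,r)$ with $\HH\cap\pi_{x^{(k_n)}}^{-1}(B(p,r))\subseteq W_a(u_{k_n})$ along a sequence $k_n\to\infty$, and then propagate non-emptiness of the interior up and down the tower using Lemma \ref{lem:O:matrix}. First I would fix a level $k$ and a letter $a\in A$. Condition (3) gives, for every $n$, a ball showing that $W_{a}(u_{k_n})$ has non-empty interior for $\topo{x^{(k_n)}}$; in particular choosing $n$ with $k_n\geq k$ we obtain some level $\ell=k_n\geq k$ at which $W_a(u_\ell)$ has non-empty interior for $\topo{x^{(\ell)}}$. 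The task is then to transport this to level $k$.

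Next I would use the fixed point relation $u_k = s_{[k,\ell)}(u_\ell)$, hence $W(u_k)=\ab(s_{[k,\ell)})W(u_\ell)$ shifted by Dumont--Thomas data; more precisely, iterating Equation~(\ref{automate-worm}) gives that each $W_a(u_k)$ is a finite union of translates of sets $M_{[k,\ell)}W_b(u_\ell)$. Since $s$ is unimodular, $M_{[k,\ell)}=\ab(s_{[k,\ell)})\in GL_{d+1}(\ZZ)\cap M_{d+1}(\NN)$, so Lemma \ref{lem:O:matrix} applies with $M=M_{[k,\ell)}$: if $W_b(u_\ell)$ has non-empty interior for $\topo{x^{(\ell)}}$, then $M_{[k,\ell)}W_b(u_\ell)$ has non-empty interior for $\topo{M_{[k,\ell)}x^{(\ell)}}=\topo{x^{(k)}}$ (using $x^{(k)}=M_{[0,k)}^{-1}x$ and $x^{(\ell)}=M_{[0,\ell)}^{-1}x$, so $M_{[k,\ell)}x^{(\ell)}=x^{(k)}$). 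A translate of an open set is open, so a single summand already witnesses that $W_a(u_k)$ has non-empty interior. The only subtlety is to make sure the union over $b$ appearing in the Dumont--Thomas decomposition of $W_a(u_k)$ actually contains the letter $b$ for which we know $W_b(u_\ell)$ is thick: here I would instead run the argument differently, going down rather than up — that is, take the $\ell=k_n\geq k$ and for the \emph{specific} letter $a$ use that $a$ itself appears as an endpoint of some path in the automaton from $a$ at level $\ell$ (via the zero-labelled loops guaranteed because $u_\ell=s_{[\ell,\cdot)}(u_\cdot)$), giving a translate of $M_{[k,\ell)}W_a(u_\ell)\subseteq W_a(u_k)$, which by Lemma \ref{lem:O:matrix} has non-empty interior for $\topo{x^{(k)}}$.

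Finally, for $k$ larger than every $k_n$ — which cannot happen since $(k_n)$ is unbounded — the statement is vacuous; for arbitrary $k$ we simply pick $n$ with $k_n\geq k$ and apply the above. The expected main obstacle is the bookkeeping in the second paragraph: correctly identifying, inside the Dumont--Thomas expansion $W_a(u_k)=\bigcup_{b\xrightarrow{t}a}M_{[k,\ell)}W_b(u_k')+t$ (iterated from level $k$ to level $\ell$), a branch whose source letter $b$ is one for which thickness of $W_b(u_\ell)$ is already known, and checking that the corresponding translation vector lies in the relevant lattice so that the image is genuinely open for $\topo{x^{(k)}}$ and contained in $W_a(u_k)$. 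Everything else is a direct application of Lemma \ref{lem:O:matrix} together with the invariance $M_{[k,\ell)}x^{(\ell)}=x^{(k)}$ and the fact that $x^{(k)}$ is a totally irrational direction (condition (4) plus Remark~\ref{rem:cv:cv}), which is needed to invoke that lemma.
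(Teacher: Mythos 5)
Your overall strategy matches the paper's: iterate Equation~(\ref{automate-worm}) from level $k$ up to a level $\ell = k_n \geq k$ supplied by condition~(3) of Definition~\ref{defG}, identify the relation $M_{[k,\ell)}\,x^{(\ell)} = x^{(k)}$, and push non-emptiness of the interior through Lemma~\ref{lem:O:matrix}. Your first paragraph is essentially the paper's proof.

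However, the ``workaround'' you insert in your second paragraph is wrong. You claim that a path from $a$ (at level $\ell$) to $a$ (at level $k$) in the abelianized prefix automaton is ``guaranteed'' by zero-labelled loops coming from $u_\ell = s_{[\ell,\cdot)}(u_\cdot)$. But the zero-labelled path provided by that relation runs through the \emph{first letters} $b_\ell\to b_{\ell-1}\to\dots\to b_k$ of the words $u_\ell,\dots,u_k$; there is no reason for it to start or end at the chosen letter $a$, and in general it does not. So you cannot in general produce a translate of $M_{[k,\ell)}W_a(u_\ell)$ inside $W_a(u_k)$.

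The subtlety you worry about does not in fact arise, and the fix is simpler than your detour. Condition~(3) gives a ball inside $W_b(u_{k_n})$ for \emph{every} $b\in A$, not just for one letter. So you only need to know that the Dumont--Thomas decomposition $W_a(u_k)=\bigcup_{b\xrightarrow{t_{\ell-1},s_{\ell-1}}\dots\xrightarrow{t_k,s_k}a} M_{[k,\ell)}W_b(u_\ell)+\sum_i M_{[k,i)}t_i$ is non-empty. The number of paths from $b$ to $a$ is $(M_{[k,\ell)})_{a,b}$, and $M_{[k,\ell)}$ is an invertible non-negative integer matrix, hence has no zero row; so some $b$ contributes a term, any such $b$ works (thickness holds for all of them), and Lemma~\ref{lem:O:matrix} applied to that single term concludes. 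A smaller point: the totally irrational character of $x^{(k)}=M_{[0,k)}^{-1}x$ already follows from condition~(2) and $M_{[0,k)}\in GL_{d+1}(\ZZ)$; you do not need condition~(4) or Remark~\ref{rem:cv:cv} for it.
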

  
  \begin{proof}
    Consider $k\in\NN$ and $a\in A$. We have by Equation~(\ref{automate-worm})
    
     \[
        W_a(u_k) = \bigcup_{b \xrightarrow{t, s_k} a} M_{k}W_b(u_{k+1})  + t.
     \]
   
Now, if we assume that the interior of $W_b(u_{k+1})$ is not empty
for $\topo{x^{(k+1)}}$, for some $b\in A$ such that $a$ occurs in $s_k(b)$,
then by Lemma~\ref{lem:O:matrix},
the interior of $M_k W_b(u_{k+1})$ is non-empty for $\topo{x^{(k)}}$.
So $W_a(u_k)$ also has non-empty interior.
 
Then we iterate the process: For $l>k$ we have
    \begin{equation}\label{eq-iter-worm}
        W_a(u_k) = \bigcup_{b \xrightarrow{t_{l-1}, s_{l-1}}\dots\xrightarrow{t_k, s_k} a} M_{[k,l)}W_b(u_l) + \sum_{i=k}^{l-1} M_{[k,i)} t_i.
    \end{equation}	
	By hypothesis, for a fixed $k$ we can find $l \geq k$ where the interior of $W_b(u_l)$ is non-empty for all $b\in A$.
	Since $M_{[k,l)}$ is invertible, there exists at least one $b\in A$ such that the union in~(\ref{eq-iter-worm}) is non-empty, and we deduce the result.
  \end{proof}
  
  \begin{lemma} \label{Wdense}
	For every $k \in \NN$ and every $a \in A$, the interior
    of $W_a(u_k)$ is dense in $W_a(u_k)$ for $\topo{x^{(k)}}$.
	
  	Now consider $U \subseteq P$ an open set such that $\pi_{x^{(k)}}^{-1}(U) \cap \HH$ is the interior of $W_a(u_k)$. Then the set $U$ is dense in $R_a^{(k)}$.
  \end{lemma}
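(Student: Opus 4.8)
Lemma \ref{Wdense} — plan.

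The statement has two parts. For the first part (density of the interior in $W_a(u_k)$ for $\topo{x^{(k)}}$), the plan is to reduce to the case $k=0$ by the shift-invariance built into the hypotheses of a good directive sequence — indeed, for every $k$ the sequence $(s_n)_{n\ge k}$ is again good (by Lemma~\ref{lem:theta2} the exponential convergence is preserved, and conditions 2--4 in Definition~\ref{defG} are stated directly in terms of the $x^{(k_n)}$). So it suffices to show: for any $p \in W_a(u_k)$, i.e.\ $p = \ab(q)$ for some prefix $q$ of $u_k$ with $qa$ a prefix, and any $\topo{x^{(k)}}$-open neighbourhood, there is an interior point of $W_a(u_k)$ inside it. I would use the self-similar structure via Equation~(\ref{eq-iter-worm}): a prefix of $u_k$ of the form corresponding to $p$ decomposes along a path $b \xrightarrow{t_{l-1},s_{l-1}} \dots \xrightarrow{t_k,s_k} a$ in the automaton, so $p = M_{[k,l)} p' + \sum_{i=k}^{l-1} M_{[k,i)} t_i$ with $p' \in W_b(u_l)$. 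By Lemma~\ref{Wopen} applied at level $l$ we can choose $l$ large enough that $W_{b'}(u_l)$ has non-empty interior for every $b'$; in particular, near $p'$ inside $W_b(u_l)$ — more precisely, since the interior points of $W_b(u_l)$ project $\pi_{x^{(l)}}$-densely into $R_b^{(l)}$ (which contains $\pi_{x^{(l)}}(p')$) — we can find an interior point $p''$ of $W_b(u_l)$ whose image $M_{[k,l)} p'' + \sum M_{[k,i)} t_i$ is as close as we want (after $\pi_{x^{(k)}}$) to $p$. The push-forward of an interior point is an interior point by Lemma~\ref{lem:O:matrix} (noting $M_{[k,l)} \in GL_{d+1}(\ZZ)\cap M_{d+1}(\NN)$ since the substitutions are unimodular and non-negative, and $x^{(l)}$ is totally irrational, being the limit-compatible direction — here one uses condition~2 of goodness, or rather that $x^{(l)} = M_{[0,l)}^{-1}x$ with $x$ totally irrational so $x^{(l)}$ is too), and adding the fixed vector $\sum M_{[k,i)} t_i$ is a $\topo{x^{(k)}}$-homeomorphism. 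Since the metric $d$ on $\P\RR_+^d$ controls $\pi_{x^{(k)}}$ Lipschitz-ly, closeness of images of $\pi_{x^{(k)}}$ translates into the $\topo{x^{(k)}}$-topology. This gives an interior point of $W_a(u_k)$ arbitrarily $\topo{x^{(k)}}$-close to $p$, proving density.

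For the second part, let $U \subseteq P$ be open with $\pi_{x^{(k)}}^{-1}(U)\cap\HH$ equal to the interior of $W_a(u_k)$; I must show $U$ is dense in $R_a^{(k)} = \overline{\pi_{x^{(k)}} W_a(u_k)}$. Take any $y \in R_a^{(k)}$; by definition of the closure there is a sequence $\pi_{x^{(k)}}(p_n) \to y$ with $p_n \in W_a(u_k)$. By the first part, each $p_n$ has an interior point $p_n'$ of $W_a(u_k)$ nearby in $\topo{x^{(k)}}$; since $\pi_{x^{(k)}}$ is continuous from $\topo{x^{(k)}}$ to $P$ (by the very definition of $\topo{x^{(k)}}$ — it is the finest topology making $\pi_{x^{(k)}}$ continuous), we can arrange $\norm{\pi_{x^{(k)}}(p_n') - \pi_{x^{(k)}}(p_n)}_1 \to 0$, hence $\pi_{x^{(k)}}(p_n') \to y$. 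But $p_n'$ interior means $\pi_{x^{(k)}}(p_n') \in U$ (since $\pi_{x^{(k)}}^{-1}(U)\cap\HH$ is exactly the interior, and $\pi_{x^{(k)}}$ injective on $\HH$ as $x^{(k)}$ totally irrational forces $x^{(k)}\cap\HH=\{0\}$). So $y$ is a limit of points of $U$, i.e.\ $y \in \overline U$. As $y \in R_a^{(k)}$ was arbitrary, $R_a^{(k)} \subseteq \overline U$, and since $U \subseteq \pi_{x^{(k)}}(\operatorname{int} W_a(u_k)) \subseteq R_a^{(k)}$, $U$ is dense in $R_a^{(k)}$.

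The main obstacle I anticipate is making the quantitative closeness argument in the first part fully rigorous: one needs that interior points of $W_b(u_l)$ are $\pi_{x^{(l)}}$-dense in a neighbourhood of $\pi_{x^{(l)}}(p')$ within $R_b^{(l)}$, which at first sight looks circular with the very statement being proved, but the circularity is broken by the shift: we invoke the first part of Lemma~\ref{Wdense} at level $l$ only after it has been established at all levels simultaneously, so the cleanest route is to prove the first part for all $k$ at once by a descending/ascending induction coupled through~(\ref{eq-iter-worm}), or — better — to observe that the ``interior dense in the worm'' property at level $l$ for all $b$ is what the proof of Lemma~\ref{Wopen} already delivers in the form ``non-empty interior'', and then density at level $l$ reduces again to~(\ref{eq-iter-worm}) at a still higher level $l'$, a recursion that terminates because we only ever need to approximate a single fixed point $p$ to within a fixed $\epsilon$, and the contraction $\matrixnorm{\pi_{x^{(k)}} M_{[k,l)}}_1 \to 0$ from the exponential-convergence hypothesis (Lemma~\ref{lem:theta2}) makes the error from truncating at level $l$ tend to $0$. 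So in fact the argument should be phrased as: pick $l$ so large that $\matrixnorm{\pi_{x^{(k)}} M_{[k,l)}}_1 \cdot \operatorname{diam}(W_b(u_l)\text{-relevant region})$ is below $\epsilon$ — using that $R_b^{(l)}$ is bounded (Corollary~\ref{cor:bounded:cover}) — then any interior point of $W_b(u_l)$ (which exists by Lemma~\ref{Wopen}) maps to an interior point of $W_a(u_k)$ within $\epsilon$ of $p$ in $\pi_{x^{(k)}}$, and no further recursion is needed. That removes the apparent circularity entirely.
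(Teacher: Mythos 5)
Your overall strategy matches the paper's: decompose $W_a(u_k)$ via Equation~(\ref{eq-iter-worm}) into pieces $M_{[k,l)}W_b(u_l)+t$, use the exponential decay of $\matrixnorm{\pi_{x^{(k)}} M_{[k,l)}}_1$ to make the $\pi_{x^{(k)}}$-diameter of such a piece small, select a piece containing the given point $m$, observe that the piece has non-empty interior (via Lemma~\ref{lem:O:matrix} together with Lemma~\ref{Wopen}), and conclude that the neighbourhood $V$ meets the interior of $W_a(u_k)$. The second part of your argument (density of $U$ in $R_a^{(k)}$) is correct and matches the paper's.

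There is, however, a genuine gap in the quantitative step. You want to pick $l$ so large that $\matrixnorm{\pi_{x^{(k)}} M_{[k,l)}}_1 \cdot \diam\bigl(\pi_{x^{(l)}} W_b(u_l)\bigr)$ drops below $\epsilon$, and to bound the second factor you invoke Corollary~\ref{cor:bounded:cover}. But that corollary gives boundedness of $R_b^{(l)}$ only \emph{for each fixed} $l$; it says nothing about $\sup_l \diam(R_b^{(l)})$. A priori this supremum could be infinite (the constant $n_0$ in Lemma~\ref{lem:theta2} depends on the starting index), in which case the product need not tend to~$0$ as $l\to\infty$. The paper's proof handles this precisely by restricting $l$ to the subsequence $I=\{k_n\}$ of condition~(3) of Definition~\ref{defG}, where each $W_b(u_{k_n})$ contains the $\pi_{x^{(k_n)}}$-preimage of a ball of a \emph{fixed} radius $r$, and then applying Lemma~\ref{lem:worm:balance:uniform} --- whose hypotheses also use condition~(4), the convergence of $x^{(k_n)}$ to a totally irrational direction --- to conclude that the sets $\pi_{x^{(k_n)}} W_b(u_{k_n})$ are \emph{uniformly} bounded in $n$. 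Your argument never invokes Lemma~\ref{lem:worm:balance:uniform} nor conditions~(3) and~(4) of goodness, and those are exactly what make the uniform bound available; without them, the contraction estimate you rely on is not justified.
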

  
  \begin{proof}
    Consider $m\in W_a(u_k)$ and $V$ open set containing $m$.
    We want to find an element of $V$ in the interior of $W_a(u_k)$. 
    By Equation~(\ref{eq-iter-worm}), $m$ belongs to a set of the following form for each $l \geq k$:
    \[
        M_{[k,l)}W_b(u_l)+t
    \]
    By Lemma~\ref{lem:worm:balance:uniform} the sets $\pi_{x^{(k_n)}}W_b(u_{k_n})$ are uniformly bounded for $n \in \NN$,
    thus we deduce
    with $\limsup_{n \to \infty} \frac{1}{n} \ln \matrixnorm{\pi_x M_{[0,n)}}_1 < 0$ and Lemma~\ref{lem:theta2}
    that the diameter of
    \[
    	\pi_{x^{(k)}}M_{[k,k_n)}W_b(u_{k_n})
    \]
    is arbitrarily small for $n \in \NN$ large enough, hence
	there exists $n \in \NN$ such that we have the inclusion $M_{[k,k_n)}W_b(u_{k_n})+t\subseteq V$.
	As this set has non-empty interior by Lemma~\ref{lem:O:matrix},
	it follows that $V$ intersects the interior of $W_a(u_k)$.
	This proves that the interior of $W_a(u_k)$ is dense in $W_a(u_k)$.
	
	Now, if $U$ is an open subset of $P$ such that $\pi_{x^{(k)}}^{-1}(U) \cap \HH$ is the interior of $W_a(u_k)$, then
	the projection $U \cap \pi_{x^{(k)}}(\HH)$ is dense in $\pi_{x^{(k)}}(W_a(u_k))$ which is dense in $R_a^{(k)}$.
	Thus $U$ is dense in $R_a^{(k)}$.

  \end{proof}
  
  \begin{lemma}\label{lem-Wintersection}
	  $\;$
	\begin{itemize}
		\item For every $t \in \Lambda \setminus \{0\}$, $R \cap (R+t)$ has empty interior.
		\item For $a \neq b \in A$, $R_a \cap R_b$ has empty interior. 
	\end{itemize}
  \end{lemma}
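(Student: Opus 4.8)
The plan is to deduce both statements from a single fact about the worm topologies: \emph{for each letter $a\in A$ there is a non-empty open subset $U_a\subseteq P$ which is dense in $R_a$ and is disjoint from $R_b$ for every $b\neq a$ and from $R+t$ for every $t\in\Lambda\setminus\{0\}$}. I would produce such a $U_a$ as follows. By Lemma~\ref{Wopen}, $W_a(u_0)$ has non-empty interior for $\topo{x}$, and by the very definition of $\topo{x}$ this interior has the form $\pi_x^{-1}(U_a)\cap\HH$ for some non-empty open $U_a\subseteq P$; by Lemma~\ref{Wdense}, $U_a$ is dense in $R_a=R_a^{(0)}$. Since $\HH\cap\pi_x^{-1}(U_a)\subseteq W_a(u_0)$ and $x$ is totally irrational (as $s$ is good), the equivalence between conditions $(1)$ and $(2)$ of Lemma~\ref{lem:car:interior}, applied to the open set $B=U_a$ and the word $u_0$, yields precisely $U_a\cap R_b=\emptyset$ for all $b\neq a$ and $U_a\cap(R+t)=\emptyset$ for all $t\in\Lambda\setminus\{0\}$.

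Granting this, the rest is soft topology resting on one elementary observation: a non-empty open set contained in $\overline{U_a}$ must meet $U_a$. For the second bullet, if $D$ denotes the interior of $R_a\cap R_b$ with $a\neq b$ and $D\neq\emptyset$, then $D$ is a non-empty open subset of $R_a\subseteq\overline{U_a}$, hence meets $U_a$; but $D\subseteq R_b$ and $U_a\cap R_b=\emptyset$, a contradiction, so $R_a\cap R_b$ has empty interior. For the first bullet, fix $t\in\Lambda\setminus\{0\}$; from $W(u_0)=\sqcup_{a\in A}W_a(u_0)$ we get $R=\bigcup_{a\in A}R_a$, hence $R\cap(R+t)=\bigcup_{a\in A}\bigl(R_a\cap(R+t)\bigr)$, a finite union of closed sets (each $R_a$ is compact by Corollary~\ref{cor:bounded:cover}). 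For each $a$, if $R_a\cap(R+t)$ had non-empty interior, that open set would be contained in $R_a\subseteq\overline{U_a}$ and hence meet $U_a$, contradicting $U_a\cap(R+t)=\emptyset$; so each $R_a\cap(R+t)$ has empty interior, and by the Baire category theorem (as used in the proof of Corollary~\ref{cor:bounded:cover}) so does the finite union $R\cap(R+t)$.

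The step I expect to require the most care is the first one — verifying that Lemma~\ref{lem:car:interior} really applies here and extracting from it the two disjointness statements, in particular that $U_a$ is disjoint not merely from the other pieces $R_b$ but from all the translated tiles $R+t$, $t\in\Lambda\setminus\{0\}$. Once that is in place, both bullets are immediate from the density of $U_a$ in $R_a$ together with the Baire-type observation above; I would also take a moment to confirm that $R_a$ and $R+t$ are closed, so that the finite union argument in the first bullet is legitimate.
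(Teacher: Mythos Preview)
Your proof is correct and follows essentially the same strategy as the paper: both arguments rest on Lemmas~\ref{Wopen} and~\ref{Wdense} to get open sets $U_a$ dense in $R_a$, together with the disjointness of the worm pieces $W_a(u_0)$ and of the worm translates $W(u_0)+t$. The only organizational difference is that the paper extracts the weaker disjointness $U_a\cap U_b=\emptyset$ (resp.\ $U\cap(U+t)=\emptyset$) directly from $W_a\cap W_b=\emptyset$ and then invokes Lemma~\ref{lem:Baire}, whereas you go through Lemma~\ref{lem:car:interior} to obtain the stronger disjointness $U_a\cap R_b=\emptyset$ and $U_a\cap(R+t)=\emptyset$, which lets you finish with the elementary observation that a non-empty open set in $\overline{U_a}$ must meet $U_a$; the price is the extra (but harmless) finite-union step for the first bullet.
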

  
  \begin{proof}
    We denote $\pi = \pi_{x^{(0)}}$.
    By Lemma~\ref{lem:worm:tiling}, we have $W(u_0)\cap (W(u_0)+t)=\emptyset$.
    Now consider $U \subseteq P$ an open set such that $\pi^{-1}(U) \cap \HH$ is the interior of $W(u_0)$.
    Then, we have
    \begin{align*}
    	\pi^{-1}(U) \cap ( \pi^{-1}(U) + t) \cap \HH = \emptyset &\Longrightarrow U \cap (U + \pi(t)) \cap \pi(\HH) = \emptyset \\
    																	&\Longrightarrow U \cap (U + \pi(t)) = \emptyset,
    \end{align*}
    because $\pi(\HH)$ is dense in $P$ since the direction $x^{(0)}$ is totally irrational.
    Moreover, by Lemma~\ref{Wdense}, we have that $U$ is dense in the $R$.
    Then, by Lemma~\ref{lem:Baire}, the empty set $U \cap (U + t)$ is dense in the interior of $R \cap (R + t)$. We deduce that the interior of $R \cap (R + t)$ is empty.
    
    For $a \neq b$, we have $W_{a}(u_0) \cap W_{b}(u_0) = \emptyset$. Let $U_a$ and $U_b$ be open subsets of $P$ such that $W_{a}(u_0) = \pi^{-1}(U_a) \cap \HH$ and $W_b(u_0) = \pi^{-1}(U_b) \cap \HH$.
    Then, by Lemma~\ref{lem:Baire}, the empty set $U_a \cap U_b$ is dense in the interior of $R_a \cap R_b$. We deduce that the interior of $R_a \cap R_b$ is empty.
    
  \end{proof}
  
  \subsubsection{Step 2: proof that the boundary has zero Lebesgue measure}\label{proof-ThC-step2}

For every $k \in \NN$, we denote $v^{(k)} = v(x^{(k)})$\nomenclature[L]{$v^{(k)}$}{vector} the unique vector such that $[v^{(k)}] = x^{(k)}$ and $\norm{v^{(k)}}_1 = 1$.
Remark that the direction $x^{(k)}$ being totally irrational, the numbers $v^{(k)}_a$ can not be equal to zero. Let us then define $g_k =\max_{a\in A}\frac{\lambda( R^{(k)}_a)}{v^{(k)}_a}$\nomenclature[L]{$g_k$}{measure of Rauzy fractal}, $f_k =\max_{a\in A}\frac{\lambda(\partial R^{(k)}_a)}{v^{(k)}_a}$\nomenclature[L]{$f_k$}{measure of the boundary of the Rauzy fractal}. Let $N_k$\nomenclature[L]{$N_k$}{endomorphism of $P$} be the linear endomorphism of $P$ such that $N_k \circ \pi_{x^{(k+1)}} = \pi_{x^{(k)}} \circ M_k$. This map is well-defined since $M_k x^{(k+1)} = x^{(k)}$. Observe that $N_k$ is an invertible map.

  \begin{lemma} \label{detN}
    For all $l > k$, we have
    \[
    	\det(N_{[k,l)}) = \cfrac {\det(M_{[k,l)})} {\norm{M_{[k,l)}v^{(l)}}_1}.
    \]
  \end{lemma}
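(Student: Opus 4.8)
The plan is to reduce the statement about the product $N_{[k,l)}$ to a statement about a single linear map, and then to evaluate one $(d{+}1)\times(d{+}1)$ determinant in a carefully chosen pair of bases — one for the source and one for the target, \emph{not} the same one.

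\textbf{Reduction.} Composing the defining relations $N_j\circ\pi_{x^{(j+1)}}=\pi_{x^{(j)}}\circ M_j$ for $k\le j<l$ gives $N_{[k,l)}\circ\pi_{x^{(l)}}=\pi_{x^{(k)}}\circ M_{[k,l)}$. Moreover $M_{[k,l)}$ has non-negative entries and $v^{(l)}\in\RR_+^{d+1}\setminus\{0\}$, so $h(M_{[k,l)}v^{(l)})=\norm{M_{[k,l)}v^{(l)}}_1$; combined with $M_{[k,l)}x^{(l)}=x^{(k)}$ and $\norm{v^{(k)}}_1=h(v^{(k)})=1$ this yields $M_{[k,l)}v^{(l)}=\norm{M_{[k,l)}v^{(l)}}_1\,v^{(k)}$. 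Hence it suffices to prove the following abstract fact and apply it with $M=M_{[k,l)}$, $w'=v^{(l)}$, $w=v^{(k)}$, $c=\norm{M_{[k,l)}v^{(l)}}_1$: if $M\in GL_{d+1}(\RR)$, if $w,w'\in\RR^{d+1}$ satisfy $h(w)=h(w')=1$ and $Mw'=c\,w$ with $c\neq0$, and if $N\colon P\to P$ is the endomorphism with $N\circ\pi_{w'}=\pi_w\circ M$, then $\det N=\det(M)/c$.

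\textbf{Block form in mixed bases.} Fix a basis $(b_1,\dots,b_d)$ of the hyperplane $P$; then $(b_1,\dots,b_d,w')$ and $(b_1,\dots,b_d,w)$ are bases of $\RR^{d+1}$. Write the matrix of $M$ with respect to the first basis in the source and the second in the target. Its last column is $\transp{(0,\dots,0,c)}$ since $Mw'=c\,w$. For $1\le i\le d$, since $b_i\in P$ we have $\pi_{w'}(b_i)=b_i$, so $\pi_w(Mb_i)=N\pi_{w'}(b_i)=Nb_i\in P=\vect(b_1,\dots,b_d)$, whence $Mb_i=Nb_i+h(Mb_i)\,w$. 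Thus the matrix is block lower-triangular with diagonal blocks equal to the matrix of $N$ in the basis $(b_1,\dots,b_d)$ and the scalar $c$, so its determinant is $c\,\det N$.

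\textbf{The change-of-basis factor collapses, then conclude.} Converting this mixed-basis matrix back to the ordinary determinant of $M$ multiplies it by $\det(b_1,\dots,b_d,w)/\det(b_1,\dots,b_d,w')$, the quotient of the determinants (in the canonical basis) of the two change-of-basis matrices. The crucial point is that $\det(b_1,\dots,b_d,y)$ depends only on $h(y)$: if $h(y)=h(y')$ then $y-y'\in P=\vect(b_1,\dots,b_d)$, so subtracting a suitable combination of the first $d$ columns from the last one turns $y$ into $y'$ without changing the determinant. As $h(w)=h(w')=1$, this quotient is $1$, so $\det M=c\,\det N$, i.e.\ $\det N=\det(M)/c$, proving the abstract fact and hence the lemma.

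I expect the only real subtlety to be bookkeeping: one must resist identifying the source and target copies of $\RR^{d+1}$ (they are equipped with different ``last'' basis vectors $w'$ and $w$), and the whole content is that the resulting change-of-basis factor disappears precisely because $w$ and $w'$ lie on the same affine hyperplane $\{h=1\}$. One could also package the same computation via the volume form $\omega_P\wedge\mathrm{d}h$ on $\RR^{d+1}$, which makes the $h$-invariance manifest, but the elementary argument above seems the shortest route.
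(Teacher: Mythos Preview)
Your proof is correct and follows essentially the same route as the paper: compute the matrix of $M_{[k,l)}$ with respect to the source basis $(b_1,\dots,b_d,v^{(l)})$ and target basis $(b_1,\dots,b_d,v^{(k)})$, read off the block-triangular form with diagonal blocks $[N_{[k,l)}]$ and $\norm{M_{[k,l)}v^{(l)}}_1$, and observe that the change-of-basis factor is $1$. The paper records this last point by simply writing the change-of-basis matrix as $\begin{pmatrix}\Id&*\\0&1\end{pmatrix}$; your column-operation justification that $\det(b_1,\dots,b_d,y)$ depends only on $h(y)$ is the same observation spelled out, and your preliminary reduction to a single abstract $(M,N,w,w',c)$ is a convenient packaging but not a different idea.
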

  
  \begin{proof}
Consider two bases of $\RR^{d+1}$ made by a basis of $P$ and $v^{(l)}$ for one,
and by the same basis of $P$ and $v^{(k)}$ for the second one.
Then we compute the matrix of the linear map $M_{[k,l)}$ in these bases.
To do this we use the definition of $N_{[k,l)}$
and the fact that $M_{[k,l)} v^{(l)} = \norm{M_{[k,l)}v^{(l)}}_1 v^{(k)}$.
Thus we obtain
$$
	\begin{pmatrix}
		\left[N_{[k,l)}\right]	&  0	\\
		*		& \norm{M_{[k,l)}v^{(l)}}_1
	\end{pmatrix}
$$
The matrix of change of basis is
$\begin{pmatrix}
	\Id	&  *	\\
	0	& 1
\end{pmatrix}$.
Then we compute the determinant of the matrix, and obtain the result.
  \end{proof}

  \begin{lemma} \label{ineq_meas}
    For every $k \in \NN$, we have $g_k\leq g_{k+1}$ and $f_k\leq f_{k+1}$.
  \end{lemma}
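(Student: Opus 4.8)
The plan is to express each level-$k$ piece $R^{(k)}_a$ as a finite union of affine copies of the level-$(k{+}1)$ pieces, then push measures through this relation and let the determinant factor of Lemma~\ref{detN} cancel against a normalization factor using unimodularity.

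First I would apply Equation~(\ref{automate-worm}) with $\sigma = s_k$ and $u = u_{k+1}$, so that $s_k(u_{k+1}) = u_k$, to obtain for each $a \in A$ the set equality $W_a(u_k) = \bigcup_{b \xrightarrow{t, s_k} a} M_k W_b(u_{k+1}) + t$. Projecting by $\pi_{x^{(k)}}$ and using the defining relation $\pi_{x^{(k)}} \circ M_k = N_k \circ \pi_{x^{(k+1)}}$, then taking closures (a finite union of closed sets, and $N_k$ a homeomorphism of $P$ that commutes with closure and with translation), yields the self-affine decomposition
$$R^{(k)}_a = \bigcup_{b \xrightarrow{t, s_k} a} \bigl(N_k R^{(k+1)}_b + \pi_{x^{(k)}}(t)\bigr).$$

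Next I would estimate measures. Lebesgue measure $\lambda$ on $P$ is translation-invariant and scales by $|\det N_k|$ under $N_k$; moreover the boundary of a finite union is contained in the union of the boundaries, and $N_k$, being a linear isomorphism of $P$, maps boundaries to boundaries. Hence $\lambda(R^{(k)}_a) \leq |\det N_k| \sum_{b \xrightarrow{t, s_k} a} \lambda(R^{(k+1)}_b)$ and likewise $\lambda(\partial R^{(k)}_a) \leq |\det N_k| \sum_{b \xrightarrow{t, s_k} a} \lambda(\partial R^{(k+1)}_b)$. By the remark following Lemma~\ref{dumont:thomas}, the number of transitions $b \xrightarrow{t, s_k} a$ with fixed source $b$ is $(M_k)_{a,b}$, so these sums equal $\sum_{b \in A} (M_k)_{a,b} \lambda(R^{(k+1)}_b)$, respectively with $\partial$. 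Bounding $\lambda(R^{(k+1)}_b) \leq g_{k+1} v^{(k+1)}_b$ (resp. $\lambda(\partial R^{(k+1)}_b) \leq f_{k+1} v^{(k+1)}_b$) by definition of $g_{k+1}$ and $f_{k+1}$, the right-hand side becomes at most $g_{k+1} (M_k v^{(k+1)})_a$, and since $M_k v^{(k+1)}$ is a nonnegative representative of $M_k x^{(k+1)} = x^{(k)}$ it equals $\norm{M_k v^{(k+1)}}_1 v^{(k)}$, so $(M_k v^{(k+1)})_a = \norm{M_k v^{(k+1)}}_1 v^{(k)}_a$.

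To finish, I would invoke Lemma~\ref{detN} with $l = k+1$, giving $|\det N_k| = |\det M_k| / \norm{M_k v^{(k+1)}}_1 = 1 / \norm{M_k v^{(k+1)}}_1$ because $s_k$ is unimodular. The two norm factors cancel, leaving $\lambda(R^{(k)}_a) \leq g_{k+1} v^{(k)}_a$ and $\lambda(\partial R^{(k)}_a) \leq f_{k+1} v^{(k)}_a$. Dividing by $v^{(k)}_a$, which is nonzero because $x^{(k)} = M_{[0,k)}^{-1} x$ is totally irrational, and taking the maximum over $a \in A$, we get $g_k \leq g_{k+1}$ and $f_k \leq f_{k+1}$. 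There is no genuine obstacle here beyond bookkeeping; the only point requiring a little care is the boundary estimate, which relies on the elementary fact that $\partial\bigl(\bigcup_i A_i\bigr) \subseteq \bigcup_i \partial A_i$ for finite unions together with $N_k$ being a linear homeomorphism of $P$, so that the boundary relation follows from the set-theoretic decomposition of $R^{(k)}_a$ above.
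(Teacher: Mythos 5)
Your proposal is correct and follows essentially the same route as the paper: the self-affine decomposition of $R^{(k)}_a$ in terms of the $R^{(k+1)}_b$'s via $N_k$, the measure estimate counting transitions by $(M_k)_{a,b}$, and the cancellation via Lemma~\ref{detN} together with $\abs{\det M_k}=1$. You spell out a bit more than the paper (how the decomposition of $R^{(k)}_a$ is derived from Equation~(\ref{automate-worm}), and the boundary inclusion $\partial(\bigcup_i A_i) \subseteq \bigcup_i \partial A_i$ used for $f_k$, which the paper dismisses as "similar"), but the argument is the same.
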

  \begin{proof}
      For every $k \in \NN$ and $a \in A$, we have the equality
    \[
        R^{(k)}_a = \bigcup_{b \xrightarrow{t, s_k} a} N_k R^{(k+1)}_{b} + \pi_{v^{(k)}} (t).\]
        
 We deduce
  \begin{align*}
  	(\lambda (R_a^{(k)}))_{a \in A} &\leq \left( \sum_{b \xrightarrow{t, s_k} a} \abs{\det N_k} \lambda(R_b^{(k+1)}) \right)_{a \in A} \\
  									&= \abs{\det N_k} M_k(\lambda(R_a^{(k+1)}))_{a \in A} \\
  									&\leq \abs{\det N_k} M_k g_{k+1} v^{(k+1)}.
  \end{align*}
  By Lemma~\ref{detN} and using $\abs{\det M_k}=1$ we have
  \[
  	(\lambda (R_a^{(k)}))_{a \in A} \leq \frac{1}{\norm{M_kv^{(k+1)}}_1}
    g_{k+1} M_k v^{(k+1)} = g_{k+1}v^{(k)},
  \]
  thus the sequence $g_k$ is nondecreasing.
  The proof is similar for the sequence $f_k$.
  \end{proof}
  
  	Let $I = \{k_n \mid n \in \NN\}$, and
    let $a \in A$. For every $b \in A$, $k \in I$ and $l \in I$, let
    \[
        L_b^{k,l} = \{\pi_{v^{(k)}} \left( \sum_{i=k}^{l-1} M_{[k,i)} t_i \right) \in P \mid b \xrightarrow{t_{l-1}, s_{l-1}} ... \xrightarrow{t_k, s_k} a\}
    \]
    \[
        P_b^{k,l} = \{t \in L_b^{k,l} \mid N_{[k,l)} R^{(l)}_{b} + t \subseteq B(p_l, r) \},
    \]
    where $r > 0$ and $p_l \in P$ are such that $\HH \cap \pi_{v^{(l)}}^{-1} B(p_l, r) \subseteq W_a(u_l)$.
    It is possible by definition of $k_n$, since the directive sequence is good (see Definition~\ref{defG}).

    \begin{lemma}\label{lem-rapport-L}
    	There exists a uniform constant $C_a > 0$ such that for all $n \in I$, there exists $l_0$ such that for all $l> l_0$, $l \in I$, we have
		\[
		    \sum_{b\in A} v^{(l)}_b\#P_b^{n, l} \geq C_a\sum_{b\in A} v^{(l)}_b\#L_b^{n, l}.
		\]
    \end{lemma}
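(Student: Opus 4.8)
The plan is to show that, for a worm which is ``thick enough'' at level $n$ (meaning it contains a ball of radius $r$ around some point $p_n$ in the topology $\topo{x^{(n)}}$), a definite proportion of the branches that feed into $R_a^{(n)}$ from a far-away level $l$ actually land well inside that ball, so that the corresponding cylinders contribute with uniformly controlled density. First I would unwind the Dumont--Thomas formula: by Equation~(\ref{eq-iter-worm}) and the definition of $N_{[k,l)}$, projecting by $\pi_{v^{(n)}}$ gives
\[
	R_a^{(n)} = \bigcup_{b \in A}\ \bigcup_{t \in L_b^{n,l}} \left( N_{[n,l)} R_b^{(l)} + t \right),
\]
and each branch $b \xrightarrow{t_{l-1},s_{l-1}} \cdots \xrightarrow{t_n,s_n} a$ contributes a translate of the small set $N_{[n,l)} R_b^{(l)}$. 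The count of branches ending at $a$ starting from $b$ is $(M_{[n,l)})_{a,b}$, so $\#L_b^{n,l} = (M_{[n,l)})_{a,b}$; thus $\sum_b v_b^{(l)} \#L_b^{n,l} = \left(M_{[n,l)} v^{(l)}\right)_a = \norm{M_{[n,l)}v^{(l)}}_1 v_a^{(n)}$, a clean quantity.

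Next I would estimate $\#P_b^{n,l}$ from below. The translates $t \in L_b^{n,l}$ are (roughly) the lattice-like positions of the pieces $N_{[n,l)} R_b^{(l)}$ tiling $R_a^{(n)}$; the ``bad'' ones $t \in L_b^{n,l}\setminus P_b^{n,l}$ are those for which $N_{[n,l)}R_b^{(l)} + t$ is \emph{not} contained in $B(p_l,r)$ --- hence meets the complement, so $t$ lies within $\diam(N_{[n,l)}R_b^{(l)})$ of the boundary $\partial B(p_l,r)$ or outside the ball. Since the worms $\pi_{x^{(k_n)}}W(u_{k_n})$ are uniformly bounded (Lemma~\ref{lem:worm:balance:uniform}), the $L_b^{n,l}$ are spread with bounded density in a region of bounded diameter (depending only on $n$), and by Lemma~\ref{lem:theta2} together with the good hypothesis $\limsup \frac1n \ln\matrixnorm{\pi_x M_{[0,n)}}_1<0$ we have $\diam(N_{[n,l)}R_b^{(l)}) \to 0$ as $l\to\infty$ through $I$. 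Therefore the fraction of bad $t$'s is a boundary effect that can be absorbed: the number of $t \in L_b^{n,l}$ lying in the fixed-size ``safe core'' $B(p_l,r) \ominus (\text{small})$ is at least a fixed proportion --- call it $C_a$, depending on $r$, on the uniform bound for the worm at level $n$, and on the volume ratios --- of the total $\#L_b^{n,l}$, uniformly for $l$ large. Summing over $b$ with weights $v_b^{(l)}$ (all bounded away from $0$ and $\infty$ uniformly, since $x^{(l)} \to$ a totally irrational direction) gives the claimed inequality
\[
	\sum_{b\in A} v_b^{(l)} \#P_b^{n,l} \geq C_a \sum_{b\in A} v_b^{(l)} \#L_b^{n,l}.
\]

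The main obstacle I anticipate is making the ``fixed proportion'' genuinely uniform in $l$: one must quantify simultaneously that (i) the collection $\{t\}_{t\in L_b^{n,l}}$ fills a bounded region with an approximately uniform density --- this uses that $\bigcup_t (N_{[n,l)} R_b^{(l)} + t)$ covers $R_a^{(n)}$ with controlled overlaps, which in turn rests on the tiling Lemma~\ref{lem:worm:tiling} and the covering statement in Corollary~\ref{cor:bounded:cover} --- and (ii) the ``thickness'' $r$ of the feeding ball at level $n$ is fixed once and for all by goodness, so the safe core $B(p_n,r)$ has volume bounded below independently of which $k_n$ we took. The cleanest route is probably to compare Lebesgue measures: $\lambda(R_a^{(n)}) = \sum_b \#L_b^{n,l}\,\abs{\det N_{[n,l)}}\,\lambda(R_b^{(l)})$ up to overlap of measure zero (Lemma~\ref{lem-Wintersection} controls the overlaps), while the part of $R_a^{(n)}$ inside the safe core has measure at least $\lambda(B(p_n,r')) = c\, r'^d$ with $r' = r/2$, and each such contribution comes from a $t$ with $N_{[n,l)}R_b^{(l)}+t$ meeting the core; bounding $\diam(N_{[n,l)}R_b^{(l)})$ below $r' $ for $l$ large then forces those pieces to lie fully inside $B(p_n,r)$, i.e.\ $t \in P_b^{n,l}$. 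Dividing the lower bound $c\,r'^d$ for the core by the upper bound $\abs{\det N_{[n,l)}}\max_b\lambda(R_b^{(l)}) \le \abs{\det N_{[n,l)}} g_l$ for the measure of a single piece, and recalling $g_l \le g_\infty := \sup_k g_k$ is finite if we can show it (which follows from $g_k$ nondecreasing and bounded by $\lambda(P/\Lambda)\cdot\mathrm{const}$ via the covering property), yields a lower bound on the number of pieces in the core of the form $c\,r'^d / (\abs{\det N_{[n,l)}} g_\infty)$, while $\sum_b \#L_b^{n,l} \le (\text{const})/\abs{\det N_{[n,l)}}$ by the same Lebesgue accounting applied to all of $R_a^{(n)}$; the ratio is then $\ge C_a$ independent of $l$, as desired.
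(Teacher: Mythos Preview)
Your overall strategy matches the paper's exactly: decompose $R_a^{(n)}$ into the translates $N_{[n,l)}R_b^{(l)}+t$, wait until their diameters drop below $r/2$, observe that every piece meeting $B(p_n,r/2)$ then lies in $B(p_n,r)$ and hence contributes to $P_b^{n,l}$, and compare measures. You also correctly write down the key algebraic identity $\sum_b v_b^{(l)}\#L_b^{n,l}=(M_{[n,l)}v^{(l)})_a=\norm{M_{[n,l)}v^{(l)}}_1\,v_a^{(n)}$.

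However, the final assembly has a genuine gap. In your ``cleanest route'' you bound $\sum_b \#L_b^{n,l}$ from above by ``the same Lebesgue accounting applied to all of $R_a^{(n)}$''. That direction of the inequality requires the pieces $N_{[n,l)}R_b^{(l)}+t$ to be \emph{disjoint in measure}, which is equivalent to the boundaries having measure zero --- precisely what this lemma is a step towards proving. Lemma~\ref{lem-Wintersection} only gives empty interior of intersections, not measure zero, so invoking it here is circular. The paper avoids this entirely: it never bounds the $L$-side by measure. Instead it uses the identity you already wrote, together with Lemma~\ref{detN}, which gives $\abs{\det N_{[n,l)}}\cdot\norm{M_{[n,l)}v^{(l)}}_1=1$. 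Then from the covering inequality
\[
\sum_b \abs{\det N_{[n,l)}}\,\lambda(R_b^{(l)})\,\#P_b^{n,l}\ \ge\ \lambda\bigl(B(p_n,r/2)\bigr)
\]
and $\lambda(R_b^{(l)})\le g_l\,v_b^{(l)}$ one gets directly $\sum_b v_b^{(l)}\#P_b^{n,l}\ge \lambda(B(p_n,r/2))\,\norm{M_{[n,l)}v^{(l)}}_1/g_l$, and dividing by the algebraic expression for $\sum_b v_b^{(l)}\#L_b^{n,l}$ yields the ratio $\ge \lambda(B(p_n,r/2))/(g_l\,v_a^{(n)})$, with no disjointness needed.

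A second, smaller issue: your bound on $g_l$ (``bounded by $\lambda(P/\Lambda)\cdot\text{const}$ via the covering property'') goes the wrong way --- covering gives a lower bound on $\lambda(R^{(l)})$, not an upper one. The correct reason $g_l$ stays bounded along $I$ is that the $R^{(l)}$ are \emph{uniformly} bounded by Lemma~\ref{lem:worm:balance:uniform} and the coordinates $v_b^{(l)}$ are bounded away from $0$ because $x^{(l)}$ converges (along $I$) to a totally irrational direction.
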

    
    \begin{proof}
    	Let $n \in I$.
	    Let $l_0 \in \NN$ such that $\forall l \geq l_0$ with $l \in I$, the diameter of $N_{[n,l)} R^{(l)}_{b}$ is less than $r/2$. 
	    It is possible, using Lemma~\ref{lem:theta2}, and because $R^{(l)}$, $l \in I$, are uniformly bounded by Lemma~\ref{lem:worm:balance:uniform}.
	    Then, for every $t \in L_b^{n, l}$, if $N_{[n,l)} R^{(l)}_{b} + t$ meets $B(p_{l}, r/2)$, then it is included in $B(p_{l}, r)$.
	    Thus, we have 
	    
	    \[
	    	g_{l} \sum_{b\in A} v^{(l)}_b\#P_b^{n, l} \geq \sum_{b\in A} \lambda (R_b^{(l)})\#P_b^{n, l} \geq \frac{\lambda(B(p_{l}, r/2))}{\abs{\det(N_{[n,l)})}}.
	    \]
	    
	    Moreover we have $\sum_{b\in A} v^{(l)}_b\#L_b^{n, l} = (M_{[n, l)} v^{(l)})_{a} = \norm{M_{[n, l)} v^{(l)}}_1 v_a^{(n)}$.
	    
	    We deduce by Lemma~\ref{detN}
	    \[
	    	\frac{\sum_{b \in A}v^{(l)}_b\#P_b^{n, l} }{\sum_{b \in A} v^{(l)}_b\# L_b^{n, l}}\geq \frac{1}{g_{l}}\lambda(B(p_{l}, r/2))\frac{1}{v^{(n)}_a}.
	    \]
		Since $1/g_{l}$ and $v_a^{(n)}$ converges to non-zero values, by hypothesis of total irrationality on the limit of the sequence of directions $x^{(n)}$, for $n \in I$, we deduce the result.
    \end{proof}

  \begin{proposition} \label{bmaj}
  	There exists $c<1$ such that, for every $k \in I$, there exists $l > k$ in $I$ such that $f_k \leq c f_l$,
  	where $I = \{k_n \mid n \in \NN\}$.
  \end{proposition}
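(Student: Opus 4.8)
I would exploit the subdivision of $R^{(k)}_a$ into the small pieces
$C_\gamma = N_{[k,l)}R^{(l)}_{b(\gamma)} + \pi_{v^{(k)}}\!\big(\sum_{i=k}^{l-1}M_{[k,i)}t_i\big)$
indexed by the paths $\gamma\colon b(\gamma)\xrightarrow{t_{l-1},s_{l-1}}\dots\xrightarrow{t_k,s_k}a$ in the automaton (this is Equation~(\ref{eq-iter-worm}) projected to $P$, cf.\ the proofs of Lemmas~\ref{Wopen} and \ref{ineq_meas}), together with Lemma~\ref{lem-rapport-L}, in order to improve on the trivial bound $f_k\le f_{k+1}$ coming from Lemma~\ref{ineq_meas}. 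So: fix $k\in I$ and pick $a\in A$ realizing the maximum defining $f_k$, i.e.\ $f_k=\lambda(\partial R^{(k)}_a)/v^{(k)}_a$. Apply Lemma~\ref{lem-rapport-L} with $n=k$ (for each of the finitely many letters) to get a uniform $C_a>0$, then fix some $l\in I$, $l>k$, large enough that the conclusion of Lemma~\ref{lem-rapport-L} holds and that $\diam\big(N_{[k,l)}R^{(l)}_b\big)$ is small for all $b$ (using Lemma~\ref{lem:theta2} and the uniform boundedness of the $R^{(l)}$ from Lemma~\ref{lem:worm:balance:uniform}).

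The topological heart is the claim that the pieces $C_\gamma$ with $t_\gamma\in P^{k,l}_{b(\gamma)}$ do not touch $\partial R^{(k)}_a$. Indeed, by definition of $P^{k,l}_b$ such a piece is contained in the ball of radius $r$ furnished by the good-directive-sequence hypothesis (Definition~\ref{defG}), which by Lemma~\ref{lem:car:interior} lies in the interior of $R^{(k)}_a$; since $C_\gamma$ is closed and sits inside this open set, $\partial C_\gamma$ is disjoint from $\partial R^{(k)}_a$. As $\partial R^{(k)}_a\subseteq\bigcup_\gamma\partial C_\gamma$ (boundary of a finite union is contained in the union of the boundaries), I may discard those pieces, and using $\partial C_\gamma=N_{[k,l)}(\partial R^{(l)}_{b(\gamma)})+t_\gamma$, the scaling of $\lambda$ by $|\det N_{[k,l)}|$, its translation invariance, and $\lambda(\partial R^{(l)}_b)\le f_l v^{(l)}_b$:
\[
  \lambda(\partial R^{(k)}_a)\ \le\ \sum_{\gamma:\,t_\gamma\notin P^{k,l}_{b(\gamma)}}\lambda(\partial C_\gamma)\ =\ |\det N_{[k,l)}|\sum_{b\in A}\lambda(\partial R^{(l)}_b)\,\#\big(L^{k,l}_b\setminus P^{k,l}_b\big)\ \le\ |\det N_{[k,l)}|\,f_l\sum_{b\in A}v^{(l)}_b\,\#\big(L^{k,l}_b\setminus P^{k,l}_b\big).
\]

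Now Lemma~\ref{lem-rapport-L} gives $\sum_b v^{(l)}_b\#(L^{k,l}_b\setminus P^{k,l}_b)\le(1-C_a)\sum_b v^{(l)}_b\#L^{k,l}_b$, and since the number of paths $b\to a$ is $(M_{[k,l)})_{a,b}$ and $M_{[k,l)}v^{(l)}$ is a nonnegative multiple of $v^{(k)}$ we have $\sum_b v^{(l)}_b\#L^{k,l}_b=(M_{[k,l)}v^{(l)})_a=\matrixnorm{M_{[k,l)}v^{(l)}}_1\,v^{(k)}_a$ — wait, more precisely $\norm{M_{[k,l)}v^{(l)}}_1\,v^{(k)}_a$. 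Finally Lemma~\ref{detN} and unimodularity ($|\det M_{[k,l)}|=1$) give $|\det N_{[k,l)}|=1/\norm{M_{[k,l)}v^{(l)}}_1$, so every $l$-dependent factor cancels and $\lambda(\partial R^{(k)}_a)\le(1-C_a)f_l\,v^{(k)}_a$, i.e.\ $f_k\le(1-C_a)f_l$. Since $A$ is finite and each $C_a>0$, setting $c=1-\min_{a\in A}C_a<1$ proves the proposition.

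The step I expect to be most delicate is the topological bookkeeping of the second paragraph — pinning down that the pieces landing in the interior ball are genuinely invisible to $\partial R^{(k)}_a$ — and, paired with it, keeping track that the determinant factor $|\det N_{[k,l)}|$ and the norm $\norm{M_{[k,l)}v^{(l)}}_1$ cancel exactly; that cancellation is precisely what upgrades the ``fixed positive proportion'' estimate of Lemma~\ref{lem-rapport-L} into a contraction factor $c<1$ that is independent of the depth $l$.
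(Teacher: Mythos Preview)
Your proof is correct and follows essentially the same route as the paper's: decompose $R^{(k)}_a$ into the level-$l$ pieces, observe that those pieces indexed by $P^{k,l}_b$ lie in the interior of $R^{(k)}_a$ and hence cannot meet $\partial R^{(k)}_a$, bound the boundary measure by the remaining terms, and use Lemma~\ref{lem-rapport-L} together with Lemma~\ref{detN} to produce the contraction factor $1-C_a$. The only cosmetic difference is that you fix a single letter $a$ realizing the maximum in $f_k$, whereas the paper runs the estimate for every $a$ simultaneously and takes $C=\min_{a}C_a$ at the end; both arrive at the same $c=1-\min_a C_a$, which is legitimately independent of $k$ because Lemma~\ref{lem-rapport-L} provides each $C_a$ uniformly in $n\in I$.
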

  
  \begin{proof}
  	Let $k \in I$. For every $l > k$, we have
    \[
        R^{(k)}_a = \bigcup_{b \xrightarrow{t_{l-1}, s_{l-1}} ... \xrightarrow{t_k, s_k} a} N_{[k,l)} R^{(l)}_{b} + \sum_{i=k}^{l-1} \pi_{v^{(k)}}( M_{[k,i)} t_i).
    \]
    
    Hence, we have
    \[
        \partial R^{(k)}_a \subseteq \bigcup_{b \in A} \bigcup_{t \in L_b^{k,l} } N_{[k,l)} \partial R^{(l)}_{b} + t.
    \]
 
 Now if $l \in I$ and $t\in P_b^{k,l}$, then $N_{[k,l)} \partial R^{(l)}_{b} + t$ included in the interior of $R_a^{(k)}$, thus we deduce
 
    \[
        \partial R^{(k)}_a \subseteq \bigcup_{b \in A} \bigcup_{t \in L_b^{k,l} \setminus P_b^{k,l}} N_{[k,l)} \partial R^{(l)}_{b} + t,
    \]
    Using the inclusion
    we deduce
    \begin{align*}
        \lambda( \partial R^{(k)}_a ) &\leq \abs{\det(N_{[k,l)})} \sum_{b \in A} \lambda(\partial R^{(l)}_{b}) \#(L_b^{k,l} \setminus P_b^{k,l}) \\
        							&\leq \abs{\det(N_{[k,l)})} f_l\sum_{b \in A} v^{(l)}_b\# (L_b^{k,l} \setminus P_b^{k,l}) \\
									&\leq \abs{\det(N_{[k,l)})} f_l(1-C_a)\sum_{b \in A} v^{(l)}_b\# L_b^{k,l} \\
									&= (1-C_a)f_l \abs{\det(N_{[k,l)})} \norm{M_{[k,l)} v^{(l)}}_1 v^{(k)}_a \\
									&= (1-C_a) f_l v^{(k)}_a.
	\end{align*}
Thus $f_k\leq (1-C)f_l$, with $C = \min_{a \in A} C_a$.
\end{proof}
  
  \begin{lemma} \label{lem:zero:measure}
  	For every $a \in A$, we have $\lambda(\partial R_a) = 0$.
  \end{lemma}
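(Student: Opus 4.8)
The plan is to prove that the normalized boundary measures $f_k$ vanish along the sequence $I=\{k_n\mid n\in\NN\}$, by combining the geometric contraction of Proposition~\ref{bmaj} with a uniform upper bound $\sup_{k\in I}f_k<\infty$, and then to transport the conclusion down to level $0$ through the finite desubstitution relation between $R_a$ and the pieces $R_b^{(k)}$. The delicate point is establishing the uniform bound; the rest is a mechanical chaining of the preceding results.

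\textbf{Uniform bound $\sup_{k\in I}f_k<\infty$.} For $k\in I$ the worms $W(u_k)$ satisfy the hypotheses of Lemma~\ref{lem:worm:balance:uniform} — this is exactly where item~4 of Definition~\ref{defG}, the total irrationality of $\lim_n x^{(k_n)}$, is used — so the sets $\pi_{x^{(k)}}(W(u_k))$, hence the Rauzy fractals $R^{(k)}$ and their pieces $R_a^{(k)}$, lie in one fixed ball of $P$ and have uniformly bounded Lebesgue measure, so $\lambda(\partial R_a^{(k)})\le\lambda(R^{(k)})\le C_1$ for all $k\in I$ and $a\in A$. On the other hand each $x^{(k)}=M_{[0,k)}^{-1}x$ is a totally irrational direction (total irrationality of a direction is invariant under $GL_{d+1}(\QQ)$, since an invertible rational matrix sends a $\QQ$-linearly independent tuple to a $\QQ$-linearly independent tuple), so every coordinate $v_a^{(k)}$ is strictly positive; moreover $v^{(k_n)}\to v(x^\infty)$, whose coordinates are positive, so $\delta:=\inf_{k\in I}\min_{a\in A}v_a^{(k)}>0$. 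Hence $f_k=\max_{a\in A}\lambda(\partial R_a^{(k)})/v_a^{(k)}\le C_1/\delta$ for every $k\in I$.

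\textbf{Vanishing along $I$.} Fix $k\in I$. Iterating Proposition~\ref{bmaj} — whose constant $c$ lies in $[0,1)$, since $P_b^{k,l}\subseteq L_b^{k,l}$ forces $C_a\le 1$ there — produces integers $k=j_0<j_1<j_2<\dots$ in $I$ with $f_{j_m}\le c\,f_{j_{m+1}}$ for every $m$, hence $f_k\le c^m f_{j_m}\le c^m(C_1/\delta)$; letting $m\to\infty$ gives $f_k=0$. Thus $\lambda(\partial R_a^{(k)})=0$ for every $k\in I$ and every $a\in A$.

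\textbf{Descent to level $0$.} Pick any $k\in I$ (if $0\in I$ there is nothing further to do). Iterating Equation~(\ref{automate-worm}), i.e.\ using Equation~(\ref{eq-iter-worm}) with lower index $0$, together with the identity $N_{[0,k)}\circ\pi_{x^{(k)}}=\pi_{x^{(0)}}\circ M_{[0,k)}$, and taking closures, one gets the finite union
\[
R_a=R_a^{(0)}=\bigcup_{b\xrightarrow{t_{k-1},s_{k-1}}\dots\xrightarrow{t_0,s_0}a}N_{[0,k)}R_b^{(k)}+\sum_{i=0}^{k-1}\pi_{v^{(0)}}(M_{[0,i)}t_i).
\]
Since the boundary of a finite union of sets is contained in the union of their boundaries and $N_{[0,k)}$ is a linear automorphism of $P$, we obtain $\partial R_a\subseteq\bigcup_b\big(N_{[0,k)}\partial R_b^{(k)}+(\text{const}_b)\big)$, a finite union of sets of zero Lebesgue measure, so $\lambda(\partial R_a)=0$. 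The one step I would double-check in a full write-up is the uniform boundedness invoked above: that Lemma~\ref{lem:worm:balance:uniform} genuinely applies along $(k_n)$, even though item~3 of Definition~\ref{defG} supplies balls of common radius $r$ with moving centers $p$ — harmless, since $0$ lies in every worm, so these centers are themselves bounded — and that $\inf_{k\in I}\min_a v_a^{(k)}>0$.
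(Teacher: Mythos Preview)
Your proof is correct and follows the same strategy as the paper: iterate Proposition~\ref{bmaj} against a uniform bound on $(f_k)_{k\in I}$ to force $f_k=0$ along $I$, then descend to level~$0$. Two small remarks on the comparison. First, the paper's two-line proof leaves the uniform bound $\sup_{k\in I}f_k<\infty$ entirely implicit (it is hidden in the uniform boundedness of $R^{(l)}$ already used in Lemma~\ref{lem-rapport-L}, together with $\min_a v_a^{(k_n)}\to\min_a v(x^\infty)_a>0$); you make this explicit, which is an improvement in clarity. Your worry about moving centers in Lemma~\ref{lem:worm:balance:uniform} is legitimate, and your own hint is exactly the fix: the proof of that lemma yields $\pi_{x^{(k_n)}}(W(u_{k_n}))\subseteq B(p_n,r+2K)$ with $K$ depending only on the radius $r$ and on $x^\infty$, and then $0\in\pi_{x^{(k_n)}}(W(u_{k_n}))$ forces $\|p_n\|\le r+2K$, giving a center-independent bound. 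Second, for the descent the paper simply invokes Lemma~\ref{ineq_meas}: once $f_k=0$ for some $k\in I$, monotonicity gives $f_0\le f_k=0$, hence $\lambda(\partial R_a)=0$ for all $a$. Your explicit finite-union argument via the relation $R_a=\bigcup N_{[0,k)}R_b^{(k)}+\text{const}$ is of course also valid, but Lemma~\ref{ineq_meas} was proved precisely to avoid rewriting it.
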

  	
  \begin{proof}
	  We deduce from Proposition~\ref{bmaj} that there exists $k \in \NN$ such that $f_k = 0$. 
  	Then we have $\lambda(\partial R^{(n)}_a) = 0$ for every $n \leq k$ and every $a \in A$ by Lemma~\ref{ineq_meas}. 
  \end{proof}
  
  \subsubsection{Step 3: proof that the translation is conjugate to the subshift}
  
  We refer to~\cite{Aki.Merc.18}. In the theorem that we recall below, the authors give conditions to prove that the translation by $\pi_x(e_0)$ on the torus $P/\Lambda \simeq \TT^d$ is measurably conjugate to the subshift $\Omega_w$ generated by a word $w \in A^\NN$.
  We check that each condition is satisfied for the word $w = u_0$:
	\begin{itemize}
		\item The boundedness is given by the Corollary~\ref{cor:bounded:cover},
		\item Minimality of the subshift $\Omega_{u_0} = \Omega_{u}$ is given by Proposition~\ref{prop-omega-u-omega-s} since we have primitivity by Lemma~\ref{lem:primitivity},
		\item Boundaries of $R_a$, $a \in A$, have zero Lebesgue measure thanks to Lemma~\ref{lem:zero:measure},
		\item Thanks Lemma~\ref{lem-Wintersection}, we have a topological tiling, and thanks to Lemma~\ref{lem:zero:measure} the boundaries have zero Lebesgue measure.
			Hence we deduce that the union $\bigcup_{t \in \Lambda} R + t = P$ is disjoint in Lebesgue measure.
	\end{itemize}

Thus, we can use the following theorem for $w = u_0$.

\begin{theorem}\cite[Theorem 2.3]{Aki.Merc.18} \label{thm-step3}
	Let $w \in A^\NN$ be an infinite word, and let $x$ be a totally irrational direction.
	Let $R = \overline{\pi_x(W(w))}$ and for all $a \in A$, $R_a = \overline{\pi_x(W_a(w))}$.
	We assume that we have the following:
	\begin{itemize}
		\item the set $\pi_x(W(w))$ is bounded,
		\item the subshift $(\Omega_{w}, T)$ generated by $w$ is minimal,
		\item the boundaries of $R_a$, $a \in A$, have zero Lebesgue measure,
		\item the union $\bigcup_{t \in \Lambda} R + t = P$ is disjoint in Lebesgue measure.
	\end{itemize}
	
	Then there exists a Borel $T$-invariant measure $\mu$ such that the subshift $(\Omega_w, T, \mu)$
	is measurably conjugate
	to the translation on the torus $(P/\Lambda, T_{\pi_x(e_0)}, \lambda)$.
\end{theorem}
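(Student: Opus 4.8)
This is \cite[Theorem~2.3]{Aki.Merc.18}; here is a sketch of the argument, which realizes the conjugacy as the symbolic coding of an explicit domain exchange on the Rauzy fractal. The plan is first to present $T_{\pi_x(e_0)}$ as a piecewise translation of $R$. By the tiling hypothesis $\bigcup_{t\in\Lambda}(R+t)=P$ (disjoint in Lebesgue measure), $R$ contains a genuine fundamental domain $F\subseteq R$ with $\lambda(R\setminus F)=0$, so $q|_F\colon F\to P/\Lambda$ is a measure-preserving bijection up to a null set. The pieces $R_a=\overline{\pi_x(W_a(w))}$ cover $R$, and since their boundaries are $\lambda$-null one checks in the standard way that they overlap only on $\lambda$-null sets and that their interiors partition $R$ up to measure zero; hence $F_a:=F\cap R_a$ partition $F$ modulo null sets. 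Define $E\colon F\to F$ by $E(y)\equiv y+\pi_x(e_a)\pmod\Lambda$ for $y\in F_a$. The key algebraic fact is $\pi_x(e_a)=e_a-v(x)$ with $e_a-e_0\in\Lambda$, so $\pi_x(e_a)\equiv\pi_x(e_0)\pmod\Lambda$ for every $a\in A$; therefore $q\circ E=T_{\pi_x(e_0)}\circ q|_F$, i.e.\ $(F,E,\lambda)$ is measurably conjugate to $(P/\Lambda,T_{\pi_x(e_0)},\lambda)$ via $q$. Moreover the successor $w_n\mapsto w_{n+1}$ along the worm adds $e_{(w)_n}$, and $\pi_x(W(w))$ is bounded, so the $E$-orbit of $0$ is exactly $(\pi_x(w_n))_n$ and its $\{R_a\}$-itinerary is precisely $w$.

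Next, define the coding $\mathrm{cod}\colon P/\Lambda\to A^{\NN}$ on the full-measure shift-invariant set $Z_0$ of points whose $T_{\pi_x(e_0)}$-orbit avoids all the $\lambda$-null sets $q(\partial R_a)$, by $\mathrm{cod}(z)=(a_n)$ with $T^n_{\pi_x(e_0)}z\in q(\operatorname{int}R_{a_n})$; on $Z_0$ it is continuous into $A^{\NN}$. Any length-$k$ factor $b_0\cdots b_{k-1}$ of some $\mathrm{cod}(z)$ occurs on the non-empty open set $O$ of $z'$ with $T^{i}z'\in q(\operatorname{int}R_{b_i})$ for $i<k$; since the worm is dense in $R$ and $R$ tiles, the orbit $(\pi_x(w_m)+\Lambda)_m$ of $0$ is dense, so it meets $O$, forcing $b_0\cdots b_{k-1}=(w)_m\cdots(w)_{m+k-1}$, a factor of $w$. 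Hence $\mathrm{cod}(Z_0)\subseteq\Omega_w$. Because $w$ is balanced, $\Omega_w$ is uniquely ergodic; put $\mu:=\mathrm{cod}_*\lambda$, a $T$-invariant Borel measure, necessarily the unique invariant measure of $\Omega_w$. Then $\mathrm{cod}$ is essentially surjective, since $\mu(\mathrm{cod}(Z_0))\geq\lambda(\mathrm{cod}^{-1}(\mathrm{cod}(Z_0)))\geq\lambda(Z_0)=1$.

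It remains to show $\mathrm{cod}$ is $\lambda$-a.e.\ injective, equivalently that $\{q(R_a)\}_a$ is a generating partition, i.e.\ $\bigcap_{n\geq0}T^{-n}(q(R_{a_n}))$ is a single point for $\lambda$-a.e.\ $z$ with itinerary $(a_n)$. One restricts to the full-measure set of $z$ whose whole orbit stays in $\bigcup_a q(\pi_x(\operatorname{int}_{\topo{x}}W_a(w)))$ — this is full measure because, by Lemma~\ref{lem:car:interior}, $\pi_x(\operatorname{int}_{\topo{x}}W_a(w))=P\setminus\bigl(\bigcup_{b\neq a}R_b\cup\bigcup_{t\neq 0}(R+t)\bigr)$, and these open sets are pairwise disjoint with full-measure union, while $\lambda$ is $T$-invariant. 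On this set the itinerary records step by step in which $\topo{x}$-interior of the worm the point sits, and the translation structure together with the boundedness of the worm then pins the point down: two distinct points with the same itinerary would produce a non-zero $\delta\in P/\Lambda$ with $q(R_a)=q(R_a)+\delta$ modulo $\lambda$-null sets for all $a$, i.e.\ an extra $\mathbb{Z}\delta$-periodicity of each $R_a+\Lambda$; this is excluded because $R$ is bounded and, the direction $x$ being totally irrational, $T_{\pi_x(e_0)}$ is a minimal (hence uniquely ergodic) translation, so no $R_a+\Lambda$ can be invariant under a subgroup strictly larger than $\Lambda$. Thus $\mathrm{cod}$ is a measurable isomorphism from $(P/\Lambda,T_{\pi_x(e_0)},\lambda)$ onto $(\Omega_w,T,\mu)$. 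The main obstacle is exactly this injectivity step: the tiling, the covering and the null-boundary statements are bookkeeping once Lemma~\ref{lem:car:interior} is available, whereas ruling out the coincidental collapse of distinct torus points genuinely requires the total irrationality of $x$ together with a rigidity statement for the Rauzy fractal pieces.
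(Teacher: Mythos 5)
This statement is cited, not proved, in the paper (only the ``idea'' and the commuting-diagram picture are given), so I am comparing your reconstruction to what a correct argument along the stated lines would require, not to a proof text.

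Your overall strategy --- realize the conjugacy as the $\{R_a\}$-coding of a domain exchange, use density of the worm to land the coding inside $\Omega_w$, and use equidistribution to control the injectivity defect --- is the right one, and the first two thirds are essentially sound (up to a small slip: $\pi_x(\operatorname{int}_{\topo{x}}W_a(w))$ is not all of $P\setminus\bigl(\bigcup_{b\neq a}R_b\cup\bigcup_{t\neq 0}(R+t)\bigr)$ but only the dense subset of it lying in $\pi_x(\HH)$, which is what Lemma~\ref{lem:car:interior} actually gives).

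The genuine gap is exactly where you flag ``the main obstacle'', and your attempted closure does not work as written. From two distinct torus points $z\neq z'$ with the same itinerary you derive, via unique ergodicity, that $q(R_a)+\delta=q(R_a)$ modulo null for $\delta=z'-z\in P/\Lambda$, and you then claim this is excluded ``because $R$ is bounded \dots\ so no $R_a+\Lambda$ can be invariant under a subgroup strictly larger than $\Lambda$.'' But the boundedness of $R\subset P$ places \emph{no} obstruction on a finite-order extra periodicity of $q(R_a)$ in the torus: already on $\TT^1$ the bounded set $[0,\tfrac14)\cup[\tfrac12,\tfrac34)\subset[0,1)$ has a $\ZZ$-periodization that is $\tfrac12\ZZ$-periodic. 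Nothing in the four hypotheses, nor in total irrationality of $x$, rules out such a coincidence for an abstract $R_a$ at the level of $P/\Lambda$. The argument must be carried out upstairs in $P$, where boundedness really does bite. Concretely: since $q|_R$ is a.e.\ bijective (tiling hypothesis) and $(R_a+\Lambda)\cap R=R_a$ modulo null, lift $z,z'$ to $y,y'\in R$; these have the same $E$-itinerary, and because $E$ acts by the \emph{same} translation $\pi_x(e_{a_n})$ on both points at step $n$, $E^ny'-E^ny=y'-y=:\delta\in P$ is constant. Equidistribution of the $E$-orbit then gives $\lambda\bigl(R_a\setminus(R_a-\delta)\bigr)=0$ and hence $R_a=R_a+\delta$ modulo $\lambda$-null \emph{as subsets of $P$}. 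Iterating, $\lambda\bigl(R_a\cap(R_a+n\delta)\bigr)=\lambda(R_a)>0$ for every $n$, while $R_a$ and $R_a+n\delta$ are disjoint once $\|n\delta\|_1>\diam(R_a)$ --- a contradiction unless $\delta=0$. So your intuition that ``boundedness pins the point down'' is correct, but it only works after lifting to the domain exchange on $R$; as written, the argument in $\TT^d$ does not close.

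Two smaller points: first, ``$w$ balanced $\Rightarrow$ $\Omega_w$ uniquely ergodic'' is not justified (balance gives uniform letter frequencies, not unique ergodicity of the subshift), but you do not actually need it --- the theorem only asks for the existence of one $\mu$, and $\mu:=\cod_*\lambda$ is invariant and works; second, essential surjectivity onto $\Omega_w$ is immediate from $\mu=\cod_*\lambda$ being a probability on $\Omega_w$ and needs no recourse to unique ergodicity.
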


    The idea to prove this theorem is to show that the natural conjugacy that we have between the shift map on the orbit $\orbit{w} = \{T^n w \mid n \in \NN \}$ 
    and the translation by $\pi_x(e_0)$ on the quotient $\HH/\Lambda$, gives a measurable conjugacy after taking the closure. See Figure~\ref{fig_idea}.
	Hence, we have:
	\begin{remark} \label{rem:coding}
		The symbolic coding coming from the partition of the Rauzy fractal into pieces $R_a(s)$, $a \in A$ is a measurable conjugacy between the translation by $\pi_x(e_0)$ on the torus $P/\Lambda$ and the subshift $\Omega_s = \Omega_u$.
		In particular, it gives a generating partition.
	\end{remark}
	
	\begin{figure}[ht]

		\[
			\xymatrix
			{
                    \orbit{w} \ar[d] \ar[r]^T & \orbit{w} \ar[d] & &
					\Omega_w \ar[d] \ar[r]^T & \Omega_w \ar[d] \\
				W(w) \ar[d] \ar[r]^E & W(w) \ar[d] & \leadsto &
					R \ar[d] \ar[r]^E & R \ar[d] \\
				\HH / \Lambda \ar[r]^{T_{\pi_x(e_0)}} & \HH / \Lambda & &
					P / \Lambda \ar[r]^{T_{\pi_x(e_0)}} & P / \Lambda \\
			}
		\]
		\caption{Commutative diagrams of the conjugacy between the shift $T$, the domain exchange $E$ and the translation on the quotient $T_{\pi_x(e_0)}$, before and after taking the closure} \label{fig_idea}
	\end{figure}
	
	Figure~\ref{fig:ex:tiling} shows the tiling of $P$ by the Rauzy fractal of Figure~\ref{fig:ex:Rauzy} for the lattice $\Lambda$. The vector $\pi_x(e_0)$ giving the translation in the quotient $P/\Lambda$ is also depicted.
	
\begin{figure}[ht]
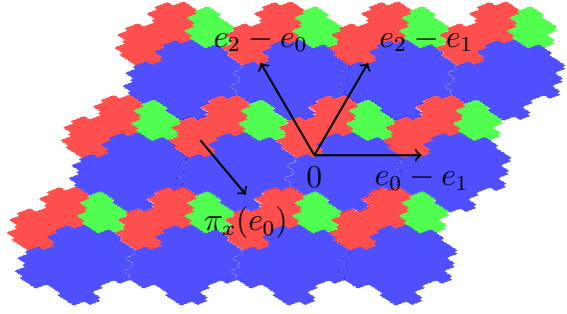

    \centering
    \begin{tikzpicture}[scale=1]
    	\def\decx{0.334167480468748cm}
    	\def\decy{0.0228622267305872cm}
    	\def\w{1.72357157707214cm}
    	\def\transp{.7}
		\node[opacity=\transp] (pic) at (\decx,\decy) {\includegraphics[width=\w]{Cadic.png}};
		\node[opacity=\transp] (pic) at (\decx+1.414213562373095cm,\decy) {\includegraphics[width=\w]{Cadic.png}};
		\node[opacity=\transp] (pic) at (\decx-1.414213562373095cm,\decy) {\includegraphics[width=\w]{Cadic.png}};
		\node[opacity=\transp] (pic) at (\decx-2*1.414213562373095cm,\decy) {\includegraphics[width=\w]{Cadic.png}};
		\node[opacity=\transp] (pic) at (\decx-0.7071067811865475cm,\decy+1.2247448713915894cm) {\includegraphics[width=\w]{Cadic.png}};
		\node[opacity=\transp] (pic) at (\decx+0.7071067811865475cm,\decy+1.2247448713915894cm) {\includegraphics[width=\w]{Cadic.png}};
		\node[opacity=\transp] (pic) at (\decx+0.7071067811865475cm+1.414213562373095cm,\decy+1.2247448713915894cm) {\includegraphics[width=\w]{Cadic.png}};
		\node[opacity=\transp] (pic) at (\decx-0.7071067811865475cm-1.414213562373095cm,\decy+1.2247448713915894cm) {\includegraphics[width=\w]{Cadic.png}};
		\node[opacity=\transp] (pic) at (\decx-0.7071067811865475cm,\decy-1.2247448713915894cm) {\includegraphics[width=\w]{Cadic.png}};
		\node[opacity=\transp] (pic) at (\decx+0.7071067811865475cm,\decy-1.2247448713915894cm) {\includegraphics[width=\w]{Cadic.png}};
		\node[opacity=\transp] (pic) at (\decx-0.7071067811865475cm-1.414213562373095cm,\decy-1.2247448713915894cm) {\includegraphics[width=\w]{Cadic.png}};
		\node[opacity=\transp] (pic) at (\decx-0.7071067811865475cm-2*1.414213562373095cm,\decy-1.2247448713915894cm) {\includegraphics[width=\w]{Cadic.png}};
		\draw[->, thick] (0,0) -- (1.414213562373095cm, 0) node[below] {$e_0-e_1$};
		\draw[->, thick] (0,0) -- (-0.7071067811865475cm, 1.2247448713915894cm) node[above] {$e_2-e_0$};
		\draw[->, thick] (0,0) -- (0.7071067811865475cm, 1.2247448713915894cm) node[above right] {$e_2-e_1$};
		\draw[->, thick] (-1.5cm,.2cm) -- (-1.5cm+0.6011679667801577cm, .2cm-0.7241156395806627cm) node[below] {$\pi_x(e_0)$};
		\node[below] at (0,0) {$0$};
	\end{tikzpicture}
	\caption{Tiling of $P$ by a Rauzy fractal, giving the translation by $\pi_x(e_0)$ on the torus $P/\Lambda$} \label{fig:ex:tiling}
\end{figure}

\subsubsection{Conclusion} 
Now, we prove Theorem~\ref{thmC}.
Starting from a good directive sequence, there exists a Rauzy fractal $R$ by Remark~\ref{rk:def:rauzy}.
By Lemma~\ref{lem-Wintersection} we know that $R$ and $R+t$, $t \in \Lambda \setminus \{0\}$ have intersection of empty interior. Since $\bigcup_{t\in\Lambda} R+t = P$, by Corollary~\ref{cor:bounded:cover}, we deduce that $R$ defines a topological tiling of the torus $P/\Lambda \simeq \TT^d$.
By step 2 we know that the boundaries of $R$ and of the pieces $R_a$ have zero Lebesgue measures.
Thus we deduce that up to a set of zero Lebesgue measure, $R$ is a measurable fundamental domain of $P$ for the action of $\Lambda$.
By Lemma~\ref{lem-Wintersection} we know that the interior of the intersection of two pieces is empty.
So such intersection $R_a \cap R_b$ is included in the boundary of $R_a$ and it has zero Lebesgue measure.
Thus, we get that the union $R(s) = \bigcup_{a \in A} R_a(s)$ is disjoint in Lebesgue measure.
By Lemma~\ref{Wdense}, for all $a \in A$, the piece $R_a$ is the closure of an open set, thus it is the closure of its interior.
By step 3, Theorem~\ref{thm-step3} and Remark~\ref{rem:coding} give the expected conjugacy.

Now, we prove that for every $a \in A$, $R_a$ is a bounded remainder set for the translation by $\pi_x(e_0)$ on the torus $P/\Lambda$.
The Rauzy fractal being bounded, there exists a constant $K$ such that $R-R \subseteq B(0,K)$, where $R-R = \{p-q \mid p,q \in R\}$ is the set of differences.
Let $u$ be a fixed point of $s$.
By the previous conjugacy, for $\lambda$-almost every $z \in P/\Lambda$, there exists an infinite word $w \in \Omega_u$ such that
$w$ is the coding of the orbit of $z$ by the translation for the measurable partition $(R_a)_{a \in A}$ of the torus $P/\Lambda$.
Then, for every $a \in R_a$ we have the equality
\[
	\sum_{n=0}^{N-1} \indic_{R_a}(T_{\pi_x(e_0)}^n(z)) = \abs{p_N}_a,
\]
where $p_N$ is the prefix of length $N$ of the word $w$.
Since $w \in \Omega_u$, for every $N \in \NN$ the word $p_N$ is a factor of $u_0$.
Thus
\[
	\ab(p_N) - Nv(x) = \pi_x(\ab(p_N)) \in \pi_x(W(u_0) - W(u_0)) \subseteq R-R \subseteq B(0,K).
\]
Hence, for $\lambda$-almost every $z \in P/\Lambda$ and for every $a \in A$ we get the inequality
\[
	\abs{\sum_{n=0}^{N-1} \indic_{R_a}(T_{\pi_x(e_0)}^n(z)) - N v(x)_a} \leq K.
\]
Now, by Birkhoff ergodic theorem, we get that for every $a \in A$, $v(x)_a = \frac{\lambda(R_a)}{\lambda(R)}$, so $R_a$ is a bounded remainder set.

Finally note that, by construction, $(R_a)_{a\in A}$ is a liftable partition of
the torus.
Altogether, we get that $(R_a)_{a\in A}$ is a nice generating partition.

\section{Dynamics of continued fractions}\label{sec:cont-frac}

\subsection{Extended continued fraction algorithms}

\begin{definition} \label{def:continued:fraction}
	An \emph{extended continued fraction algorithm}, denoted $(X,s_0)$, is the data of
	\begin{itemize}
		\item a subset $X \subseteq \P\RR_+^d$,
		\item a finite alphabet $A =\{0,\dots,d\}$, with $d \geq 1$,
		\item a finite set $S \subseteq \hom(A^+,A^+)$ of unimodular substitutions on the alphabet $A$,
		\item a map $s_0\colon X \to S$\nomenclature[L]{$s_0$}{extended continued fraction algorithm}
	such that for all $x\in X$, $\ab(s_0(x))^{-1} x\in X$.  
\item 
	a map defined by
	$$F = \map{X}{X}{x}{\ab(s_0(x))^{-1} x}.$$\nomenclature[L]{$F$}{continued fraction algorithm}

	\end{itemize}
\end{definition}

We use the word \emph{extended} to indicate that the algorithm uses substitutions. If we do not use the substitutions, we can retain their matrices $\ab(s_0(x))$ only, or even just the map $F$. We then speak of a \emph{continued fraction algorithm}, denoted $(X,F)$.

Given a continued fraction algorithm $(X,F)$, there are several possible choices for $S$ and $s_0$ to turn it into an extended continued fraction algorithm $(X,s_0)$. These choices do not yield the same associated subshifts, and not the same complexity function.

\noindent Moreover we define $X_0$ as the biggest subset of $X$ such that
\begin{itemize}
	\item $F^n$ is continuous on $X_0$ for all $n \in \NN$,
	\item $X \setminus X_0$ contains all the non totally irrational directions.
\end{itemize}

For $x \in X$, we also denote $s_i(x) = s_0(F^i(x)), i\geq 0$.
The matrices associated to the substitutions are denoted by $M_i = M_i(x) = \ab(s_i(x))$, and we use the classical notation: if $p \leq q$, $M_{[p,q)}$ stands for the product of matrices $M_p M_{p+1} \dots M_{q-1}$. With the map $s_0$ we can do some symbolic dynamics: it allows to define a map
$$s = \map{X}{S^\NN}{x}{s(x) = (s_n(x))_{n \in \NN}},$$

\nomenclature[L]{$s=s(x)$}{directive sequence associated to $x$}

\begin{definition} \label{def:meas:cont:frac}

Let $(X,s_0)$ be an extended continued fraction algorithm, equipped with a measure $\mu$.
We say that $(X,s_0,\mu)$ is an \emph{extended measured continued fraction algorithm} if 
	\begin{enumerate}
		\item $\mu$ is an ergodic $F$-invariant Borel probability measure,
		\item The map $s_0$ is measurable with respect to $\mu$,
		\item $\mu(X_0) = 1$,
		\item for all measurable $Y \subseteq X$ we have $\mu(Y) = 0 \Longrightarrow \mu(F(Y)) = 0$, 
		\item $\exists \epsilon>0, \forall x\in X_0, \forall n \geq 1, \mu(F^n(\{y\in X \mid M_{[0,n)}(y) = M_{[0,n)}(x)\})) > \epsilon$.

\end{enumerate}
\end{definition}

As above, if we are not interested in the particular choice of substitution, we will consider instead the \emph{measured continued fraction algorithm} $(X,F,\mu)$.

Remark that for usual continued fraction algorithms, $X \setminus X_0$ is an invariant set.
And in this case, the ergodicity of $\mu$ gives $\mu(X \setminus X_0) = 1$ or $\mu(X \setminus X_0)=0$. Hence, the hypothesis $\mu(X_0) = 1$ is equivalent to say that $\mu$ is not supported only by $X \setminus X_0$.

Now we give a criterion to prove that a map satisfies the hypotheses of Definition~\ref{def:meas:cont:frac}:
\begin{proposition} \label{prop:cf}
Assume that we have a map
	\[
		F = \map{X}{X}{x}{\ab(s_0(x))^{-1} x}.
	\]
	 such that
	there exists a finite union $H$ of rational hyperplanes of $\P\RR_+^{d}$
	that partition $X \setminus H$
	into a finite number of pieces $(X_i)_{i \in I}$ such that
	for every $i \in I$,
	\begin{itemize}
		\item $\ab(s_0)$ is constant on $X_i$,
	 	\item $(\ab(s_0(x))^{-1} X_i) \setminus H$ is a union of pieces:
			$(\ab(s_0(x))^{-1} X_i) \setminus H = \bigcup_{j\in J}X_j$
			for some $J \subseteq I$.
	\end{itemize}
	If $\mu$ is a Borel ergodic probability measure on $X$ such that
	\begin{itemize}
		\item for all $i \in I$, $\mu(X_i) > 0$,
		\item for every measurable subset $Y$, we have $\mu(Y) = 0 \Longrightarrow \mu(F(Y)) = 0$.
		\item the measure
			of the set of non totally irrational directions is zero,
	\end{itemize}
	then $(X, s_0, \mu)$ is an extended measured continued fraction algorithm
	as defined in Definition~\ref{def:meas:cont:frac}.
\end{proposition}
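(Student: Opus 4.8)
\emph{Proof plan.} The plan is to verify the five conditions of Definition~\ref{def:meas:cont:frac} one at a time; conditions~(1), (2), (4) are essentially immediate, (3) follows from the $F$-invariance of $\mu$, and the only substantial point is (5), which uses the Markov-type second bullet of the hypothesis. Throughout, write $N\subseteq X$ for the set of non totally irrational directions (so $\mu(N)=0$), write $M_i=\ab(s_0(x))$ for the common value of $\ab\circ s_0$ on $X_i$, and write $J_i\subseteq I$ for the index set with $M_i^{-1}X_i\setminus H=\bigcup_{j\in J_i}X_j$. Since every rational hyperplane lies in $N$, we have $H\subseteq N$ and $\mu(H)=0$. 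Condition~(1) is exactly the hypothesis that $\mu$ is a Borel ergodic probability measure (ergodicity understood to include $F$-invariance); in particular $\mu(F^{-k}(H))=\mu(H)=0$ for all $k$. Condition~(2) holds because $s_0$ takes finitely many values and is constant on each of the finitely many Borel pieces $X_i$, off the Borel $\mu$-null set $H$. Condition~(4) is one of the hypotheses.

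\emph{Condition (3).} On each piece $X_i$, a component of $X\setminus H$, the map $F$ coincides with the continuous projective map $x\mapsto M_i^{-1}x$, and the restrictions to distinct pieces are distinct; hence $F$ is continuous precisely off $H$, $F^n$ is continuous precisely off $\bigcup_{k=0}^{n-1}F^{-k}(H)$, and so $X_0=X\setminus\big(N\cup\bigcup_{k\ge 0}F^{-k}(H)\big)$. The removed set is a countable union of $\mu$-null sets, so $\mu(X_0)=1$.

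\emph{Condition (5).} Put $\epsilon_0=\min_{i\in I}\mu(X_i)>0$ (a minimum over the finite set $I$ of positive numbers); we prove (5) with $\epsilon=\epsilon_0/2$. Fix $x\in X_0$ and $n\ge 1$. Since $F^k(x)\notin H$ for all $k$, each $F^k(x)$ lies in a unique piece $X_{i_k}$, $M_k(x)=M_{i_k}$, and from $F^{k+1}(x)\in X_{i_{k+1}}\cap\big(M_{i_k}^{-1}X_{i_k}\setminus H\big)=X_{i_{k+1}}\cap\bigcup_{j\in J_{i_k}}X_j$ together with disjointness of the pieces we get $i_{k+1}\in J_{i_k}$, hence $X_{i_{k+1}}\subseteq M_{i_k}^{-1}X_{i_k}$, for every $k$. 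Let $C=\{y\in X:F^k(y)\in X_{i_k},\ 0\le k\le n-1\}$. Using the set identity $F(A\cap F^{-1}(D))=F(A)\cap D$, the equality $F(X_{i_k})=M_{i_k}^{-1}X_{i_k}$, and the inclusions just obtained, an easy induction gives $F^{m}(C)=\{y:F^k(y)\in X_{i_{k+m}},\ 0\le k\le n-1-m\}$ for $0\le m\le n-1$; in particular $F^{n-1}(C)=X_{i_{n-1}}$, whence
\[
  F^{n}(C)=F(X_{i_{n-1}})=M_{i_{n-1}}^{-1}X_{i_{n-1}}=\Big(\bigcup_{j\in J_{i_{n-1}}}X_j\Big)\cup\big(M_{i_{n-1}}^{-1}X_{i_{n-1}}\cap H\big).
\]
Here $J_{i_{n-1}}\ne\emptyset$ because $F^{n}(x)\in M_{i_{n-1}}^{-1}X_{i_{n-1}}\setminus H$, and $M_{i_{n-1}}^{-1}X_{i_{n-1}}\cap H\subseteq H$ is $\mu$-null, so $\mu(F^{n}(C))\ge\sum_{j\in J_{i_{n-1}}}\mu(X_j)\ge\epsilon_0$. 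Finally, writing $Y_x=\{y\in X:M_{[0,n)}(y)=M_{[0,n)}(x)\}$, we have $C\subseteq Y_x$ (since $F^k(y)\in X_{i_k}$ forces $M_k(y)=M_{i_k}=M_k(x)$), and on $Y_x$ the iterate $F^n$ equals the single injective projective map $y\mapsto M_{[0,n)}(x)^{-1}y$, so $F^n(C)\subseteq F^n(Y_x)$ and $\mu(F^n(Y_x))\ge\mu(F^n(C))\ge\epsilon_0>\epsilon$. Measurability of these images is routine, each being the image of a Borel set under a continuous injective map on the Polish space $\P\RR_+^d$.

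\emph{Main obstacle.} The heart of the argument is condition~(5): the Markov-type hypothesis must be used to identify $F^n$ of an admissible length-$n$ cylinder with a union of whole pieces $X_j$, whose measure is then bounded below by $\min_i\mu(X_i)$ uniformly in $x$ and $n$; passing to $\{y:M_{[0,n)}(y)=M_{[0,n)}(x)\}$, on which $F^n$ is a single linear map, is then immediate. The remaining bookkeeping with the $\mu$-null sets $H$, $F^{-k}(H)$, $M_i^{-1}X_i\cap H$ and $N$ is routine given $\mu(N)=0$ and the $F$-invariance of $\mu$.
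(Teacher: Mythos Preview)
Your proof is correct and follows essentially the same route as the paper's. Both arguments dispose of conditions (1), (2), (4) immediately, obtain $\mu(X_0)=1$ from $H\subseteq N$ and $F$-invariance, and concentrate on (5) by showing that $F^n(\{y:M_{[0,n)}(y)=M_{[0,n)}(x)\})$ contains $F(X_{i(F^{n-1}x)})$, whose measure is bounded below by $\min_i\mu(X_i)$. The only presentational difference is that the paper proves the backward inclusion $M_{[0,n-1)}(x)\,X_{i(F^{n-1}x)}\subseteq\{y:M_{[0,n)}(y)=M_{[0,n)}(x)\}$ by induction, while you iterate the cylinder $C$ forward and show $F^{n-1}(C)=X_{i_{n-1}}$; these are two phrasings of the same Markov computation. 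One minor remark: your claim that $F$ is continuous \emph{precisely} off $H$ is not needed (and not part of the hypothesis); you only use continuity on $X_{\mathrm{irr}}$, and since $F^{-k}(H)\subseteq N$ your formula for $X_0$ collapses to $X_{\mathrm{irr}}$ anyway, matching the paper.
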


Such a family $(X_i)_{i \in I}$ is sometimes called a Markov partition. 

\begin{proof}
	Let $X_{\mathrm{irr}} \subseteq X$ be the set of totally irrational directions of $X$. 
	With such hypotheses, the map $F$ is continuous on
	$X_{\mathrm{irr}} \subseteq X \setminus H$,
	and the set $X_{\mathrm{irr}}$ is invariant by $F$.
	So, for all $n \in \NN$, $F^n$ is continuous on
	$X_{\mathrm{irr}}$, so $X_{0} = X_{\mathrm{irr}}$.
	By hypothesis, we have $\mu(X_0) = 1$.
	It remains to show the property
	\[
		\exists \epsilon > 0,\ \forall x \in X_0,\ \forall n \geq 1,\ \mu(F^n(\{ y \in X \mid M_{[0,n)}(x) = M_{[0,n)}(y) \})) > \epsilon.
	\]
	
	Let us show that for all $n \geq 1$, we have
	\[
		F^n(\{ y \in X \mid M_{[0,n)}(x) = M_{[0,n)}(y) \})
		\supseteq M_{n-1}^{-1}(x) X_{i(F^{n-1} x)}
		= F(X_{i(F^{n-1} x)}),
	\]
	where $i(y) \in I$ is such that $y \in X_{i(y)}$.
	It will ends the proof since the sets $F(X_{i(F^{n-1} x)})$ have positive measure
	\[
		\mu(F X_{i(F^{n-1} x)})
		= \mu(F^{-1}(F X_{i(F^{n-1} x)}))
		\geq \mu( X_{i(F^{n-1} x)} ) > 0,
	\]
	and there are finitely many of them.
	
	The inclusion is equivalent to
	\[
		\{ y \in X \mid M_{[0,n)}(x) = M_{[0,n)}(y) \} \supseteq M_{[0,n-1)}(x) X_{i(F^{n-1} x)}.
	\]
	
	We show the inclusion for every $x \in X_0$ by induction on $n$.
	
	Let $y \in M_{[0,n-1)}(x) X_{i(F^{n-1} x)}$.
	If $n = 1$, we have $y \in X_{i(x)}$, so $M_{0}(x) = M_0(y)$.
	Otherwise, by hypothesis,
	we have $X_{i(F(x))} \subseteq F(X_{i(x)})$, so
	\[
		M_0(x) X_{i(F(x))} \subseteq X_{i(x)}.
	\]
	If we iterate this, we see that we have $M_{[0,n-1)}(x) X_{i(F^{n-1}(x))} \subseteq X_{i(x)}$.
	So, $M_0(x) = M_0(y)$ and we have $F(y) \in M_{[0,n-2)}(F(x)) X_{i(F^{n-1}(x))}$.
	By induction hypothesis with $x$ replaced with $F(x)$,
	we get that $M_{[0,n)}(x) = M_{[0,n)}(y)$.
\end{proof}

The hypotheses of this proposition are true for the usual continued fraction algorithms of Brun, and Cassaigne.
See Sections~\ref{sec-examples} and~\ref{sec-cassaigne}.

\begin{lemma} \label{lem:near-prefix}
Let $(X,s_0)$ be an extended continued fraction algorithm.
We have
    \[
       \forall x\in X_0, \forall k \in \NN, \exists r > 0,
       d(x,y) \leq r \Longrightarrow \forall i \leq k, s_i(x) = s_i(y).
    \]
\end{lemma}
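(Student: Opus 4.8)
My plan is to recognise that the statement is exactly the assertion that the map $s\colon X\to S^\NN$, $s(y)=(s_n(y))_n$, is continuous at $x$ when $S^\NN$ carries the product topology with $S$ discrete: a basic neighbourhood of $s(x)$ is $\{(\sigma_n)\mid\sigma_i=s_i(x)\text{ for all }i\le k\}$, and $d(x,y)\le r$ just constrains $v(y)$ to a neighbourhood of $v(x)$. Since $s_n=s_0\circ F^n$, and since $x\in X_0$ already gives that $F^n$ is continuous at $x$ for every $n$, I would reduce everything to the single claim that, for each $n$, $s_0$ is constant on a neighbourhood of $F^n(x)$. Granting this, the lemma follows by a routine composition argument: for each $i\le k$ pick a radius $\rho_i>0$ on which $s_0$ is constant around $F^i(x)$, then $r_i>0$ with $d(x,y)\le r_i\Rightarrow d(F^i(x),F^i(y))<\rho_i$ (possible by continuity of $F^i$ at $x$), and set $r=\min_{i\le k}r_i$; then $d(x,y)\le r$ forces $s_i(y)=s_0(F^i(y))=s_0(F^i(x))=s_i(x)$ for all $i\le k$.

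To get that $s_0$ is locally constant at $F^n(x)$ for all $n$, I would prove by induction on $n$ the stronger statement that $F^n(x)\in X_0$ and $s_0$ is locally constant at $F^n(x)$, the base case $n=0$ being included. The inductive core is the implication: if $z\in X_0$ then $s_0$ is locally constant at $z$ and $F(z)\in X_0$. For the first half, suppose $s_0$ is not locally constant at the (totally irrational) point $z\in X_0$; since $S$ is finite there is a sequence $y_j\to z$ in $X$ and a fixed $\tau\ne\sigma:=s_0(z)$ with $s_0(y_j)=\tau$ for all $j$. Because $F(y_j)=[\ab(\tau)^{-1}v(y_j)]$ lies in $X\subseteq\P\RR_+^d$, the vectors $\ab(\tau)^{-1}v(y_j)$ stay in $\RR_+^{d+1}\setminus\{0\}$; as $v(y_j)\to v(z)$ and $\ab(\tau)^{-1}v(z)\ne 0$, continuity of projectivization on $\RR_+^{d+1}\setminus\{0\}$ gives $F(y_j)\to[\ab(\tau)^{-1}v(z)]$, while continuity of $F$ at $z$ gives $F(y_j)\to F(z)=[\ab(\sigma)^{-1}v(z)]$. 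Hence $v(z)$ is an eigenvector of $\ab(\sigma)\ab(\tau)^{-1}\in GL_{d+1}(\ZZ)$, i.e.\ $z$ lies in $s_0^{-1}(\sigma)\cap\overline{s_0^{-1}(\tau)}$, a common boundary of two level sets of $s_0$; the crux is that no totally irrational point of $X_0$ can lie there. For the second half, on a neighbourhood of $z$ the map $F$ equals the matrix action $w\mapsto[\ab(\sigma)^{-1}v(w)]$, a local homeomorphism of $\P\RR_+^d$ with $F(z)=\ab(\sigma)^{-1}z$; since $\ab(\sigma)^{-1}\in GL_{d+1}(\ZZ)$ preserves total irrationality, $F(z)$ is totally irrational, and pulling a sequence $\to F(z)$ back through this local homeomorphism shows that continuity of all iterates $F^m$ at $z$ forces continuity of all iterates at $F(z)$; hence $F(z)\in X_0$.

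The step I expect to require the most care is the claim emphasised above, that no totally irrational point of $X_0$ lies on a common boundary of two level sets of $s_0$. Pure linear algebra does not settle it, since an integer matrix can have a totally irrational eigendirection, so the fact that $v(z)$ is fixed up to scaling by $\ab(\sigma)\ab(\tau)^{-1}$ is not in itself absurd; one must invoke the structure of the algorithm. For every algorithm used in this paper the claim is immediate: $s_0$ is piecewise constant with respect to a partition whose boundary is contained in a finite union $H$ of rational hyperplanes of $\P\RR_+^d$ (the Markov partition of Proposition~\ref{prop:cf}), and $X_0$, consisting only of totally irrational directions, is disjoint from $H$. Feeding this back into the induction of the second paragraph and then into the composition argument of the first completes the proof.
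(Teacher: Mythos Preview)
Your proof is correct. The paper's own proof is a single sentence: ``This is an obvious consequence of the definition of the set $X_0$, where $s_n$ is continuous for every $n \in \NN$.'' In other words, the paper simply asserts that each $s_n=s_0\circ F^n$ is continuous at points of $X_0$ and deduces the lemma immediately (since $S$ is discrete, continuity of $s_n$ at $x$ is the same as local constancy).

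Your argument takes a genuinely more careful route. You correctly observe that the definition of $X_0$ only provides continuity of $F^n$, not of $s_n$, and that passing from one to the other requires knowing that $s_0$ is locally constant at every $F^n(x)$. You then (i) reduce by the composition argument, (ii) prove by induction that $F^n(x)\in X_0$, using that near a point where $s_0$ is locally constant the map $F$ is a projectivised integer linear map, hence a local homeomorphism preserving total irrationality, and (iii) justify local constancy of $s_0$ at points of $X_0$ via the Markov-partition structure of Proposition~\ref{prop:cf}, after noting that the purely linear-algebraic argument (eigenvector of $\ab(\sigma)\ab(\tau)^{-1}$) does not by itself yield a contradiction. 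This is exactly the hidden content behind the paper's one-line proof: for the algorithms treated here one has $X_0=X_{\mathrm{irr}}$ (see the proof of Proposition~\ref{prop:cf}), the level sets of $s_0$ are separated by rational hyperplanes, and integer invertible matrices preserve total irrationality, so that both ``$s_0$ locally constant on $X_0$'' and ``$F(X_0)\subseteq X_0$'' are immediate. Your unpacking makes explicit the structural hypothesis that the paper is tacitly using; the trade-off is that your proof, as you note yourself, applies to algorithms with the Markov-partition structure rather than to the bare Definition~\ref{def:continued:fraction}.
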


\begin{proof}
	This is an obvious consequence of the definition of the set $X_0$, where $s_n$ is continuous for every $n \in \NN$.
\end{proof}

\begin{remark}\label{rem-lien-v-x}
	We have $x=M_{[0,k)} F^k(x)$, so that
	$v(x) \in \bigcap_{k\geq 0} M_{[0,k)}(x) \RR_+^{d+1}$.
	If this cone is a line it follows that $v(x)=v^{(0)}$,
	and more generally that $v(F^k(x))=v^{(k)}$.
\end{remark}

\subsection{Lyapunov exponents}

Consider a dynamical system $(X,T)$ with a $T$-invariant Borel probability measure $\mu$ on $X$. {\bf A cocycle} of the dynamical system $(X,T)$ is a map $\bM\colon X\times \NN\to GL_{d+1}(\RR)$ such that
\begin{itemize}
\item $\bM(x,0)=\Id$ for all $x\in X$,
\item $\bM(x,n+m)=\bM(T^n(x),m)\bM(x,n)  $ for all $x\in X$ and $n,m\in\NN$.
\end{itemize}
We denote $\bM(x,-n)=\bM(x,n)^{-1}$ for $n>0$.
Let $\norm{.}$ be any norm on $\RR^{d+1}$.

\begin{theorem}[Oseledets]\label{thm:oseledets}
	Let $(X,T)$ be a dynamical system and $\mu$ be an invariant probability measure  for this system. 
	Let $\bM$ be a cocycle of $(X,T)$ in $GL_{d+1}(\RR)$ such that the maps $x\mapsto \ln{\matrixnorm{\bM(x,1)}}, x\mapsto \ln{\matrixnorm{\bM(x,-1)}}$ are $L^1$-integrable with respect to $\mu$. 

	Then there exists a measurable set $Z\subseteq X$\nomenclature[L]{$Z$}{subset of measure one in Oseledets theorem} with $\mu(Z)=1$ and measurable functions $r,\theta_i$ from $Z$ to $\RR$,
	such that for all $x\in Z$ there is 
	\begin{itemize}
	\item an integer $r(x)$ with $0<r(x)\leq d+1$
	\item $r(x)$ distinct numbers $\theta_1(x)>\dots>\theta_{r(x)}(x)$
	\item a sequence of linear subspaces 
	$$\RR^{d+1}=E_1(x)\supsetneqq \dots \supsetneqq E_{r(x)}(x)\supsetneqq E_{r(x)+1}(x)=\{0\}$$\nomenclature[L]{$E_i(x)$}{Lyapunov spaces}
	such that 
	$$y\in E_i(x)\setminus E_{i+1}(x)\iff \lim_{n\to\infty} \frac{1}{n}\ln\norm{\bM(x,n)y}=\theta_i(x)$$
	\end{itemize}

	If in addition $\mu$ is an ergodic measure, then $Z$ can be chosen so that the functions that map $x$ to $r(x),\theta_1(x),\dots,\theta_{r(x)}(x)$, $\dim E_1(x)\dots, \dim E_r(x)$ are constant on $Z$.
	Then we denote $\theta_i(x)$ by $\theta_i(\mu)=\theta_i(T,\mu)$.
\end{theorem}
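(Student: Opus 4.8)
This is the classical Oseledets multiplicative ergodic theorem, and in the paper the natural thing is to cite it (e.g. Oseledets' original article, or the treatments in Ruelle or in Viana's book) rather than reprove it; what follows is the line of argument I would follow if I had to supply a proof of the one-sided form stated here. The plan is to first extract the numbers $\theta_i$ from the subadditive ergodic theorem, and then build the flag $E_\bullet(x)$ by a geometric argument on singular directions. Write $A_n = \bM(\cdot,n)$, so that $A_{n+m}(x) = A_m(T^n x) A_n(x)$. For each $k \in \{1,\dots,d+1\}$ the $k$-th exterior power $\wedge^k A_n$ is again a cocycle, on $\wedge^k \RR^{d+1}$, and $x \mapsto \ln \matrixnorm{\wedge^k A_n(x)}$ is subadditive and, thanks to the $L^1$-hypothesis on $\ln\matrixnorm{\bM(\cdot,\pm1)}$, integrable (the hypothesis on $\bM(\cdot,-1)$ is what keeps the smallest exponent finite). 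Kingman's subadditive ergodic theorem then yields an $L^1$ and a.e.\ limit $\Lambda_k(x) = \lim_n \tfrac1n \ln \matrixnorm{\wedge^k A_n(x)}$, which is $T$-invariant, hence $\mu$-a.e.\ constant by ergodicity. Since $\matrixnorm{\wedge^k A} = s_1(A)\cdots s_k(A)$ in terms of the singular values $s_1(A)\ge\dots\ge s_{d+1}(A)$, the inequality $s_1\cdots s_{k+1}\cdot s_1\cdots s_{k-1}\le (s_1\cdots s_k)^2$ shows $k\mapsto\Lambda_k$ (with $\Lambda_0=0$) is concave; the distinct values of its consecutive slopes $\Lambda_k-\Lambda_{k-1}$, listed decreasingly, are the $\theta_1(x)>\dots>\theta_{r(x)}(x)$, and the multiplicity of $\theta_i$ is the length of the maximal run of $k$'s with slope $\theta_i$.

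Next I would build the filtration. The candidate spaces are
\[
  E_i(x) = \Bigl\{ v \in \RR^{d+1} : \limsup_{n\to\infty} \tfrac1n \ln \norm{A_n(x) v} \le \theta_i(x) \Bigr\},
  \qquad E_{r+1}(x) = \{0\},
\]
which are manifestly linear subspaces, nested $\RR^{d+1}=E_1(x)\supseteq E_2(x)\supseteq\dots$, and equivariant: $A_1(x)E_i(x) = E_i(Tx)$ is immediate from the cocycle relation and the definition. The substantive claims are that for a.e.\ $x$ the $\limsup$ is actually a limit equal to $\theta_i(x)$ on $E_i(x)\setminus E_{i+1}(x)$, and that $\dim E_i(x) - \dim E_{i+1}(x)$ equals the multiplicity of $\theta_i(x)$, so that $\dim E_i(x)$ matches the kink data of $k\mapsto\Lambda_k$ obtained above. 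I would prove this via the polar (singular value) decomposition $A_n(x) = U_n D_n V_n$ with $V_n$ orthogonal: one shows, using Kingman again applied to suitably chosen $k\times k$ minors together with Raghunathan's angle estimates and a Borel–Cantelli argument, that the flags spanned by the rows of $V_n(x)$ (the right singular directions, grouped according to the clusters of singular values of comparable size) converge fast enough that the limiting flag is well-defined a.e.; that limit is $E_\bullet(x)$, and the corresponding growth rates are the $\theta_i(x)$. Measurability of $x\mapsto E_i(x)$, $r(x)$, $\theta_i(x)$, $\dim E_i(x)$ follows because each is obtained from countably many measurable operations on $x\mapsto A_n(x)$, and then $T$-equivariance together with ergodicity forces $r$, the $\theta_i$ and the $\dim E_i$ to be a.e.\ constant, giving the final sentence of the statement.

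The main obstacle is exactly the second step: the subadditive ergodic theorem is a clean black box delivering the exponents $\theta_i$ and their multiplicities, but it says nothing about the \emph{directions} realizing those rates. Upgrading "the norms $\matrixnorm{\wedge^k A_n}$ grow at rate $\Lambda_k$" to "the right singular subspaces converge and the Oseledets flag exists with the correct graded dimensions" is the geometric heart of the theorem, and it is precisely here that one needs Raghunathan's quantitative control of the angles between the relevant subspaces of $A_n(x)$ and $A_{n+1}(x)$. Since this material is entirely standard, in the paper I would simply invoke the cited reference and move on.
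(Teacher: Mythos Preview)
Your instinct is exactly right: the paper does not prove this theorem at all but simply states it and cites Oseledets and Furstenberg--Kesten. Your sketch of the standard proof (Kingman for the exponents via exterior powers, then Raghunathan-type convergence of singular flags for the filtration) is the textbook route and is fine as background, but for the purposes of matching the paper, your opening sentence already does the job.
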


The numbers $\theta_{i}(T,\mu), i=1\dots m$ are called {\bf Lyapunov exponents} of the cocycle \cite{Oseledets,Furst-Kest}.
We also use the following formulas, see \cite[Theorem 6.3]{Bert.Delec.14}.
Remark that in order to avoid confusion we denote the Lyapunov exponents of $x$ by $\theta_i(x)$ and by $\theta_i(\mu)$ their value for almost all points with respect to the ergodic measure $\mu$. 
\begin{corollary}\label{lyapunov-furstenberg}
	In the ergodic case we have,
	for every $x\in Z$,
	\[
	\theta_1(\mu) = \lim_{n\to \infty} \frac 1 n \ln \matrixnorm{\bM(x,n)},\quad
	\theta_2(\mu) = \lim_{n\to \infty} \frac 1 n \ln \matrixnorm{\bM(x,n)\restrict{E_2(x)}}.
	\]
\end{corollary}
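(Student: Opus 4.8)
The plan is to read both identities as statements about the top Lyapunov exponent of a linear cocycle restricted to an equivariant subbundle: I would prove the first formula directly from the multiplicative ergodic theorem, and then obtain the second one by applying the first to the cocycle restricted to the bundle $x \mapsto E_2(x)$.

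For $\theta_1(\mu)$ the lower bound is immediate. Fixing $x \in Z$ and any vector $y \notin E_2(x)$, Oseledets (Theorem~\ref{thm:oseledets}) gives $\frac1n\ln\norm{\bM(x,n)y} \to \theta_1(x) = \theta_1(\mu)$, and since $\matrixnorm{\bM(x,n)} \ge \norm{\bM(x,n)y}/\norm{y}$ we get $\liminf_n \frac1n\ln\matrixnorm{\bM(x,n)} \ge \theta_1(\mu)$. For the matching upper bound I would invoke the uniform (tempered-norm) refinement of Oseledets: for $\mu$-a.e.\ $x$ and every $\varepsilon > 0$ there is a finite constant $C_\varepsilon(x)$ with $\norm{\bM(x,n)y} \le C_\varepsilon(x)\,e^{n(\theta_1(\mu)+\varepsilon)}\norm{y}$ for every $y \in \RR^{d+1}$ and every $n \ge 0$, the non-uniformity of the pointwise limits being absorbed into $C_\varepsilon(x)$. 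Taking the supremum over unit $y$ yields $\matrixnorm{\bM(x,n)} \le C_\varepsilon(x)\,e^{n(\theta_1(\mu)+\varepsilon)}$, hence $\limsup_n \frac1n\ln\matrixnorm{\bM(x,n)} \le \theta_1(\mu)+\varepsilon$, and letting $\varepsilon \to 0$ finishes the first formula. Alternatively, convergence of $\frac1n\ln\matrixnorm{\bM(x,n)}$ to a constant is Kingman's subadditive ergodic theorem applied to the subadditive sequence $\ln\matrixnorm{\bM(x,n)}$ (using the $L^1$ hypothesis), the content being then only to identify the limit as $\theta_1(\mu)$ via the same two bounds.

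For $\theta_2(\mu)$ the key point is equivariance of the Oseledets filtration: because each $\bM(x,n)$ is invertible and $E_i(x)$ is characterized by the growth rate $\theta_i$, one has $\bM(x,n)\,E_2(x) = E_2(T^nx)$ for $\mu$-a.e.\ $x$ and all $n \ge 0$. Thus $\bM$ restricts to a measurable linear cocycle over $(X,T,\mu)$ on the subbundle $x \mapsto E_2(x)$, whose Lyapunov spectrum is exactly $\theta_2(\mu) > \theta_3(\mu) > \dots$, with top exponent $\theta_2(\mu)$; its operator norm at time $n$ is by definition $\matrixnorm{\bM(x,n)\restrict{E_2(x)}}$. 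Moreover $\ln\matrixnorm{\bM(x,1)\restrict{E_2(x)}}$ is $L^1$, being squeezed between $-\ln\matrixnorm{\bM(x,-1)}$ and $\ln\matrixnorm{\bM(x,1)}$ — the left bound because the operator norm of a restriction is at least the smallest singular value of the full map — and both of these are integrable by hypothesis. Applying the first formula to this restricted cocycle then gives $\theta_2(\mu) = \lim_n \frac1n \ln\matrixnorm{\bM(x,n)\restrict{E_2(x)}}$.

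The main obstacle is the upper-bound half of the first formula: the convergence $\frac1n\ln\norm{\bM(x,n)y} \to \theta_i(\mu)$ supplied by Oseledets holds pointwise in $y$ but not uniformly over the unit sphere, so one genuinely needs the tempered-norm machinery (or an equivalent ergodic-theoretic control of the error constants $C_\varepsilon(x)$) to pass from the vectorwise estimates to a bound on the operator norm. Everything else — the generic-vector lower bound, equivariance of the filtration, the integrability check for the restricted cocycle, and the bootstrap to $\theta_2$ — is routine once that uniformity is in hand.
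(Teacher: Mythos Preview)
Your argument is correct and follows the standard route: lower bounds from Oseledets applied to a generic vector, upper bound on $\theta_1$ via either the tempered-norm refinement or Kingman's subadditive ergodic theorem, and then $\theta_2$ by restricting to the equivariant subbundle $E_2$ and repeating. The integrability check for the restricted cocycle is fine, and the equivariance $\bM(x,n)E_2(x)=E_2(T^nx)$ is indeed part of the Oseledets package.

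The paper, however, does not prove this corollary at all: it simply states the formulas and refers to \cite[Theorem 6.3]{Bert.Delec.14}. So there is no ``paper's own proof'' to compare against --- you have supplied a proof where the authors chose to outsource one. One cosmetic remark: the corollary is phrased ``for every $x\in Z$'' while Kingman and the tempered-norm estimate each hold on their own full-measure sets; since $Z$ in Theorem~\ref{thm:oseledets} is only determined up to null sets, one simply intersects $Z$ with those sets, which you might want to say explicitly.
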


\nomenclature[G$\h$]{$\theta_2(x)$}{Lyapunov exponent}

\subsection{Lyapunov exponents for a continued fraction algorithm}
In the following, since we use transpose of matrix, we consider the dual space of $\RR^{d+1}$. For a vector $v\in\RR^{d+1}$ we denote $v^\circ$\nomenclature[S]{$v^\circ$}{orthogonal in the dual space} the orthogonal in the dual space, i.e., the set of linear forms which vanish on $v$.

\nomenclature[G]{$\varphi$}{linear form}
\begin{remark} \label{rem:theta2}
	For every $x \in X$, we have the equality
	\[
		{v(x)}^\circ = \{ \varphi \circ \pi_x \mid \varphi \in \left( \RR^{d+1} \right)^* \}.
	\]
	Indeed, we have $\pi_x(v(x)) = 0$ and $\im(\pi_x) = P$ is a hyperplane.
\end{remark}

\begin{lemma} \label{lem-bon-cocycle}
Let $(X,F)$ be a continued fraction algorithm.
\begin{itemize}
\item $\forall y \in X$, $\forall n \in \NN$, $\forall N \geq n$, $M_{[0,N)}(y)=M_{[0,n)}(y)M_{[0,N-n)}(F^ny)$.

\item $\forall y \in X$, $\forall n \in \NN$, $\forall N \geq n$, $\pi_yM_{[0,N)}(y)=\pi_y M_{[0,n)}(y)\pi_{F^n(y)}M_{[0,N-n)}(F^ny)$.

\item The map
$$\map{X\times\NN}{GL_{d+1}(\RR)}{(x,n)}{{\bM(x,n)}=\transp{M_{[0,n)}}(x)}$$
defines a cocycle.
\end{itemize}
\end{lemma}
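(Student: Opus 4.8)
The three assertions are all obtained by unwinding the definitions $s_i(x)=s_0(F^i(x))$ and $M_i(x)=\ab(s_i(x))$, so the plan is essentially a careful bookkeeping argument. For the first assertion I would start from the observation that, for every $j\geq 0$,
\[
	M_{n+j}(y)=\ab\bigl(s_0(F^{n+j}(y))\bigr)=\ab\bigl(s_0(F^j(F^n y))\bigr)=M_j(F^n y),
\]
so that the tail block satisfies $M_{[n,N)}(y)=M_n(y)\cdots M_{N-1}(y)=M_0(F^n y)\cdots M_{N-1-n}(F^n y)=M_{[0,N-n)}(F^n y)$. Splitting the product as $M_{[0,N)}(y)=M_{[0,n)}(y)\,M_{[n,N)}(y)$ then gives the identity $M_{[0,N)}(y)=M_{[0,n)}(y)\,M_{[0,N-n)}(F^n y)$.

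For the second assertion the key geometric input is that $\pi_y$ annihilates the direction along which $M_{[0,n)}(y)$ acts on the fiber over $F^n(y)$. By Remark~\ref{rem-lien-v-x} the vector $v(y)$ is a positive multiple of $M_{[0,n)}(y)\,v(F^n(y))$, and since $\pi_y(v(y))=0$ we get $\pi_y\bigl(M_{[0,n)}(y)\,v(F^n(y))\bigr)=0$. Now $\RR^{d+1}=P\oplus\RR\,v(F^n(y))$, the projection $\pi_{F^n(y)}$ restricts to the identity on $P$ and kills $v(F^n(y))$, and the previous computation shows that $\pi_y M_{[0,n)}(y)$ also kills $v(F^n(y))$; evaluating on an arbitrary vector $p+t\,v(F^n(y))$ with $p\in P$ then shows that $\pi_y M_{[0,n)}(y)=\pi_y M_{[0,n)}(y)\,\pi_{F^n(y)}$ as maps on $\RR^{d+1}$. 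Composing this equality on the right with $M_{[0,N-n)}(F^n y)$ and using the first assertion yields $\pi_y M_{[0,N)}(y)=\pi_y M_{[0,n)}(y)\,\pi_{F^n(y)}M_{[0,N-n)}(F^n y)$.

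For the third assertion I would deduce the cocycle property directly from the first. First $\bM(x,0)=\transp{M_{[0,0)}}(x)=\transp{\Id}=\Id$. Then, transposing the first identity (which reverses the order of the factors), one gets for all $n,m\in\NN$
\[
	\bM(x,n+m)=\transp{M_{[0,n+m)}}(x)=\transp{M_{[0,m)}}(F^n x)\,\transp{M_{[0,n)}}(x)=\bM(F^n x,m)\,\bM(x,n),
\]
which is exactly the cocycle relation for the dynamical system $(X,F)$. Finally $\bM$ takes values in $GL_{d+1}(\RR)$ because each $M_i(x)=\ab(s_i(x))$ lies in $GL_{d+1}(\ZZ)$, the substitutions in $S$ being unimodular, and a product of transposes of such matrices is again invertible. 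None of these steps presents a real obstacle; the only point requiring a little care is the projection identity in the second assertion, where one has to keep track of the fact that $M_{[0,n)}(y)$ carries the escape direction $v(F^n(y))$ back onto the line $\RR\,v(y)$ that $\pi_y$ annihilates, after which the composition collapses as claimed.
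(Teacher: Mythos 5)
Your proof is correct and follows essentially the same route as the paper: the first identity comes from $F^{n+j}(y)=F^{j}(F^n y)$, the third by transposing the first, and the second from the projection identity $\pi_y M_{[0,n)}(y)\,\pi_{F^n(y)}=\pi_y M_{[0,n)}(y)$, which the paper states in the equivalent standalone form $\pi_x M\,\pi_{M^{-1}x}=\pi_x M$. The only cosmetic difference is that the paper verifies this identity by expanding $M\pi_{M^{-1}x}(z)=Mz-h(z)Mv(M^{-1}x)$ and applying $\pi_x$, whereas you decompose the domain as $P\oplus\RR\,v(F^n(y))$; both reduce to the same observation that $\pi_y$ annihilates $M_{[0,n)}(y)\,v(F^n(y))\in\RR_+v(y)$.
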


\begin{proof}
For the first point, consider $i\geq n$, then $F^i(y)=F^{i-n}(F^n(y))$, thus $M_i(y)=M_{i-n}(F^n(y))$. Then the third point is a consequence of the definition of a cocycle. 
It remains to prove the second point.
It is an consequence of the first point and of the identity
\[
	\pi_x M \pi_{M^{-1} x} = \pi_x M,
\]
for every $x \in \P\RR_+^d \cap M\P\RR_+^d$ and every matrix $M \in GL_{d+1}(\RR)$.
This identity comes from the fact that $M\pi_{M^{-1}x}(y) = My - h(y) M v(M^{-1}x)$,
and $\pi_x(M v(M^{-1}x)) = 0$. 
\end{proof}

Let now $(X,F,\mu)$ be a measured continued fraction algorithm, as defined in Section~\ref{sec:cont-frac}.
We use Theorem~\ref{thm:oseledets} and Corollary~\ref{lyapunov-furstenberg} for the cocyle defined in Lemma~\ref{lem-bon-cocycle}.
Remark that the hypothesis of integrability is automatically satisfied since this cocycle takes only a finite numbers of values.
In the following we consider the set $Z$ given by Theorem~\ref{thm:oseledets} for this cocycle $(x,n) \mapsto \transp{M_{[0,n)}}(x)$.
In particular we have the following corollary.

\begin{corollary} \label{cor:oss}
	For every $x\in Z$ and $\varphi \in (\RR^{d+1})^*$ we have 
	\[
		\lim_{n \to \infty} \frac{1}{n} \ln \norm{\varphi M_{[0,n)}}_1 = \theta_i(x) \ \Longleftrightarrow\ \varphi \in E_i(x) \setminus E_{i+1}(x),
	\]
	and we have
	\begin{align*}
		\theta_1(x) = \theta_1(\mu) &= \lim_{n\to \infty} \frac 1 n \ln \matrixnorm{\transp{M_{[0,n)}}(x)}_\infty,\\
		\theta_2(x) = \theta_2(\mu) &= \lim_{n\to \infty} \frac 1 n \ln \matrixnorm{\transp{M_{[0,n)}}(x)\restrict{E_2(x)}}_\infty.
	\end{align*}
\end{corollary}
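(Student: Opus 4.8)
The plan is to deduce Corollary~\ref{cor:oss} directly from Oseledets' theorem (Theorem~\ref{thm:oseledets}) and the Furstenberg-type formulas of Corollary~\ref{lyapunov-furstenberg}, applied to the cocycle $\bM(x,n)=\transp{M_{[0,n)}}(x)$, which is a genuine cocycle by the third item of Lemma~\ref{lem-bon-cocycle}. First I would check the integrability hypothesis of Theorem~\ref{thm:oseledets}: since $S$ is finite, $\bM(x,1)=\transp{\ab(s_0(x))}$ takes only finitely many values, hence so does $\bM(x,-1)=\transp{\ab(s_0(x))}^{-1}$, so the maps $x\mapsto\ln\matrixnorm{\bM(x,\pm1)}$ are bounded and in particular $L^1(\mu)$. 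Theorem~\ref{thm:oseledets} then provides the set $Z$ with $\mu(Z)=1$, the integer $r$, the exponents $\theta_1>\dots>\theta_r$ and the filtration $\RR^{d+1}=E_1(x)\supsetneqq\dots\supsetneqq E_{r+1}(x)=\{0\}$; because $\mu$ is ergodic (Definition~\ref{def:meas:cont:frac}) we may and do take $Z$ so that $r$, the $\theta_i$ and the $\dim E_i$ are constant on $Z$, which is precisely the content of the identities $\theta_i(x)=\theta_i(\mu)$.

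Next I would make explicit the duality identification implicit in the statement. Writing a linear form $\varphi$ as a row vector, the composition $\varphi M_{[0,n)}$ is again a row vector, and transposing gives $\transp{(\varphi M_{[0,n)})}=\transp{M_{[0,n)}}\transp{\varphi}=\bM(x,n)\,\transp{\varphi}$; in particular $\norm{\varphi M_{[0,n)}}_1=\norm{\bM(x,n)\transp{\varphi}}_1$, where the $1$-norm is just the sum of absolute values of the coordinates. Thus, under the linear isomorphism $\varphi\mapsto\transp{\varphi}$ from $(\RR^{d+1})^*$ to $\RR^{d+1}$, the right action $\varphi\mapsto\varphi M_{[0,n)}$ on the dual corresponds to the left action of $\bM(x,n)$ on $\RR^{d+1}$, and the Oseledets subspaces $E_i(x)$ for $\bM$ transport to subspaces of $(\RR^{d+1})^*$, which we denote the same way (consistently with the notation $v(x)^\circ$ of Remark~\ref{rem:theta2}). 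Since all norms on a finite-dimensional space are equivalent, $\tfrac1n\ln\norm{\bM(x,n)\transp{\varphi}}_1$ and $\tfrac1n\ln\norm{\bM(x,n)\transp{\varphi}}$ (for whatever norm Theorem~\ref{thm:oseledets} is stated with) have the same limit whenever it exists, so the characterisation $y\in E_i(x)\setminus E_{i+1}(x)\iff\lim_n\tfrac1n\ln\norm{\bM(x,n)y}=\theta_i(x)$, applied to $y=\transp{\varphi}$, yields exactly the first displayed equivalence of the corollary (the degenerate case $\varphi=0$ being trivially excluded on both sides).

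Finally, the two formulas for $\theta_1$ and $\theta_2$ are Corollary~\ref{lyapunov-furstenberg} applied verbatim to the cocycle $\bM$, giving $\theta_1(\mu)=\lim_n\tfrac1n\ln\matrixnorm{\bM(x,n)}$ and $\theta_2(\mu)=\lim_n\tfrac1n\ln\matrixnorm{\bM(x,n)\restrict{E_2(x)}}$ for every $x\in Z$, with $\bM(x,n)=\transp{M_{[0,n)}}(x)$; once more, replacing the operator norm used there by $\matrixnorm{\cdot}_\infty$ leaves the limits unchanged, by equivalence of operator norms in finite dimension. I do not expect a genuine obstacle here: the entire content of the proof is the correct bookkeeping of the transpose/duality identification and the (harmless) switches of norm, the real work being carried by Theorem~\ref{thm:oseledets}, Corollary~\ref{lyapunov-furstenberg} and Lemma~\ref{lem-bon-cocycle}, all of which are already available.
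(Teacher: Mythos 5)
Your proof is correct and follows exactly the route the paper has in mind: the paper itself offers no separate proof of Corollary~\ref{cor:oss} but simply introduces the cocycle $\bM(x,n)=\transp{M_{[0,n)}}(x)$ via Lemma~\ref{lem-bon-cocycle}, notes the integrability is automatic because the cocycle takes finitely many values, invokes Theorem~\ref{thm:oseledets} and Corollary~\ref{lyapunov-furstenberg}, and then states the corollary as an immediate consequence. What you add is precisely the bookkeeping the paper leaves implicit — the identification $\varphi\mapsto\transp{\varphi}$ of $(\RR^{d+1})^*$ with $\RR^{d+1}$ so that $\norm{\varphi M_{[0,n)}}_1=\norm{\bM(x,n)\transp{\varphi}}_1$, and the observation that equivalence of norms in finite dimension makes the choice of $\norm{\cdot}_1$ versus $\matrixnorm{\cdot}_\infty$ irrelevant for the exponential growth rates — and these are exactly the right details to supply.
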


\begin{definition}\label{def-algo-Pisot}
A measured continued fraction algorithm $(X,F,\mu)$ is said to satisfy Pisot condition
if for $\mu$-almost every point $x$ we have $\theta_1(x)>0>\theta_2(x)$, and $\codim(E_2(x))=1$.
\end{definition}

\begin{lemma}\label{Jeff} Let $x\in Z$.
	Assume $\codim(E_2(x))=1$ and $\displaystyle\bigcap_{n\geq 0}M_{[0,n)}(x)\RR_+^{d+1} = \RR_+ v$.
	Then we have
	\[
		E_2(x) = v^\circ.
	\]
	
\end{lemma}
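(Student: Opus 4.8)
The strategy is a dimension count combined with a single inclusion. Since $\codim(E_2(x))=1$ we have $\dim E_2(x)=d$, and since $v\neq 0$ the hyperplane $v^\circ=\{\varphi\in(\RR^{d+1})^*\mid \varphi(v)=0\}$ also has dimension $d$; so it suffices to prove $E_2(x)\subseteq v^\circ$, that is, $\varphi(v)=0$ for every $\varphi\in E_2(x)$ (the case $\varphi=0$ being trivial). I use the identification of the Oseledets spaces with spaces of linear forms, for which Corollary~\ref{cor:oss} (cf. Remark~\ref{rem:theta2}) reads: $\varphi\in E_i(x)\setminus E_{i+1}(x)$ iff $\frac1n\ln\norm{\varphi M_{[0,n)}(x)}_1\to\theta_i(x)$. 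Normalizing so that $\norm{v}_1=1$ does not change $v^\circ$, so I assume $v=v(x)$ (Remark~\ref{rem-lien-v-x}); in particular $h(v)=1$.

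The key step is to produce a bounded family of preimages of $v$. By hypothesis $v\in\RR_+v=\bigcap_n M_{[0,n)}(x)\RR_+^{d+1}\subseteq M_{[0,n)}(x)\RR_+^{d+1}$, so $y_n:=M_{[0,n)}(x)^{-1}v$ lies in $\RR_+^{d+1}$ for every $n$. Writing $M_{[0,n)}(x)=\ab(\sigma)$ for the non-erasing substitution $\sigma=s_0(x)\circ\dots\circ s_{n-1}(x)$, the $a$-th column of $M_{[0,n)}(x)$ is $\ab(\sigma(a))$, whose coordinate-sum is $h(M_{[0,n)}(x)e_a)=\abs{\sigma(a)}\geq 1$. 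Hence
\[
	1=h(v)=h(M_{[0,n)}(x)\,y_n)=\sum_{a\in A}(y_n)_a\,\abs{\sigma(a)}\geq\sum_{a\in A}(y_n)_a=\norm{y_n}_1 .
\]
Now for $\varphi\in E_2(x)$ and every $n$ one gets $\abs{\varphi(v)}=\abs{(\varphi M_{[0,n)}(x))(y_n)}\leq\norm{\varphi M_{[0,n)}(x)}_1\,\norm{y_n}_1\leq\norm{\varphi M_{[0,n)}(x)}_1$. By Corollary~\ref{cor:oss}, $\frac1n\ln\norm{\varphi M_{[0,n)}(x)}_1$ tends to $\theta_i(x)$ for some $i\geq 2$, hence to a value $\leq\theta_2(x)$. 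Under the Pisot condition (Definition~\ref{def-algo-Pisot}), which is the setting in which this lemma is applied, one has $\theta_2(x)=\theta_2(\mu)<0$, so $\norm{\varphi M_{[0,n)}(x)}_1\to 0$ and therefore $\varphi(v)=0$. This gives $E_2(x)\subseteq v^\circ$, and equality follows from the dimension count.

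The step I expect to require the most care is "$\norm{\varphi M_{[0,n)}(x)}_1\to 0$", which above rests on $\theta_2(x)<0$. If one wants the lemma exactly as stated, with $\codim(E_2(x))=1$ as the only spectral hypothesis, then $\theta_2(x)$ need not be negative and the preimage bound $\norm{y_n}_1\leq 1$ is too weak. One should then exploit that $\codim(E_2(x))=1$ forces the top exponent $\theta_1(x)$ to be simple, so that $v$ spans the one-dimensional expanding Oseledets direction of the direct cocycle $M_{[0,n)}(x)$, and consequently $\norm{y_n}_1=\norm{M_{[0,n)}(x)^{-1}v}_1$ decays at exponential rate $-\theta_1(x)$ rather than merely staying bounded; pairing this with the growth rate $\leq\theta_2(x)$ of $\varphi M_{[0,n)}(x)$ then yields $\abs{\varphi(v)}\lesssim e^{n(\theta_2(x)-\theta_1(x))}\to 0$, using the positive spectral gap $\theta_1(x)-\theta_2(x)>0$. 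Making precise the relation between the direct and transposed Oseledets filtrations needed for this refinement is the delicate point of the argument.
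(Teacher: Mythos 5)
Your reduction to $E_2(x)\subseteq v^\circ$ plus a dimension count matches the paper, but your primary argument does not prove the lemma as stated: it uses $\theta_2(x)<0$ (the Pisot condition), whereas the lemma assumes only $x\in Z$, $\codim E_2(x)=1$, and the cone condition, and the paper's proof uses no sign condition on $\theta_2$. You diagnose this yourself, noting that the bound $\norm{y_n}_1\le 1$ for $y_n=M_{[0,n)}^{-1}v$ is too weak, and you sketch a fix via the top Oseledets direction of the \emph{direct} cocycle. That fix is where the genuine gap sits: it would require relating the Oseledets filtrations of $M_{[0,n)}$ and of $\transp{M_{[0,n)}}$, a duality that is not established in the paper (the whole framework is set up only for the transposed cocycle), and you leave this ``delicate point'' unproved.

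The paper's argument closes the gap without any such duality and without Pisot. Instead of pulling $v$ back through $M_{[0,n)}^{-1}$, it pushes the fixed positive vector $w=(1,\dots,1)^t$ forward and normalizes: $w_n=M_{[0,n)}w/\norm{M_{[0,n)}w}_1$. By the cone hypothesis $w_n\to v$ (up to the scaling of $v$), so it suffices to show $\varphi(w_n)\to 0$ for $\varphi\in E_2(x)$. Hölder gives $\abs{\varphi(w_n)}\le \norm{\varphi M_{[0,n)}}_\infty\norm{w}_1/\norm{M_{[0,n)}w}_1$, and the numerator grows at rate $\theta_i(x)$ for some $i\ge2$. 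The key step controlling the denominator is the one your approach lacks a substitute for: since $\codim E_2(x)=1$, some coordinate functional $e_i^*$ is not in $E_2(x)$, and because $M_{[0,n)}$ has nonnegative entries and $w$ is the all-ones vector, one has $\norm{e_i^* M_{[0,n)}}_\infty\le e_i^* M_{[0,n)}w\le\norm{M_{[0,n)}w}_1$, whence $\liminf_n\frac1n\ln\norm{M_{[0,n)}w}_1\ge\theta_1(x)$. Therefore $\limsup_n\frac1n\ln\abs{\varphi(w_n)}\le\theta_i(x)-\theta_1(x)<0$, which is the spectral gap that always holds; $\theta_2<0$ is never needed. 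This elementary nonnegativity trick is exactly what lets the paper bypass the direct/transposed Oseledets comparison your proposed fix would have to develop.
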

\begin{proof}

	Let  $y\in E_2(x)$ (a linear form, or a line vector), and let $w = \begin{pmatrix} 1 \\ \vdots \\ 1 \end{pmatrix} \in \RR_+^{d+1}$. \newline
	By Hölder inequality we have
	\[
		\abs{y M_{[0,n)} w} \leq \norm{y M_{[0,n)}}_\infty \norm{w}_1.
	\]
	Let us denote $w_n = \frac{M_{[0,n)} w}{\norm{M_{[0,n)} w}_1}$, we obtain
	\begin{align*}
		\frac{1}{n} \ln \abs{y w_n} &\leq \frac{1}{n} \ln(\norm{y M_{[0,n)}}_\infty) + \frac{1}{n} \ln(\norm{w}_1) - \frac{1}{n} \ln(\norm{M_{[0,n)} w}_1). 
	\end{align*}
	But we have
	\[
		\norm{M_{[0,n)} w}_1 = \sum_{i=0}^d \abs{e_i^* M_{[0,n)} w}.
	\]
	And for $e_i^* \not\in E_2(x)$ (it exists since $d \geq 1$), we have $\lim_{n \to \infty} \frac{1}{n} \ln \norm{e_i^* M_{[0,n)}}_\infty = \theta_1(x)$.
	Moreover, we have $\norm{e_i^* M_{[0,n)}}_\infty \leq \abs{e_i^* M_{[0,n)} w}$ so we have
	\[
		\liminf_{n \to \infty} \frac{1}{n} \ln \norm{M_{[0,n)} w}_1 \geq \lim_{n \to \infty} \frac{1}{n} \ln \norm{e_i^* M_{[0,n)}}_\infty = \theta_1(x).
	\]
	And we have $\lim_{n \to \infty} \frac{1}{n} \ln \norm{y M_{[0,n)}}_\infty = \theta_i(x), i\geq 2$ with $y\in E_i(x)\setminus E_{i+1}(x)$, so we get
	\[
		\limsup_{n \to \infty} \frac{1}{n} \ln \abs{y w_n} \leq \theta_i(x) - \theta_1(x) < 0.
	\]
	And we also have $w_n \xrightarrow[n \to \infty]{} v$ by hypothesis. We deduce that we have
	\[
		y v = \lim_{n \to \infty} y w_n = 0,
	\]
	so $y \in v^\circ$. Since $\dim(E_2(x))=\dim(v^\circ)$ we obtain that $E_2(x) = v^\circ$.

\end{proof}

\begin{lemma} \label{lem-cocycle-matrice}
	Let $x \in Z$. We assume that
	$$\bigcap_{n\geq 0}M_{[0,n)}(\RR_+^{d+1}) = \RR_+ v,$$
	and that $\codim(E_2(x)) = 1$. Then, we have
	$$\matrixnorm{\pi_v M_{[0,n)}}_1 \leq (d+1)\matrixnorm{\transp{M_{[0,n)}}\restrict{E_2(x)}}_\infty.$$
\end{lemma}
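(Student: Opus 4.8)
The goal is to bound the operator semi-norm $\matrixnorm{\pi_v M_{[0,n)}}_1$ acting from $P$ to $P$ in terms of the dual semi-norm $\matrixnorm{\transp{M_{[0,n)}}\restrict{E_2(x)}}_\infty$ acting on linear forms restricted to $E_2(x)$. The bridge between the two is Remark~\ref{rem:theta2}: since $v = v(x)$ (this needs a word of justification — under the hypothesis $\bigcap_n M_{[0,n)}\RR_+^{d+1} = \RR_+ v$ we have $v = v(x)$ by Remark~\ref{rem-lien-v-x}, hence $\pi_v = \pi_x$), we have ${v}^\circ = \{\varphi\circ\pi_v \mid \varphi\in(\RR^{d+1})^*\}$, and by Lemma~\ref{Jeff}, under the stated hypotheses, $E_2(x) = v^\circ$. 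So $E_2(x)$ is exactly the space of linear forms that factor through $\pi_v$.

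\textbf{Key steps.} First I would express the $P\to P$ operator semi-norm $\matrixnorm{\pi_v M_{[0,n)}}_1$ dually: for a linear map $N\colon P\to P$, $\matrixnorm{N}_1 = \sup_{\varphi} \frac{\norm{\transp{N}\varphi}}{\norm{\varphi}}$ where $\varphi$ ranges over the dual of $P$ (i.e.\ over $(\RR^{d+1})^*/v^\circ$ if one is careful, but it is cleaner to pick representatives), and the dual norm on $P^*$ is the quotient of the $\infty$-norm. Concretely, write any linear form on $P$ as the restriction of some $\varphi\in(\RR^{d+1})^*$; its norm as a functional on $(P,\norm{\cdot}_1)$ is $\inf\{\norm{\varphi + \psi}_\infty \mid \psi\in v^\circ\}$, but since we only want an upper bound on $\matrixnorm{\pi_v M_{[0,n)}}_1$, it suffices to produce, for each unit vector $z\in P$, a good bound on $\norm{\pi_v M_{[0,n)} z}_1$. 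Using $\norm{w}_1 = \sup_{\norm{\varphi}_\infty\le 1}\abs{\varphi w}$ and that $\pi_v M_{[0,n)} z \in P$, we get $\norm{\pi_v M_{[0,n)} z}_1 = \sup_{\norm{\varphi}_\infty \le 1}\abs{\varphi\, \pi_v M_{[0,n)} z}$; but $\varphi\,\pi_v = \varphi\circ\pi_v \in v^\circ = E_2(x)$, and $\varphi\circ\pi_v$ has $\infty$-norm at most $(\text{a constant})\cdot\norm{\varphi}_\infty$ — here is where the factor $d+1$ enters, since $\pi_v$ has $\matrixnorm{\pi_v}_1 \le 2$ but one needs the $\infty$-norm of the composed functional, and bounding $\norm{\varphi\circ\pi_v}_\infty$ against $\norm{\varphi}_\infty$ costs the dimension-dependent constant (using $\norm{z}_\infty \le \norm{z}_1 \le (d+1)\norm{z}_\infty$). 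Then $\abs{(\varphi\circ\pi_v)\, M_{[0,n)}\, z} = \abs{(\transp{M_{[0,n)}}(\varphi\circ\pi_v))\, z} \le \norm{\transp{M_{[0,n)}}(\varphi\circ\pi_v)}_\infty \norm{z}_1 = \norm{\transp{M_{[0,n)}}(\varphi\circ\pi_v)}_\infty$, and since $\varphi\circ\pi_v\in E_2(x)$ this is at most $\matrixnorm{\transp{M_{[0,n)}}\restrict{E_2(x)}}_\infty \norm{\varphi\circ\pi_v}_\infty$. Collecting the constants gives the bound $(d+1)\matrixnorm{\transp{M_{[0,n)}}\restrict{E_2(x)}}_\infty$.

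\textbf{Main obstacle.} The delicate point is the bookkeeping of the various norms when passing between $\RR^{d+1}$ and the hyperplane $P$: one must be careful that $\matrixnorm{\cdot}_1$ on maps $P\to P$ is computed with the ambient $\norm{\cdot}_1$, that $\matrixnorm{\transp{M_{[0,n)}}\restrict{E_2(x)}}_\infty$ is the $\infty$-operator-semi-norm on functionals, and that the identification $E_2(x) = v^\circ$ (valid by Lemma~\ref{Jeff}) and $v = v(x)$ (so $\pi_v = \pi_x$) are in force. Getting exactly the constant $d+1$ — rather than $2(d+1)$ or $(d+1)^2$ — requires choosing the estimates tightly: specifically using that for $w \in P$ (so $h(w)=0$) and $\varphi\in(\RR^{d+1})^*$ with $\norm{\varphi}_\infty\le 1$, one has $\abs{(\varphi\circ\pi_v)(w)} = \abs{\varphi(w)} \le \norm{w}_1$, so in fact $\norm{(\varphi\circ\pi_v)\restrict{P}}$ as a functional on $(P,\norm{\cdot}_1)$ is $\le 1$; the factor $d+1$ then comes solely from comparing $\norm{\cdot}_1$ and $\norm{\cdot}_\infty$ when moving from the $\matrixnorm{\cdot}_1$ picture to the $\matrixnorm{\cdot}_\infty$ picture on the transpose side. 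I expect the proof to be short once these identifications are set up explicitly, with essentially no hard analysis — it is a normed-linear-algebra argument built on Lemma~\ref{Jeff} and Remark~\ref{rem:theta2}.
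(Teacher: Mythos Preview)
Your strategy is the same as the paper's: identify $E_2(x)=v^\circ$ via Lemma~\ref{Jeff} and Remark~\ref{rem:theta2} (after noting $v=v(x)$ from Remark~\ref{rem-lien-v-x}), and then pass to the dual side. The difference is only in how the duality is written out. The paper expands $\norm{\pi_v M_{[0,n)}w}_1=\sum_{i=0}^d |e_i^*\pi_v M_{[0,n)}w|$, swaps the sup and the sum, and uses that each $e_i^*\circ\pi_v\in E_2(x)$ with $\norm{e_i^*\circ\pi_v}_\infty=\max(v_i,1-v_i)\le 1$; summing the $d+1$ terms gives the constant $d+1$. You instead write $\norm{\pi_v M_{[0,n)}w}_1=\sup_{\norm{\varphi}_\infty\le 1}|\varphi\,\pi_v M_{[0,n)}w|$ and bound with a single $\varphi$.

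One correction: your account of where the $d+1$ comes from is off. It does \emph{not} arise from the inequality $\norm{z}_1\le(d+1)\norm{z}_\infty$. In fact your own approach, carried through cleanly, gives the \emph{better} constant $2$: for $\norm{\varphi}_\infty\le 1$ one has $(\varphi\circ\pi_v)(e_j)=\varphi_j-\sum_i\varphi_i v_i$, hence $\norm{\varphi\circ\pi_v}_\infty\le 1+\norm{\varphi}_\infty\norm{v}_1=2$, so
\[
\matrixnorm{\pi_v M_{[0,n)}}_1=\sup_{\norm{\varphi}_\infty\le 1}\norm{(\varphi\circ\pi_v)M_{[0,n)}}_\infty
\le \matrixnorm{\transp{M_{[0,n)}}\restrict{E_2(x)}}_\infty\cdot\sup_{\norm{\varphi}_\infty\le 1}\norm{\varphi\circ\pi_v}_\infty
\le 2\,\matrixnorm{\transp{M_{[0,n)}}\restrict{E_2(x)}}_\infty.
\]
So your route is correct and even sharper than the stated bound; the paper's $d+1$ simply comes from summing $d+1$ coordinate functionals rather than taking a sup.
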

\begin{proof}
	Recall that $\pi_v$ is the projection on $P$ with respect to the direction $\RR v$.

	\begin{align*}
	  \matrixnorm{\pi_v M_{[0,n)}}_1
		&= \sup_{w, \norm{w}_1\leq 1} \norm{ \pi_v M_{[0,n)}w}_1\\
		&= \sup_{w, \norm{w}_1\leq 1} \sum_{i=0}^d \abs{e_i^* \pi_v M_{[0,n)}w}\\
		&\leq \sum_{i=0}^d \sup_{w, \norm{w}_1\leq 1} \abs{e_i^* \pi_v M_{[0,n)}w}
		  =  \sum_{i=0}^d \norm{ e_i^* \pi_v M_{[0,n)}}_{\infty}.
	\end{align*}

	On the other hand, we have by Lemma~\ref{Jeff} and Remark~\ref{rem:theta2} 
	    $$E_2(x)=\{\varphi\circ \pi_v \mid \varphi\in (\RR^{d+1})^*\} \subseteq (\RR^{d+1})^*.$$ 
	    
    We conclude

    \[
    	\matrixnorm{\pi_v M_{[0,n)}}_1 \leq \matrixnorm{\transp{M_{[0,n)}}\restrict{E_2(x)}}_{\infty} \sum_{i=0}^d  \norm{e_i^*\circ \pi_v}_\infty.
    \]

    Since $\norm{v}_1=1$, we have by definition $\pi_v(e_j) = e_j - v$, thus we deduce $e_i^*\circ \pi_v(e_j)=\delta_{i,j} - v_i$, and
    $\norm{e_i^*\circ \pi_v}_\infty=\max(v_i, 1-v_i)$. We deduce

	    \[
	        \matrixnorm{\pi_v M_{[0,n)}}_1 \leq (d+1)\matrixnorm{\transp{M_{[0,n)}}\restrict{E_2(x)}}_\infty.
	    \]
		    
\end{proof}

From Lemma~\ref{lem-cocycle-matrice} and Corollary~\ref{cor:oss}, we deduce the following

\begin{corollary} \label{cor:theta2}
	Let $x \in Z$. Assume that $\bigcap_{n\geq 0}M_{[0,n)} \P\RR_+^{d} = \{x\}$ and that \break $\codim(E_2(x)) = 1$.
	Then we have the equality
	\[
		\lim_{n \to \infty} \frac{1}{n} \ln \matrixnorm{\pi_x M_{[0,n)}}_1 = \theta_2(x).
	\]
\end{corollary}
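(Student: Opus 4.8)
The plan is to establish the two–sided estimate
\[
	\matrixnorm{\transp{M_{[0,n)}}\restrict{E_2(x)}}_\infty \;\leq\; \matrixnorm{\pi_x M_{[0,n)}}_1 \;\leq\; (d+1)\,\matrixnorm{\transp{M_{[0,n)}}\restrict{E_2(x)}}_\infty
\]
valid for every $n\in\NN$, and then to apply $\frac1n\ln(\cdot)$ and let $n\to\infty$. By Corollary~\ref{cor:oss} the quantity $\frac1n\ln\matrixnorm{\transp{M_{[0,n)}}\restrict{E_2(x)}}_\infty$ tends to $\theta_2(x)$, hence so does $\frac1n\ln$ of the rightmost term (the factor $d+1$ being irrelevant in the limit). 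The squeeze then forces $\lim_{n\to\infty}\frac1n\ln\matrixnorm{\pi_x M_{[0,n)}}_1$ to exist and to equal $\theta_2(x)$, which is the claim.

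First I would unwind the hypotheses. The assumption $\bigcap_{n\geq 0} M_{[0,n)}\P\RR_+^d = \{x\}$ is equivalent to $\bigcap_{n\geq 0} M_{[0,n)}\RR_+^{d+1} = \RR_+ v(x)$; combined with $x\in Z$ and $\codim(E_2(x)) = 1$, this lets me invoke Lemma~\ref{Jeff} (with $v = v(x)$), which gives $E_2(x) = v(x)^\circ$. Recalling also that $\pi_x = \pi_{v(x)}$, the right-hand inequality above is then exactly the content of Lemma~\ref{lem-cocycle-matrice}, all of whose hypotheses are now verified.

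For the left-hand inequality I would use the elementary identity $\matrixnorm{B}_1 = \matrixnorm{\transp{B}}_\infty$ (maximal column sum equals maximal row sum of the transpose) with $B = \pi_x M_{[0,n)}$, so that $\matrixnorm{\pi_x M_{[0,n)}}_1 = \matrixnorm{\transp{M_{[0,n)}}\,\transp{\pi_x}}_\infty$. The key observation is that $\transp{\pi_x}$ fixes $E_2(x)$ pointwise: for $\psi\in E_2(x) = v(x)^\circ$ one has $\psi\circ\pi_x = \psi$, since $\pi_x(z) - z \in \RR v(x) \subseteq \ker\psi$ for every $z$, i.e. $\transp{\pi_x}\psi = \psi$. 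Restricting the supremum defining the $\infty$-operator norm to the $\norm{\cdot}_\infty$-unit ball of $E_2(x)$ therefore yields
\[
	\matrixnorm{\transp{M_{[0,n)}}\,\transp{\pi_x}}_\infty \;\geq\; \sup_{\substack{\psi\in E_2(x)\\ \norm{\psi}_\infty\leq 1}} \norm{\transp{M_{[0,n)}}\,\transp{\pi_x}\psi}_\infty \;=\; \sup_{\substack{\psi\in E_2(x)\\ \norm{\psi}_\infty\leq 1}} \norm{\transp{M_{[0,n)}}\psi}_\infty \;=\; \matrixnorm{\transp{M_{[0,n)}}\restrict{E_2(x)}}_\infty .
\]
(Alternatively, one may fix a single $\psi_0\in E_2(x)\setminus E_3(x)$, write $\psi_0 = \varphi_0\circ\pi_x$ via Remark~\ref{rem:theta2}, and conclude the lower bound directly from $\tfrac1n\ln\norm{\psi_0 M_{[0,n)}}_\infty\to\theta_2(x)$, which is Corollary~\ref{cor:oss}; this avoids the operator-norm manipulation.)

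I do not expect a genuine obstacle: the argument is short. The only point requiring care is the transpose bookkeeping together with the identification $E_2(x) = v(x)^\circ$ coming from Lemma~\ref{Jeff} and Remark~\ref{rem:theta2} — this is precisely what makes $\transp{\pi_x}$ act as the identity on $E_2(x)$, and hence what lets the operator norm of $\pi_x M_{[0,n)}$ detect the full Lyapunov contraction on $E_2(x)$ rather than a strictly larger exponent.
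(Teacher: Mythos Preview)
Your proof is correct and follows essentially the same route as the paper: the upper bound via Lemma~\ref{lem-cocycle-matrice}, the lower bound via the identification $E_2(x)=v(x)^\circ$ from Lemma~\ref{Jeff} together with $\psi\circ\pi_x=\psi$ for $\psi\in E_2(x)$, and then the squeeze using Corollary~\ref{cor:oss}. The only cosmetic difference is that the paper obtains the lower bound by fixing a single $\varphi$ with $\varphi\circ\pi_x\in E_2(x)\setminus E_3(x)$ (exactly your parenthetical alternative), whereas your main argument packages this as the operator-norm inequality $\matrixnorm{\transp{M_{[0,n)}}\restrict{E_2(x)}}_\infty\leq\matrixnorm{\pi_x M_{[0,n)}}_1$.
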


\begin{proof}
	By Lemma~\ref{Jeff} and Remark~\ref{rem:theta2}, we have $E_2(x) = \{ \varphi \circ \pi_x \mid \varphi \in (\RR^{d+1})^* \}$.
	Let $\varphi \in (\RR^{d+1})^* \setminus \{0\}$ be a non-zero linear form such that $\varphi \circ \pi_x \in E_2(x) \backslash E_3(x)$.
	Then, using Lemma~\ref{lem-cocycle-matrice}, we have the inequalities
	\[
		\norm{\varphi\circ \pi_x M_{[0,n)}}_1 \leq \matrixnorm{\pi_x M_{[0,n)}}_1 \norm{\varphi}_1 \leq (d+1) \matrixnorm{\transp{M_{[0,n)}}\restrict{E_2(x)}}_\infty \norm{\varphi}_1.
	\]
	We have $\lim_{n \to \infty} \frac{1}{n} \ln \norm{\varphi \circ \pi_x M_{[0,n)}}_1 = \theta_2(x)$ since $\varphi\circ \pi_x \in E_2(x) \setminus E_3(x)$,
	and we have
	\[
		\lim_{n \to \infty} \frac{1}{n} \ln \matrixnorm{\transp{M_{[0,n)}}\restrict{E_2(x)}}_\infty = \theta_2(x)
	\]
	by Corollary~\ref{cor:oss}.
	Thus by squeeze theorem we get that the limit $\lim_{n \to \infty} \frac{1}{n} \ln \matrixnorm{\pi_x M_{[0,n)}}_1$ exists and is equal to $\theta_2(x)$.
\end{proof}

\section{A lot of good points}\label{sec-lot-good}

The aim of this section is to prove that one seed point gives a set of full $\mu$-measure of good directive sequences (see Proposition~\ref{prop:mesure:good}). With this result, and with Theorem~\ref{thmC}, the proof of Theorem~\ref{thmA} will be easy.

\subsection{Definitions and main result}

\begin{definition} \label{def:G0}
	We define the set of \emph{seed points} $G_0$\nomenclature[L]{$G_0$}{seed points} as the set of points $x \in X$ such that
	\begin{itemize}
	\item $x$ is a totally irrational direction,
	\item for every $n \in \NN$, $F^n$ is continuous at $x$,
	\item $\limsup_{n \to \infty} \frac{1}{n} \ln \matrixnorm{\pi_x M_{[0,n)}(x)}_1 < 0$, 
	\item there exists a letter $a \in A$
		and a fixed point $u \in {(A^\NN)}^\NN$ of $s(x)$
		such that $W_a(u_0)$ has non-empty interior for the topology $\topo{x}$
		(see Definitions~\ref{def:fixed:point}, \ref{def-topo-paul},
		and~\ref{def:worm}).
\end{itemize}
\end{definition}

\begin{definition}
	We define the set of \emph{good points}
	\[
		G = \{ x \in X \mid s(x) \text{ is a good directive sequence} \},
	\]
	where a good directive sequence is defined in Definition~\ref{defG}\nomenclature[L]{$G$}{good points}.
\end{definition}

\begin{proposition} \label{prop:mesure:good}
	 Let $\mu$ be an $F$-invariant ergodic probability measure on $X$
	 satisfying the Pisot condition (see Definition~\ref{def-algo-Pisot}).
	If $G_0 \neq \emptyset$, then $\mu(G) = 1$.
\end{proposition}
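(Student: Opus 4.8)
The plan is to propagate the four defining properties of a good directive sequence from the single seed point $x_0 \in G_0$ to $\mu$-almost every point, using ergodicity of $\mu$ together with the Pisot condition. Properties (1), (2) and (4) of Definition~\ref{defG} are the ``easy'' ones. For property (2): by hypothesis $\mu(X_0)=1$ and $X\setminus X_0$ contains the non totally irrational directions, so $\mu$-almost every $x$ is totally irrational; hence we work from now on with $x \in Z \cap X_0$ totally irrational and such that $\bigcap_n M_{[0,n)}(x)\RR_+^{d+1}=\RR_+ v(x)$ (this last cone is a line $\mu$-a.e. because the Pisot condition forces $\codim E_2(x)=1$ and $\theta_2(x)<0$, and by Remark~\ref{rem:cv:cv} together with Corollary~\ref{cor:theta2}; also this gives $\bigcap_n M_{[0,n)}\P\RR_+^d=\{x\}$, so $x^{(k)}=F^k(x)$ by Remark~\ref{rem-lien-v-x}, and each $F^k(x)$ is totally irrational, giving property (4) once we know the $x^{(k_n)}$ converge — in fact for a continued fraction algorithm $x^{(k)}=F^k(x)$ visits a syndetic set of points and any subsequential limit along the return times we will use is totally irrational $\mu$-a.e. by the same argument applied at $F^{k}(x)$, so property (4) is handled by choosing the $k_n$ as return times to a suitable set, as below). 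For property (1): the Pisot condition gives $\theta_2(\mu)<0$, and Corollary~\ref{cor:theta2} shows $\lim_n \frac1n\ln\matrixnorm{\pi_x M_{[0,n)}}_1=\theta_2(x)=\theta_2(\mu)<0$ for $\mu$-a.e.\ $x$.

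The real content is property (3): for $\mu$-a.e.\ $x$ there exist a fixed point $u$ of $s(x)$, an increasing sequence $(k_n)$, and a radius $r>0$ with $\HH\cap\pi_{x^{(k_n)}}^{-1}(B(p,r))\subseteq W_a(u_{k_n})$ for all $n$ and all $a\in A$. The approach is: first use the seed point to produce a \emph{non-empty interior} at one scale; then spread this over a positive-measure set and use ergodicity to revisit it infinitely often. Concretely, since $x_0\in G_0$, there is a letter $a_0$, a fixed point $u$ of $s(x_0)$, and an open ball $B(p_0,\rho)\subseteq P$ with $\HH\cap\pi_{x_0}^{-1}(B(p_0,\rho))\subseteq W_{a_0}(u_0)$. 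Applying Equation~(\ref{automate-worm}) and primitivity (Lemma~\ref{lem:primitivity}, valid since $x_0$ is totally irrational) as in the proof of Lemma~\ref{Wopen}, one upgrades this to: there is $m_0$ and a radius $r_0>0$ such that for \emph{every} $a\in A$ there is $q_a\in P$ with $\HH\cap\pi_{x_0}^{-1}(B(q_a,r_0))\subseteq W_a(u_0)$ — i.e. all pieces of the worm at scale $0$ have non-empty interior, uniformly. Now I would use Lemma~\ref{lem:near-prefix} (equivalently Lemma~\ref{lem:O:matrix} applied finitely many times): the property ``$M_{[0,m_0)}(y)=M_{[0,m_0)}(x_0)$'' is determined by finitely many coordinates of $s(y)$, and on the set $Y_0=\{y\in X_0 : s_i(y)=s_i(x_0)\ \forall i<m_0\}$ — which is open in $X_0$ and contains $x_0$, hence has $\mu(Y_0)>0$ by hypothesis (5) of Definition~\ref{def:meas:cont:frac}, or more directly because seed points lie in the support — one gets a uniform interior statement: there is $r_1>0$ such that for every $y\in Y_0$, every $a\in A$, there is $p\in P$ with $\HH\cap\pi_y^{-1}(B(p,r_1))\subseteq W_a(u^{(y)}_0)$ for a suitable fixed point $u^{(y)}$ of $s(y)$. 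Here one must check that a fixed point of the ``tail'' $s_{[m_0,\cdot)}$ can be pulled back through the common prefix $s_0(x_0),\dots,s_{m_0-1}(x_0)$ to a fixed point of $s(y)$, which is routine from Definition~\ref{def:fixed:point}.

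Finally, ergodicity: by Birkhoff/Poincaré applied to $F$ and the positive-measure set $Y_0$, for $\mu$-a.e.\ $x$ the orbit $(F^k(x))$ returns to $Y_0$ infinitely often; let $(k_n)$ be the sequence of return times. For each such $n$, $F^{k_n}(x)=x^{(k_n)}\in Y_0$, so the uniform interior statement at scale $0$ applied to the point $x^{(k_n)}$ gives, for every $a\in A$, a ball $B(p,r_1)$ with $\HH\cap\pi_{x^{(k_n)}}^{-1}(B(p,r_1))\subseteq W_a((u^{x})_{k_n})$, where $u^x$ is the fixed point of $s(x)$ obtained by pulling the fixed point of $s(x^{(k_n)})$ back through the prefix — and crucially these pulled-back fixed points are consistent, i.e.\ determine a single fixed point $u^x$ of $s(x)$ (one chooses it once and uses that $s_{[k_n,\cdot)}(x)=s(F^{k_n}x)$). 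This is exactly property (3) with radius $r=r_1$. Property (4): since $Y_0$ has positive measure and $\mu$ is ergodic, one can additionally shrink $Y_0$ to a positive-measure subset on which $x\mapsto x$ is ``close to $x_0$'' in the sense needed, but in fact total irrationality of every $F^{k_n}(x)$ is automatic $\mu$-a.e., and convergence of a subsequence of $(x^{(k_n)})$ to a totally irrational limit is obtained by passing to a further positive-measure subset $Y_1\subseteq Y_0$ of small diameter (possible since $\mu$ is a probability measure on the metric space $X$), replacing $(k_n)$ by the return times to $Y_1$; the limit then lies in $\overline{Y_1}$, and by Lemma~\ref{lem:ouvert:petite:mesure} we may choose $Y_1$ to avoid the (measure-zero, but not empty) non-totally-irrational set in the sense that $\overline{Y_1}$ still meets it only in measure zero — the cleanest route is to note $x^{(k_n)}$ itself need not converge, but any accumulation point works for Definition~\ref{defG}(4) provided it is totally irrational, which one forces by shrinking $Y_1$ to have closure contained in $X_0$ up to measure zero and small diameter, then picking one accumulation point; I expect a small amount of care is needed here but no essential difficulty.

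\medskip

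\noindent\textbf{Main obstacle.} The delicate point is the ``pull-back and consistency'' of fixed points: property~(3) demands a \emph{single} fixed point $u$ of $s(x)$ that simultaneously works at all scales $k_n$, whereas the seed point only directly gives interiors at scale $0$ of \emph{its own} fixed point. Reconciling this — showing that the fixed point of $s(F^{k_n}x)$ whose worm pieces have interiors can be chosen coherently as $n$ varies, so as to glue into one fixed point of $s(x)$ — together with making the radius $r$ uniform over the (infinitely many) return times and ensuring property~(4), is where the argument requires genuine care; everything else reduces to Oseledets (via Corollary~\ref{cor:theta2}), the automaton relation~(\ref{automate-worm}), and the Poincaré recurrence/ergodic theorem for $F$.
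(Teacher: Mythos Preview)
Your overall architecture is the same as the paper's: handle properties (1), (2), (4) of Definition~\ref{defG} via Oseledets/Corollary~\ref{cor:theta2} and the hypothesis $\mu(X_0)=1$, and obtain property (3) by producing a positive-measure ``good neighbourhood'' of the seed point and applying Poincar\'e recurrence. Your treatment of (1), (2), (4) is essentially fine, and your route to $\mu(Y_0)>0$ via hypothesis~(5) (together with the contrapositive of hypothesis~(4)) of Definition~\ref{def:meas:cont:frac} is correct; note that ``seed points lie in the support'' is \emph{not} assumed anywhere, so that alternative justification is unfounded.

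The genuine gap is in your ``uniform interior statement'' on $Y_0$. Agreement of the first $m_0$ substitutions does not control the worm $W_a(u_0^{(y)})$, which depends on \emph{all} of $s(y)$ through $u_{m_0}^{(y)}$: Equation~(\ref{eq-iter-worm}) expresses $W_a(u_0^{(y)})$ as a union of translates of $M_{[0,m_0)}W_b(u_{m_0}^{(y)})$, and you have no information whatsoever about the interior of $W_b(u_{m_0}^{(y)})$ for generic $y\in Y_0$. Neither Lemma~\ref{lem:O:matrix} nor Lemma~\ref{lem:near-prefix} helps here. The paper fills exactly this gap with Lemmas~\ref{lem:mZC}--\ref{lem:perturb}: one first shows that near $x_0$ there is a \emph{positive-measure} set of points lying in some $Z_C$ (uniform exponential contraction, Lemma~\ref{lem:mZC}), then uses the Dumont--Thomas series description of $R_a$ to prove Hausdorff continuity $\delta(R_a(x_0),R_a(y))\le\epsilon$ on that set (Lemma~\ref{lem:approx}), and finally transfers the ball-avoidance characterization of the interior (Lemma~\ref{lem:car:interior}) from $x_0$ to $y$ by halving the ball (Lemma~\ref{lem:perturb}). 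The intersection with $Z_C$ is essential: without a uniform tail bound, the Rauzy fractals $R_a(y)$ need not stay close to $R_a(x_0)$ on all of $Y_0$, and your uniform $r_1$ does not exist.

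Conversely, the issue you flag as the ``main obstacle'' --- gluing fixed points along the return times into a single fixed point of $s(x)$ --- is not where the difficulty lies. By Lemma~\ref{lem:car:interior} the inclusion $\HH\cap\pi_{x^{(k)}}^{-1}(B)\subseteq W_a(u_k)$ is equivalent to a condition on the Rauzy pieces $R_b^{(k)}$ and their $\Lambda$-translates, and by Corollary~\ref{cor:bounded:cover} these do not depend on the choice of fixed point. Hence once $F^{k_n}(x)\in\bigcap_a G_{B_a,C}^a$ for infinitely many $k_n$, \emph{any} fixed point $u$ of $s(x)$ satisfies property~(3); no coherence argument is needed. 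The paper then secures property~(4) exactly as you suggest, by excising from the recurrence target a small open set containing the non-totally-irrational directions (Lemma~\ref{lem:ouvert:petite:mesure}).
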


\begin{remark}
$\;$
For a periodic point $x$ (i.e., such that $F^p(x) = x$ for some $p \geq 1$), all the conditions to be a seed point, except the last one, are easily tested:
\begin{itemize}
	\item we have $\limsup_{n \to \infty} \frac{1}{n} \ln \matrixnorm{\pi_x M_{[0,n)}(x)}_1 < 0$ if, and only if, $M_{[0,p)}$ is Pisot.
	\item we have that $x$ is a totally irrational direction if the matrix $M_{[0,p)}$ has an irreducible characteristic polynomial,
	\item we have the continuity of $F^n$ for every $n \in \NN$ if, and only, if we have it for $0 < n \leq p$,
	\item the last property is automatic for an irreducible Pisot unimodular substitution if the Pisot substitution conjecture holds (see Subsection~\ref{sec:pisot} for more details).
\end{itemize}
\end{remark}

In the proof of Proposition~\ref{prop:mesure:good}, we need a variant of the notion of seed point:
\begin{definition}
	We define $G_1$\nomenclature[L]{$G_1$}{auxiliary set related to the good points} as the set of $x \in X$ such that
	\begin{itemize}
		\item $x \in X_0$,
		\item $\limsup_{n \to \infty} \frac{1}{n} \ln \matrixnorm{\pi_x M_{[0,n)}(x)}_1 < 0$, 
		\item there exists a fixed point $u$ of $s(x)$ such that,
			for all $a \in A$,
			$W_a(u)$ has non-empty interior for the topology $\topo{x}$.
	\end{itemize}
\end{definition}

The definitions of $G_1$ and $G_0$ differ only by their last properties where we ask that the interior is not empty for every $a \in A$ rather than for one.

In the following, we use some more notations.

\begin{definition}\label{def-ZC-GBC}
	Let us define
	\[
		Z_C = \{ x \in X \mid \forall n \in \NN, \matrixnorm{\pi_xM_{[0,n)}(x)}_1 \leq C e^{ -\frac{n}{C} } \}.
	\]
	
    Let $B$ be a ball of positive radius in $P$ and let $C > 0$. For every $a \in A$, we define
    \[
        G_{B, C}^a = Z_C \cap \{ x \in X \mid \pi_x^{-1}(B) \cap \HH \subseteq W_{a}(x) \} 
    \]
    and
    \[
	    G_{B, C} = \bigcup_{a \in A} G_{B,C}^a.
	\]
\end{definition}
	
	\nomenclature[L]{$G_{B,C}$}{seed set with explicit bound}
	\nomenclature[L]{$Z_C$}{set of points with explicit exponential convergence}
	
\subsection{Proof of Proposition~\ref{prop:mesure:good}}

In all this subsection, we assume that $\mu$ is an $F$-invariant ergodic probability measure on $X$
satisfying the Pisot condition (see Definition~\ref{def-algo-Pisot}).
The strategy is to prove
\[
	 G_0 \neq \emptyset \Longrightarrow \mu(G_0) > 0
			    \Longrightarrow \mu(G_1) > 0
			    \Longrightarrow \mu(G) = 1.
\]

In the following each step corresponds to one of these implications.

 \subsubsection{Step \texorpdfstring{1: $G_0 \neq \emptyset \Longrightarrow \mu(G_0) > 0$}{1}}

	\begin{lemma} \label{lem:Cx}
		Let $x \in X$. If we have $\limsup_{n \to \infty} \frac{1}{n} \ln \matrixnorm{\pi_x M_{[0,n)}}_1 < 0$,
		then there exists $C_x > 0$ such that $x \in Z_{C_x}$.
	\end{lemma}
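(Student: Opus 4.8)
Write $a_n = \matrixnorm{\pi_x M_{[0,n)}(x)}_1$ for $n \in \NN$; each $a_n$ is a finite nonnegative real number, being the operator norm of an explicit linear map. The plan is to turn the asymptotic statement $\limsup_{n\to\infty} \frac1n \ln a_n < 0$ into the required uniform bound by absorbing the first few terms into a large constant.

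First I would fix $\beta > 0$ with $\limsup_{n\to\infty} \frac1n \ln a_n < -\beta$. By definition of $\limsup$ there is an integer $n_0$ such that $\frac1n \ln a_n \leq -\beta$, hence $a_n \leq e^{-\beta n}$, for every $n \geq n_0$. Then I would set
\[
	C_x = \max\Bigl(1,\ \tfrac1\beta,\ \max_{0 \leq n < n_0} a_n e^{\beta n_0}\Bigr),
\]
which is a finite positive number since it is a maximum of finitely many finite quantities.

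It remains to check that $a_n \leq C_x e^{-n/C_x}$ for all $n \in \NN$, which splits into two cases. For $n \geq n_0$, using $C_x \geq 1/\beta$ we have $1/C_x \leq \beta$, so $e^{-n/C_x} \geq e^{-\beta n} \geq a_n$, and since $C_x \geq 1$ this gives $a_n \leq C_x e^{-n/C_x}$. For $0 \leq n < n_0$, again $1/C_x \leq \beta$ gives $e^{-n/C_x} \geq e^{-n_0/C_x} \geq e^{-\beta n_0}$, so $C_x e^{-n/C_x} \geq C_x e^{-\beta n_0} \geq a_n$ by the choice of $C_x$. Hence $x \in Z_{C_x}$.

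There is no real obstacle here; the only point requiring a little care is to pick $C_x$ large enough to simultaneously control the exponential rate (via $C_x \geq 1/\beta$) and to dominate the finitely many initial terms $a_0, \dots, a_{n_0-1}$.
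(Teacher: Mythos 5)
Your proof is correct and follows essentially the same route as the paper: fix a negative exponential rate strictly above the $\limsup$ (you call it $-\beta$, the paper uses $l/2$ where $l$ is the $\limsup$), observe the tail satisfies the exponential bound, and choose $C_x$ as a maximum that simultaneously controls the rate and absorbs the finitely many initial terms. The only cosmetic difference is the exact form of the constant dominating the initial terms; both choices work.
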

	
	\begin{proof}
		Let $l = \limsup_{n \to \infty} \frac{1}{n} \ln \matrixnorm{\pi_x M_{[0,n)}}_1$.
		There exists $n_x \in \NN$ such that for all $n \geq n_x$, we have
        \[
            \matrixnorm{\pi_x M_{[0,n)}}_1 \leq e^{n l/2}.
        \]
        If we take $C_x = \max(\max_{n < n_x} \matrixnorm{\pi_x M_{[0,n)}}_1 e^{-n l/2},\ 1,\  -\frac{2}{l})$, we have $x \in Z_{C_x}$. \\
	\end{proof}
	
	Remark that using this lemma, we have the equality
	\[
		G_1 = X_0 \cap \bigcup_{C > 0} \bigcup_{\substack{(B_a)_{a \in A}\\  \text{balls of positive radius} }}   \bigcap_{a \in A} G_{B_a, C}^a.
	\]
	
    \begin{lemma} \label{lem:ZC}
    	We have $$\lim_{C\to \infty} \mu(Z_C)=1.$$
    \end{lemma}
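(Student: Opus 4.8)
The plan is to show that $(Z_C)_{C>0}$ is a nondecreasing family of sets whose union has full $\mu$-measure, and then conclude by continuity of $\mu$ from below.

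First I would check the monotonicity $Z_C\subseteq Z_{C'}$ for $0<C\le C'$. It suffices to see that $Ce^{-n/C}\le C'e^{-n/C'}$ for every integer $n\ge 0$; taking logarithms, this inequality reads $\ln(C'/C)+n\,\dfrac{C'-C}{CC'}\ge 0$, which is clear since both terms are nonnegative. Hence $C\mapsto\mu(Z_C)$ is nondecreasing, so $\lim_{C\to\infty}\mu(Z_C)=\mu\!\left(\bigcup_{C>0}Z_C\right)$. Next I would identify this union: if $x\in Z_C$ then $\frac1n\ln\matrixnorm{\pi_xM_{[0,n)}(x)}_1\le \frac{\ln C}{n}-\frac1C\to-\frac1C<0$, and conversely Lemma~\ref{lem:Cx} shows every $x$ with $\limsup_n\frac1n\ln\matrixnorm{\pi_xM_{[0,n)}(x)}_1<0$ lies in some $Z_{C_x}$. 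Therefore
\[
	\bigcup_{C>0}Z_C=\Bigl\{\,x\in X\ \Big|\ \limsup_{n\to\infty}\tfrac1n\ln\matrixnorm{\pi_xM_{[0,n)}(x)}_1<0\,\Bigr\}.
\]

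It then remains to prove that the set on the right has $\mu$-measure $1$. Here I would use the Pisot condition: for $\mu$-almost every $x$ we have $x\in Z$, $\codim(E_2(x))=1$, and $\theta_2(x)=\theta_2(\mu)<0$. I also need that for $\mu$-almost every $x$ the decreasing cone $\bigcap_{n}M_{[0,n)}(x)\RR_+^{d+1}$ is the single ray $\RR_+v(x)$, equivalently $\bigcap_{n}M_{[0,n)}(x)\P\RR_+^d=\{x\}$: this follows from applying Oseledets' theorem to the forward cocycle $n\mapsto M_{[0,n)}(x)$, whose top Lyapunov exponent is simple because $\codim(E_2(x))=1$, together with the fact that the matrices $M_k$ are nonnegative, which forces the top Oseledets direction to lie in the positive cone and hence, by Remark~\ref{rem-lien-v-x}, to coincide with $v(x)$ (alternatively one can invoke \cite{Bert.Delec.14}). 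Granting this, the hypotheses of Corollary~\ref{cor:theta2} are met for $\mu$-almost every $x$, so
\[
	\lim_{n\to\infty}\tfrac1n\ln\matrixnorm{\pi_xM_{[0,n)}(x)}_1=\theta_2(x)=\theta_2(\mu)<0,
\]
which shows the displayed set has full measure. Hence $\mu(Z_C)\to\mu\!\left(\bigcup_{C>0}Z_C\right)=1$ as $C\to\infty$.

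The main obstacle is precisely the last point: justifying, from the Pisot condition alone, that $\bigcap_{n}M_{[0,n)}(x)\P\RR_+^d$ reduces to $\{x\}$ for $\mu$-almost every $x$, so that Lemma~\ref{lem-cocycle-matrice} and Corollary~\ref{cor:theta2} can be applied; this is where the nonnegativity of the substitution matrices and the simplicity of the top exponent are genuinely used. Everything else is either the short logarithmic computation for monotonicity, an application of Lemma~\ref{lem:Cx}, or a direct invocation of the already-established Oseledets-type corollaries.
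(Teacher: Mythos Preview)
Your proof is correct and follows the same route as the paper: both define $Y=\{x:\limsup_n\frac1n\ln\matrixnorm{\pi_xM_{[0,n)}(x)}_1<0\}$, use Lemma~\ref{lem:Cx} to get $Y\subseteq\bigcup_{C>0}Z_C$, invoke Corollary~\ref{cor:theta2} together with the Pisot condition to obtain $\mu(Y)=1$, and conclude by monotonicity of $C\mapsto Z_C$. You add two pieces of detail the paper omits: an explicit check of the monotonicity $Z_C\subseteq Z_{C'}$, and a discussion of why $\bigcap_n M_{[0,n)}(x)\P\RR_+^d=\{x\}$ holds $\mu$-almost everywhere (a hypothesis of Corollary~\ref{cor:theta2} that the paper invokes without comment). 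Your justification of this last point via simplicity of the top exponent for the forward cocycle and nonnegativity of the matrices is on the right track, though your appeal to Remark~\ref{rem-lien-v-x} is slightly circular since that remark already assumes the cone is a line; the clean argument is that simplicity of $\theta_1$ forces the nested cones $M_{[0,n)}\P\RR_+^d$ to contract to the one-dimensional top Oseledets space, which, being in the positive cone and containing $v(x)$, must equal $\{x\}$.
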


    \begin{proof}
    	Let $Y = \{x \in X \mid \limsup_{n \to \infty} \frac{1}{n} \ln \matrixnorm{\pi_x M_{[0,n)}}_1 < 0\}$.
		We have $\mu(Y) = 1$ by Corollary~\ref{cor:theta2}, and because $\theta_2(F, \mu) < 0$.
    	And by Lemma~\ref{lem:Cx}, we have $Y \subseteq \bigcup_{C > 0} Z_{C}$.
        Since $Z_C$ is increasing with $C$, we get $\lim_{C\to \infty} \mu(Z_C) \geq \mu(Y) = 1$.
    \end{proof}

\begin{lemma} \label{lem:mZC}
    We have
    \[
        \forall x \in G_0, \exists C \in \RR_+, \forall r > 0, \mu(B(x,r) \cap Z_C \cap X_0) > 0.
    \]
\end{lemma}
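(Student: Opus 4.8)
The plan is to exploit the hypothesis $G_0 \neq \emptyset$ together with the continuity properties packaged in the definition of $X_0$. Fix $x \in G_0$. By the definition of a seed point, $\limsup_{n \to \infty} \frac{1}{n} \ln \matrixnorm{\pi_x M_{[0,n)}(x)}_1 < 0$, so by Lemma~\ref{lem:Cx} there exists $C_x > 0$ with $x \in Z_{C_x}$. I will not be able to use this $C_x$ directly for nearby points, because the convergence rate is not uniform; instead I would take a slightly larger constant $C$ (to be specified below) and argue that a whole ball around $x$ lands in $Z_C$. The first key step is therefore: find a radius $r_0$ and a constant $C$ such that $B(x,r_0) \cap X_0 \subseteq Z_C$. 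For this, note that $x \in G_0$ means $x$ is totally irrational and $F^n$ is continuous at $x$ for every $n$, hence by Lemma~\ref{lem:near-prefix} the matrices $M_{[0,n)}(y)$ agree with $M_{[0,n)}(x)$ for $y$ close enough to $x$ — but only for finitely many $n$ at a time, so this alone does not control the tail.

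To get around this, the second key step combines the finite-prefix agreement with the eventual exponential decay. Pick $n_0$ large enough that $\matrixnorm{\pi_x M_{[0,n)}(x)}_1 \leq e^{-n/(2C_x)}$ for all $n \geq n_0$, and set $C = 2C_x \max\{1, \max_{n \leq n_0} \matrixnorm{\pi_x M_{[0,n)}(x)}_1 e^{n/(2C_x)}\}$ or a similar explicit choice. By Lemma~\ref{lem:near-prefix} applied with $k = n_0$, choose $r_0 > 0$ so that $d(x,y) \leq r_0$ implies $s_i(y) = s_i(x)$ for all $i \leq n_0$, hence $M_{[0,n)}(y) = M_{[0,n)}(x)$ for all $n \leq n_0$. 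For such $y$, the estimate $\matrixnorm{\pi_y M_{[0,n)}(y)}_1 \leq C e^{-n/C}$ holds for $n \leq n_0$ (using $\pi_y \to \pi_x$ and adjusting $C$, or more simply by taking $r_0$ small enough that $\matrixnorm{\pi_y M_{[0,n)}(x)}_1 \leq C e^{-n/C}$ by continuity of $\pi_y$ in $y$ for the finitely many $n \leq n_0$); I would need to be a little careful here and possibly use that $\matrixnorm{\pi_y - \pi_x}_1 \to 0$ as $y \to x$, shrinking $r_0$ once more. For $n > n_0$ with $y \in X_0$, we still have $M_{[0,n)}(y) = M_{[0,n_0)}(x) M_{[0, n-n_0)}(F^{n_0} y)$, and the factor $\matrixnorm{\pi_x M_{[0,n_0)}(x)}_1$ is already exponentially small; so one bounds $\matrixnorm{\pi_y M_{[0,n)}(y)}_1 \leq \matrixnorm{\pi_y M_{[0,n_0)}(x)}_1 \cdot \matrixnorm{M_{[0,n-n_0)}(F^{n_0}y)}_1$ — but the second factor can grow, so this crude split is not enough. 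The cleaner route is to instead only claim $B(x,r_0) \cap X_0 \subseteq Z_C$ by noting that for $y$ in a small enough ball, $F^{n_0}(y)$ is close to $F^{n_0}(x)$ (continuity of $F^{n_0}$ at $x$, which holds since $x \in G_0$), and then iterating: this shows the directive sequence of $y$ tracks that of $x$ far enough that the contraction persists. I expect this bookkeeping — getting a genuinely uniform exponential bound on a full ball — to be the main obstacle, and I would handle it by an open-cover/compactness argument on $\overline{B(x,r_0)} \cap X_0$ combined with Lemma~\ref{lem:theta2}, which says the exponential decay rate is shift-invariant along the directive sequence.

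Once $B(x, r_0) \cap X_0 \subseteq Z_C$ is established, the final step is almost immediate. Since $x \in G_0 \subseteq X_0$ and $\mu(X_0) = 1$, and since $\mu$ is a Borel probability measure, every open ball around $x$ has positive measure provided $x$ lies in the support of $\mu$ — but we do not know that a priori. However, we do not need full balls: we only need $\mu(B(x,r) \cap Z_C \cap X_0) > 0$ for all $r > 0$, and here I would invoke property (5) of Definition~\ref{def:meas:cont:frac} (the cylinder sets $\{y : M_{[0,n)}(y) = M_{[0,n)}(x)\}$ have $F^n$-image of measure $> \epsilon$, hence in particular positive measure), together with the fact that these cylinders shrink to $\{x\}$-neighborhoods as $n \to \infty$ by Lemma~\ref{lem:near-prefix}. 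Concretely: for any $r > 0$, pick $n$ large enough that the cylinder $\{y : M_{[0,n)}(y) = M_{[0,n)}(x)\} \cap X_0$ is contained in $B(x,r) \cap Z_C \cap X_0$ (possible by Lemma~\ref{lem:near-prefix} for the $B(x,r)$ part, and by the previous step for the $Z_C$ part, after enlarging $C$ if necessary); this cylinder has positive $\mu$-measure because its forward image under $F^n$ does (measure $> \epsilon$) and $F$ does not create null sets from non-null sets only in the forward direction — so one instead argues directly that the cylinder is non-null, e.g. because $\mu$ gives positive mass to each piece $X_i$ and the cylinder contains $M_{[0,n)}(x)$ applied to such a piece. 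This yields $\mu(B(x,r) \cap Z_C \cap X_0) \geq \mu(\text{cylinder} \cap X_0) > 0$, which is exactly the claim.
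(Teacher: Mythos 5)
Your final step (cylinders of positive measure via Definition~\ref{def:meas:cont:frac}) is on target, but your first step has a genuine gap: the inclusion $B(x,r_0)\cap X_0 \subseteq Z_C$ that you are trying to establish is \emph{false} in general, and no amount of shrinking $r_0$ or enlarging $C$, nor a compactness argument, can repair it. The reason is that membership in $Z_C$ constrains the \emph{entire} tail of the directive sequence, whereas $X_0$ puts no such constraint on it. Lemma~\ref{lem:near-prefix} only forces $s_i(y)=s_i(x)$ for $i\leq n_0$; for $n>n_0$ the tail $F^{n_0}(y)$ is an essentially arbitrary point of $X$, and within any neighbourhood of $x$ there are points $y\in X_0$ whose tail is bad enough that $\limsup_n \frac1n\ln\matrixnorm{\pi_y M_{[0,n)}(y)}_1\geq 0$, hence $y\notin Z_C$ for any $C$. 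You actually notice the obstruction yourself ("the second factor can grow, so this crude split is not enough"), but the proposed fix — an open-cover/compactness argument plus Lemma~\ref{lem:theta2} — will not produce a uniform exponential bound on a ball: $Z_C$ is not open, Lemma~\ref{lem:theta2} is a statement about a single sequence and does not uniformize across nearby points, and the set of "bad" $y$ has measure zero but is typically dense.

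The paper's proof sidesteps this entirely by never claiming a ball-inclusion. It instead builds a positive-measure subset $Y$ of the cylinder $\{y\in X_0 \mid M_{[0,n)}(y)=M_{[0,n)}(x)\}$ whose tail is \emph{also} controlled, namely
\[
   Y = M_{[0,n)}(x)\bigl(Z_{C'}\cap O_K\bigr)\cap \{y\in X_0\mid M_{[0,n)}(y)=M_{[0,n)}(x)\},
\]
where $O_K=\{z\mid 1<K\min_i v(z)_i\}$ and $C'$, $K$ are chosen so that $\mu(Z_{C'})>1-\epsilon$ and $\mu(O_K)>1-\epsilon$ with $2\epsilon$ the lower bound from property (5) of Definition~\ref{def:meas:cont:frac}. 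Then one splits $\matrixnorm{\pi_y M_{[0,N)}(y)}_1$ via Lemma~\ref{lem-bon-cocycle} into a head (controlled because $x\in Z_{C_x}$ and $M_{[0,n)}(y)=M_{[0,n)}(x)$) and a tail (controlled because $F^n(y)\in Z_{C'}$); the change of projection from $\pi_x$ to $\pi_y$ is absorbed into a factor $(K+1)$ coming from $F^n(y)\in O_K$, giving $Y\subseteq Z_C$ with $C=(K+1)C_xC'$. This is precisely the ingredient your plan is missing: you must conditioning on where the tail lands, not try to make the whole cylinder fit into $Z_C$.
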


\begin{proof}
    Let $x \in G_0$.
    Let $\epsilon > 0$ such that
    \[
    	\forall n \geq 1,\ \mu(F^n(\{y \in X_0 \mid M_{[0,n)}(x) = M_{[0,n)}(y)\})) \geq 2 \epsilon.
    \]
    This is given by our hypotheses on the measured continued fraction algorithm (see Definition~\ref{def:meas:cont:frac}).
    
    Now let
    \[
    	O_K = \{ x \in X \mid 1 < K \min_{i}(v(x)_i) \}.
    \]

    We have $\mu(\bigcup_{K > 1} O_K) = 1$, by definition of a measured continued fraction algorithm (see Definition~\ref{def:meas:cont:frac}).
    So there exists $K > 1$ such that $\mu(O_K) > 1 - \epsilon$.

    By Lemma~\ref{lem:ZC}, there exists $C' \geq 1$ such that $\mu(Z_{C'}) > 1 - \epsilon$.
    We choose the constant $C = (K+1) C_x C'$, where $C_x \geq 1$ is such that $x \in Z_{C_x}$.
    Let $r > 0$. Thank to Remark~\ref{rem:cv:cv}, we can take $n \in \NN$ large enough such that $M_{[0,n)}(x) X$ is included in $B(x,r)$.
    Then, we take
    \[
    	Y = M_{[0,n)}(x) \left( Z_{C'} \cap O_K \right) \cap \{y \in X_0 \mid M_{[0,n)}(x) = M_{[0,n)}(y)\}.
    \]
    By the previous inequalities, we have $\mu(F^n(Y)) = \mu(M_{[0,n)}^{-1} Y) > 0$.
    Using that we have a measured continued fraction algorithm (see Definition~\ref{def:meas:cont:frac}), we get that $\mu(Y) > 0$.
    
    We have $Y \subseteq X_0 \cap B(x,r)$ by construction.
    Let us show that we have the inclusion $Y \subseteq Z_{C}$.
    Let $y \in Y$. For all $N \geq n$, we have by Lemma~\ref{lem-bon-cocycle}
    \begin{align*}
        \matrixnorm{\pi_y M_{[0,N)}(y)}_1 &\leq  \matrixnorm{\pi_y M_{[0,n)}(y)}_1 \matrixnorm{\pi_{F^n(y)} M_{[0,N-n)}(F^ny)}_1 \\
                                    &\leq \matrixnorm{\pi_y M_{[0,n)}(x)}_1 \matrixnorm{\pi_{F^n(y)} M_{[0,N-n)}(F^ny)}_1 
    \end{align*}
    because we have $M_{[0,n)}(x) = M_{[0,n)}(y)$ by construction of $Y$.
    Now, let us show that we have $\matrixnorm{\pi_y M_{[0,n)}(x)}_1 \leq (K+1) C_x e^{ -n/C }$.
    
    Recall that $h\colon \RR^{d+1} \to \RR$ is the sum, i.e., the linear form such that for every $w \in \RR_+^{d+1}$, $h(w) = \norm{w}_1$. 
    Now, for every $z \in X$, we have for all $w \in \RR^{d+1}$, $\pi_z w = w - h(w) v(z)$.
    Let $M = M_{[0,n)}(x)$.
    We have $x \in Z_{C_x}$ and $C \geq C_x$, so we have for every $w \in \RR^{d+1}$,
    \[
    	\norm{M w - h(M w) v(x)}_1 = \norm{ \pi_x M w }_1 \leq C_x e^{ -n/C } \norm{w}_1.
    \]
    
    Let $y'$ such that $M y' = y$.
    The previous inequality applied with $w = v(y')$ gives
    \[
    	\norm{Mv(y') - h(M v(y')) v(x)}_1 \leq C_x e^{ -n/C }.
    \]
    By triangular inequality, and using that $v(y) = \frac{M v(y')}{h(M v(y'))}$, we have
    \begin{align*}
    	\norm{ \pi_y M w }_1 &= \norm{ Mw - h(Mw) v(y) }_1 \\
    						&\leq \norm{Mw - h(M w) v(x)}_1 + \abs{\frac{h(Mw)}{h(Mv(y'))}} \norm{M v(y') - h(M v(y')) v(x)}_1,
    \end{align*}
    so we get
    \[
    	\norm{ \pi_y M w }_1 \leq C_x e^{ -n/C } \left( \norm{w}_1 + \abs{\frac{h(Mw)}{h(M v(y'))}} \right).
    \]
    Now, we have $y' \in O_K$, so $h(M v(y')) \geq \frac{1}{K} \max\{\norm{C}_1 \mid C \text{ column of } M\}$, and we have $\abs{h(M w)} \leq \max\{\norm{C}_1 \mid C \text{ column of } M\} \norm{w}_1$. Thus, we have
    \[
    	\norm{w}_1 + \abs{\frac{h(Mw)}{h(M v(y'))}} \leq (K+1) \norm{w}_1.
    \]
    We deduce that $\matrixnorm{\pi_y M_{[0,n)}(x)}_1 \leq (K+1) C_x e^{ -n/C }$.
    And by construction of $Y$ we have $F^n(y) \in Z_{C'}$, and we have $C \geq C'$, so we have
    \[
    	\matrixnorm{\pi_{F^n(y)} M_{[0,N-n)}(F^ny)}_1 \leq C' e^{ -(N-n)/C }.
    \]
    We deduce that
    \[
    	\matrixnorm{\pi_y M_{[0,N)}}_1 \leq C e^{ -N/C }.
    \]
    Hence, we get that $Y \subseteq B(x,r) \cap Z_{C} \cap X_0$ with $\mu(Y) > 0$, so $\mu(B(x,r) \cap Z_C \cap X_0) > 0$.
\end{proof}

The next lemma says that if $x$ is in $G_0$, then there exists a set of positive measure of points close to $x$ where the Rauzy fractals are close to each other for the Hausdorff distance $\delta$ in $P$ defined by
\[
	\delta(A,B) = \max \left( \sup_{x \in A} d(x, B),\ \sup_{x \in B} d(A, x) \right),
\]
for every subsets $A,B$ of $P$.

\nomenclature[G]{$\delta$}{Hausdorff distance}

\begin{lemma} \label{lem:approx}
    For all $x \in G_0$ and for all $\epsilon > 0$, there exists $C > 0$ and $V \subseteq B(x,\epsilon) \cap Z_C \cap X_0$ such that $\mu(V) > 0$ and
    $\forall y \in V,\ \forall a \in A,\ \delta(R_a(x), R_a(y)) \leq \epsilon$.
\end{lemma}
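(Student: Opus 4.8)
The plan is to represent both $R_a(x)$ and $R_a(y)$, for $y$ near $x$, via the Dumont--Thomas formula of Corollary~\ref{cor:bounded:cover} --- as the sets of sums $\sum_{n\ge 0}\pi_z(M_{[0,n)}(z)\,t_n)$ indexed by left-infinite paths $\ldots\xrightarrow{t_n,s_n(z)}\ldots\xrightarrow{t_0,s_0(z)}a$ in the abelianized prefix automaton $\A$ --- and then to compare them by cutting these paths at a large common level $k$. First I would fix $x\in G_0$ and $\varepsilon>0$; since $\limsup_n\frac1n\ln\matrixnorm{\pi_xM_{[0,n)}(x)}_1<0$, Lemma~\ref{lem:Cx} gives a constant $C_x$ with $x\in Z_{C_x}$, and by Remark~\ref{rem:cv:cv} together with Lemma~\ref{lem:primitivity} the directive sequence $s(x)$ is primitive, so Corollary~\ref{cor:bounded:cover} applies to $x$. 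By Lemma~\ref{lem:mZC} I would then choose a constant $C\ge C_x$ for which $\mu(B(x,r)\cap Z_C\cap X_0)>0$ holds for \emph{every} $r>0$; note that any $y\in Z_C\cap X_0$ is again totally irrational with $\sum_n\matrixnorm{\pi_yM_{[0,n)}(y)}_1$ convergent, so Corollary~\ref{cor:bounded:cover} applies to such $y$ as well and yields the analogous formula for $R_a(y)$.

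Next I would choose $k$ so large that, with $\kappa=\max_{t\in\Sigma}\norm{t}_1$, both $\kappa\,C_x\sum_{n\ge k}e^{-n/C_x}$ and $\kappa\,C\sum_{n\ge k}e^{-n/C}$ are at most $\varepsilon/3$; the first bounds the $\pi_x$-tail $\sum_{n\ge k}\pi_x(M_{[0,n)}(x)t_n)$ of any path, the second the $\pi_y$-tail for $y\in Z_C$. Then, using Lemma~\ref{lem:near-prefix}, I would pick $r\le\varepsilon$ small enough that $d(x,y)\le r$ forces $s_i(y)=s_i(x)$ for $i<k$ (hence $M_{[0,n)}(y)=M_{[0,n)}(x)$ for $n\le k$), and additionally $r\le\varepsilon\bigl(3\kappa\sum_{n<k}\matrixnorm{M_{[0,n)}(x)}_1\bigr)^{-1}$, and set $V=B(x,r)\cap Z_C\cap X_0$, which has positive $\mu$-measure by Lemma~\ref{lem:mZC}. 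To estimate $\delta(R_a(x),R_a(y))$ for $y\in V$ and $a\in A$: given $p=\sum_{n\ge0}\pi_x(M_{[0,n)}(x)t_n)\in R_a(x)$, truncate its defining path at level $k$ to a finite segment ending at some state $b_k\in A$; since the transition labels coincide for $i<k$, this segment is also valid for $s(y)$, and by primitivity of $s(y)$ (Lemma~\ref{lem:primitivity}, $y$ being totally irrational with $\bigcap_n M_{[0,n)}(y)\RR_+^{d+1}$ a line by Remark~\ref{rem:cv:cv}) it extends to a left-infinite path for $s(y)$, producing $q\in R_a(y)$ whose first $k$ summands use the same letters $t_0,\dots,t_{k-1}$. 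Using $\norm{(\pi_x-\pi_y)w}_1=\abs{h(w)}\,d(x,y)$ and $h(M_{[0,n)}(x)t_n)=\norm{M_{[0,n)}(x)t_n}_1\le\kappa\,\matrixnorm{M_{[0,n)}(x)}_1$ for $n<k$, together with the two tail bounds, gives $\norm{p-q}_1\le\varepsilon$; the reverse inequality is symmetric, using primitivity of $s(x)$ to complete a truncated path of a point of $R_a(y)$ and the bound $x\in Z_{C_x}\subseteq Z_C$ for its $\pi_x$-tail.

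The delicate point --- and where I expect most of the work --- is the manipulation of left-infinite paths in $\A$: one must verify that the finite path-segment determined by the common prefix of $s(x)$ and $s(y)$ can be completed to a genuine left-infinite path on both sides, and that these completions contribute only exponentially small $\pi$-tails. This is exactly where primitivity of $s(x)$ and of $s(y)$ is used (the latter available only because $y\in Z_C$ forces convergence of $\sum_n\matrixnorm{\pi_yM_{[0,n)}(y)}_1$, hence a one-dimensional limit cone). One must also respect the order of the choices --- $C$ first (from Lemma~\ref{lem:mZC}, uniformly in $r$), then $k$ (in terms of $C$ and $C_x$), then $r$ (in terms of $x$, $k$ and $\varepsilon$) --- since $k$ depends on $C$ and $r$ depends on $k$.
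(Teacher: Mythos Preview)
Your proposal is correct and follows essentially the same approach as the paper's own proof: choose $C$ via Lemma~\ref{lem:mZC}, then $k$ from the exponential tail bounds in $Z_C$, then $r$ via Lemma~\ref{lem:near-prefix}, set $V=B(x,r)\cap Z_C\cap X_0$, and compare points of $R_a(x)$ and $R_a(y)$ by truncating and re-extending left-infinite paths in $\A$. Two minor remarks: the paper bounds the head terms by the simpler observation that the finitely many vectors $\sum_{i\le k}M_{[0,i)}(x)t_i$ vary continuously under $\pi_x\mapsto\pi_y$ (your explicit formula $\|(\pi_x-\pi_y)w\|_1=|h(w)|\,d(x,y)$ is a nice sharpening of this), and for the extension of the truncated path the paper invokes only invertibility of the matrices $\ab(\sigma)$, $\sigma\in S$ --- which already guarantees that every state of $\A$ has an incoming edge for each label --- rather than full primitivity of $s(y)$.
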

\begin{proof}
Let $x \in G_0$.
    Let $C \in \RR_+$ given by Lemma~\ref{lem:mZC}.
    Let $k \in \NN$ big enough to have
    \[
        \frac{C e^{ -k/C }}{1 - e^{-1/C}} \max_{t \in \Sigma} \norm{t}_1 \leq \frac{\epsilon}{3},
    \]
    where $\Sigma \subseteq \ZZ^{d+1}$ is the finite Dumont-Thomas alphabet for our $S$-adic system, see Definition~\ref{def-dt}.
    Then, we choose $R^{(k)} > 0$ small enough such that for all $y \in B(x, R^{(k)})$, we have
    \[
        \forall (t_i)_{i\leq k} \in \Sigma^{k+1},
        \norm{\pi_x \left( \sum_{i=0}^k M_{[0,i)}(x) t_i \right) - \pi_y \left( \sum_{i=0}^k M_{[0,i)}(x) t_i \right) }_1 \leq \frac{\epsilon}{3}.
    \]
    It is possible because we compare the images by $\pi_x$ and by $\pi_y$ of the same element $\displaystyle\sum_{i=0}^k M_{[0,i)}(x) t_i$ that lives in a finite set. 
    
    Then, we take $r > 0$ given by Lemma~\ref{lem:near-prefix} such that $\norm{x - y}_1 \leq 2r \Longrightarrow \forall i \leq k, s_i(x) = s_i(y)$, and we can assume that $r \leq R^{(k)}$ and $r \leq \epsilon$ up to take the minimum of the three values,
    and we let $V = B(x,r) \cap Z_C \cap X_0$.
    Let's show that the set $V$ satisfy what we want. We have $\mu(V) > 0$ by Lemma~\ref{lem:mZC}.
    
    Let $y \in V$. We have convergence of the series $\sum_n \matrixnorm{\pi_x M_{[0,n)}(y)}_1$ since $y \in Z_C$. 
	Hence, we can use Corollary~\ref{cor:bounded:cover}, and we get that
    \[
    	R_a(y) = \{ \sum_{n=0}^\infty \pi_y(M_{[0,n)}(y) t_n) \mid ... \xrightarrow{t_n, s_n(y)} ... \xrightarrow{t_0, s_0(y)} a \in \A \},
    \]
    and we get the same description for $R_a(x)$.
    
    Let $p \in R_a(x)$, and let $\dots \xrightarrow{t_n, s_n(x)} \dots \xrightarrow{t_0, s_0(x)} a$ be a left-infinite path in the abelianized prefix automaton $\A$ such that
    \[
    	p = \sum_{n=0}^\infty \pi_x(M_{[0,n)(x)} t_n).
    \]
    We have $s_i(x) = s_i(y)$ for all $i \leq k$, and the matrices of substitutions of $S$ are invertible, 
    so we can take a left-infinite path
    $\dots \xrightarrow{t_n', s_n(y)} \dots \xrightarrow{t_0', s_0(y)} a$ in the automaton $\A$ such that $t_i = t_i'$ for all $i \leq k$.
    This defines a point $p' \in R_a(y)$ by
    \[
    	p' = \sum_{n=0}^\infty \pi_y(M_{[0,n)(y)} t_n').
    \]
    
    We have the inequalities
    \begin{align*}
        \norm{p - p'}_1
          \leq{}& \norm{\pi_{x} \left( \sum_{i=0}^k M_{[0,i)}(x) t_i \right) - \pi_{y} \left( \sum_{i=0}^k M_{[0,i)}(y) t_i \right) }_1 \\
	        &+ \norm{ \sum_{i=k+1}^{\infty} \pi_{x} \left( M_{[0,i)}(x) t_i \right) }_1
                 + \norm{ \sum_{i=k+1}^{\infty} \pi_{y} \left( M_{[0,i)}(y) t_i' \right) }_1\\
        \intertext{Then using that $x \in Z_C$ and $y \in Z_C$, we have} 
        \norm{p - p'}_1 
          \leq{}& \frac{\epsilon}{3}+ \sum_{j=k+1}^{\infty}Ce^{-j/C}\norm{t_j}_1+  \sum_{j=k+1}^{\infty}Ce^{-j/C}\norm{t_j'}_1\\
          \leq{}& \frac{\epsilon}{3} +2 \frac{C e^{-k/C}}{1 - e^{-1/C}} \max_{t \in \Sigma} \norm{t}_1 \\
          \leq{}& \epsilon.
    \end{align*}
    
    By reverting the role of $x$ and $y$, we also show that for any point $p \in R_{a}(y)$, there exists a point $p' \in R_{a}(x)$ such that $\norm{p - p'}_1 \leq \epsilon$, so we get the wanted inequality
    \[
    	\delta(R_{a}(x), R_{a}(y)) \leq \epsilon.
    \]
\end{proof}

 \begin{lemma} \label{lem:perturb}
 	If $G_0 \neq \emptyset$ then $\mu(G_0) > 0$.
 \end{lemma}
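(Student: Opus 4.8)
The goal is to show that a single seed point $x_0 \in G_0$ forces $\mu(G_0) > 0$. The key geometric fact about a seed point is that the worm of (some piece of) the fixed point has non-empty interior for the topology $\topo{x_0}$; by Lemma~\ref{lem:car:interior}, this is equivalent to a statement about the Rauzy fractal $R(x_0)$ avoiding certain closed sets (translates $R+t$, $t \in \Lambda \setminus\{0\}$, and other pieces $R_b$). The strategy is to transfer this open condition to a neighborhood of $x_0$ using Lemma~\ref{lem:approx}, which guarantees a set $V$ of positive $\mu$-measure, arbitrarily close to $x_0$ and inside $Z_C \cap X_0$, on which the Rauzy fractals $R_a(y)$ are Hausdorff-close to $R_a(x_0)$.

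\begin{proof}
	Let $x \in G_0$. By definition, there is a letter $a \in A$, a fixed point $u$ of $s(x)$, and an open set $B_0 \subseteq P$ such that $\HH \cap \pi_x^{-1}(B_0) \subseteq W_a(u_0)$. We may assume $B_0 = B(p_0, r_0)$ for some $p_0 \in P$ and $r_0 > 0$; shrinking it, we may also assume its closure $\overline{B(p_0, r_0)} = \overline{B(p_0,2\rho)}$ is still contained in the interior, i.e.\ $\HH \cap \pi_x^{-1}(B(p_0,2\rho)) \subseteq W_a(u_0)$ for some $\rho > 0$. By Lemma~\ref{lem:car:interior} applied to the open ball $B(p_0,2\rho)$, this is equivalent to
	\[
		\forall b \in A \setminus \{a\},\ \forall t \in \Lambda \setminus \{0\},\ B(p_0,2\rho) \cap R_b(x) = \emptyset = B(p_0,2\rho) \cap (R(x) + t),
	\]
	where $R_b(x) = \overline{\pi_x W_b(u_0)}$ and $R(x) = \overline{\pi_x W(u_0)}$. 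Since $R(x)$ is bounded (seed points satisfy the exponential convergence hypothesis, so Corollary~\ref{cor:bounded:cover} applies), only finitely many translates $t \in \Lambda$ are relevant: there is a finite set $F \subseteq \Lambda \setminus \{0\}$ such that $B(p_0, 2\rho)$ can only meet $R(x)+t$ for $t \in F$, and the displayed condition then says $\dist\big(B(p_0,\rho), R_b(x) \cup \bigcup_{t \in F}(R(x)+t)\big) \geq \rho$ after possibly shrinking $\rho$ again.

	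Now apply Lemma~\ref{lem:approx} with $\epsilon = \rho/2$: there is a constant $C > 0$ and a set $V \subseteq B(x, \rho/2) \cap Z_C \cap X_0$ with $\mu(V) > 0$ such that for every $y \in V$ and every $b \in A$, $\delta(R_b(x), R_b(y)) \leq \rho/2$, hence also $\delta(R(x), R(y)) \leq \rho/2$ since $R = \bigcup_b R_b$. Fix $y \in V$. For $b \neq a$, any point of $R_b(y)$ is within $\rho/2$ of a point of $R_b(x)$, which is at distance $\geq \rho$ from $B(p_0,\rho)$; hence $R_b(y) \cap B(p_0, \rho/2) = \emptyset$. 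Similarly, for $t \in F$, the set $R(y)+t$ is within Hausdorff distance $\rho/2$ of $R(x)+t$, so $(R(y)+t) \cap B(p_0,\rho/2) = \emptyset$; and for $t \in \Lambda \setminus (F \cup \{0\})$, $R(y)+t$ is at Hausdorff distance $\leq \rho/2$ from $R(x)+t$, which is disjoint from $B(p_0, 2\rho)$, so again $(R(y)+t) \cap B(p_0,\rho/2) = \emptyset$. By Lemma~\ref{lem:car:interior} in the reverse direction, applied to the ball $B(p_0, \rho/2)$ and any fixed point $u^y$ of $s(y)$, this shows $\HH \cap \pi_y^{-1}(B(p_0,\rho/2)) \subseteq W_a(u^y_0)$, so $W_a(u^y_0)$ has non-empty interior for $\topo{y}$.

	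It remains to check $y \in G_0$. Since $y \in X_0$, $y$ is a totally irrational direction and $F^n$ is continuous at $y$ for all $n$. Since $y \in Z_C$, we have $\matrixnorm{\pi_y M_{[0,n)}(y)}_1 \leq C e^{-n/C}$, so $\limsup_n \frac1n \ln \matrixnorm{\pi_y M_{[0,n)}(y)}_1 \leq -1/C < 0$. Combined with the preceding paragraph, all four conditions of Definition~\ref{def:G0} hold, so $y \in G_0$. Since $V \subseteq G_0$ and $\mu(V) > 0$, we conclude $\mu(G_0) > 0$.
\end{proof}

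\textbf{Main obstacle.} The delicate point is the bookkeeping in the passage from the topological statement ``$W_a$ has non-empty interior'' to a \emph{quantitative} separation statement with a uniform margin $\rho$, and back. The forward direction of Lemma~\ref{lem:car:interior} gives disjointness of the ball from infinitely many sets $R_b$, $R+t$; one must use boundedness of $R(x)$ to reduce to finitely many translates, and then genuinely shrink the ball to get a positive distance so that a Hausdorff perturbation of size $\rho/2$ cannot destroy the separation. Care is also needed because $s(y)$ may have several fixed points, but Proposition~\ref{prop:R:uniqueness} (valid here since $y \in Z_C$ gives strong convergence and $y$ totally irrational gives primitivity via Lemma~\ref{lem:primitivity}) ensures $R_a(y)$ is independent of that choice, so the reverse application of Lemma~\ref{lem:car:interior} is legitimate for any fixed point.
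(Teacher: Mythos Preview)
Your proof is correct and follows essentially the same approach as the paper: translate the non-empty-interior condition via Lemma~\ref{lem:car:interior} into separation of the ball from $R_b(x)$ ($b\neq a$) and from translates $R(x)+t$, then use Lemma~\ref{lem:approx} to get a positive-measure set $V\subseteq Z_C\cap X_0$ on which the Rauzy pieces stay Hausdorff-close, and apply Lemma~\ref{lem:car:interior} in reverse to conclude $V\subseteq G_0$.

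One simplification: your reduction to a finite set $F\subseteq\Lambda$ and the ``possibly shrinking $\rho$ again'' step are unnecessary. The disjointness $B(p_0,2\rho)\cap(R(x)+t)=\emptyset$ already gives $d(p_0,R(x)+t)\geq 2\rho$ for \emph{every} $t\in\Lambda\setminus\{0\}$ individually, so the triangle-inequality bound $d(p_0,p)\geq 2\rho-\rho/2$ works uniformly in $t$ without any finiteness. The paper runs exactly this cleaner version. Your remark about independence of the Rauzy pieces from the choice of fixed point (via Proposition~\ref{prop:R:uniqueness}) is a useful clarification that the paper leaves implicit.
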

 
 \begin{proof}
 	Let $x \in G_0$. Let $a \in A$, and let $u$ be a fixed point of $s(x)$, such that there exists an open ball $B_a = B(c_a,r_a)$ of positive radius $r_a > 0$ such that $\HH \cap \pi_{x}^{-1}(B_a) \subseteq W_a(u)$.
    Then, by Lemma~\ref{lem:car:interior}, we have that for all $b \in A \setminus \{a\}$ and ${t \in \Lambda \setminus \{0\}}$
    
    \[
    	B_a \cap R_b(x)=\emptyset= B_a \cap (R(x)+t).
    \]
	
    We take the $C' > 0$ and $V \subseteq B(x, \epsilon) \cap Z_{C'} \cap X_0$ given by Lemma~\ref{lem:approx} for $\epsilon = r_a/2$. 
    Let $B'_a = B(c_a,r_a/2)$ the open ball with half the radius of $B_a$ and same center.
    Let us show that for all $b \in A \setminus \{a\}$ and ${t \in \Lambda \setminus \{0\}}$ we have
    \[
    	\forall y \in V,\ B'_a \cap R_b(y)=\emptyset= B'_a \cap (R(y)+t).
    \]
    
    If $b \in A\setminus\{a\}$, then we have for all $y \in V$ and for all $p \in R_b(y)$,
    \[
    	d(c_a, p) \geq d(c_a, R_b(x)) - d(R_b(x), p) \geq r_a - \delta(R_b(x), R_b(y)) \geq r_a - r_a/2 = r_a/2,
    \]
    so $B'_a \cap R_b(y) = \emptyset$.
    
    If $t \in \Lambda \setminus \{0\}$, then we have for all $y \in V$ and all $p \in R(y) + t$,
    \[
    	d(c_a, p) \geq d(c_a, R(x)+t) - d(R(x)+t, p) \geq r_a - \delta(R(x)+t, R(y)+t) \geq r_a/2,
    \]
    so $B'_a \cap (R(y)+t) = \emptyset$.
    
    By Lemma~\ref{lem:car:interior}, we deduce that for every $y \in V$, we have the inclusion $\pi_{y}^{-1}(B'_a) \cap \HH \subseteq W_{a}(y)$.
    We get that $V \subseteq G_ {B'_a, C'}^a$,
    so the set $G_0$ has positive measure.
 \end{proof}

\subsubsection{Step \texorpdfstring{2: $\mu(G_0) > 0 \Longrightarrow \mu(G_1) > 0$}{2}}
    
	\begin{lemma} \label{lem:G0:G1}
		We have $\mu(G_0) > 0 \Longrightarrow \mu(G_1) > 0$.
	\end{lemma}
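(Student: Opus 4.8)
The plan is to use ergodicity to show that, in fact, $\mu(G_1)=1$, which a fortiori gives $\mu(G_1)>0$. Since $\mu(G_0)>0$ and $G_0=\bigcup_{a\in A}G_0^a$ is a finite union, where $G_0^a$ denotes the (measurable) set of seed points for which the last condition of Definition~\ref{def:G0} is witnessed by the particular letter $a$, there is a letter $a$ with $\mu(G_0^a)>0$; alternatively, the proof of Lemma~\ref{lem:perturb} already exhibits such a letter $a$ together with a ball $B$ and a constant $C$ with $\mu(G_{B,C}^a)>0$, and $G_{B,C}^a$ is contained in this set.

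First I would record what holds for $\mu$-almost every $x\in X$: the point $x$ lies in $X_0$ and is a totally irrational direction (because $\mu(X_0)=1$); by the Pisot condition, Oseledets' theorem gives that the cocycle $\transp{M_{[0,n)}}$ has a simple top exponent, so $\bigcap_n M_{[0,n)}(x)\RR_+^{d+1}=\RR_+v(x)$, whence by Lemma~\ref{lem:primitivity} the directive sequence $s(x)$ is primitive, and by Corollary~\ref{cor:theta2} we have $\limsup_{n\to\infty}\frac1n\ln\matrixnorm{\pi_xM_{[0,n)}(x)}_1=\theta_2(\mu)<0$; and, by Birkhoff's theorem applied to $\indic_{G_0^a}$ together with ergodicity of $\mu$, the forward orbit of $x$ visits $G_0^a$ with positive frequency, so $F^{n_k}(x)\in G_0^a$ for some increasing sequence $(n_k)$.

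Then I would fix such an $x$ and a fixed point $u$ of $s(x)$, and show $x\in G_1$. Recall that $F^m(x)=x^{(m)}$ and that $(u_m,u_{m+1},\dots)$ is a fixed point of $s(x^{(m)})$. Pick $k$ so large that $s_{[0,n_k)}(a)$ contains every letter (possible by primitivity). The point $y:=x^{(n_k)}\in G_0^a$ has exponentially decaying $\matrixnorm{\pi_yM_{[0,n)}(y)}_1$ and a totally irrational direction, so Corollary~\ref{cor:bounded:cover} applies at $y$ and the pieces $R_b(y)$, $R(y)$ do not depend on the chosen fixed point; hence the property ``$W_a(u'_0)$ has non-empty interior for $\topo{y}$'' --- which by Lemma~\ref{lem:car:interior} is equivalent to the existence of a ball disjoint from all $R_b(y)$, $b\ne a$, and all $R(y)+t$, $t\in\Lambda\setminus\{0\}$, a condition that does not involve $u'$ --- transfers from the witnessing fixed point of $s(y)$ to the fixed point $(u_{n_k},u_{n_k+1},\dots)$. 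Thus $W_a(u_{n_k})$ has non-empty interior for $\topo{x^{(n_k)}}$. Now I would propagate this downward one level at a time, exactly as in the proof of Lemma~\ref{Wopen}: by Equation~(\ref{automate-worm}), $W_c(u_m)=\bigcup_{b\xrightarrow{t,s_m}c}M_mW_b(u_{m+1})+t$, and if $W_b(u_{m+1})$ has non-empty interior for $\topo{x^{(m+1)}}$ then, by Lemma~\ref{lem:O:matrix} (applied with the unimodular non-negative matrix $M_m$ and $M_mx^{(m+1)}=x^{(m)}$), so does $M_mW_b(u_{m+1})+t\subseteq W_c(u_m)$, for every $c$ occurring in $s_m(b)$. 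Iterating from level $n_k$ down to level $0$ starting with $b=a$, one gets that $W_c(u_0)$ has non-empty interior for $\topo{x}$ for every letter $c$ occurring in $s_{[0,n_k)}(a)$, i.e.\ for every $c\in A$. Together with $x\in X_0$ and $\limsup_{n\to\infty}\frac1n\ln\matrixnorm{\pi_xM_{[0,n)}(x)}_1<0$, this shows $x\in G_1$. Since the set of such $x$ has full $\mu$-measure, $\mu(G_1)=1>0$.

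The main obstacle is the downward propagation step: the Dumont--Thomas translation vectors $t$ appearing in Equation~(\ref{automate-worm}) lie in $\ZZ^{d+1}$ but need not lie in the lattice $\Lambda$, so a priori translating a $\topo{x^{(m)}}$-open piece by $t$ could spoil its openness; this is handled exactly as in the proof of Lemma~\ref{Wopen}, by keeping track of the full union over automaton transitions rather than a single summand, and is the only point requiring care. A minor subsidiary point, dispatched via Corollary~\ref{cor:bounded:cover} and Lemma~\ref{lem:car:interior} as above, is the independence of the ``non-empty interior of a worm piece'' property from the choice of fixed point of the relevant directive sequence.
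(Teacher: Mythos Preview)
Your proof is correct and follows the paper's approach: use recurrence to find $n$ with $F^n(x)\in G_0$, take $n$ large enough that primitivity yields $M_{[0,n)}>0$, and then propagate non-empty interior from $W_a(u_n)$ to every $W_b(u_0)$ via Equation~(\ref{automate-worm}) and Lemma~\ref{lem:O:matrix}. Two minor remarks: your explicit transfer of the interior property between fixed points of $s(F^n x)$ via Corollary~\ref{cor:bounded:cover} and Lemma~\ref{lem:car:interior} is a clean alternative to the paper's implicit extension of the witnessing fixed point back to one of $s(x)$; and your diagnosis of the ``main obstacle'' is slightly off --- translation by $t\in\Sigma$ preserves non-empty $\topo{x^{(m)}}$-interior by removing finitely many low-height integer points (exactly the trick in the proof of Lemma~\ref{lem:O:matrix}), not by taking the full union --- but this does not affect correctness.
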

	
	\begin{proof}
		If $\mu(G_0) > 0$, then by Poincaré recurrence theorem, we have
		\[
			\mu \left( \bigcap_{n \in \NN} \bigcup_{k \geq n} F^{-k} G_0 \right) > 0.
		\]
		
		Let $x \in \bigcap_{n \in \NN} \bigcup_{k \geq n} F^{-k} G_0$.
		Using $\limsup_{n \to \infty} \frac{1}{n} \ln \matrixnorm{\pi_x M_{[0,n)}(x)}_1 < 0$ and Remark~\ref{rem:cv:cv}, we deduce from
		Lemma~\ref{lem:primitivity} that there exists $n_0 \in \NN$ such that $M_{[0,n_0)}(x) > 0$.
		Let $n \geq n_0$ such that $F^n(x) \in G_0$.
		Let $u$ be a fixed point of $s(x)$, and $a \in A$ a letter, such that $W_a(u_n)$ has non-empty interior for the topology $\topo{F^nx}$. For every $b \in A$, we have the equality
		\[
			W_b(u) = \bigcup_{c \xrightarrow{t_n, s_n(x)} \dots \xrightarrow{t_0, s_0(y)} b} M_{[0,n)}(x) W_c(u_n) + \sum_{k=0}^{n-1} M_{[0,k)}(x) t_k.
		\]
		And thanks to $M_{[0,n)}(x) > 0$, we know that for every $b$, the letter $c = a$ appears in this union. The interior of $W_a(u_n)$ is non-empty for the topology $\topo{F^n x}$, so by Lemma~\ref{lem:O:matrix} we have the non-emptiness of the interior of $W_b(u)$ for the topology $\topo{x}$, for every $b \in A$. 
		By Lemma~\ref{lem:theta2} and Lemma~\ref{lem:Cx}, there exists $C > 0$ such that $x \in Z_C$.
		We get that $x \in G_1$. So $G_1$ contains the set $\bigcap_{n \in \NN} \bigcup_{k \geq n} F^{-k} G_0$ which has positive measure.
	\end{proof}

\subsubsection{Step \texorpdfstring{3: $\mu(G_1) > 0 \Longrightarrow \mu(G) = 1$}{3}}

\begin{lemma}\label{cor-G-invariant-measure}
	The set $G$ is measurable and $F$-invariant.
\end{lemma}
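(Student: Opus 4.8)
The plan is to prove the two assertions separately, starting with $F$-invariance, which is the easier one.

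\textbf{$F$-invariance.} First I would record the bookkeeping relating $s(x)$ and its shift $s(F(x))$: one has $M_{[0,n)}(s(F(x))) = M_{[1,n+1)}(s(x))$, the direction $(F(x))^{(k)}$ equals $x^{(k+1)} = F^{k+1}(x)$, and the map $(u_k)_k \mapsto (u_{k+1})_k$ is a bijection from the fixed points of $s(x)$ onto the fixed points of $s(F(x))$, with inverse $(u'_k)_k \mapsto (s_0(x)(u'_0),u'_0,u'_1,\dots)$. Granting this, I would check that each of the four conditions of Definition~\ref{defG} is preserved in both directions. Condition~(1) is shift-invariant by Lemma~\ref{lem:theta2} (applied with $k=1$, since $x^{(1)}=F(x)$); conditions~(2) and~(4) are preserved because $F$ and $F^{-1}$ act on directions by matrices of $GL_{d+1}(\ZZ)$, which preserve total irrationality; and condition~(3) is preserved by replacing the increasing index sequence $(k_n)$ by $(k_n+1)$ (resp.\ by $(k_n-1)_{n\ge n_0}$ in the other direction, a cofinal part of an increasing sequence being still increasing), combined with the fixed-point correspondence above. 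Running both implications yields $F^{-1}(G)=G$.

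\textbf{Measurability.} The aim is to exhibit $G$ as a countable Boolean combination of $\mu$-measurable sets; since $\mu(X_0)=1$ it suffices to treat $G\cap X_0$. I would first note that $x\mapsto s_n(x)=s_0\circ F^n(x)$ is $\mu$-measurable, hence so is $x\mapsto M_{[0,n)}(x)$ (finitely many values, each on a measurable set), while $x\mapsto\pi_x$ is continuous; therefore $x\mapsto\matrixnorm{\pi_x M_{[0,n)}(x)}_1$ is $\mu$-measurable and each $Z_C$ of Definition~\ref{def-ZC-GBC} is measurable. By Lemma~\ref{lem:Cx}, condition~(1) cuts out exactly $\bigcup_{C\in\NN^*}Z_C$, measurable. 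The set $X_{\mathrm{irr}}$ of totally irrational directions is Borel, being the complement of a countable union of rational hyperplanes, so condition~(2) is Borel.

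The crux is conditions~(3)–(4). Using Lemma~\ref{lem:car:interior} together with Corollary~\ref{cor:bounded:cover}, on $Z_C\cap X_0$ the Rauzy fractal $R(x)$ and its pieces $R_a(x)$ are well defined (independent of the fixed point) and uniformly bounded, and — arguing as in the proof of Lemma~\ref{lem:approx}, where nearby points of $X_0$ share an arbitrarily long prefix of their continued fraction expansion (Lemma~\ref{lem:near-prefix}) and the uniform exponential decay on $Z_C$ forces the fractals close — the map $x\mapsto(R(x),(R_a(x))_{a\in A})$ is continuous on $Z_C\cap X_0$ for the Hausdorff distance. Hence, via Lemma~\ref{lem:car:interior}, the condition "$\HH\cap\pi_x^{-1}(B(p,r))\subseteq W_a$ of some fixed point" rewrites as "$B(p,r)$ avoids $R_b(x)$ for $b\ne a$ and avoids $R(x)+t$ for $t\in\Lambda\setminus\{0\}$" (a countable intersection, all but finitely many clauses automatic by boundedness), which defines a measurable set $G_{B,C}^a$; letting $B$ range over balls with rational center and radius and $C$ over $\NN^*$, the set $G_1=X_0\cap\bigcup_C\bigcup_{(B_a)_a}\bigcap_a G_{B_a,C}^a$ is measurable. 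Given condition~(1), condition~(3) then holds for $x$ iff $F^k(x)\in G_1$ for infinitely many $k$, i.e.\ $x\in\bigcap_n\bigcup_{k\ge n}F^{-k}(G_1)$, measurable; and, granting~(3), condition~(4) amounts to requiring that the $\omega$-limit set $\bigcap_n\overline{\{F^k(x)\mid k\ge n,\ F^k(x)\in G_1\}}$ of the corresponding subsequence meet the ($G_\delta$) set $X_{\mathrm{irr}}$, again a measurable condition on $x$. Intersecting these countably many constraints shows $G$ is ($\mu$-)measurable.

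\textbf{Expected obstacle.} The delicate point is entirely in condition~(3): turning "$W_a$ contains a ball for $\topo{x}$" into a Borel condition on $x$, which rests on the Hausdorff-continuity of $x\mapsto R_a(x)$ on the sets $Z_C\cap X_0$ (essentially a repackaging of Lemma~\ref{lem:approx}), and then on correctly disentangling the index sequence $(k_n)$ shared by conditions~(3) and~(4) — here the key is that condition~(3) is hereditary along subsequences, so one may first pin down the set $\{k\mid F^k(x)\in G_1\}$ and only afterwards impose the totally-irrational-limit requirement of~(4). The $F$-invariance part, by contrast, is routine bookkeeping with the shift on directive sequences.
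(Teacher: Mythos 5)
Your $F$-invariance argument coincides with the paper's: both check that each of the four conditions of Definition~\ref{defG} is preserved by $F$ and $F^{-1}$, via Lemma~\ref{lem:theta2} for condition~(1), integrality of the cocycle matrices for condition~(2), and the shift on word sequences (with the index substitution $k_n\mapsto k_n+1$, resp.\ $k_{n+1}-1$) for conditions~(3)--(4).

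For measurability the paper writes ``left to the reader'' and gives no argument, so there is nothing to compare against; your attempt is a sensible route, but two steps are not tight as written. First, the claimed equivalence ``given~(1), condition~(3) holds iff $F^k(x)\in G_1$ for infinitely many $k$'' is only one-way: $G_1$ is a union over all $C$ and all families of balls $(B_a)_a$, so $F^{k_n}(x)\in G_1$ allows the witnessing radii to shrink with $n$, whereas condition~(3) requires a single positive $r$ along the whole subsequence. The repair is to work with a fixed rational radius $r$ and a fixed integer $C$ --- i.e.\ with the countable family $\bigcap_a\bigcup_{p}G^a_{B(p,r),C}$, $p$ ranging over a countable dense set of $P$ --- and only afterwards take the countable union over $r$ and $C$; this also cleanly binds the choice of $r$ to the $\omega$-limit condition used for~(4). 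Second, the final claim that ``the $\omega$-limit set meets the $G_\delta$ set $X_{\mathrm{irr}}$'' defines a measurable set is not automatic: for a closed-set-valued map $x\mapsto L(x)$ and a $G_\delta$ target, the set $\{x : L(x)\cap X_{\mathrm{irr}}\neq\emptyset\}$ is a priori only analytic, not Borel. It is universally (hence $\mu$-)measurable, which suffices here, but that invocation of descriptive set theory should be made explicit rather than asserted as routine; you could also sidestep it in the spirit of Lemma~\ref{lem:ouvert:petite:mesure}, replacing $X_{\mathrm{irr}}$ by a countable exhaustion of closed subsets up to $\mu$-null sets.
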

\begin{proof}
	Since we assume that $\mu$ is a Borel measure, the fact that $G$ is a measurable set is an exercise left to the reader.
	Let us show that $G$ is $F$-invariant. We check that every point of Definition~\ref{defG} is invariant.
	\begin{enumerate}
		\item By Lemma~\ref{lem:theta2}, the limit $\limsup_{n \to \infty} \frac{1}{n} \ln \matrixnorm{\pi_x M_{[0,n)}(x)}_1$ is $F$-invariant.
		\item The total irrationality of the direction is a $F$-invariant property since $F$ acts by integer invertible matrices.
		\item 
				Let us denote $x'=F(x)=M(x)^{-1}x$, and let $v'^{(k)}=v(F^k(x'))=v^{(k+1)}$, $s'=s(x')$, and $u'$ be the word sequence obtained by shifting $u$ by one, i.e., $u'_k=u_{k+1}$.
				Then $u'$ is a fixed point of $s'$, and $u_0=s_0(x)u_1=s_0(x)u'_0$.
				We deduce that the condition 
				$\pi_{v^{(k)}}^{-1}(B(y, r)) \cap \HH \subseteq W_a(u_k)$ can be written as $\pi_{v'^{(k-1)}}^{-1}(B(y, r)) \cap \HH \subseteq W_a(u'_{k-1})$.

				Thus it suffices to replace $(k_n)_{n \in \NN}$ by $(k_{n+1} - 1)_{n \in \NN}$ or by $(k_{n}+1)_{n \in \NN}$,
				and we deduce that this property is preserved by $F$ and $F^{-1}$.
		\item The last property is clearly preserved, and the limit is the same.
	\end{enumerate}
	We conclude that $F^{-1}(G) = G$.

\end{proof}

\begin{lemma} \label{lem:full:measure}
    If $\mu(G_1) > 0$, then $\mu(G) = 1$.
\end{lemma}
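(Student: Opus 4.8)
The plan is to exploit the ergodic dichotomy: by Lemma~\ref{cor-G-invariant-measure} the set $G$ is measurable and $F$-invariant, so ergodicity of $\mu$ forces $\mu(G)\in\{0,1\}$, and it suffices to exhibit a positive-measure subset of $G$. Starting from $\mu(G_1)>0$, I would first single out one constant $C>0$ and one tuple of open balls $(B_a)_{a\in A}$ of $P$, say $B_a=B(c_a,\rho_a)$, such that $W:=X_0\cap\bigcap_{a\in A}G^a_{B_a,C}$ has $\mu(W)>0$. This is possible by the identity for $G_1$ displayed just after Lemma~\ref{lem:Cx}, once one observes that the union appearing there may be taken countable: $Z_C$ is non-decreasing in $C$, so $C$ may be restricted to integers, and every open ball of $P$ contains a smaller one with rational centre and radius, while the inclusion $\pi_x^{-1}(B)\cap\HH\subseteq W_a(u_0)$ passes to sub-balls. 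Note that $W\subseteq Z_C$ and, since $W\subseteq X_0$, every point of $W$ is a totally irrational direction.

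Next I would trim $W$ so that its closure stays away from the bad set. Write $N$ for the set of non-totally-irrational directions of $X$; then $\mu(N)=0$ because $N\subseteq X\setminus X_0$, so by Lemma~\ref{lem:ouvert:petite:mesure} there is an open $O\supseteq N$ with $\mu(O)<\mu(W)$. Put $W'=W\setminus O$. Then $\mu(W')>0$, and $\overline{W'}\subseteq X\setminus O$ is a compact subset of $X$ consisting only of totally irrational directions. Now I would apply the Poincar\'e recurrence theorem to $W'$: the $F$-invariant set $\bigcap_{m}\bigcup_{k\ge m}F^{-k}W'$ of points whose orbit meets $W'$ infinitely often contains $W'$ up to a null set, hence has full measure by ergodicity. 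Intersecting it with the full-measure set of $x\in X$ that are totally irrational directions and satisfy $\limsup_n\frac1n\ln\matrixnorm{\pi_xM_{[0,n)}(x)}_1<0$ (full measure by Corollary~\ref{cor:theta2} together with $\theta_2(F,\mu)<0$) produces a full-measure set $X'$, and the claim is that $X'\subseteq G$.

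To verify the claim, fix $x\in X'$, set $s=s(x)$ (its direction is $x$ by Remark~\ref{rem:cv:cv}), choose any fixed point $u$ of $s$, and let $(k_n)$ enumerate increasingly the indices $k$ with $F^k(x)\in W'$; recall that $x^{(k_n)}=F^{k_n}(x)$. Items~1 and~2 of Definition~\ref{defG} hold by membership of $x$ in $X'$. Since $F^{k_n}(x)\in\overline{W'}$ with $\overline{W'}$ compact, a subsequence $(F^{k_{n_j}}(x))_j$ converges to some $\ell\in\overline{W'}$, which is totally irrational; this is item~4 for the sequence $(k_{n_j})_j$. For item~3 I would use the uniform radius $r:=\min_{a\in A}\rho_a>0$: fix $j$ and $a\in A$ and set $z=F^{k_{n_j}}(x)=x^{(k_{n_j})}$. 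Then $z\in W\subseteq G^a_{B_a,C}$, so $z$ is totally irrational, $z\in Z_C$ (whence $\sum_n\matrixnorm{\pi_zM_{[0,n)}(z)}_1<\infty$), and the inclusion $\pi_z^{-1}(B_a)\cap\HH\subseteq W_a(w_0)$ holds for some fixed point $w$ of $s(z)$. By Corollary~\ref{cor:bounded:cover} and Lemma~\ref{lem:car:interior} this inclusion is in fact independent of the chosen fixed point of $s(z)$, and the shift of $u$ by $k_{n_j}$ is such a fixed point, so $\pi_{x^{(k_{n_j})}}^{-1}(B(c_a,r))\cap\HH\subseteq\pi_{x^{(k_{n_j})}}^{-1}(B_a)\cap\HH\subseteq W_a(u_{k_{n_j}})$, which is item~3 with $p=c_a$. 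Hence $s(x)$ is good, $x\in G$, and $\mu(G)=1$.

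The step I expect to be the main obstacle is item~4: iterates recurring to a positive-measure set need not converge, and even along a convergent subsequence the limit direction could a priori fail to be totally irrational. Both difficulties are absorbed by trimming $W$ to $W'=W\setminus O$ before invoking recurrence, which makes $\overline{W'}$ compact and disjoint from $N$; the delicate point is to arrange this while keeping $C$ and the balls $B_a$ fixed, since it is exactly the fixed radius $r=\min_a\rho_a$ and the fixed exponential bound on $Z_C$ that make items~3 and~4 uniform over the recurrence times.
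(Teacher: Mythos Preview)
Your proof is correct and follows essentially the same line as the paper's: fix $C>0$ and balls $(B_a)_{a\in A}$ so that $\bigcap_a G^a_{B_a,C}$ has positive measure, cut out a small open set $O\supseteq N$ via Lemma~\ref{lem:ouvert:petite:mesure}, and apply Poincar\'e recurrence to the remaining set; the infinitely many return times provide the sequence $(k_n)$ needed for items~3 and~4 of Definition~\ref{defG}, while $O$ being open forces any subsequential limit of $x^{(k_n)}$ to be totally irrational. The paper then uses ergodicity and Lemma~\ref{cor-G-invariant-measure} exactly as you do.

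Your write-up is in fact more careful than the paper's on a couple of points: you explain why the union over $C$ and balls can be taken countable, you explicitly extract a convergent subsequence for item~4 (the paper merely says ``follows from the fact that $O$ is open''), and you invoke Corollary~\ref{cor:bounded:cover} with Lemma~\ref{lem:car:interior} to justify that the inclusion $\pi_z^{-1}(B_a)\cap\HH\subseteq W_a(\cdot)$ is independent of the choice of fixed point, a step the paper leaves implicit. The only cosmetic difference is that you first pass to a full-measure set $X'$ and then prove $X'\subseteq G$, whereas the paper first shows $\mu(G)>0$ and then invokes ergodicity; these are equivalent.
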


\begin{proof}
Let $(B_a)_{a \in A}$ be a family of balls of positive radius and $C > 0$ such that
\[
	\mu(\bigcap_{a \in A} G_{B_a,C}^a) > 0.
\]

Using Lemma~\ref{lem:ouvert:petite:mesure}, let $O \subseteq X$ be an open set containing all the non totally irrational directions such that $\mu(O) < \mu(\bigcap_{a \in A} G_{B_a,C}^a)$.

First of all we claim that
    \[
        \bigcap_{n_0 \in \NN} \bigcup_{n \geq n_0} F^{-n} \left( \bigcap_{a \in A} G_{B_a,C}^a \cap X_0 \setminus O \right) \subseteq G
    \]
    Indeed if $m$ is inside $\bigcap_{n_0 \in \NN} \bigcup_{n \geq n_0} F^{-n} \left( \bigcap_{a \in A} G_{B_a,C}^a  \cap X_0 \setminus O \right)$, then there exists infinitely many $n$ such that $F^n(m)$ belongs to $\bigcap_{a \in A} G_{B_a,C}^a \cap X_0 \setminus O$.
This gives the third property of Definition~\ref{defG}.
The last property follows from the fact that the set $O$ is open.

Now we apply the Poincaré recurrence theorem:
We have
$\mu(\bigcap_{a \in A} G_{B_a,C}^a \cap X_0 \setminus O) > 0$,
thus $\mu$-almost every point of
$\bigcap_{a \in A} G_{B_a,C}^a \cap X_0 \setminus O$
comes back to $\bigcap_{a \in A} G_{B_a,C}^a \cap X_0 \setminus O$.
We deduce $\mu(G)>0$.
By Lemma~\ref{cor-G-invariant-measure}, $G$ is an $F$-invariant set,
thus by ergodicity we have $\mu(G)=1$.
 \end{proof}
 
This ends the proof of Proposition~\ref{prop:mesure:good}.

\subsection{Proof of Theorem~\ref{thmA}}\label{sec-A}

By hypothesis we can apply Proposition~\ref{prop:mesure:good}. 
We deduce that $\mu(G) = 1$.
Now we apply Theorem~\ref{thmC} for each point of $G$, and it gives 
that the Rauzy fractal induces a generating partition of the translation by $e_0 - v(x)$ on the torus $P/\Lambda$. And its symbolic coding is a measurable conjugacy with the subshift associated to $x$.
If $\psi\colon P/\Lambda \to \TT^d$ is an isomorphism, then we get that the subshift is measurably conjugate to the translation by $\psi(e_0 - v(x))$ on the torus $\TT^d$.

\section{Examples of continued fraction algorithms} \label{sec-examples}

Here we list some classical examples of continued fraction algorithms and we check if the hypotheses of Theorem~\ref{thmA} are fulfilled.

\subsection{Classical continued fraction algorithm}\label{exemple-1d-sturmien}

The algorithm is defined on the whole $X=\P\RR_+^2$. Let $S = \{\tau_0, \tau_1\}$, where
$$\tau_0 = \left\{\begin{array}{ccl} 0 &\mapsto& 0 \\ 1 &\mapsto& 01 \end{array} \right.,
\ \ \ \tau_1 = \left\{\begin{array}{ccl} 0 &\mapsto& 10 \\ 1 &\mapsto& 1 \end{array} \right.$$

\nomenclature[G]{$\tau_0,\tau_1$}{Sturmian substitutions}

Remark that this example is constructed on the same set $S$ as in
Example~\ref{exple-sturmien}, and that the abelianization of the substitutions
are $$ M_0=\begin{pmatrix}1&1\\0&1\end{pmatrix}, \ \ \ M_1=\begin{pmatrix}1&0\\
1&1\end{pmatrix}.$$

We define the extended continued fraction algorithm as:
$$s_0 = \map{X}{S}{\left[(x_0, x_1)\right]}{\begin{cases}\tau_0\quad \text{if } x_0 \geq x_1\\
\tau_1 \quad \text{if } x_0 < x_1\end{cases}}$$

The associated continued fraction algorithm is:
$$F = \map{X}{X}{\left[(x_0, x_1)\right]}{\begin{cases}\left[(x_0-x_1,x_1)\right]\quad \text{if } x_0 \geq x_1\\
\left[(x_0,x_1-x_0)\right] \quad \text{if } x_0 < x_1\end{cases}}$$

This algorithm is known as the \emph{additive continued fraction algorithm} in dimension one, see \cite{Arn.Nog.93}.

Remark that with the change of coordinates $x=\frac{x_0}{x_1}$, we obtain the map
$$\map{(0,+\infty)}{(0,+\infty)}{x}{\begin{cases}x-1\quad \text{if } x \geq 1\\
\ \frac{x}{1-x} \, \ \quad \text{if } x < 1\end{cases}}$$

There exists an ergodic invariant measure for this algorithm which is absolutely
continuous with respect to Lebesgue measure, it density can be expressed
$\frac{1}{x}$ in this coordinate system, but this measure has infinite volume.
So we cannot apply our Theorem~\ref{thmA}.

The usual acceleration of this algorithm restricted to $(0,1)$ is given by the
map $\map{(0,1)}{(0,1)}{x}{\{\frac{1}{x}\}}$. This map has an invariant ergodic
probability measure which is absolutely continuous with respect to Lebesgue
measure, with density $\frac{1}{\log 2}\frac{1}{1+x}$, see \cite{Arn.Nog.93}.
But it cannot be described with a finite number of matrices, so we cannot either
apply our Theorem~\ref{thmA} with this acceleration.

However, this additive algorithm is very-well know, and for every totally
irrational direction, fixed points of the directive sequence $s(x)$ are constituted of Sturmian words.
See~\cite{Pyth.02} for more details.
It could be shown that for every totally irrational direction $x \in \P\RR^1$, the directive sequence $s(x)$ is good.
Hence, we deduce by Theorem~\ref{thmC} that for such direction $x$ there exists a generating partition of the translation by $e_0 - v(x)$ on torus $P/\Lambda \simeq \TT^1$
whose symbolic coding is a measurable conjugacy with the subshift $\Omega_{s(x)}$.
And we easily check that the set of $e_0 - v(x)$, for $x$ a totally irrational direction, is the set of totally irrational vectors of $P/\Lambda$.
On the other hand, the complexity of the subshift $\Omega_{s(x)}$ is $p(n) = n+1$.
Thus, we get, for every irrational translation of $\TT^1$, a generating partition whose symbolic coding has
complexity $n+1$.

This result is already well known. We know that Sturmian words have complexity $n+1$,
and that there exists a partition of the torus with two intervals whose symbolic coding is measurably conjugated to the subshift.
See~\cite{Pyth.02} for more details.

\subsection{Brun algorithm}

Now we give an example of a continued fraction algorithm which does not have associated substitutions (i.e., not an extended continued fraction algorithm). Let $X$ be $\P\RR_+^3$.
For $\zeta\in S_3$ (the permutation group on the set $\{0,1,2\}$) we define $X_\zeta=\{\left[(x_0, x_1, x_2)\right] \in X \mid x_{\zeta(0)}<x_{\zeta(1)}<x_{\zeta(2)}\}$. Then we define the six matrices
$$B_{012}=\begin{pmatrix}1&0&0\\0&1&0\\ 0&1&1\end{pmatrix}, B_{021}=\begin{pmatrix}1&0&0\\0&1&1\\ 0&0&1\end{pmatrix}, B_{120}=\begin{pmatrix}1&0&1\\0&1&0\\ 0&0&1\end{pmatrix},$$ 
$$B_{102}=\begin{pmatrix}1&0&0\\0&1&0\\ 1&0&1\end{pmatrix}, B_{201}=\begin{pmatrix}1&0&0\\1&1&0\\ 0&0&1\end{pmatrix}, B_{210}=\begin{pmatrix}1&1&0\\0&1&0\\ 0&0&1\end{pmatrix}.$$
	
Then we define $M\colon X\to GL_3(\ZZ)$ by $Mx=B_\zeta$ if $x\in X_{\zeta}$. If $x$ is in $X$ and not in some $X_{\zeta}$, then we extend the definition arbitrarily. The following will not depend on these choices.

The Brun algorithm is then defined, as all algorithm of continued fraction, by	
$$F = \map{X}{X}{x}{(Mx)^{-1}x}.$$
In other words, the algorithm subtracts from the largest coordinate the largest of the remaining ones.

\begin{lemma}\cite{Arn.Labbe.15}
The following function is a density function of an invariant probability measure for $F$:
$$\frac{1}{2x_{\zeta(1)}(1-x_{\zeta(1)})(1-x_{\zeta(0)}-x_{\zeta(1)})}\quad x\in X_\zeta.$$
\end{lemma}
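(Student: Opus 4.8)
The statement to prove is that the function
\[
\rho(x) = \frac{1}{2\,x_{\zeta(1)}(1-x_{\zeta(1)})(1-x_{\zeta(0)}-x_{\zeta(1)})}, \qquad x\in X_\zeta,
\]
is the density (with respect to Lebesgue measure on the simplex $\Delta$) of an $F$-invariant probability measure for the Brun algorithm in dimension $d=2$, i.e. on $\P\RR_+^3$. The approach is the standard transfer-operator computation: one verifies that $\rho$ satisfies the functional equation $\rho = (\mathcal L_F \rho)$, where $\mathcal L_F$ is the Perron--Frobenius operator of $F$ acting on densities on $\Delta$, and then checks that $\rho$ integrates to a finite value so that it can be normalized to a probability density.

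First I would fix coordinates. Work on the simplex $\Delta=\{(x_0,x_1,x_2): x_i\ge 0,\ x_0+x_1+x_2=1\}$, using $(x_0,x_1)$ (say) as local coordinates, and recall that for a direction $x$ lying in $X_\zeta$ the map $F$ acts as $x\mapsto [B_\zeta^{-1}x]$, i.e. on the simplex as $x\mapsto B_\zeta^{-1}x/\|B_\zeta^{-1}x\|_1$. Since each $B_\zeta\in GL_3(\ZZ)$ is an elementary (shear) matrix with $\det = 1$, the inverse branches of $F$ are exactly the maps $g_\zeta\colon y\mapsto B_\zeta y/\|B_\zeta y\|_1$, each a projective transformation. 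The transfer operator is
\[
(\mathcal L_F f)(y) = \sum_{\zeta\in S_3} |\mathrm{Jac}\,g_\zeta(y)|\; f(g_\zeta(y))\,\indic_{g_\zeta(y)\in X_\zeta},
\]
and I need to show $\mathcal L_F\rho = \rho$. The Jacobian of a projective map $y\mapsto By/\|By\|_1$ on the simplex is $|\det B|/\|By\|_1^{3} = 1/\|By\|_1^{3}$ in dimension $d=2$ (the power $d+1=3$ comes from the dimension of the ambient space); I would state this once and reuse it.

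The computational heart is then: for a fixed target point $y$, identify which branches $g_\zeta$ actually land in the correct cone $X_\zeta$ (this is a combinatorial bookkeeping over the order relations among coordinates — for Brun one typically finds that two or three branches contribute, corresponding to the possible orderings that can be "undone"), substitute $\rho(g_\zeta(y))$, multiply by $\|B_\zeta y\|_1^{-3}$, and check the sum telescopes/collapses to $\rho(y)$. The main obstacle — and where I'd spend the real effort — is precisely this case analysis: tracking the permutation $\zeta$ through $B_\zeta^{-1}$, getting the arguments $x_{\zeta(0)}, x_{\zeta(1)}$ of $\rho$ right at the preimage points, and verifying the algebraic identity among the three rational functions. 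It is the kind of identity that is a one-line miracle once the indexing is correct and a nightmare if it isn't, so I would set up very careful notation for which coordinate plays which role before computing, and probably verify the $d=2$ case by the explicit six-matrix list given in the text rather than trying to do it dimension-agnostically. Finiteness of $\int_\Delta \rho\,d\lambda$ — needed to get a genuine \emph{probability} measure, as opposed to the infinite-volume situation of Section~\ref{exemple-1d-sturmien} — follows because near each facet of $\Delta$ the singularities of $\rho$ are of the integrable type $1/t$ only along lower-dimensional strata; I would check this by a direct estimate in the local coordinates, noting that the factors $x_{\zeta(1)}$, $1-x_{\zeta(1)}$, $1-x_{\zeta(0)}-x_{\zeta(1)}$ each vanish to first order on a coordinate hyperplane and the product of the reciprocals stays integrable in two variables. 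Finally, I would cite \cite{Arn.Labbe.15} for the fact that this measure is moreover ergodic, since ergodicity is not part of what the density computation gives.
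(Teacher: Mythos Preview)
The paper does not prove this lemma at all: it is stated with the citation \cite{Arn.Labbe.15} and no argument is given. So there is nothing to compare your proposal against; you have outlined a proof where the paper simply defers to the literature.

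That said, your plan is the standard and correct way to verify such a statement: write down the transfer operator $\mathcal L_F$, compute the Jacobian $\|B_\zeta y\|_1^{-3}$ of each inverse branch $y\mapsto B_\zeta y/\|B_\zeta y\|_1$, and check the functional equation $\mathcal L_F\rho=\rho$ by a case analysis over the six cones $X_\zeta$. Your warning that the bookkeeping of which branches land in which $X_\zeta$ is the genuine work is exactly right. The integrability argument you sketch is also correct. One small correction: in the paper the ergodicity of this measure is attributed to \cite{Schwei.00,Lag.93} (see the Remark immediately after the lemma), not to \cite{Arn.Labbe.15}, which is cited only for the density formula itself.
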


\begin{remark}
	For the invariant probability measure $\mu$ given by this lemma, we have
	\begin{itemize}
		\item $\mu(X_0) = 1$,
		\item for every $Y \subseteq X$ such that $\mu(Y) = 0$, we have $\mu(F(Y)) = 0$,
		\item $\exists \epsilon>0, \forall x\in X_0, \forall n\in\NN, \mu(F^n(\{y\in X \mid M_{[0,n)}(y) = M_{[0,n)}(x)\})) > \epsilon$. 
\item This measure is ergodic \cite{Schwei.00,Lag.93}.
	\end{itemize}
	Because $\mu$ is absolutely continuous with respect to Lebesgue, and by Proposition~\ref{prop:cf}.
\end{remark}

In other words, $(X, F, \mu)$ is a measured continued fraction algorithm.

General conditions that permit to check the Pisot condition for the Brun algorithm with the measure $\mu$ are given in \cite{Avil.Dele.19}.

As said at the beginning it is not an extended continued fraction algorithm, but we can extend it.
In \cite{Labbe.15}, some choices have been made to associate a finite set of substitutions to this algorithm.
Denoting $b_{\zeta}$ the substitution with matrix $B_\zeta$ such that $b_{\zeta}(a)$ starts with $a$ for every letter $a\in \{0,1,2\}$, we find that
\[
	b_{210} b_{021} b_{102} = \left(\begin{array}{rcl}
												0 &\mapsto& 0210 \\
												1 &\mapsto& 10 \\
												2 &\mapsto& 210
											\end{array}\right)
	                                       = \left(\begin{array}{rcl}
												0 &\mapsto& 10 \\
												1 &\mapsto& 2 \\
												2 &\mapsto& 0
											\end{array}\right)^3.
\]

\begin{figure}[!h]
    \centering
    \begin{tikzpicture}[scale=1]
		\node (pic) at (0,0) {\includegraphics[width=5cm]{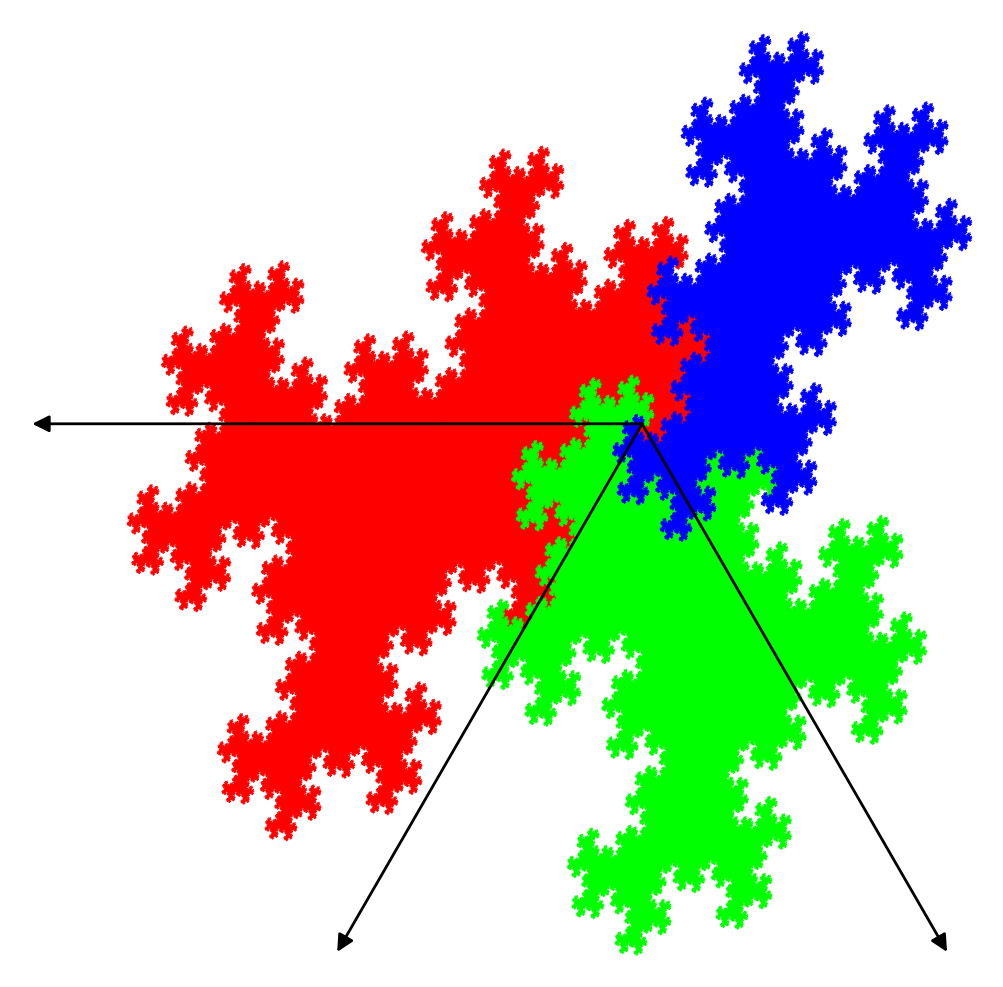}};
		\node at (-2.5cm, .7cm) {$e_1 - e_0$};
		\node at (-1.6cm, -2.1cm) {$e_2 - e_0$};
		\node at (2.7cm, -1.8cm) {$e_2 - e_1$};
		\node at (.7,.6) {$0$};
	\end{tikzpicture}
	\caption{Rauzy fractal of the directive sequence $(b_{210} b_{021} b_{102})^\omega$} \label{fig:ex:Brun}
\end{figure}

\nomenclature[L]{$b_\zeta$}{substitution for Brun algorithm}

Using Proposition~\ref{prop-decidable},
we can check that the interior of $W_0(u)$ is non-empty for the topology $\topo{x_0}$,
where $u$ is a fixed point of the substitution $b_{210} b_{021} b_{102}$
and $x_0=[v_0]$ with $v_0 = \freq(u)$.
Hence, we can check that $x_0 \in X$ is a seed point.
Therefore, we can apply Theorem~\ref{thmA}, and we get that for $\mu$-almost every point $x$ of $X$,
the $S$-adic subshift associated to $x$ is measurably conjugate to a translation on the torus $P/\Lambda$.

\subsection{Arnoux-Rauzy algorithm}

The Arnoux-Rauzy extended continued fraction algorithm is defined by

	\[
            s_0 = \map{X}{S}{\left[(x_0,x_1,x_2)\right]}{
				\left\{\begin{array}{rl}
						ar_0 &\text{ if } x_0 > x_1+x_2 \\
				        ar_1 &\text{ if } x_1 > x_0+x_2 \\
						ar_2 &\text{ if } x_2 > x_0+x_1
                \end{array}\right.}
	\]

where $S = \{ar_0, ar_1, ar_2\}$ with

\[
        ar_0 = \left\{ \begin{array}{l}
0 \mapsto 0\\
1 \mapsto 10\\
2 \mapsto 20
\end{array}\right., \ \ \
ar_1 = \left\{ \begin{array}{l}
0 \mapsto 01\\
1 \mapsto 1\\
2 \mapsto 21
\end{array}\right., \ \ \
ar_2 = \left\{ \begin{array}{l}
0 \mapsto 02\\
1 \mapsto 12\\
2 \mapsto 2
\end{array}\right.
	\]

The associated continued fraction algorithm is

	\[
            F = \map{X}{X}{\left[(x_0,x_1,x_2)\right]}{
				\left\{\begin{array}{rl}
						\left[(x_0 - x_1 -x_2, x_1, x_2)\right] &\text{ if } x_0 > x_1+x_2 \\
				        \left[(x_0, x_1 - x_0 - x_2, x_2)\right] &\text{ if } x_1 > x_0+x_2 \\
						\left[(x_0, x_1, x_2 - x_1 - x_0)\right] &\text{ if } x_2 > x_0+x_1
                \end{array}\right.}
	\]
\nomenclature[L]{$ar_0, ar_1, ar_2$}{Arnoux Rauzy substitutions}

In other words, the algorithm subtracts from the largest coordinate the sum of the other ones.
Here again we extend this definition to the boundaries of the sets in any choice.
In this case, the set $X$ is defined a posteriori as the subset of points of
$\P\RR_+^{2}$ from which $F^n$ is defined for all $n \in \NN$, it is known as
\emph{the Rauzy gasket}. It is depicted in Figure~\ref{fig:rauzy:gasket}.

\begin{figure}[!h]
    \centering
    \begin{tikzpicture}
		\node (pic) at (0,0) {\includegraphics[width=5cm]{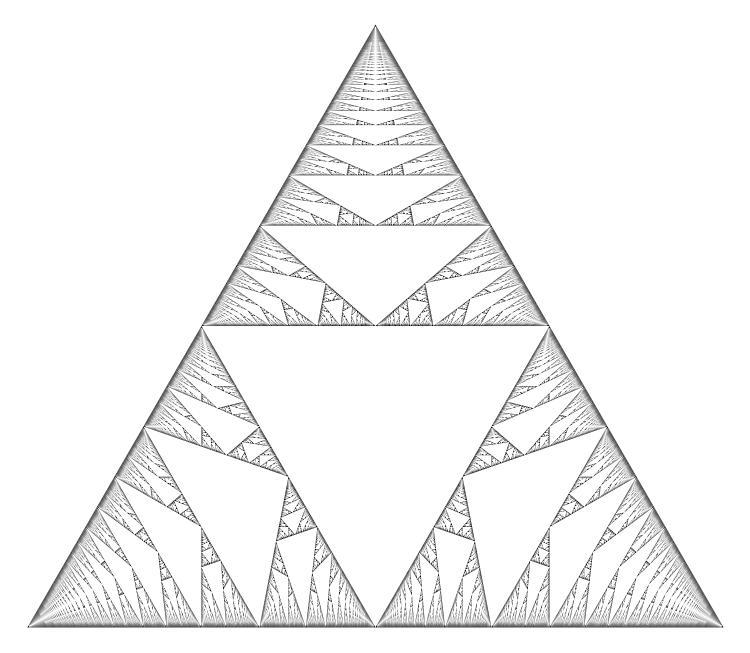}};
		\node at (pic.south east) {[(0,1,0)]};
		\node at (pic.south west) {[(1,0,0)]};
		\node at (pic.north) {[(0,0,1)]};
	\end{tikzpicture}
	\caption{Rauzy gasket $X \subset \P\RR_+^2$} \label{fig:rauzy:gasket}
\end{figure}

For this set $S$ of substitutions, the Dumont-Thomas alphabet is
$\Sigma = \{0, e_0, e_1, e_2\}$,
and the automaton $\A$ is depicted in Figure~\ref{fig:apa:arnoux:rauzy}.
	
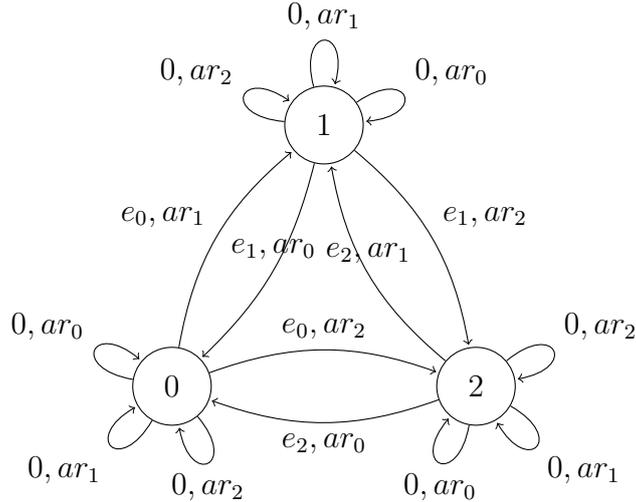
\begin{figure}[ht]
    \centering
	\begin{tikzpicture}[scale=1, shorten >=1pt,node distance=2cm,auto] 
	   \node[state] (q_1) at (2, 3.46410161513775) {$1$}; 
	   \node[state] (q_0) at (0, 0) {$0$}; 
	   \node[state] (q_2) at (4, 0) {$2$};
		\path[->]
		(q_0) edge  [in=140, out=170, loop] node {$0, ar_0$} (q_0)
		      edge  [in=210, out=240, loop] node {$0, ar_1$} (q_0)
		      edge  [in=280, out=310, loop] node[below] {$0, ar_2$} (q_0)
		      edge [bend left=20] node {$e_0, ar_1$} (q_1)
		      edge [bend left=20] node {$e_0, ar_2$} (q_2)
		(q_1) edge  [in=5, out=35, loop] node {$0, ar_0$} (q_1)
		      edge  [in=75, out=105, loop] node {$0, ar_1$} (q_1)
		      edge  [in=145, out=175, loop] node {$0, ar_2$} (q_1)
		      edge [bend left=16] node[above] {$e_1, ar_0$} (q_0)
		      edge [bend left=20] node {$e_1, ar_2$} (q_2)
		(q_2) edge  [in=230, out=260, loop] node[below] {$0, ar_0$} (q_2)
		      edge  [in=300, out=330, loop] node {$0, ar_1$} (q_2)
		      edge  [in=370, out=400, loop] node {$0, ar_2$} (q_2)
		      edge [bend left=20] node[above] {$e_2, ar_1$} (q_1)
		      edge [bend left=20] node {$e_2, ar_0$} (q_0);
	\end{tikzpicture}
	\caption{Abelianized prefix automaton $\A$ for the Arnoux-Rauzy substitutions}
	\label{fig:apa:arnoux:rauzy}
\end{figure}

This algorithm has been well studied, see~\cite{Arn.Rau.91,Avi.Hub.Skrip.16}. 
In \cite{Avil.Dele.19}, some sufficient conditions for a measured continued fraction algorithm to satisfy Pisot condition are given. One of these conditions is independent of the ergodic measure.  It is called Pisot property. They prove that Pisot property is satisfied for this algorithm.

It appears that this algorithm has a lot of ergodic measures.
One of them has been introduced in \cite{Avi.Hub.Skrip.Inv}.
And this measure is a good candidate, but we haven't check that it fulfils all the hypotheses needed in Definition~\ref{def:meas:cont:frac}.

\section{Application: Cassaigne algorithm and two-dimen\-sional translations}\label{sec-cassaigne}
First we define the Cassaigne extended measured continued fraction algorithm, and show that it fulfills the hypotheses of Theorem~\ref{thmA}. Then we will prove Theorem~\ref{thmB}.

\subsection{Description of the algorithm}
The algorithm is defined on the whole $X=\P\RR_+^{3}$.
Let $\mu$ be the measure on $X$ with density $\frac{1}{(1-x_0)(1-x_2)}$
with respect to the Lebesgue measure on $X$.
Let $S=\{c_0,c_1\}$, where
$$
    c_0 = \begin{cases}
        0\mapsto 0\\
        1\mapsto 02\\
        2\mapsto 1
    \end{cases} ,
    \ \ \
    c_1 = \begin{cases}
        0\mapsto 1\\
        1\mapsto 02\\
        2\mapsto 2
    \end{cases}
$$

\nomenclature[L]{$c_0,c_1$}{Cassaigne substitutions}

We define the extended continued fraction algorithm as:
$$s_0 = \map{X}{S}{\left[(x_0, x_1, x_2)\right]}{\begin{cases}c_0\quad \text{if } x_0 \geq x_2\\
c_1 \quad \text{if } x_0 < x_2\end{cases}}$$

The associated continued fraction algorithm is:
$$F = \map{X}{X}{\left[(x_0, x_1, x_2)\right]}{\begin{cases}\left[(x_0-x_2, x_2,x_1)\right]\quad \text{if } x_0 \geq x_2\\
\left[(x_1,x_0, x_2-x_0)\right] \quad \text{if } x_0 < x_2\end{cases}}$$

The matrices associated to the substitutions $c_0$ and $c_1$ are:
$$M_0=\begin{pmatrix} 1&1&0\\ 0&0&1\\ 0&1&0\end{pmatrix} \ \ \ M_1=\begin{pmatrix} 0&1&0\\ 1&0&0\\ 0&1&1\end{pmatrix}.$$

For this set $S$ of substitutions, the Dumont-Thomas alphabet is
$\Sigma = \{0, e_0\}$,
and the automaton $\A$ is depicted in Figure~\ref{fig:apa:cassaigne}.

\begin{figure}[ht]
    \centering
	\begin{tikzpicture}[shorten >=1pt,node distance=2cm,auto] 
	   \node[state] (q_1)   {$1$}; 
	   \node[state] (q_0) [left=of q_1] {$0$}; 
	   \node[state] (q_2) [right=of q_1] {$2$};
		\path[->]
		
		(q_0) edge  [loop left] node {$0, c_0$} (q_0)
		      edge  [bend left] node {$0, c_1$} (q_1)
		(q_1) edge  node  {$0, c_1$} (q_0)
		      edge  [bend left=45] node  {$0, c_0$} (q_0)
		      edge node {$e_0, c_0$} (q_2)
		      edge [bend left=45] node {$e_0, c_1$} (q_2)
		(q_2) edge  [loop right] node {$0, c_1$} (q_2) 
		      edge [bend left] node {$0, c_0$} (q_1);
	\end{tikzpicture}
	\caption{Abelianized prefix automaton $\A$ for $S=\{c_0, c_1\}$} 
	\label{fig:apa:cassaigne}
\end{figure}
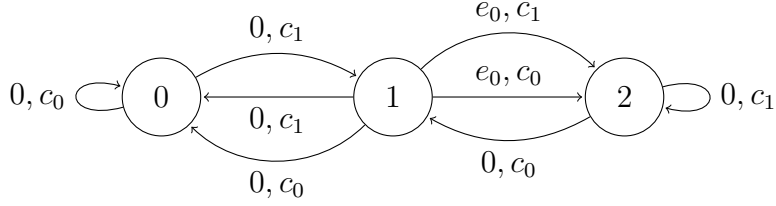

\begin{lemma}\label{lem-lyap-bst}
$(X,s_0,\mu)$ is an extended measured continued fraction algorithm and satisfies
the Pisot condition.
\end{lemma}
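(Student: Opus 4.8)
The statement bundles two claims: $(X,s_0,\mu)$ is an extended measured continued fraction algorithm (Definition~\ref{def:meas:cont:frac}), and it satisfies the Pisot condition (Definition~\ref{def-algo-Pisot}). For the first claim, the strategy is to invoke Proposition~\ref{prop:cf}: I would exhibit the Markov partition explicitly. Here $H$ is the single rational hyperplane $\{x_0 = x_2\}$, and $X\setminus H$ splits into the two pieces $X_0' = \{x_0 > x_2\}$ and $X_1' = \{x_0 < x_2\}$, on which $\ab(s_0)$ equals $M_0$ and $M_1$ respectively — constant, as required. One then checks the Markov property: $M_0^{-1} X_0' \setminus H$ and $M_1^{-1} X_1' \setminus H$ are each unions of the two pieces $X_0', X_1'$. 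This is a direct computation with the two $3\times 3$ matrices on the simplex $\Delta$, using the explicit formula for $F$ given just above the lemma. The measure hypotheses needed by Proposition~\ref{prop:cf} are: $\mu(X_i')>0$ for $i=0,1$ (clear, since $\mu$ has a density $\frac{1}{(1-x_0)(1-x_2)}$ which is positive and locally integrable on each open half of the simplex, and each piece has positive Lebesgue measure); the null-set-preservation $\mu(Y)=0 \Rightarrow \mu(F(Y))=0$ (this follows from $\mu$ being absolutely continuous with respect to Lebesgue measure together with $F$ being piecewise an invertible integer linear map, hence piecewise bi-Lipschitz on the projective space); and that the non totally irrational directions form a $\mu$-null set (they form a countable union of hyperplanes, each Lebesgue-null, hence $\mu$-null by absolute continuity). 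One also needs that $\mu$ is $F$-invariant and ergodic — invariance is exactly the content of the density formula (this is the standard fact about the Cassaigne algorithm from \cite{Arn.Labbe.15,Cass.Lab.17}, which I would cite), and ergodicity likewise is known for this algorithm and should be cited rather than reproved. Putting these together, Proposition~\ref{prop:cf} yields that $(X,s_0,\mu)$ is an extended measured continued fraction algorithm.

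**The Pisot condition.** For the second claim I need $\theta_1(\mu) > 0 > \theta_2(\mu)$ and $\codim(E_2) = 1$ for $\mu$-almost every point. This is where the real work is and where I expect to lean on external input. The first Lyapunov exponent $\theta_1$ is positive for any primitive continued fraction cocycle (the matrices $M_0, M_1$ are nonnegative and products eventually become strictly positive, so $\matrixnorm{M_{[0,n)}}_1$ grows, giving $\theta_1 \geq 0$; strict positivity and $\theta_1 > \theta_2$ can be obtained from the fact that, as noted, $\lim M_{[0,n)}\RR_+^{d+1}$ is a ray a.e., which forces the top exponent to be simple). The heart of the matter is $\theta_2(\mu) < 0$. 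The plan is to appeal to the known numerical/theoretical results on the Cassaigne–Selmer algorithm: its Lyapunov spectrum has been computed (the second exponent is strictly negative, cf. the Arnoux–Labbé and Cassaigne–Labbé references, and this is also established via the connection to the Brun/Selmer algorithms), so I would cite that $\theta_2(\mu) < 0$. That the codimension of $E_2$ is $1$ follows once $\theta_2 < 0 = $ the "gap": since $\theta_1$ is simple, $\dim E_1/E_2 = 1$, i.e. $\codim E_2 = 1$; combined with Lemma~\ref{Jeff} this pins down $E_2 = v^\circ$.

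**Assembly and the main obstacle.** Concretely the proof would read: (1) verify the Markov partition with $H = \{x_0=x_2\}$ and two pieces, invoking Proposition~\ref{prop:cf}, citing invariance and ergodicity of $\mu$ from the literature; this gives the extended measured continued fraction algorithm structure. (2) Apply Oseledets (Theorem~\ref{thm:oseledets}) to the cocycle $\bM(x,n) = \transp{M_{[0,n)}}(x)$ of Lemma~\ref{lem-bon-cocycle}; note integrability is automatic since the cocycle takes finitely many values. (3) Argue $\theta_1(\mu) > 0$ and $\codim E_2(x) = 1$ a.e. from primitivity / simplicity of the top exponent, and quote $\theta_2(\mu) < 0$. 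Then Definition~\ref{def-algo-Pisot} is satisfied. The main obstacle is genuinely step (3): establishing $\theta_2(\mu) < 0$ is not a routine verification — it is a substantive dynamical fact about this specific algorithm, and the honest move in the paper is to cite the established computation of the Cassaigne algorithm's Lyapunov exponents rather than to reprove negativity from scratch. A secondary subtlety is making the primitivity/simplicity argument for $\theta_1$ clean: one should check that the two matrices $M_0, M_1$ generate, in suitable finite products, strictly positive matrices on a set of full measure of directive sequences, which follows from the $\mu$-ergodicity (both symbols occur infinitely often a.e.) plus an explicit short product $M_0 M_1 M_0$-type computation showing a positive matrix appears.
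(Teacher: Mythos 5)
Your proposal takes essentially the same approach as the paper: both verify the structural hypotheses of Definition~\ref{def:meas:cont:frac} by invoking Proposition~\ref{prop:cf} (absolute continuity of $\mu$ with respect to Lebesgue plus the explicit Markov structure of the two-piece partition $\{x_0>x_2\}$, $\{x_0<x_2\}$), cite \cite{Arn.Labbe.15} for $F$-invariance of $\mu$, and cite external results for ergodicity and the Lyapunov spectrum rather than reproving them. The one place the paper is cleaner than your sketch is the treatment of $\codim(E_2)=1$: you try to derive it heuristically from ``primitivity forces simplicity of $\theta_1$,'' which is indeed true but requires a genuine Furstenberg-type argument (contraction of positive cones for the transposed cocycle) that you don't spell out, whereas the paper sidesteps this entirely by conjugating to the Selmer algorithm and quoting \cite{Nakai.06} for both $\theta_2<0$ and $\codim(E_2)=1$ simultaneously, then transferring via the conjugation established in \cite{Cass.Lab.17}. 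If you go the paper's route you should also use \cite{Schwei.00} for ergodicity of Selmer, as the paper does, instead of just asserting ergodicity is ``known''.
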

\begin{proof}
We refer to \cite{Arn.Labbe.15} for a proof of the $F$-invariance
of the measure $\mu$.
By \cite{Schwei.00} we know that Selmer algorithm is ergodic.
Moreover we know that Cassaigne algorithm is conjugated to Selmer algorithm
\cite{Cass.Lab.17}, thus we deduce the ergodicity of this measure.
It is well known that for the Selmer algorithm the second Lyapunov exponent is strictly negative,
with $x \mapsto \codim(E_2(x))$ $\mu$-almost surely constant to $1$, see \cite{Nakai.06}.
Thus by conjugation we deduce $\theta_2(F,\mu) < 0$ and $\codim(E_2(x)) = 1$ for $\mu$-almost every $x \in X$.
This algorithm fulfills the condition of Proposition~\ref{prop:cf}
since $\mu$ is absolutely continuous with respect to Lebesgue.
Hence $(X,s_0,\mu)$ is an extended measured continued fraction algorithm.
\end{proof}

\begin{figure}[ht]
    \centering
    \begin{tikzpicture}
    	\def\vert{-4.5cm};
		\node at (.6cm,0) {\includegraphics[width=2.81126206827164cm]{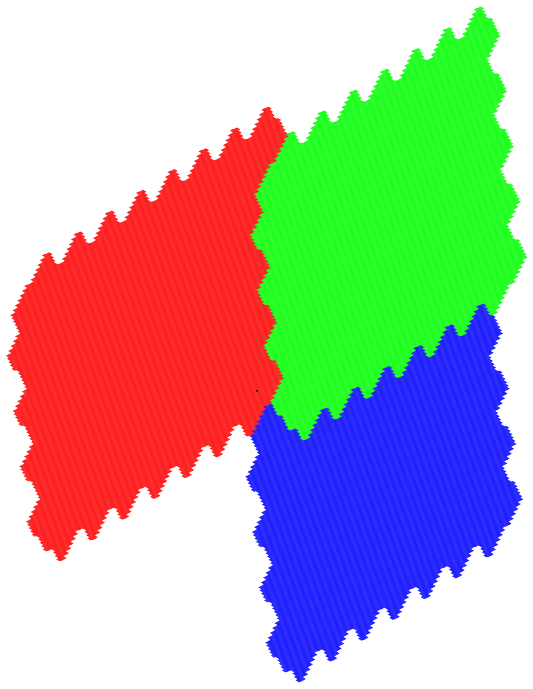}};
		\node at (4cm,0) {\includegraphics[width=2.98883149337768cm]{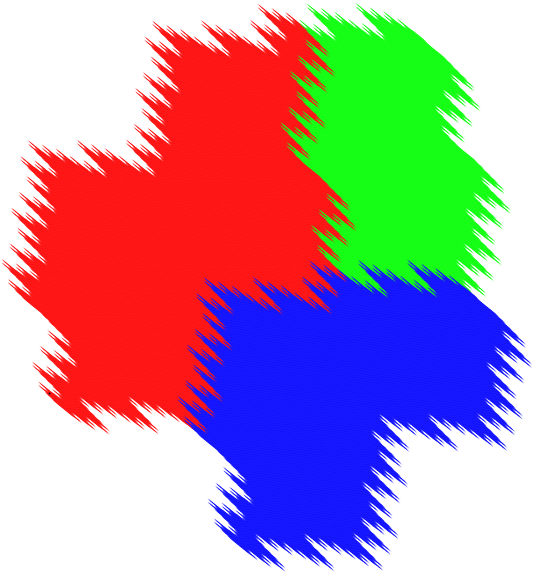}};
		\node at (8cm,0) {\includegraphics[width=4.03884036254884cm]{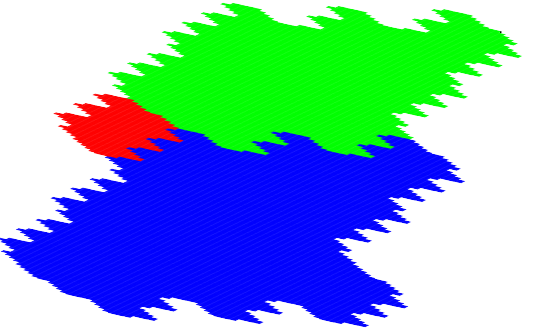}};
		\node at (12.2cm,0) {\includegraphics[width=4.91417584133148cm]{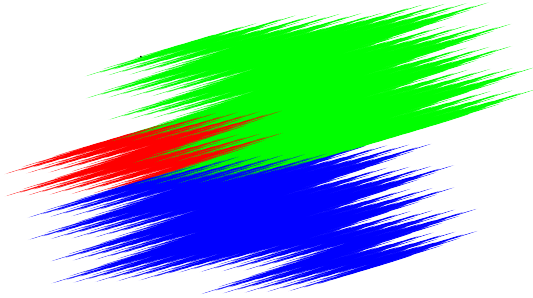}};
		
		\node at (.6cm,\vert) {\includegraphics[width=2.81126206827164cm]{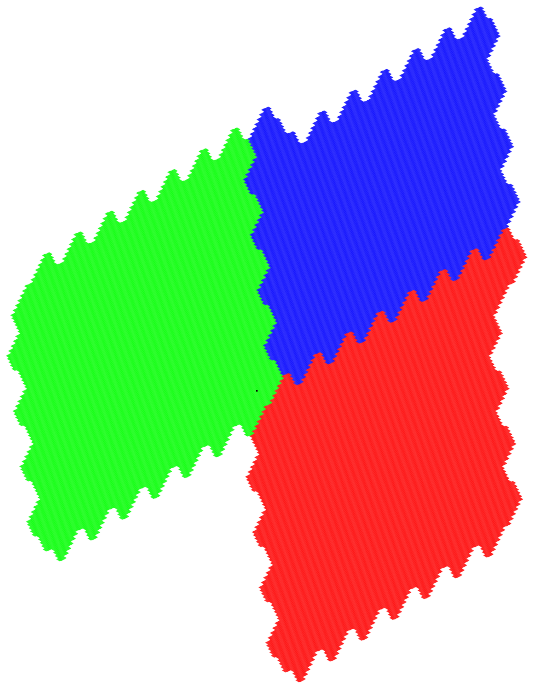}};
		\node at (4cm,\vert) {\includegraphics[width=2.98883149337768cm]{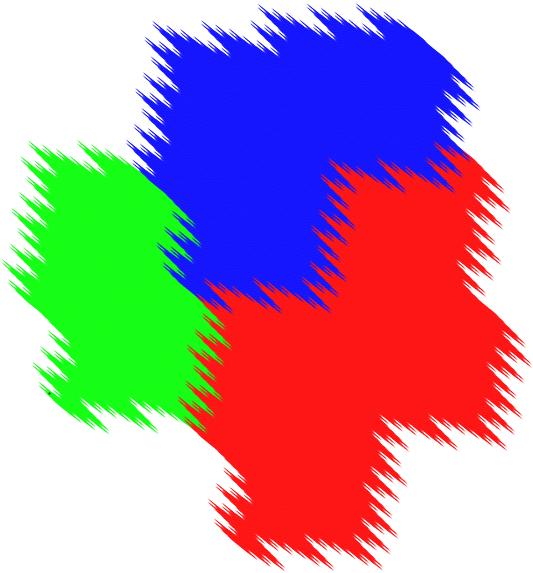}};
		\node at (8cm,\vert) {\includegraphics[width=4.03884036254884cm]{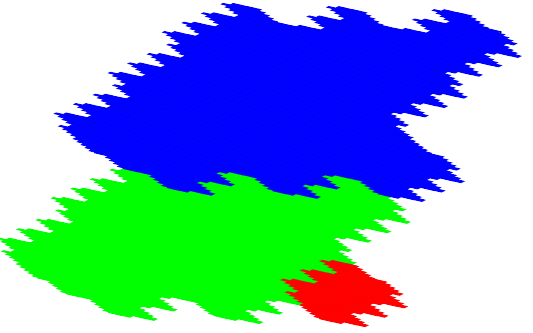}};
		\node at (12.2cm,\vert) {\includegraphics[width=4.91417584133148cm]{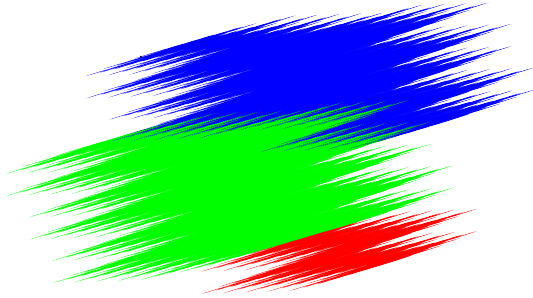}};
		
		\draw[->, thick] (.6cm, \vert*2/5-.2cm) -- (.6cm, 3*\vert/5-.2cm) node[midway, left] {$E$};
        \draw[->, thick] (4cm, \vert*2/5) -- (4cm, 3*\vert/5) node[midway, left] {$E$};
        \draw[->, thick] (8cm, \vert*2/5) -- (8cm, 3*\vert/5) node[midway, left] {$E$};
        \draw[->, thick] (12.2cm, \vert*2/5) -- (12.2cm, 3*\vert/5) node[midway, left] {$E$};
	\end{tikzpicture}
	
	\caption{Approximations of random Rauzy fractals, and the associated domain exchanges}
	\label{fig:ex:fractals}
\end{figure}

The Figure~\ref{fig:ex:fractals} illustrates approximations of Rauzy fractals $R(x)$ obtained by choosing points $x \in \P\RR_+^d$ randomly for the Lebesgue measure and applying the Cassaigne algorithm to compute the directive sequence up to a certain integer $n$. We plot the set of points
\[
	\{\sum_{k=0}^n \pi_x(M_{[0,k)} t_k) \mid b \xrightarrow{t_n, s_n} ... \xrightarrow{t_0, s_0} a \}
\]
with a color depending on the letter $a$.

If the Rauzy fractal $R_{n+1}(x)$ associated to the point $F^{n+1}(x)$ is bounded and not too large (which occurs with high probability), then the Hausdorff distance between the approximation and the Rauzy fractal is at most some pixels. Rauzy fractals of this article have been drawn using the Sage mathematical software and the badic package. These are available here:
\url{www.sagemath.org} and \url{https://gitlab.com/mercatp/badic}.

\subsection{There exists a seed point}

We consider the substitution   $c_0 c_1 = \begin{cases}
        0 \mapsto 02\\
        1 \mapsto 01\\
        2 \mapsto 1
    \end{cases}$.
    We denote by $u = (c_0c_1)^\omega(0) \in A^\NN$ its unique fixed point.
    Its abelianization is the matrix $M = \left(\begin{array}{rrr}
    1 & 1 & 0 \\
    0 & 1 & 1 \\
    1 & 0 & 0
    \end{array}\right)$. 

    This matrix has for characteristic polynomial $X^3-2X^2+X-1$. It is an irreducible polynomial over $\QQ$, with one eigenvalue greater than $1$ and two other complex eigenvalues of modulus less than $1$. We take $\beta$ the one with negative imaginary part. 
\nomenclature[G]{$\beta$}{complex number}

Thus this substitution is Pisot unimodular.
Let $x_0 \in \P\RR_+^d$ be the class of a Perron eigenvector of $\ab(c_0c_1)$.
The goal is to prove that $x_0$ is a seed point:

\begin{proposition} \label{lem:G:non-empty}
	The point $x_0$
	is in $G_0$ (see Definition~\ref{def:G0}). 
\end{proposition}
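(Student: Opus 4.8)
## Proof plan for Proposition~\ref{lem:G:non-empty}

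The plan is to verify the four conditions in Definition~\ref{def:G0} for the point $x_0 = [v_0]$, where $v_0$ is a Perron eigenvector of $M = \ab(c_0 c_1)$. Three of these are routine algebraic checks; the real work is the last one (non-empty interior of some $W_a(u_0)$).

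\emph{First}, the directive sequence associated with $x_0$ is the periodic sequence $s(x_0) = (c_0 c_1)^\omega$ (strictly, the period-$2$ sequence $c_0, c_1, c_0, c_1, \dots$), since the Cassaigne map $F$ fixes the projective class of $v_0$: indeed $F^2(x_0) = [M^{-1} v_0] = [v_0] = x_0$, and one checks that the first two digits chosen by $s_0$ along the orbit are $c_0$ then $c_1$ by comparing coordinates $x_0, x_2$ of $v_0$ and of $M_0^{-1} v_0$. One must also confirm that $x_0$ lies in the interior of the relevant Markov cells so that $F^n$ is continuous at $x_0$ for every $n$ --- this is decidable since the orbit is periodic, and reduces to checking $0 < n \leq 2$. \emph{Second}, total irrationality of $x_0$: the characteristic polynomial $X^3 - 2X^2 + X - 1$ is irreducible over $\QQ$, so the coordinates of any eigenvector generate a degree-$3$ extension and are $\QQ$-linearly independent; hence $x_0$ is a totally irrational direction. \emph{Third}, exponential decay: since $M$ is Pisot unimodular with dominant eigenvalue $> 1$ and the other two eigenvalues $\beta, \bar\beta$ of modulus $< 1$, we have $\bigcap_n M^n_{[0,n)} \RR_+^{4}$... (here $\RR_+^{d+1} = \RR_+^3$) equal to $\RR_+ v_0$, and $\matrixnorm{\pi_{x_0} M_{[0,2n)}}_1 = \matrixnorm{\pi_{v_0} M^n}_1$ decays like $|\beta|^n$, so $\limsup \frac{1}{n} \ln \matrixnorm{\pi_{x_0} M_{[0,n)}}_1 = \frac{1}{2}\ln|\beta|^2 = \ln|\beta| < 0$. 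The sequence $x_0^{(k)}$ is eventually periodic (period $2$), so it trivially has a limit, equal to $x_0$ or to $F(x_0)$, both totally irrational; but this fourth condition of Definition~\ref{defG} is not part of Definition~\ref{def:G0}, so for $G_0$ we only need the three items plus the interior condition.

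\emph{Fourth, the main obstacle:} one must exhibit a letter $a \in A$ and a fixed point $u$ of $(c_0 c_1)^\omega$ such that $W_a(u_0)$ has non-empty interior for the topology $\topo{x_0}$. We take $u$ to be the word-sequence fixed point induced by $u = (c_0 c_1)^\omega(0)$ and its shift, and we aim at $a = 0$ (as announced in the Brun subsection, and consistent with the statement of Proposition~\ref{prop-decidable} being referenced). Since $(c_0 c_1)^\omega$ is an irreducible Pisot unimodular substitution, the Rauzy fractal $R(u_0)$ and its pieces $R_a$ are well-defined, compact, cover $P$, and have non-empty interior by Corollary~\ref{cor:bounded:cover} (the strong convergence hypothesis holds because of the exponential decay above). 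By Lemma~\ref{lem:car:interior}, $W_0(u_0)$ has non-empty interior for $\topo{x_0}$ if and only if there is a non-empty open set $B \subseteq P$ disjoint from every $R_b$ with $b \neq 0$ and from every translate $R(u_0) + t$, $t \in \Lambda \setminus \{0\}$. Equivalently, $R_0$ must not be entirely covered by the other pieces together with the non-trivial lattice translates of $R$. This is exactly the kind of statement Proposition~\ref{prop-decidable} asserts to be decidable for periodic directive sequences: one computes finite approximations of the compact sets $R_0, R_1, R_2$ and the finitely many neighboring translates $R + t$ (only finitely many $t$ can bring $R+t$ near $R_0$, since $R$ is bounded), and exhibits an explicit interior point of $R_0$ avoiding all of them. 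Concretely, I would use the automaton description from Proposition~\ref{prop:R:uniqueness}: a point of $R_0$ corresponds to a left-infinite path in $\A$ ending at state $0$, and one pins down a ball around a point whose finite prefix already forces the path's tail to stay in a region where the remaining $R_b$ and lattice translates are far away, using the exponential contraction to bound the tail by $\epsilon$.

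In summary: conditions (1)–(3) of Definition~\ref{def:G0} follow from the Pisot-unimodularity and irreducibility of $M = \ab(c_0 c_1)$ together with the periodicity of the orbit of $x_0$ under $F$; the substantive point is the interior condition, which I would settle by invoking the decidability result Proposition~\ref{prop-decidable} and carrying out the finite computation (approximating the three Rauzy fractal pieces and the finitely many relevant lattice translates, then displaying an explicit interior point of $R_0(u_0)$ not covered by the rest), so that Lemma~\ref{lem:car:interior} yields the required non-empty $\topo{x_0}$-interior of $W_0(u_0)$. The expected difficulty, and the reason a separate proposition is cited, is precisely that this last step is not a closed-form argument but a verified finite computation on fractal approximations.
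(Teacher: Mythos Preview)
Your plan is correct and follows essentially the same route as the paper: verify total irrationality and continuity via irreducibility of $X^3-2X^2+X-1$ and periodicity of the orbit, get the exponential decay from the Pisot property of $M=\ab(c_0c_1)$, and reduce the interior condition to Lemma~\ref{lem:car:interior}. The only substantive difference is in how the last step is executed. You defer it to a generic finite computation (``approximate the pieces and the finitely many relevant translates, then exhibit an interior point of $R_0$''), whereas the paper carries it out explicitly and more efficiently: it conjugates the action of $M$ to multiplication by the complex eigenvalue $\beta$ via the linear map $\phi\colon\RR^3\to\CC$ (Lemma~\ref{lem-chgtbase}), then uses Lemma~\ref{lem:bounding:sets} at depth $n=8$ to pin down three explicit disks $O_0,O_1,O_2\subset\CC$ containing $\phi(R_0),\phi(R_1),\phi(R_2)$ (Corollary~\ref{cor:balls}). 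The candidate point is simply $0\in P$ (the image of the empty prefix), and the verification that $0\notin R_1\cup R_2\cup\bigcup_{t\in\Lambda\setminus\{0\}}(R+t)$ reduces to a handful of distance inequalities between disk centers in $\CC$. Your abstract plan would work, but the paper's complex-plane shortcut is what makes the computation short enough to display.
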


We need to prove several lemmas first. 

\begin{lemma} \label{lem:totally:irrational}
  The point $x_0$
  is a totally irrational direction. The map $F^n$ is continuous at $x_0$ for every $n \in \NN$.
\end{lemma}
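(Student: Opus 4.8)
The plan is to handle the two assertions separately. For total irrationality, recall that $x_0$ is the direction of a Perron eigenvector $v$ of $M=\ab(c_0c_1)$, and the characteristic polynomial of $M$ is $X^3-2X^2+X-1$, which was already noted to be irreducible over $\QQ$. First I would observe that the coordinates $v_0,v_1,v_2$ of $v$ all lie in the cubic field $\QQ(\alpha)$, where $\alpha$ is the Perron eigenvalue, and in fact span the same $\QQ$-vector space as $1,\alpha,\alpha^2$ (this is standard: the eigenvector can be written with polynomial-in-$\alpha$ entries, and if they were $\QQ$-linearly dependent the eigenspace would be defined over a proper subfield, contradicting that $[\QQ(\alpha):\QQ]=3$ and that $M$ has rational entries with irreducible characteristic polynomial). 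Concretely, solving $(M-\alpha I)v=0$ gives $v$ proportional to, say, $(\alpha,\ \alpha^2-\alpha,\ \alpha^2-2\alpha+1)$ or a similar explicit triple; since $\{1,\alpha,\alpha^2\}$ is a $\QQ$-basis of $\QQ(\alpha)$, any nontrivial rational relation $\sum q_i v_i=0$ would force a nontrivial rational relation among $1,\alpha,\alpha^2$, which is impossible. Hence $v_0,v_1,v_2$ are linearly independent over $\QQ$, i.e.\ $x_0$ is a totally irrational direction.

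For the continuity statement, the key point is that $x_0$ is a \emph{periodic} point of $F$: since $v$ is a Perron eigenvector of $M=\ab(c_0c_1)=M_0M_1$, we have $Mx_0=x_0$, so $F^2(x_0)=x_0$, and more precisely $s_0(x_0)=c_0$, $F(x_0)$ has direction $M_1 x_0$ (wait — one must check which branch is taken), so that the orbit of $x_0$ under $F$ is finite. By Lemma~\ref{lem:near-prefix} (equivalently, by the definition of $X_0$), continuity of every $F^n$ at $x_0$ is equivalent to the statement that $x_0$ and all its finitely many forward iterates avoid the switching hyperplane $H=\{x_0=x_2\}$ of the Cassaigne algorithm. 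So I would compute the two points of the $F$-orbit of $x_0$ explicitly from the eigenvector coordinates and check $x_0\neq x_2$ at each. Since the coordinates are irrational (just proved), one only needs to verify that neither orbit point satisfies the rational equation $x_0=x_2$; this is immediate because $v_0=v_2$ would be a rational relation among the $v_i$, contradicting total irrationality, and similarly for the other orbit point whose coordinates are a nonnegative integer combination of the $v_i$.

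I expect the only genuine work to be bookkeeping: writing down the eigenvector explicitly, identifying which Cassaigne branch ($c_0$ or $c_1$) is applied at each orbit point, and thereby pinning down the (length $\le 2$) orbit; once that is done, both claims reduce to ``a nontrivial rational linear form does not vanish on $(v_0,v_1,v_2)$,'' which follows from irreducibility. The mild subtlety is that continuity of $F^n$ for \emph{all} $n$ requires that the orbit never hits the hyperplane, not just the first two iterates — but periodicity of the orbit means ``for all $n$'' collapses to ``for the finitely many distinct orbit points,'' so there is no real obstacle. I would also remark that this is exactly the content of the second and third bullets of the ``decidable for periodic points'' Remark following Proposition~\ref{prop:mesure:good}, specialized to $p=2$.
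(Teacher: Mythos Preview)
Your plan is correct and will go through with only the bookkeeping you anticipate. It differs from the paper's proof in both halves, though, and the comparison is instructive.

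For total irrationality, the paper argues via Galois theory: the characteristic polynomial is irreducible with simple roots, so the Galois group acts transitively on the eigenlines; a rational linear relation $\sum q_i v_i=0$ would then be Galois-invariant and hence annihilate \emph{all} three eigenvectors, i.e.\ the rational row vector $(q_0,q_1,q_2)$ would lie in the left kernel of the (invertible) matrix of eigenvectors, a contradiction. Your approach --- write the Perron eigenvector explicitly with entries in $\QQ(\alpha)$ and observe they span the same $\QQ$-space as $1,\alpha,\alpha^2$ --- is more elementary and perfectly fine here, but the Galois argument has the advantage of working uniformly for any irreducible characteristic polynomial without ever solving for the eigenvector.

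For continuity, the paper is much terser than you: it simply notes that total irrationality implies continuity of every $F^n$. The reason is that (as established in the proof of Proposition~\ref{prop:cf}) the set of totally irrational directions is $F$-invariant and disjoint from the union $H$ of rational switching hyperplanes, so $X_0=X_{\mathrm{irr}}$ for the Cassaigne algorithm. Your periodicity argument is correct and self-contained, but the detour through the finite orbit is unnecessary: once you know $x_0$ is totally irrational, every iterate $F^n(x_0)$ is totally irrational (integer invertible matrices preserve this), hence never on $\{x_0=x_2\}$, and you are done --- no need to identify the orbit or invoke its finiteness.
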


\begin{proof}
    The characteristic polynomial $X^{3} - 2X^{2} + X - 1$ is irreducible over $\QQ$ and splits with simple roots over the splitting field. Thus the Galois group acts transitively on the eigenvectors. Hence, if $x_0$ was not a totally irrational direction, it would give a rational non-zero vector in the left kernel of the matrix of eigenvectors. And this is absurd because this matrix is invertible. We deduce that $x_0$ is a totally irrational direction, thus we have the continuity of $F^n$ at $x_0$ for every $n \in \NN$.
 \end{proof}

Here, there is a natural projection on $\CC$ for which $M$ acts by multiplication by $\beta$:

\begin{lemma}\label{lem-chgtbase}
	Consider the linear map $\phi$\nomenclature[G$\varphi$]{$\phi$}{linear map from $\RR^3$ to $\CC$} from $\RR^3$ to $\CC$  given by $\phi(v) = e \cdot v$, for the line vector $e = (1, \beta^2-\beta, \beta-1)$.
	This map induces a bijection between $P$ and $\CC$.
	For every $v \in \RR^3$, we have $\phi(M v) = \beta \phi (v)$ and $\phi (\pi_{x_0} v) = \phi (v)$.
\end{lemma}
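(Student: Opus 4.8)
The plan is to verify the three assertions by elementary linear algebra over $\RR$ and $\CC$, treating the three complex eigenvalues $\beta, \bar\beta$, and the Pisot unit (call it $\eta$) of $M$, and using that the line vector $e=(1,\beta^2-\beta,\beta-1)$ is, by construction, a left eigenvector of $M$ for the eigenvalue $\beta$.

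First I would check that $e$ is a left eigenvector: compute $eM$ directly from $M=\left(\begin{smallmatrix}1&1&0\\0&1&1\\1&0&0\end{smallmatrix}\right)$, getting $eM = (1 + (\beta-1),\ 1 + (\beta^2-\beta),\ (\beta^2-\beta))$; then use the relation $\beta^3-2\beta^2+\beta-1=0$ (equivalently $\beta^3 = 2\beta^2-\beta+1$, and also $\beta^2-\beta = \beta(\beta-1)$, $\beta^2 = \beta(\beta^2-\beta)+\dots$, whatever one-line manipulation fits) to see that $eM = \beta e$ coordinatewise. Once $eM=\beta e$ is established, for every $v\in\RR^3$ we get $\phi(Mv) = e\cdot(Mv) = (eM)\cdot v = \beta (e\cdot v) = \beta\phi(v)$, which is the second claim. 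For the third claim, recall $\pi_{x_0}(v) = v - h(v)\frac{v(x_0)}{h(v(x_0))}$ and that $v(x_0)$ is (a positive multiple of) the Perron eigenvector of $M$, i.e. of eigenvalue $\eta\ne\beta$; since $e$ is a left eigenvector for $\beta$ and $v(x_0)$ a right eigenvector for $\eta$, they are orthogonal, $e\cdot v(x_0)=0$, hence $\phi(\pi_{x_0}v) = \phi(v) - \frac{h(v)}{h(v(x_0))}\,e\cdot v(x_0) = \phi(v)$.

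It remains to show $\phi$ restricts to a bijection $P\to\CC$. The map $\phi\colon\RR^3\to\CC\cong\RR^2$ is $\RR$-linear and surjective (its real and imaginary parts are two $\RR$-linear forms which are linearly independent over $\RR$: if they were dependent, $e$ would be proportional to a real vector, forcing $\beta$ real, contradiction — or, more concretely, one exhibits two vectors among $e_0,e_1,e_2$ whose $\phi$-images are $\RR$-linearly independent, which is a finite check using $\mathrm{Im}(\beta)\ne 0$ and $\beta\ne 1$). Its kernel is therefore a line; that line is exactly $\RR\, v(x_0)$, because $e\cdot v(x_0)=0$ as just shown and $\ker\phi$ is a real subspace containing $v(x_0)$ of the right dimension (dimension $3 - 2 = 1$). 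Since $P=\ker h$ and $v(x_0)\notin P$ (as $h(v(x_0))=\norm{v(x_0)}_1>0$), we have $\RR^3 = P \oplus \ker\phi$, so the restriction $\phi|_P$ is injective, and it is surjective because $\phi$ already is; hence $\phi|_P\colon P\to\CC$ is an $\RR$-linear isomorphism.

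I do not expect a genuine obstacle here — everything is a finite computation with the cubic $X^3-2X^2+X-1$ plus the standard eigenvector-orthogonality fact. The only mildly delicate point is the cleanest way to present that $e$ is a left eigenvector and that $\phi|_P$ is injective; I would simply note that the characteristic polynomial is irreducible (already established in Lemma~\ref{lem:totally:irrational}) so all eigenvalues are simple, whence left and right eigenspaces for distinct eigenvalues are orthogonal, and that $\phi|_P$ is injective precisely because $\ker\phi=\RR\,v(x_0)$ is a complement of $P$.
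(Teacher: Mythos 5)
Your proposal is correct and follows essentially the same route as the paper: observe that $e$ is a left eigenvector of $M$ for $\beta$, deduce $\phi(Mv)=\beta\phi(v)$, use orthogonality of left and right eigenvectors for distinct eigenvalues to get $e\cdot v(x_0)=0$ and hence $\phi(\pi_{x_0}v)=\phi(v)$, and finally show $\phi$ has rank $2$ with kernel $\RR\,v(x_0)$, a complement of $P$, so $\phi\restrict{P}$ is a bijection onto $\CC$. You merely spell out a few steps the paper leaves implicit (the coordinatewise check that $eM=\beta e$ and the verification that $\Re(e),\Im(e)$ are $\RR$-independent).
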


\begin{proof}
	Remark that the line vector $e$ is a left-eigenvector of $M$ for the eigenvalue $\beta$.
	Let $v \in \RR^3$.
	We have $\phi(v) = e \cdot v$, so we have $\phi( M v ) = e M v = \beta e v = \beta \phi(v)$.
	And $x_0$ is the class of a right eigenvector of $M$ for an eigenvalue different of $\beta$, so we have $e x_0 = 0$, thus we get $\phi (\pi_{x_0} v) = \phi (v - h(v)v(x_0)) = \phi (v)$.
	Now we check that the rank of $\phi$ is $2$, so its kernel is the vector space spanned by $x_0$, which intersect $P$ only at $0$.
	Thus $\phi$ induces a bijection between $P$ and $\CC$.
\end{proof}

With Lemmas~\ref{dumont:thomas} and~\ref{lem-chgtbase}, we can project 
the worm $W(u)$ on the complex plane 
\[
    \phi(W_a(u)) = \{ \sum_{k=0}^{n-1} t_k \beta^k \mid 0 \xrightarrow{t_{n-1}} ... \xrightarrow{t_{0}} a \} \subset \CC,
\]
where $0 \xrightarrow{t_{n-1}} ... \xrightarrow{t_{0}} a$ denotes a path in the automaton $\phi(\A)$ of the Figure~\ref{fig:aut:c0c1} (we do not label the edges by the susbtitution since there is only one in this case). $\phi(\A)$ is the image by $\phi$ of the abelianized prefix automaton $\A$ for the substitution $c_0c_1$.

\begin{figure}
    \centering
    \includegraphics[scale=.5]{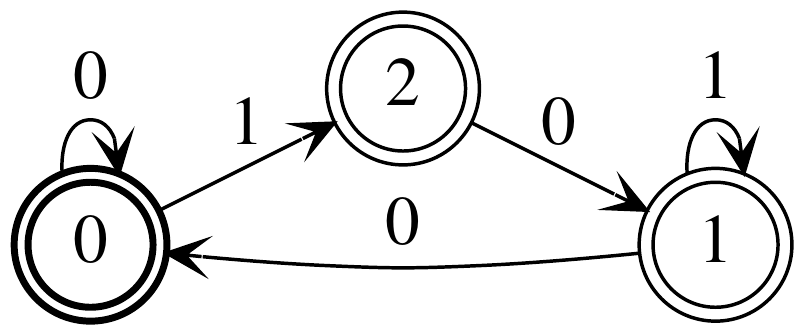}
    \caption{Automaton describing the projection by $\phi$ of the worm $W(u)$} \label{fig:aut:c0c1}
\end{figure}

The Figure~\ref{fig:c0c1:gasket} shows the Rauzy fractal $R$ of the directive sequence $(c_0c_1)^\omega$ and its image $\phi(R)$ by $\phi$.

\begin{figure}[!ht]
    \centering
    \begin{tikzpicture}[scale=2.94114425143390]
		\node at (.5cm,.7cm) {\includegraphics[width=5cm]{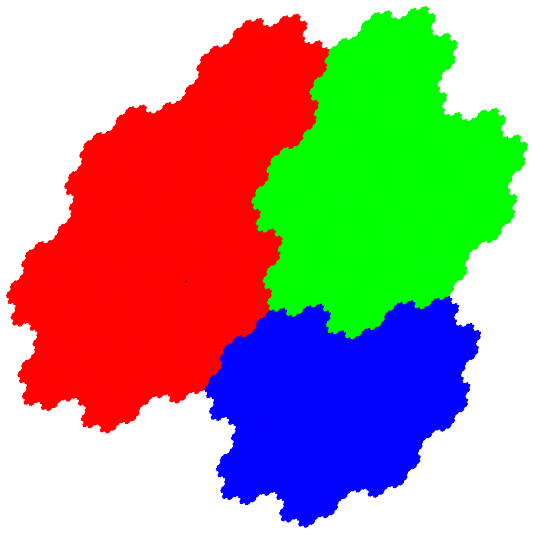}};
		\node at (3cm, .7cm) {\includegraphics[width=6.2cm]{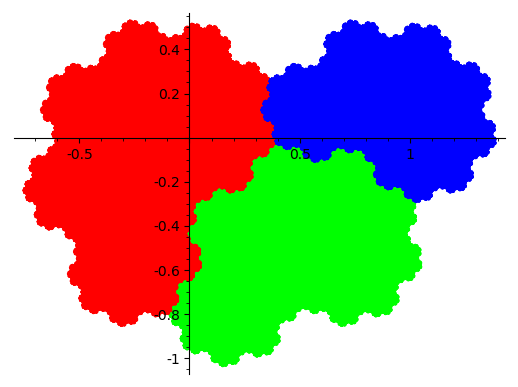}};
		\draw[->, thick] (0,0) -- (1.414213562373095cm, 0) node[below] {$e_0-e_1$};
		\draw[->, thick] (0,0) -- (-0.7071067811865475cm, 1.2247448713915894cm) node[above] {$e_2-e_0$};
		\draw[->, thick] (0,0) -- (0.7071067811865475cm, 1.2247448713915894cm) node[above] {$e_2-e_1$};
		\fill (.5cm-0.257755905389784cm, .7cm-0.0465014874935169cm) circle (.015cm) node[above] {$0$};
	\end{tikzpicture}
	\caption{Rauzy fractal of $(c_0c_1)^\omega$ in $P$ (left) and its projection by $\phi$ on $\CC$ (right)} \label{fig:c0c1:gasket}
\end{figure}

The following lemma permits to find good bounding boxes (for example good disks) that contain the parts of the Rauzy fractal.

\begin{lemma} \label{lem:bounding:sets}
    If there exists $(O_a)_{a \in A}$ open subsets of $\CC$ and an integer $n \in \NN$ such that for every $a \in A$ we have
    \[
        \bigcup_{b \xrightarrow{t_{n-1}} ... \xrightarrow{t_0} a} \big(\beta^n \overline{O_b} + \sum_{k=0}^{n-1} t_k \beta^k\big) \subseteq O_a,
    \]
    then for all $a \in A$, we have $\phi(R_a) \subseteq O_a$.
\end{lemma}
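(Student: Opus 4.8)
The plan is to combine the explicit description of the projected Rauzy fractal pieces $\phi(R_a)$ via left-infinite paths in the automaton $\phi(\A)$ with a straightforward induction on the hypothesis, using the fact that $|\beta|<1$ so that the ``tail'' of an infinite sum is absorbed into the bounding set. First I would recall that by Corollary~\ref{cor:bounded:cover} (applicable since the directive sequence $(c_0c_1)^\omega$ is Pisot unimodular, hence the series $\sum_n \matrixnorm{\pi_{x_0}M^n}_1$ converges, and $x_0$ is totally irrational), we have
\[
	R_a = \Big\{ \sum_{n=0}^\infty \pi_{x_0}(M^n t_n) \ \Big|\ \dots \xrightarrow{t_n}\dots\xrightarrow{t_0} a \in \phi(\A) \Big\},
\]
and applying $\phi$ together with Lemma~\ref{lem-chgtbase} (which gives $\phi\circ\pi_{x_0} = \phi$ and $\phi(Mv)=\beta\phi(v)$), this becomes
\[
	\phi(R_a) = \Big\{ \sum_{n=0}^\infty t_n \beta^n \ \Big|\ \dots \xrightarrow{t_n}\dots\xrightarrow{t_0} a \Big\}.
\]
Here I am implicitly identifying the integer labels $t_k\in\Sigma$ with their images $\phi(t_k)\in\CC$, consistently with the displayed formula for $\phi(W_a(u))$ in the text preceding the lemma.

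The core of the argument is then the following claim: for every $m\geq 0$ and every $a\in A$,
\[
	\bigcup_{b \xrightarrow{t_{mn-1}}\dots\xrightarrow{t_0} a} \Big(\beta^{mn}\,\overline{O_b} + \sum_{k=0}^{mn-1} t_k\beta^k\Big) \subseteq O_a,
\]
where $n$ is the fixed integer from the hypothesis and the union is over paths of length $mn$ in $\phi(\A)$. This is proved by induction on $m$: the case $m=1$ is exactly the hypothesis, and the inductive step factors a path of length $(m+1)n$ into a path of length $mn$ followed by a path of length $n$, pushes the length-$n$ block through by the hypothesis (multiplying by $\beta^{mn}$ and translating appropriately), and then applies the induction hypothesis. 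One must be slightly careful that $\overline{O_b}$ on the left of the hypothesis allows iterating with closures, which is why I keep $\overline{O_b}$ in the claim; the algebra of $\beta^{mn}\overline{O_b}+(\text{translation})$ is routine.

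Finally, take any left-infinite path $\dots\xrightarrow{t_k}\dots\xrightarrow{t_0} a$ in $\phi(\A)$ and fix $m\geq 1$. The partial sum $\sum_{k=0}^{mn-1}t_k\beta^k$, together with the fact that the remaining tail $\sum_{k\geq mn} t_k\beta^k = \beta^{mn}\sum_{j\geq 0} t_{mn+j}\beta^j$ lies in $\beta^{mn}\overline{O_b}$ for some state $b$ (because the truncated path $\dots\xrightarrow{t_{mn}} b$ visits only finitely many states and the tail sum is a limit of points that, after dividing by $\beta^{mn}$, are partial sums along a path ending at $b$, hence cluster in $\overline{O_b}$ — more simply, the tail of the full path from position $mn$ onwards is itself a left-infinite path ending at some state $b$, so the tail sum lies in $\phi(R_b)$, and it suffices to know $\phi(R_b)$ is bounded, which we have, to enclose it in $\beta^{mn}\overline{O_b}$ once $m$ is large — actually we get it directly from the claim), shows that $\sum_{k=0}^\infty t_k\beta^k \in O_a$ by the claim with that value of $m$. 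Since this holds for every path ending at $a$, we conclude $\phi(R_a)\subseteq O_a$.

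The main obstacle I expect is the bookkeeping in the last paragraph: one needs the tail sum $\sum_{k\geq mn}t_k\beta^k$ to actually land inside $\beta^{mn}\overline{O_b}$ for the appropriate intermediate state $b$, and the cleanest way to see this is to observe that the reversed tail path $\dots \xrightarrow{t_{mn+1}}\xrightarrow{t_{mn}} b$ is again a left-infinite path in $\phi(\A)$, so $\beta^{-mn}\sum_{k\geq mn}t_k\beta^k = \sum_{j\geq 0}t_{mn+j}\beta^j \in \phi(R_b)$; then one applies the already-established inclusion for shorter (finite) prefixes inductively, or equivalently runs the whole argument as a single induction showing $\phi(R_a)\subseteq O_a$ directly from the closure of the sets $O_a$ and the contraction by $\beta$. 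Everything else is elementary manipulation of geometric-type sums with ratio $|\beta|<1$ and the finiteness of $\Sigma$ and of the state set $A$.
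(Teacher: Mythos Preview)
Your inductive claim
\[
\bigcup_{b \xrightarrow{t_{mn-1}}\dots\xrightarrow{t_0} a} \Big(\beta^{mn}\,\overline{O_b} + \sum_{k=0}^{mn-1} t_k\beta^k\Big) \subseteq O_a
\]
is correct and is also what the paper iterates. The gap is in your closing step. To place the full sum $y=\sum_{k\geq 0}t_k\beta^k$ inside the left-hand side you need the tail $\sum_{j\geq 0}t_{mn+j}\beta^j$ to lie in $\overline{O_b}$, where $b$ is the state at position $mn$. But that tail is an element of $\phi(R_b)$, and the containment $\phi(R_b)\subseteq \overline{O_b}$ is precisely the statement you are trying to prove: the argument is circular. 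Your parenthetical attempts do not escape this: partial sums are not a priori in $\overline{O_b}$; a bounded set does not get ``enclosed'' in $\beta^{mn}\overline{O_b}$ for large $m$ (that set shrinks); and ``running the whole argument as a single induction'' on the conclusion has no base case.

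The paper breaks the circularity with a positive-gap argument. Since the hypothesis puts a \emph{compact} set inside the \emph{open} set $O_a$, there is an $l>0$ with
\[
\beta^n\overline{O_{b'}}+\sum_{k=0}^{n-1}t_k\beta^k \subseteq O_a \quad\text{and}\quad \operatorname{dist}\Big(\beta^n\overline{O_{b'}}+\sum_{k=0}^{n-1}t_k\beta^k,\ \CC\setminus O_a\Big)\geq l,
\]
where $b'$ is the state at position $n$. One then only uses that $\phi(R)$ is \emph{bounded} (Corollary~\ref{cor:bounded:cover}), not that it sits in any $O_c$: choose $m$ so large that $\abs{\beta}^{mn}\sup_{c\in A}\sup_{z\in\phi(R)}D(z,\overline{O_c})<l$. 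The tail $\beta^{mn}\sum_{j\geq 0}t_{mn+j}\beta^j$ is then within distance $<l$ of $\beta^{mn}\overline{O_c}$ for the state $c$ at position $mn$, hence $y$ is within distance $<l$ of $\beta^{mn}\overline{O_c}+\sum_{k=0}^{mn-1}t_k\beta^k$. Your iterated claim nests this last set inside $\beta^n\overline{O_{b'}}+\sum_{k=0}^{n-1}t_k\beta^k$, and the choice of $l$ then forces $y\in O_a$. Inserting this distance step after your induction would make the proof complete.
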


\begin{proof}
    Let $y \in \phi(R_a)$. By Corollary~\ref{cor:bounded:cover}, there exists an infinite path $... \xrightarrow{t_n} ... \xrightarrow{t_0} a$ in the automaton $\phi(\A)$ of Figure~\ref{fig:aut:c0c1} such that
    \[
        y = \sum_{k=0}^\infty t_k \beta^k.
    \]
    Let $b \in A$ such that $b \xrightarrow{t_{n-1}} ... \xrightarrow{t_0} a$ is a path in the automaton.
    Let $l> 0$ be the distance between $\beta^n \overline{O_b} + \sum_{k=0}^{n-1} t_k \beta^k$ and the complement of $O_a$.
    We denote by $D$\nomenclature[L]{$D$}{distance on $\CC$} the usual distance on $\CC$.
    Let $k \in \NN$ be large enough such that $\abs{\beta^{kn}} \max_{c \in A} \sup_{z \in \phi(R)} D(z, \overline{O_c}) < l$, and let $c \in A$ such that $c \xrightarrow{t_{kn-1}} ... \xrightarrow{t_n} b \xrightarrow{t_{n-1}} ... \xrightarrow{t_0} a$ is a path in the automaton.
    We have $\displaystyle\sum_{p=kn}^\infty t_p \beta^{p-kn} \in \phi(R)$.  So we have
    $
        D(\sum_{p=kn}^\infty t_p \beta^{p}, \beta^{kn} \overline{O_c}) < l.
    $
    
    Thus, we have
    $
        D(y, \beta^{nk} \overline{O_c} + \sum_{p=0}^{kn-1} t_p \beta^p) < l
    $.
    And we have the inclusion
    \[
        \beta^{nk} \overline{O_c} + \sum_{p=0}^{kn-1} t_p \beta^p
            \subseteq \beta^n \overline{O_b} + \sum_{k=0}^{n-1} t_k \beta^k
    \]
    by iterating $k-1$ times the inclusion of the hypothesis.
    So, we get that $y$ is in $O_a$.
\end{proof}

\begin{corollary} \label{cor:balls}
For the Rauzy fractal associated to ${(c_0c_1)}^\omega$ we have the inclusions
\begin{align*}
    \phi(R_0) &\subseteq B(-0.19-0.15i, 0.75) =: O_0, \\
    \phi(R_1) &\subseteq B(0.5 - 0.6i, 0.655) =: O_1, \\
    \phi(R_2) &\subseteq B(0.865 + 0.123i, 0.566) =: O_2.
\end{align*}
\end{corollary}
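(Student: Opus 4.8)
\emph{Proof proposal.} The plan is to apply Lemma~\ref{lem:bounding:sets} to the substitution $c_0c_1$, taking for $(O_a)_{a\in A}$ the three open disks displayed in the statement and choosing a suitable integer $n$. First I would write out the automaton $\phi(\A)$ of Figure~\ref{fig:aut:c0c1} explicitly. Reading the prefixes of $c_0c_1\colon 0\mapsto 02,\ 1\mapsto 01,\ 2\mapsto 1$ and using $\phi(e_0)=1$ and $\phi(0)=0$, its transitions are $0\xrightarrow{0}0$, $0\xrightarrow{1}2$, $1\xrightarrow{0}0$, $1\xrightarrow{1}1$ and $2\xrightarrow{0}1$. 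For a fixed $n$, the set of length-$n$ paths $b\xrightarrow{t_{n-1}}\dots\xrightarrow{t_0}a$ ending at a given state $a$ is finite (its cardinality being controlled by the entries of $\ab(c_0c_1)^n$), so the left-hand side of the inclusion required by Lemma~\ref{lem:bounding:sets} is, for each $a$, a finite union.

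For each such path, $\beta^n\overline{O_b}+\sum_{k=0}^{n-1}t_k\beta^k$ is a closed Euclidean disk of radius $|\beta|^n r_b$ centered at $\beta^n c_b+\sum_{k=0}^{n-1}t_k\beta^k$, where $c_b$ and $r_b$ denote the center and radius of $O_b$ and $t_k\in\{0,1\}$. Since each $O_a=B(c_a,r_a)$ is an \emph{open} disk, the inclusion $\beta^n\overline{O_b}+\sum_k t_k\beta^k\subseteq O_a$ amounts to the single scalar inequality
\[
\left| \beta^n c_b+\sum_{k=0}^{n-1}t_k\beta^k-c_a \right| + |\beta|^n r_b < r_a .
\]
Thus the corollary reduces to checking this finite list of inequalities, after which Lemma~\ref{lem:bounding:sets} yields $\phi(R_a)\subseteq O_a$ for all $a\in A$. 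Because $c_0c_1$ is Pisot we have $|\beta|<1$, so the radii $|\beta|^n r_b$ decay geometrically and the inequalities become easier to meet as $n$ grows; a short numerical search produces a small $n$ for which they all hold with the centers and radii displayed (these being precisely the output of that search).

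The main obstacle is twofold. First, one must locate disks and an exponent $n$ making \emph{all} of the finitely many inequalities hold at once — a numerical optimization, further constrained by the fact that Lemma~\ref{lem:bounding:sets} requires the \emph{same} $n$ to work for the three target letters $a\in\{0,1,2\}$. Second, $\beta$ is an algebraic irrational — a non-real root of $X^3-2X^2+X-1$ — so to be rigorous the arithmetic above should be carried out with certified enclosures: fix a small complex interval around $\beta$ obtained from its minimal polynomial together with a sign analysis, propagate it through the finitely many affine maps $z\mapsto\beta^n z+\sum_k t_k\beta^k$, and verify each inequality with a guaranteed margin. Everything else — enumerating the paths, forming the disks, comparing radii — is entirely routine.
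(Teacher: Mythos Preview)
Your approach is correct and is essentially the same as the paper's: apply Lemma~\ref{lem:bounding:sets} with the three given disks and reduce to a finite list of numerical inequalities to be verified by computer. The paper differs only in that it commits to the explicit exponent $n=8$ (rather than leaving it as the outcome of a search) and omits the discussion of certified arithmetic, simply stating that the check is done by computer.
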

\begin{proof}
We use Lemma~\ref{lem:bounding:sets} for $n=8$, and check the result by computer, see Figure~\ref{fig:bballs}.
\end{proof}

In the following we denote $z_a, r_a$ the center and the radius of the ball $O_a$ for $a=0,1,2$.

\begin{figure}
    \centering
    \includegraphics[scale=.7]{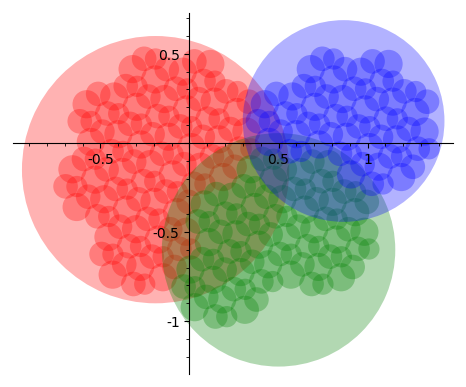}
    \caption{Bounding balls found from Lemma~\ref{lem:bounding:sets} for $n=8$} \label{fig:bballs}
\end{figure}

Now we can prove Proposition~\ref{lem:G:non-empty}: 

\begin{proof}[Proof of Proposition~\ref{lem:G:non-empty}]
We check all the conditions that show that $x_0$ is a seed point:
\begin{itemize}
\item By Lemma~\ref{lem:totally:irrational}, the direction $x_0$ is totally irrational and for all $n \in \NN$, $F^n$ is continuous at $x_0$.

\item 
We have
\[
	\lim_{n \to \infty} \frac{1}{n} \ln \matrixnorm{\pi_{x_0} M_{[0,n)}(x_0)}_1 = \lim_{n \to \infty} \frac{1}{2n} \ln \matrixnorm{\pi_{x_0} {\ab(c_0 c_1)}^n}_1 = \frac{1}{2} \ln \abs{\beta} < 0.
\]

\item We have $0 \not\in R_1 \cup R_2$ by Corollary~\ref{cor:balls} since $0 \not\in B(0.5 - 0.6i, 0.655) \cup B(0.865 + 0.123i, 0.566)$.

\item 
For $t \in \Lambda \setminus \{0, e_1-e_2, e_2-e_1\}$, we check that
we have $\abs{\phi(t)} > 1.5 > \max_{a \in A} r_a + \abs{z_a}$, so by Corollary~\ref{cor:balls}, we get that $0 \not\in R + t$.

For $t \in \{e_1-e_2, e_2-e_1\}$,
we check that we have for all $a \in A$, $\abs{z_a + \phi(t)} > r_a$, thus we have $0 \not\in R+t$.

\item We have $0 \not\in R_1 \cup R_2 \cup \bigcup_{t \in \Lambda \setminus \{0\}} R + t$, so by Lemma~\ref{lem:car:interior} we have that $0$ is in the interior of $W_0(u)$. 
	In particular, the interior of $W_0(u)$ is non-empty. 
	Furthermore, there exists a fixed point $w \in {(A^\NN)}^\NN$ of the directive sequence $s(x_0) = (c_0c_1)^\omega$ such that $w_0$ is the fixed point $u$ of the substitution $c_0c_1$.   
\end{itemize}
\end{proof}

\begin{figure}
        \centering
        \includegraphics[scale=.7]{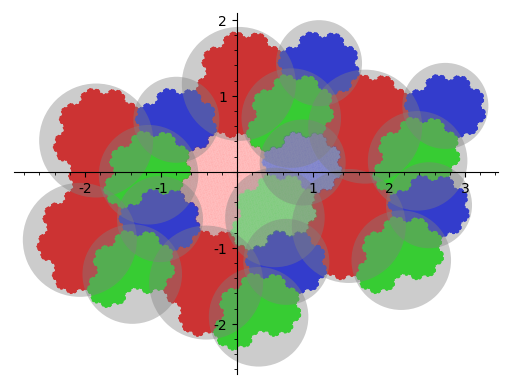}
        \caption{Proof that $0 \not\in R_1 \cup R_2 \cup \bigcup_{t \in \Lambda \setminus \{0\}} R + t$ thanks to covering with balls} \label{fig:balls:disjoint}
\end{figure}

Remark that the previous proof could be adapted to prove the semi-decidability of being a seed point for $F$-periodic points.
But we have even the decidability.

\begin{proposition}\label{prop-decidable}
	If $F$ is computable, then being a seed point is a decidable property for $F$-periodic points.
\end{proposition}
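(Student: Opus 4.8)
The plan is to show that each of the four defining conditions of a seed point (Definition~\ref{def:G0}) can be decided algorithmically for an $F$-periodic point $x$, say with $F^p(x) = x$, once $F$ is computable. First I would observe that a periodic point of a computable $F$ is itself a computable point of $\P\RR_+^d$: it is the Perron direction of the explicitly known integer matrix $M_{[0,p)}(x)$, whose entries we can read off by running $F$ for $p$ steps; equivalently $x$ is the unique positive eigendirection, and its coordinates are algebraic numbers for which we have effective approximations. The total irrationality of $x$ and the continuity of each $F^n$ at $x$ are then decidable: total irrationality amounts to irreducibility over $\QQ$ of the characteristic polynomial of $M_{[0,p)}(x)$ (as used in the proof of Theorem~\ref{thmB}, this follows from the Galois action being transitive on the eigenvectors), which is a decidable property of an explicit integer polynomial; and continuity of $F^n$ at $x$ for all $n$ reduces, by periodicity and Lemma~\ref{lem:near-prefix}, to checking continuity for $0 < n \le p$, which is decidable because $x$ is totally irrational hence lies in the interior of a Markov cell. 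The exponential decay condition $\limsup_n \frac1n \ln \matrixnorm{\pi_x M_{[0,n)}(x)}_1 < 0$ is, by periodicity, equivalent to $M_{[0,p)}(x)$ being a Pisot matrix (as noted in the remark following Proposition~\ref{prop:mesure:good}), and Pisotness of an explicit integer matrix is decidable.

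The substantive step is deciding the last condition: whether there exists a letter $a \in A$ and a fixed point $u$ of $s(x)$ such that $W_a(u_0)$ has non-empty interior for $\topo{x}$. Since $s(x) = \sigma^\omega$ with $\sigma = s_{[0,p)}$ a Pisot unimodular substitution (whose matrix $M$ we know), Lemma~\ref{lem-lien-pt-fixes} tells us the relevant word $u_0$ is a periodic point of $\sigma$, of which there are finitely many and all effectively computable (iterate $\sigma^{p'}$ on each letter for $p' \le d+1$). For each such candidate $w = u_0$ and each $a \in A$, by Lemma~\ref{lem:car:interior} non-emptiness of the interior of $W_a(w)$ for $\topo{x}$ is equivalent to the existence of a point $z \in \pi_x(W(w))$ with
\[
	z \notin \bigcup_{b \in A \setminus \{a\}} R_b \cup \bigcup_{t \in \Lambda \setminus \{0\}} R + t,
\]
and since $R$ and its pieces are compact (Corollary~\ref{cor:bounded:cover}), this is an open condition that can be certified by a finite computation: one approximates the $R_b$ and the finitely many relevant translates $R+t$ (only those within a bounded region meet a fixed compact set, as $R$ is bounded) by nested unions of balls via Lemma~\ref{lem:bounding:sets}-type bounding-box iterations — exactly the computation carried out for $x_0$ in Corollary~\ref{cor:balls} and Proposition~\ref{lem:G:non-empty}. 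The point is that the bounding-box scheme converges to the true fractal in Hausdorff distance, so if the interior is non-empty the algorithm eventually exhibits a ball witnessing it.

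To turn semi-decidability into full decidability I would run, in parallel, a procedure searching for a negative certificate. If for every periodic point $w$ of $\sigma$ and every letter $a$ the worm $W_a(w)$ has empty interior, then in particular $R_a$ has empty interior for some $a$, or equivalently the fractal pieces fail to tile — and this can be certified by covering each $\pi_x(W(w))$, up to the diameter-shrinking scale given by Lemma~\ref{lem:theta2}, by balls each of which meets some $R_b$ with $b \neq a$ or some translate $R+t$ (this is the complementary computation, again finite at each scale because only boundedly many translates are relevant). The main obstacle is making this negative branch provably terminate: one must argue that emptiness of the interior of $W_a(w)$ forces, after finitely many refinement steps, that the candidate region is entirely covered by the "forbidden" compact sets with room to spare, i.e., that there is a uniform gap to exploit. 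This follows because $W_a(w)$ having empty interior means $\pi_x(W_a(w)) \subseteq \bigcup_{b \ne a} R_b \cup \bigcup_{t \ne 0} R + t$ — and since the left side is dense in $R_a$ while the union on the right is closed and locally finite, one gets $R_a \subseteq \bigcup_{b \ne a} R_b \cup \bigcup_{t \ne 0} R+t$, a genuine compact-set containment that a finite ball cover detects. Running both branches concurrently, exactly one halts, so the property is decidable.
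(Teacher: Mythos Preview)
Your treatment of the first three seed-point conditions matches the paper's: irreducibility of the characteristic polynomial of $M_{[0,p)}(x)$ decides total irrationality, continuity of $F^n$ reduces by periodicity to $n\le p$, and the $\limsup$ condition is Pisotness of $M_{[0,p)}(x)$. The positive branch of your fourth condition (exhibiting an explicit ball witnessing non-empty interior via Lemma~\ref{lem:car:interior} and bounding-box estimates) is also fine and mirrors the computation in Proposition~\ref{lem:G:non-empty}.

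The gap is in your negative branch. First, the intermediate claim ``then in particular $R_a$ has empty interior for some $a$, or equivalently the fractal pieces fail to tile'' is simply false: by Corollary~\ref{cor:bounded:cover} the Rauzy fractal always has non-empty interior under the standing Pisot hypothesis, and the paper explicitly warns (remark after Corollary~\ref{cor:bounded:cover}) that the interior of $R_a$ in $P$ does \emph{not} coincide with the $\topo{x}$-interior of $W_a(u_0)$. You then correctly reformulate emptiness of the $\topo{x}$-interior as the containment $R_a \subseteq K := \bigcup_{b\ne a} R_b \cup \bigcup_{t\ne 0}(R+t)$, but the assertion that ``a finite ball cover detects'' this containment is where the argument breaks. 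Bounding-box iterations give \emph{outer} approximations $R_a^{(n)}\supseteq R_a$ and $K^{(n)}\supseteq K$ converging in Hausdorff distance; but $R_a^{(n)}\subseteq K^{(n)}$ does not certify $R_a\subseteq K$, and $R_a^{(n)}\subseteq K$ may fail for every $n$ since $R_a$ and $K$ typically share boundary (there is no uniform gap to exploit). Checking that each small ball of a cover ``meets'' $K$ is likewise insufficient: that only shows $K$ is dense near $R_a$, not that $R_a\subseteq K$. So your parallel-search algorithm is only a semi-decision procedure; termination on the negative side is not established.

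The paper resolves this differently: instead of metric approximation, it invokes \cite[Theorem~5.12]{Aki.Merc.18}, which shows that for a periodic point of a Pisot unimodular substitution the $\topo{x}$-interior of $W_a(u_0)$ is described by a computable finite automaton, so that emptiness of this interior reduces to language-emptiness of the automaton --- a directly decidable question. This automata-theoretic route exploits the self-similar (graph-IFS) structure exactly rather than through Hausdorff limits, and is what turns semi-decidability into full decidability.
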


\begin{proof}
	Let $x$ be an $F$-periodic point of period $p$.
	Total irrationality of the direction $x$ is equivalent to irreducibility of the characteristic polynomial of $M_{[0,p)}(x)$, and this can be checked algorithmically.
	The fact that $x$ is not a discontinuity point of $F^n$ can be checked since $F$ is computable, and it is enough to test it for $n \leq p$.
	The hypothesis that $\limsup_{n \to \infty} \norm{\pi_x M_{[0,n)}(x)}_1 < 0$ is equivalent to check that the matrix $M_{[0,p)}$ is Pisot, and this is decidable.
	Then, if $u$ is a fixed point of $s(x)$, then $u_0$ is a periodic point for the substitution $s_{[0,p)}$, see Lemma~\ref{lem-lien-pt-fixes}.
	Then, we use~\cite[Theorem~5.12]{Aki.Merc.18}.
	This theorem allows to describe the interior of $W_a(u_0)$ with a finite automaton, such that the interior is empty if, and only if, the language of the automaton is empty.
	Moreover this automaton is computable from $s_{[0,p)}$.
	And checking if an automaton has an empty language is decidable.
	The computation in~\cite{Aki.Merc.18} is done for the bi-infinite topology, but it is possible to use it to compute the interior for the topology $\topo{x_0}$, by adding a left infinite part to our worm.
\end{proof}

\subsection{Proof of Theorem~\ref{thmB}}\label{proof-thmB}

We refer to \cite[Proposition 6]{Cass.Lab.17} for the proof of the following result:
\begin{lemma}\label{lem-comp-cll}
Consider a directive sequence $s$ in $S^{\NN}$, where $S = \{c_0, c_1\}$.
Assume that $s$ cannot be written as a finite sequence followed by an infinite concatenation of $c_0^2$ and $c_1^2$.
Then $\Omega_s$ is minimal and has complexity $2n+1$.
\end{lemma}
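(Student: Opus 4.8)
This is \cite[Proposition~6]{Cass.Lab.17}; I indicate the strategy. Two assertions must be established: that $\Omega_s$ is minimal, and that its complexity function $p$ satisfies $p(n)=2n+1$ for all $n$.

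\textbf{Minimality.} The plan is to deduce minimality from Proposition~\ref{prop-omega-u-omega-s} by showing that the hypothesis on $s$ forces the directive sequence to be primitive, i.e.\ $\forall k,\ \exists n\geq k,\ \ab(s_{[k,n)})>0$. The condition ``$s$ is not a finite word followed by an infinite concatenation of $c_0^2$ and $c_1^2$'' is stable under the shift on $S^\NN$ (a tail of such a concatenation is again one), so it is enough to treat $k=0$. One then views $n\mapsto Z\big(\ab(s_{[0,n)})\big)$, where $Z(M)=\{(i,j):M_{ij}=0\}$ is the zero pattern and $M_i=\ab(c_i)$, as an infinite path in the finite transition system with maps $Z\mapsto Z(NM_0)$ and $Z\mapsto Z(NM_1)$. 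A direct finite inspection of this system shows that the only infinite paths never reaching the empty zero pattern are those in which $(s_n)$ is read, from some index on, as a concatenation of the two blocks $c_0c_0$ and $c_1c_1$; as this is excluded by hypothesis, some product $\ab(s_{[0,n)})$ is strictly positive. Hence $s$ is primitive, and by Proposition~\ref{prop-omega-u-omega-s} the subshift $\Omega_s$ is minimal.

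\textbf{Complexity.} Since $\Omega_s$ is minimal and all three letters occur, $p(0)=1$ and $p(1)=3$, so the first difference $\Delta(n):=p(n+1)-p(n)$ satisfies $\Delta(0)=2$. By the classical bilateral‑multiplicity formula for the complexity (see~\cite{Pyth.02}), $\Delta(n+1)-\Delta(n)=\sum_{w}m(w)$, the sum ranging over bispecial factors $w$ of $\Omega_s$ of length $n$, where $m(w)=e(w)-\ell(w)-r(w)+1$ and $e(w),\ell(w),r(w)$ count respectively the factors $awb$, $aw$, $wb$ of $\Omega_s$ with $a,b\in A$. Thus it suffices to prove $m(w)=0$ for every bispecial factor $w$. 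The plan is to do this by desubstitution: by shift‑stability of the hypothesis, for each $k$ the shifted directive sequence $(s_{k+n})_n$ again defines a minimal $S$‑adic subshift $\Omega^{(k)}$ satisfying the hypothesis, and $\Omega^{(k)}$ is, outside the bounded prefix/suffix created when one applies a substitution to a subshift, the image of $\Omega^{(k+1)}$ under $c_{s_k}$. One checks that $c_{s_k}$ induces a length‑increasing bijection, away from a finite set of short words treated directly, between the bispecial factors of $\Omega^{(k+1)}$ and those of $\Omega^{(k)}$, carrying the extension graph of $w$ (bipartite on $\ell(w)$ left extensions and $r(w)$ right extensions, with $e(w)$ edges) to that of its image up to isomorphism, hence preserving $m(w)$. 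Propagating $m\equiv 0$ from the finitely many short bispecial factors, whose extension graphs one computes by hand, then gives $m(w)=0$ for all $w$, whence $\Delta(n)=2$ for all $n$ and $p(n)=2n+1$.

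\textbf{Main obstacle.} The technical heart is the second part: controlling exactly how the extension graph of a bispecial factor transforms under $c_0$ and $c_1$ — that the $c_{s_k}$‑image of a bispecial factor, once trimmed of its bounded boundary, is again bispecial, that no spurious bispecial factors are created, and that left/right extensions and biextensions correspond bijectively. In fact one expects the stronger statement that each $\Omega^{(k)}$ has only tree‑shaped extension graphs (is a dendric subshift), which would yield $\Delta(n)=|A|-1=2$ at once; but this again reduces to the same graph bookkeeping under $c_0,c_1$. The low‑complexity regime, where the desubstitution identity is not yet valid because substitution images are too short, must be settled by inspecting the finitely many short factors explicitly.
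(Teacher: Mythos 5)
The paper offers no proof of this lemma beyond the citation to \cite[Proposition~6]{Cass.Lab.17}, which is exactly the reference you identify, so your proposal follows the paper's approach by construction. Your outline of how that cited result is actually proved — primitivity of the directive sequence (forced by the hypothesis excluding eventual concatenations of $c_0^2$ and $c_1^2$) yielding minimality via Proposition~\ref{prop-omega-u-omega-s}, and the vanishing of the bilateral multiplicities $m(w)$ of bispecial factors, propagated through desubstitution from the finitely many short ones, yielding $p(n)=2n+1$ — is the correct strategy, with the acknowledged bookkeeping of extension graphs under $c_0,c_1$ being precisely the technical content carried out in that reference.
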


Now we deduce the proof of Theorem~\ref{thmB}:
With Lemmas~\ref{lem-lyap-bst} and~\ref{lem:G:non-empty} we can apply Theorem~\ref{thmA} since $\mu$ is absolutely continuous with respect to the Lebesgue measure.

The map $x\mapsto t_x$ of Theorem~\ref{thmA} is the map $x\mapsto \psi(e_0-v(x))$, where $\psi\colon P/\Lambda \to \TT^2$ is an isomorphism. Now remark that $(e_0-\Delta)\cup (\Delta-e_0)$ form a cover of a measurable fundamental domain of $P$ for the action of $\Lambda$. Thus the set $\{\psi(e_0-v(x))\mid x\in G\}\cup \{\psi(v(x)-e_0)\mid x\in G\}\ $ is of full measure in $\TT^2$.
Hence, we get for Lebesgue-almost every translation of $\TT^2$ a nice generating partition whose symbolic coding is conjugate to the subshift. With Lemma~\ref{lem-comp-cll} we deduce the result.

\section{Renormalization schemes}\label{sec-renormalization}

In general, the first return map of a minimal torus translation on a bounded
remainder set is close to be a torus translation \cite{Ferenczi1992}.
In the present case, we will see how selecting the atoms on which to induce
explicitly leads to another torus translation, and how the induction process
relates to the continued fraction algorithm.

We will focus on Cassaigne algorithm, and then explain how to adapt the
reasoning to other algorithms.

Let us first look at the symbolic level. Let us consider a directive sequence
$s=(s_k)$ starting with $c_0$, and let $u$ be one of its fixed points.

The word
$u_0 = c_0(u_1)$ is the concatenation of the three finite words $0$, $02$ and
$1$.
Those three words are \emph{return words} on the pair $\{0,1\}$, i.e. any word
in $\Omega_u$ starting with $0$ or $1$ can be written in a unique way as a
concatenation of $0$, $02$ and $1$, and $0$ and $1$ appear only at the first
positions of those words.
Hence, inducing the subshift $\Omega_{u_0}$ on the clopen set $[0]\cup[1] =
\Omega_{u_0}\setminus [2]$ leads to a subshift isomorphic to $\Omega_{u_1}$,
whose directive sequence is $(s_{k+1})$.

Now, assume that $s=(s_k)$ starts with $c_1$, and again let $u$ be one of its
fixed points.
In this case, the images of the letters by $c_1$ are not return words, and we
have to look backwards: the reverse of the images of the letters by $c_1$, that
is $1$, $20$ and $2$ are return words on the pair $\{1,2\}$.
An option could be to reverse $c_1(1)$ in the definition of $c_1$ to be $20$ as
it will not change the continued fraction algorithm, but it will increase the
complexity of the associated subshift, which we can not afford.
Instead, we remark that inducing on $T([1])\cup T([2]) = \Omega_u \setminus
T([0])$, where $T$ denotes the shift map, leads again to a subshift isomorphic
to $\Omega_{u_1}$, whose directive sequence is again $(s_{k+1})$.

All those remarks translate to the geometrical level, and we get the following
renormalization scheme.
To simplify the notations, 
we identify the Rauzy fractals $R_i(x)\subseteq P$ with their image by the
projection $q\colon P\to P/\Lambda$.
Let $T_x$ be a translation of the torus, and let $(R_0(x),R_1(x),R_2(x))$ be the
associated partition by Rauzy fractals.

\begin{itemize}
        \item (\emph{bottom type}) if $\lambda(R_0(x)) > \lambda(R_2(x))$, let $U = (P/\Lambda) \setminus R_2(x)$
        \item (\emph{top type}) if $\lambda(R_0(x)) < \lambda(R_2(x))$, let $U = (P/\Lambda) \setminus T_x(R_0(x))$
\end{itemize}

Then, the induced application $(T_x)_U$ is isomorphic to the translation
$T_{F(x)}$, but it is defined on a smaller torus $P/\Lambda'$, with
$U$ being a measurable fundamental domain of $P$ for the action of a lattice $\Lambda'$.
The induced application $(T_x)_U$ can be renormalized to the translation
$T_{F(x)}$ on the reference torus $P/\Lambda$: the linear map that sends $U$ to
$R(F(x))$ is $N^{-1}$, where $N$ is the linear endomorphism of $P$ such that $N \circ \pi_{F x} = \pi_x \circ \ab(s_0(x))$
that was introduced in subsection \ref{proof-ThC-step2}. And we have the relation $\Lambda' = N\Lambda$.

It is remakable to see that this scheme is pretty similar to the famous
Rauzy-Veech induction for interval exchange maps \cite{Rauzy1979} (we named the
\emph{top} and \emph{bottom} types after the naming scheme of
\cite{Yoccoz2010}).

\begin{figure}[!h]
    \centering
	\begin{tikzpicture}[scale=1]
        \def\vert{-4.5cm}
        \def\horiz{6cm}
		\node (pic11) at (0,0) {\includegraphics[width=3.34175343990326cm]{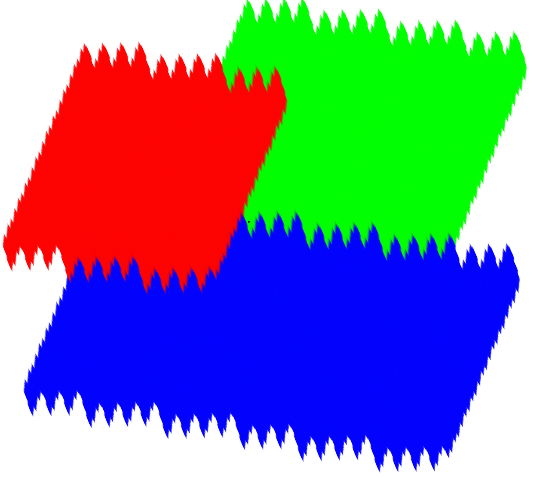}};
		\node (pic12) at (\horiz, 0) {\includegraphics[width=3.28993709564208cm]{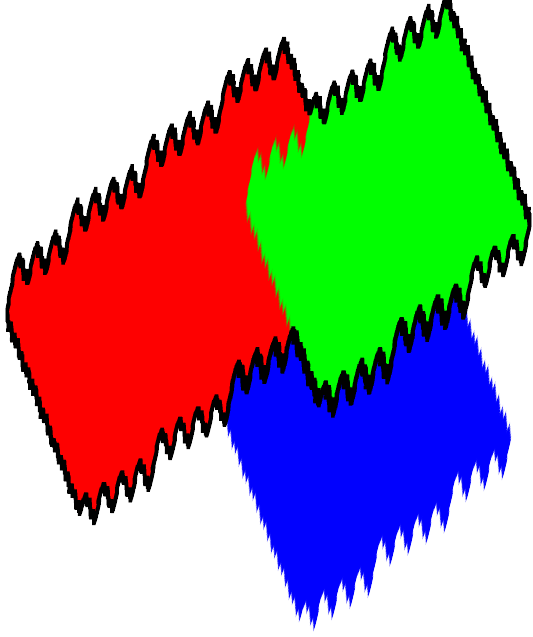}};
		\node (pic13) at (2*\horiz, 0) {\includegraphics[width=4.12394912719726cm]{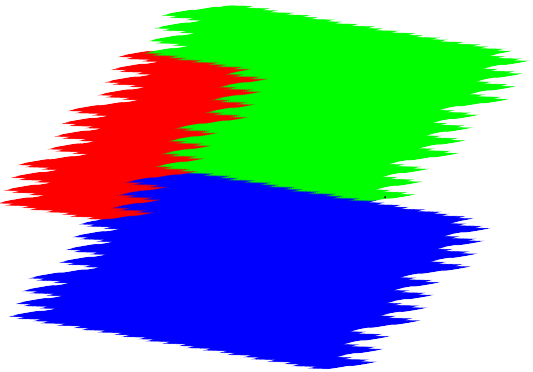}};
		\node (pic21) at (0, \vert) {\includegraphics[width=3.34175343990326cm]{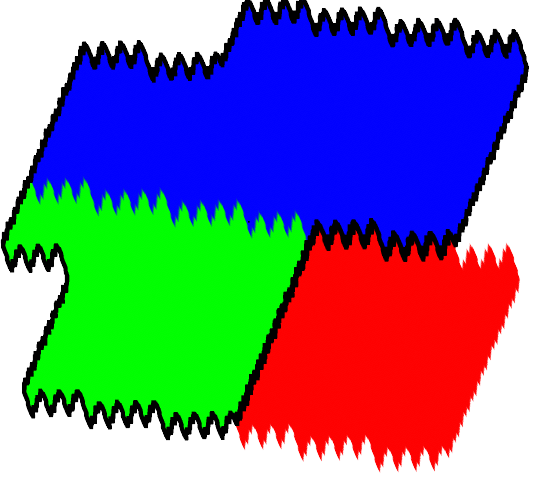}};
		\node (pic22) at (\horiz, \vert) {\includegraphics[width=3.28993709564208cm]{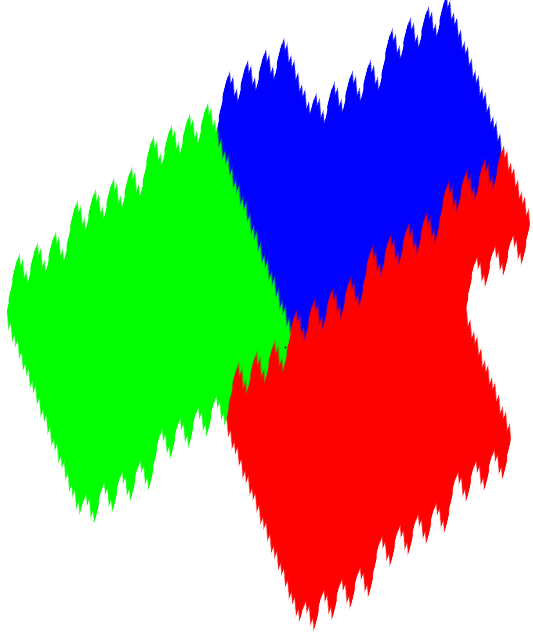}};
		\node (pic23) at (2*\horiz, \vert) {\includegraphics[width=4.12394912719726cm]{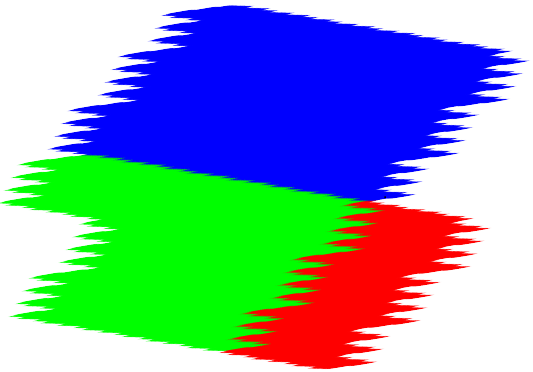}};
        \draw[->, very thick] (\horiz/3, \vert/2) -- (2*\horiz/3, \vert/2) node[midway, above] {\emph{top}};
        \draw[->, very thick] (4*\horiz/3, \vert/2) -- (5*\horiz/3, \vert/2) node[midway, above] {\emph{bottom}};

        \draw[->, thick] (0cm, \vert*2/5) -- (0cm, 3*\vert/5) node[midway, left] {$T_x$};
            \draw[->, thick] (\horiz, \vert*2/5) -- (\horiz, 3*\vert/5) node[midway, left] {$T_{F(x)}$};
            \draw[->, thick] (2*\horiz, \vert*2/5) -- (2*\horiz, 3*\vert/5) node[midway, left] {$T_{F^2(x)}$};
	\end{tikzpicture}
        \caption{Two steps of induction/renormalization. The sets of induction are outlined in black. Note that the pictures look flipped (and stretched) from one step to the next one, this is due to the fact that the renormalization matrices have negative determinant.} 
        \label{fig:renormalization}
\end{figure}

Figure~\ref{fig:renormalization} shows two steps of induction and renormalization starting from $v(x) = (0.256005715380561..., 0.286881483823029..., 0.457112800796410...)$.
The corresponding directive sequence is $s(x) = c_1c_0c_1c_0c_1c_0c_0c_0c_1c_0c_0c_0c_1c_1c_0c_0c_0c_0c_0c_0c_1c_1c_0c_1c_0c_0c_1c_1...$ 
It corresponds to the translation by
\[
	t = (0.743994284619439..., -0.286881483823029..., -0.457112800796410...)
\]
on the torus $P/\Lambda$.
The figure shows the Rauzy fractals $R(x), R(Fx)$ and $R(F^2x)$, with $R_0$ in red, $R_1$ in green and $R_2$ in blue.
The first line shows the decomposition $R = R_0 \cup R_1 \cup R_2$, and the second line shows the decomposition $R = (R_0 + \pi(e_0)) \cup (R_1 + \pi(e_1)) \cup (R_2 + \pi(e_2))$ after applying the domain exchange corresponding to the translation on the torus.

The Brun and Arnoux-Rauzy extended continued fraction algorithms also enjoy a
similar renormalization scheme:
\begin{itemize}
\item For the Brun algorithm (with the substitutions as in \cite{Labbe.15}), it
      suffice to induce on the complementary of the image of the second largest atom.
Another choice of substitutions will lead to a different renormalization scheme.
\item For the Arnoux-Rauzy algorithm, it suffice to induce on the image of the largest atom.
\end{itemize}

In all cases, one renormalization step corresponds to applying one step of the
continued fraction algorithm.

\section{Remarks and open problems}

\subsection{Comments on the results of another paper}

In \cite{Berth-Steiner-Thusw.14} the authors prove two theorems on the same subject. Their Theorem~3.1 is in the same spirit as our Theorem~\ref{thmC}. In their case, they need some hypotheses such as irreducibility and balanceness for the directive sequences or coincidence conditions on the subshift. In our theorem these conditions are not assumed, and are replaced by our notion of good directive sequence.
Theorem~3.3 of \cite{Berth-Steiner-Thusw.14} is in the same spirit as our Theorem~\ref{thmA}. Here again, the hypotheses are not on the same objects.

\subsection{Translation vectors \emph{vs} directions}

The link between a continued fraction algorithm and torus translations was done
by associating to every direction $x\in\P\RR_+^d$, a translation vector of
$\TT^d$ via the composition:
$$\chi\colon\xymatrix{\P\RR_+^d \ar[r]^{v} & \Delta \ar[r]^f & P \ar[r]^{q} &
P/\Lambda \ar[r]^{\psi} & \TT^d}.$$
The map $f=\map{\Delta}{P}{y}{\pi_y(e_0)}$ is affine (and injective), which is
why we could transport results holding for almost every direction to results
holding for almost every torus translation.
Note however that the map $\chi$ is not surjective, so that we had to project the
simplex twice to cover all possible torus translations in dimension $2$ in the
proof of Theorem \ref{thmB} (Section \ref{proof-thmB}).
If $t$ is an element of $\TT^2$, either $t$ or $-t$ is the image of some
direction $x\in\P\RR_+^2$. Since the translation $T_t$ is conjugated to the
translation $T_{-t}$, we got the result for almost every translation of $\TT^2$.

In higher dimensions, if we identify $\TT^d$ with the unit hypercube $[0,1]^d$,
the image of $\P\RR_+^d$ is the convex hull
$S_d$ of $\{0,e_1,\dots,e_{d}\}$, whose Lebesgue measure is only $1/d!$.

If $\alpha\in GL_d(\ZZ)$ is an automorphism of $\TT^d$, $T_t$ is conjugated to
$T_{\alpha(t)}$.
As shown in \cite{Freudenthal1942}, there exists an explicit finite family
$(\alpha_i)_{0\leq i< d!}$ of elements of $GL_d(\ZZ)$ and a family $(n_i)_{0\leq
i< d!}$ of elements of $\ZZ^d$ such that $[0,1]^d = \bigcup_{0\leq i< d!}
\alpha_i(S_d) + n_i$, that is,
$$\TT^d = \bigcup_{0\leq i< d!} \alpha_i(\chi(\P\RR_+^d)).$$
Such tiling is also known as \emph{Kuhn triangulation} \cite{LeeSantos2017}.

Therefore, if we want to go from a particular translation $T_t$ of $\TT^d$ to a
projective direction and study its dynamics through continued fractions, it
suffices to find to which atom $\alpha_i(S_d)$ of the triangulation it belongs,
and to associate the direction $x=\chi^{-1}(\alpha_i^{-1}(t))$ (note that $\chi$ is
injective, except on the finite set $\{[e_0],\ldots,[e_d]\}$).

\subsection{Exceptional directions in Cassaigne algorithm}
A natural question is to understand the set of directions where the conclusion of Theorem~\ref{thmB} is true.
Our proof shows that it works for a subset of measure one in the set of totally irrational directions. 
Can we extend the result of Theorem~\ref{thmB} to  all this set ? It is not possible with our proof, but maybe we can use some other continued-fraction algorithm, or some unrelated method. 
Indeed, there are subshifts defined by the Cassaigne algorithm which are not balanced \cite{Andrieu2020}, so there are directions where we can not use this algorithm to construct symbolic codings of translations of $\TT^2$.
More generally, one may ask whether some subshifts defined by the
Cassaigne algorithm are weakly mixing, see \cite{CassaigneFerencziMessaoudi2008}.

To finish with Cassaigne algorithm we list some properties of exceptional directions.
Let $x = \left[(x_0,x_1,x_2)\right]\in \P\RR_+^{3}$ be a direction and $s(x)=(s_k)$ be its associated directive sequence.
We have equivalence between $\dim_\QQ \vect(x_0,x_1,x_2) = 1$ and the fact that the sequence $(s_k)$ is ultimately constant.
Moreover $\dim_\QQ \vect(x_0,x_1,x_2) = 1$ if, and only if, $s(x)$ is not everywhere growing.
And finally the property $\dim_\QQ \vect(x_0,x_1,x_2) \leq 2$ is equivalent to
the fact that $(s_k)$ can be written as the concatenation of a finite sequence
followed by an infinite concatenation of $c_0^2$ and $c_1^2$ (even runs)
\cite[Lemma 1]{Cass.Lab.17}.

\subsection{Higher dimensions}

Another natural question is to generalize Theorem~\ref{thmB} for $d\geq 3$.
For example we could be interested in the following set $S$
$$\begin{cases}
a\mapsto a\\
b\mapsto c\\
c\mapsto d\\
d\mapsto ab
\end{cases}
\begin{cases}
a\mapsto c\\
b\mapsto b\\
c\mapsto d\\
d\mapsto ab
\end{cases}$$

It seems that the complexity of the subshift is linear, but it is bigger than $3n+1$. Moreover we do not know actually if the other hypotheses are fulfilled.

\subsection{Pisot substitution conjecture and converse of Theorem~\ref{thmC} for constant directive sequences} \label{sec:pisot}
 
 	We say that a substitution is \emph{irreducible} if its matrix has an irreducible characteristic polynomial.
 	
	The \emph{Pisot substitution conjecture} states (or is equivalent to the fact) that the conclusion of our Theorem~\ref{thmC} is true for every directive sequence of the form $\sigma^\omega$,
	with $\sigma$ an irreducible Pisot unimodular substitution: the subshift is measurably conjugated to a translation on a torus.

	But for such particular directive sequences, this is equivalent to being good.

\begin{lemma} \label{lem:pisot:periodic:good}
	Let $\sigma$ be an irreducible Pisot unimodular substitution such that for a periodic point $u \in A^\NN$,
	there exists a letter $a \in A$ such that $W_a(u)$ is not empty for the topology $\topo{x}$, where $x \in \P\RR_+^d$ is the class of a Perron eigenvector of $\ab(\sigma)$.
	Then, the directive sequence $\sigma^\omega$ is good.
\end{lemma}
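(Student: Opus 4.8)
The plan is to check directly the four conditions defining a good directive sequence (Definition~\ref{defG}) for $s=\sigma^\omega$, taking for $x$ the class of a Perron eigenvector of $\ab(\sigma)$. Two preliminary observations organize everything. First, since $\sigma$ is irreducible the characteristic polynomial of $\ab(\sigma)$ is irreducible over $\QQ$; a non-negative matrix with an irreducible characteristic polynomial of degree $\geq 2$ cannot be brought to block-triangular form, hence is irreducible in the Perron--Frobenius sense, and as its dominant eigenvalue $\beta$ is strictly larger in modulus than all the others (Pisot), $\ab(\sigma)$ is primitive: $\ab(\sigma)^q>0$ for some $q$. In particular $v(x)$ is strictly positive, $\bigcap_n \ab(\sigma)^n\P\RR_+^{d}=\{x\}$, and since $\ab(\sigma)v(x)=\beta v(x)$ with $\beta>0$ one gets $x^{(k)}=\ab(\sigma)^{-k}x=x$ for all $k$; thus condition (4) reduces to condition (2). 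Second, condition (2) --- total irrationality of $x$ --- follows from the Galois argument already used in Lemma~\ref{lem:totally:irrational}: the characteristic polynomial being irreducible and separable, its Galois group acts transitively on eigenvectors, so a non-trivial rational linear relation among the entries of $v(x)$ would hold for every eigenvector and produce a non-zero rational vector in the left kernel of the invertible eigenvector matrix, which is absurd.

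For condition (1), I would use the endomorphism $N$ of $P$ determined by $N\circ\pi_x=\pi_x\circ\ab(\sigma)$ (well defined since $\ab(\sigma)x=x$; this is the map $N_k$ of Subsection~\ref{proof-ThC-step2}, constant here). An induction gives $\pi_x\ab(\sigma)^n=N^n\pi_x$, hence $\matrixnorm{\pi_x\ab(\sigma)^n}_1\leq\matrixnorm{N^n}_1\matrixnorm{\pi_x}_1$. Splitting $\RR^{d+1}=\RR v(x)\oplus E^{\mathrm c}$ into the $\beta$-eigenline and the sum of the other generalized eigenspaces --- both $\ab(\sigma)$-invariant --- the projection $\pi_x$ restricts to an isomorphism $E^{\mathrm c}\to P$ conjugating $\ab(\sigma)\restrict{E^{\mathrm c}}$ to $N$. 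So the eigenvalues of $N$ are exactly the non-Perron eigenvalues of $\ab(\sigma)$, all of modulus $<1$, and therefore $\limsup_n\tfrac1n\ln\matrixnorm{\pi_x\ab(\sigma)^n}_1<0$.

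Condition (3) is the crux. Write the periodic point of the hypothesis as $u$, with $\sigma^p(u)=u$, and let $a_0$ be a letter with $W_{a_0}(u)$ of non-empty $\topo{x}$-interior. I would first manufacture a fixed point $U=(U_k)$ of $\sigma^\omega$ with $U_0=u$ by setting $U_k=\sigma^{(-k)\bmod p}(u)$, which satisfies $\sigma(U_{k+1})=U_k$ (using $\sigma^p u=u$) and $U_{np}=u$; then take $k_n=np$, so that $u_{k_n}=u$ and $x^{(k_n)}=x$ for every $n$. It remains to show $W_a(u)$ has non-empty $\topo{x}$-interior for \emph{every} $a\in A$. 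Iterating the worm relation~(\ref{eq-iter-worm}) $mp$ times and using that $u$ is $\sigma^p$-periodic yields
\[
   W_a(u)\ \supseteq\ \ab(\sigma)^{mp}\,W_b(u)+v
\]
for each $b$ starting a length-$mp$ path ending at $a$ in the abelianized prefix automaton, the number of such paths being $(\ab(\sigma)^{mp})_{a,b}$; by primitivity this is positive for $b=a_0$ as soon as $mp\geq q$. Since $\ab(\sigma)^{mp}\in GL_{d+1}(\ZZ)\cap M_{d+1}(\NN)$ fixes the totally irrational direction $x$, Lemma~\ref{lem:O:matrix} makes $\ab(\sigma)^{mp}W_{a_0}(u)$ have non-empty $\topo{x}$-interior, and translating by $v\in\NN^{d+1}$ preserves this after removing, exactly as in the proof of Lemma~\ref{lem:O:matrix} and of Lemma~\ref{Wopen}, the projections of the finitely many lattice points of $\HH$ at levels below $h(v)$ lying inside the relevant bounded open set. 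Hence $W_a(u)$ has non-empty $\topo{x}$-interior for all $a$; picking a ball $B(p_a,r_a)$ inside each interior and setting $r=\min_{a\in A}r_a>0$ gives precisely the statement of condition (3). All four conditions hold, so $\sigma^\omega$ is good.

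I expect the bookkeeping in condition (3) to be the main difficulty: bootstrapping non-empty interior from the single letter $a_0$ to all letters requires primitivity of $\ab(\sigma)$ together with a careful handling of the translation terms $v$ in~(\ref{eq-iter-worm}), both of which are, however, already available in the toolbox --- primitivity from the irreducible-Pisot hypothesis, translations treated as in Lemma~\ref{lem:O:matrix} and in the proof of Lemma~\ref{Wopen}. Conditions (1), (2) and (4) are routine consequences of the Pisot, unimodular and irreducible hypotheses and of the fact that $x$ is a fixed point of the projective action of $\ab(\sigma)$.
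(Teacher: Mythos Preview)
Your proof is correct and follows essentially the same route as the paper's: verify the four conditions of Definition~\ref{defG} directly, using the Pisot property for (1), irreducibility for (2), the fixed direction $x^{(k)}=x$ for (4), and the worm relation together with Lemma~\ref{lem:O:matrix} and primitivity of $\ab(\sigma)$ to bootstrap non-empty interior from the single letter $a_0$ to all letters for (3). Your choice $k_n=np$ (so that $u_{k_n}=u$ identically) is in fact a bit cleaner than the paper's choice $k_n=n$, which tacitly requires propagating non-empty interior to each of the finitely many words $\sigma^j(u)$, $0\le j<p$; your more explicit handling of the translation term when invoking Lemma~\ref{lem:O:matrix} is also a welcome addition.
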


\begin{proof}
	We check that $s=\sigma^\omega$ satisfies the four points of Definition~\ref{defG}.
	\begin{enumerate}
		\item We have $\lim_{n \to \infty} \frac{1}{n} \norm{\pi_x M_{[0,n)}}_1 = \lim_{n \to \infty} \frac{1}{n} \norm{\pi_x M^n}_1 = \ln(\abs{\beta})$, where $\beta$ is the second biggest eigenvalue of $M = \ab(\sigma)$ in absolute value. We have $\ln(\abs{\beta}) < 0$ since $\sigma$ is Pisot.
		\item The direction $x$ is totally irrational since $\sigma$ is irreducible.
		\item We have for all $n \in \NN$, $x^{(n)} = x$, so $\lim_{n \to \infty} x^{(n)} = x$ exists and is a totally irrational direction, so we have the fourth point.
		\item Let us check the third point.
			Let $u_0$ be a periodic point of $\sigma$ of period $p$, such that there exists $a \in A$ such that $W_a(u_0)$ has non-empty interior for $\topo{x}$,
			and let $u \in {(\A^\NN)}^\NN$ be the word sequence defined by
				\[
					u_n = \sigma^{p - (n\ mod\ p)}(u_0),
				\]
				where $(n\ mod\ p)$ is the remainder in the division of $n$ by $p$.
				We easily check that $u$ is a fixed point of the directive sequence $\sigma^\omega$.
				By Lemma~\ref{lem:primitivity} there exists $n_0 \in \NN$ such that for all $n \geq n_0$, $M_{[0,n)} = M^n > 0$.
				We choose $n \geq n_0$ divisible by $p$. Hence we have $u_n = u_0$.
				Now, for every $b \in A$, we have the equality
				\[
					W_b(u_0) = \bigcup_{c \xrightarrow{t_{n-1}, s_{n-1}} ... \xrightarrow{t_0, s_0} b} M^n W_c(u_0) + \sum_{k=0}^{n-1} M^k t_k.
				\]
				By Lemma~\ref{lem:O:matrix}, $W_b(u_0)$ has non-empty interior for all $b \in A$.
				Hence, we get the third point, with the sequence $(k_n)_{n \in \NN} = (n)_{n \in \NN}$.
	\end{enumerate}
\end{proof}

In~\cite[Theorem~3.3]{Aki.Merc.18}, they prove the following

\begin{theorem}
	Let $\sigma$ be an irreducible Pisot unimodular substitution.
	Then we have the equivalence between
	\begin{itemize}
		\item $\sigma$ satisfies the Pisot substitution conjecture,
		\item there exists a periodic point $u \in A^\NN$ and a letter $a \in A$ such that $W_a(u)$ has non-empty interior for $\topo{x}$,
	\end{itemize}
	where $x \in \P\RR_+^d$ is the class of a Perron eigenvector of $\ab(\sigma)$.
\end{theorem}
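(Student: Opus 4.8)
The plan is to treat the two implications of the equivalence separately. Recall that, with the convention adopted in this subsection, ``$\sigma$ satisfies the Pisot substitution conjecture'' means that the conclusion of Theorem~\ref{thmC} holds for the constant directive sequence $\sigma^\omega$; in particular it means that the Rauzy fractal $R=R(\sigma^\omega)$ is a measurable fundamental domain of $P$ for $\Lambda$, that its pieces $R_a$ are pairwise disjoint in Lebesgue measure, that each $R_a$ is the closure of its interior with $\lambda(\partial R_a)=0$, and that the symbolic coding is a measurable conjugacy between the subshift and the torus translation.

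For the implication ``non-empty interior $\Rightarrow$ conjecture'' there is essentially nothing to do beyond citing two earlier results: if $W_a(u)$ has non-empty interior for $\topo{x}$ for some periodic point $u$ of $\sigma$ and some letter $a$, then Lemma~\ref{lem:pisot:periodic:good} shows $\sigma^\omega$ is a good directive sequence, and Theorem~\ref{thmC} then delivers exactly the required conclusion.

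The substantive direction is the converse. Assuming the conjecture, I would fix a fixed point $w\in(A^\NN)^\NN$ of $\sigma^\omega$ and set $u=w_0$; this is a periodic point of $\sigma$ by Lemma~\ref{lem-lien-pt-fixes}, and by Corollary~\ref{cor:bounded:cover} the Rauzy fractal built from $u$ coincides with $R=R(\sigma^\omega)$, with pieces $R_b=\overline{\pi_x W_b(u)}$. Since $\sigma$ is irreducible, $x$ is totally irrational, so $\pi_x$ is injective on $\HH$ with dense image in $P$. Because $R$ is compact, the covering $\bigcup_{t\in\Lambda}R+t=P$ is locally finite, so $F_1:=\bigcup_{t\in\Lambda\setminus\{0\}}(R+t)$ is closed and, by the fundamental-domain property, $F_1\cap\operatorname{int}(R)$ is a locally finite union of compact sets of zero Lebesgue measure, hence nowhere dense; likewise each $R_a\cap R_b$ with $a\neq b$ is compact of zero measure, hence nowhere dense. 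The Baire category theorem then makes the open set
\[
	V=\operatorname{int}(R)\setminus\Big(F_1\cup\bigcup_{a\neq b}R_a\cap R_b\Big)
\]
non-empty. Picking a small open ball $B\subseteq V$, the relatively closed sets $B\cap R_a$, $a\in A$, are pairwise disjoint and cover $B$, so connectedness of $B$ forces $B\subseteq R_a$ for a single $a$; then $B\cap R_b=\varnothing$ for $b\neq a$ and $B\cap(R+t)=\varnothing$ for $t\in\Lambda\setminus\{0\}$, and Lemma~\ref{lem:car:interior} gives $\HH\cap\pi_x^{-1}(B)\subseteq W_a(u)$, a non-empty set since $\pi_x(\HH)$ is dense in $P$. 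Thus $W_a(u)$ has non-empty interior for $\topo{x}$.

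The hard part is hidden in the phrase ``satisfies the Pisot substitution conjecture''. Once one has, as above, a genuine measure-theoretic tiling of $P$ by the Rauzy fractal --- which is precisely the fundamental-domain clause in the conclusion of Theorem~\ref{thmC}, resting in turn on Theorem~\ref{thm-step3} --- the remainder is routine point-set topology. If one instead starts only from ``the subshift is measurably conjugate to some torus translation'', one must first upgrade that to $\lambda(R)=\lambda(P/\Lambda)$, i.e.\ to the tiling; this passage is the core of the Pisot substitution conjecture in its equivalent geometric forms and is where the real work lies, handled by the standard structure theory of Rauzy fractals of irreducible Pisot unimodular substitutions. A minor point still to be checked is that the periodic point exhibited really realizes $R(\sigma^\omega)$, which is ensured by Lemma~\ref{lem-lien-pt-fixes} and the fixed-point independence in Corollary~\ref{cor:bounded:cover}.
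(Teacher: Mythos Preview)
The paper does not prove this theorem: it is quoted verbatim from \cite[Theorem~3.3]{Aki.Merc.18} and no argument is given here. So there is no ``paper's own proof'' to compare with; I can only assess your proposal on its merits within the framework of this paper.

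Your proposal is correct. The forward implication (non-empty interior $\Rightarrow$ conjecture) is exactly Lemma~\ref{lem:pisot:periodic:good} followed by Theorem~\ref{thmC}, as you say. For the converse, your Baire-type argument is clean and works as written: $F_1=\bigcup_{t\in\Lambda\setminus\{0\}}(R+t)$ is closed by local finiteness (since $R$ is compact), and both $F_1\cap\operatorname{int}(R)$ and each $R_a\cap R_b$ are closed sets of zero Lebesgue measure, hence with empty interior; removing them from $\operatorname{int}(R)$ leaves an open dense set $V$, and the connectedness trick to single out one letter $a$ on a small ball $B\subseteq V$ is correct. Lemma~\ref{lem:car:interior} then gives exactly $\HH\cap\pi_x^{-1}(B)\subseteq W_a(u)$, and density of $\pi_x(\HH)$ in $P$ (from total irrationality of $x$) makes this set non-empty.

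You are also right to flag the one genuine subtlety: the paper's phrasing of ``satisfies the Pisot substitution conjecture'' oscillates between ``the full conclusion of Theorem~\ref{thmC} holds'' and the bare statement ``the subshift is measurably conjugate to a torus translation''. Your argument uses the former (specifically: $R$ is a measurable fundamental domain and the $R_a$ are disjoint in measure). Reducing the latter to the former is indeed the crux of the Pisot theory for a single substitution, and is precisely what \cite{Aki.Merc.18} handles; your remark that this upgrade is where the real content lies is accurate. One cosmetic point: your sentence that ``$F_1\cap\operatorname{int}(R)$ is a locally finite union of compact sets'' is not literally true (it is only relatively closed in $\operatorname{int}(R)$), but the conclusion you draw from it is correct and follows already from $F_1$ being closed with $\lambda(F_1\cap\operatorname{int}(R))=0$.
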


From this lemma and this theorem we deduce the following

\begin{corollary}
	The converse of Theorem~\ref{thmC} is true for directive sequences of the form $\sigma^\omega$,
	where $\sigma$ is an irreducible Pisot unimodular substitution.
	In other words, if $\Omega_{\sigma^\omega}$ is measurably conjugated to a translation on a torus, then the directive sequence $\sigma^\omega$ is good.
\end{corollary}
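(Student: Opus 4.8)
The plan is to chain together the two results just recalled, in a single short deduction. Assume that $\Omega_{\sigma^\omega}$ is measurably conjugate to a translation on a torus. By the definition adopted in this subsection, this is precisely the statement that the irreducible Pisot unimodular substitution $\sigma$ satisfies the Pisot substitution conjecture, which is the first bullet of the cited Theorem~3.3 of \cite{Aki.Merc.18}. Applying that theorem in the nontrivial direction, we obtain the second bullet: there exist a periodic point $u \in A^\NN$ of $\sigma$ and a letter $a \in A$ such that $W_a(u)$ has non-empty interior for the topology $\topo{x}$, where $x \in \P\RR_+^d$ is the class of a Perron eigenvector of $\ab(\sigma)$.

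With this periodic point in hand, the hypotheses of Lemma~\ref{lem:pisot:periodic:good} are met: $\sigma$ is irreducible Pisot unimodular by assumption, and we have produced the required letter $a$ and periodic point $u$ with $W_a(u)$ of non-empty interior for $\topo{x}$. Lemma~\ref{lem:pisot:periodic:good} then yields directly that the constant directive sequence $\sigma^\omega$ is good in the sense of Definition~\ref{defG}, which is exactly the conclusion of the corollary.

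This indeed establishes a genuine converse to Theorem~\ref{thmC}: that theorem gives the implication ``$\sigma^\omega$ good $\Rightarrow$ nice generating partition and measurable conjugacy with $\Omega_{\sigma^\omega}$'', whereas here we obtain ``$\Omega_{\sigma^\omega}$ measurably conjugate to a torus translation $\Rightarrow$ $\sigma^\omega$ good''. The only mild point to keep track of is that the ``torus'' in the hypothesis and the torus $P/\Lambda$ appearing in Theorem~\ref{thmC} coincide up to a measure-preserving isomorphism: for an irreducible Pisot unimodular $\sigma$ the dimension of the target torus is forced (it equals $d$), so this is automatic and no information is lost when transporting the conjugacy. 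Since the whole argument is an immediate composition of Lemma~\ref{lem:pisot:periodic:good} with the quoted theorem of \cite{Aki.Merc.18}, there is no substantial obstacle; the content lies entirely in those two inputs.
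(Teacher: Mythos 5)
Your proof is correct and matches the paper's intended argument: the paper introduces Lemma~\ref{lem:pisot:periodic:good} and Theorem~3.3 of \cite{Aki.Merc.18} immediately before the corollary precisely so that the corollary follows by the chain ``conjugacy $\Leftrightarrow$ Pisot conjecture for $\sigma$ $\Leftrightarrow$ existence of a periodic point with non-empty worm interior $\Rightarrow$ $\sigma^\omega$ good.'' You also correctly identified and filled in the one quiet step (identifying the abstract torus in the hypothesis with $P/\Lambda$), which the paper leaves implicit.
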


And we can restate the Pisot substitution conjecture as:

\begin{conjecture}[Reformulation of the Pisot substitution conjecture]
	For every irreducible Pisot unimodular substitution $\sigma$, the directive sequence $\sigma^\omega$ is good.
\end{conjecture}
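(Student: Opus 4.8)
The statement is the Pisot substitution conjecture expressed in the language of good directive sequences; it is open, and what follows describes the natural attack within the present framework, together with the precise point at which it stops.

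\emph{Reduction to one interior point.} By Lemma~\ref{lem:pisot:periodic:good}, to prove that $\sigma^\omega$ is good it suffices to exhibit a periodic point $u\in A^\NN$ of $\sigma$ and a letter $a\in A$ such that $W_a(u)$ has non-empty interior for the topology $\topo{x}$, where $x\in\P\RR_+^d$ is the class of a Perron eigenvector of $\ab(\sigma)$. Since $\sigma$ is irreducible Pisot unimodular, $x$ is totally irrational and $\sum_n\matrixnorm{\pi_x \ab(\sigma)^n}_1$ converges geometrically, so by Corollary~\ref{cor:bounded:cover} the Rauzy fractal $R=R(u)$ is compact, has non-empty interior, satisfies $\bigcup_{t\in\Lambda}R+t=P$, and decomposes as $R=\bigcup_{a\in A}R_a$. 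By Lemma~\ref{lem:car:interior}, the required interior point exists precisely when there is a non-empty open ball $B\subseteq P$ with $B\cap R_b=\emptyset$ for all $b\neq a$ and $B\cap(R+t)=\emptyset$ for all $t\in\Lambda\setminus\{0\}$.

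\emph{Reduction to null overlaps.} Such a ball is produced as soon as the overlaps $R_a\cap R_b$ ($a\neq b$) and $R\cap(R+t)$ ($t\in\Lambda\setminus\{0\}$) are Lebesgue-null. Indeed $R$ is bounded, so on any bounded region only finitely many translates $R+t$ are relevant; a finite union of closed Lebesgue-null sets has empty interior, so its complement contains an open ball $B$ inside $R$; since $R=\bigcup_a R_a$ is a finite union of closed sets, one of the $B\cap R_a$ has non-empty interior (Baire), and shrinking further to avoid the null sets $R_a\cap R_b$, $b\neq a$, yields the desired $B$, whence $W_a(u)$ has non-empty interior by Lemma~\ref{lem:car:interior}. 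The plan is thus to prove that these overlaps are null. Self-similarity is the lever: iterating the graph-directed relation $R_a=\bigcup_{b\xrightarrow{t,\sigma}a}N R_b+\pi_x(t)$, with $N$ the contraction induced by $\ab(\sigma)$ on the subdominant eigenspace (after identifying $P$ with a Euclidean space on which $\ab(\sigma)$ acts by the Galois conjugates of the Pisot number), one bounds $\lambda(R_a\cap R_b)$ and $\lambda(R\cap(R+t))$ by themselves with a uniform contraction factor \emph{unless} desubstituted cylinders coincide; this reduces the problem to a coincidence condition. If the balanced-pair / super-coincidence algorithm of Ei--Ito--Rao terminates on $\sigma$, or if the strong coincidence condition of Arnoux--Ito holds together with the geometric tiling property, the overlaps vanish; for a fixed $\sigma$ this is moreover decidable, by \cite[Theorem~5.12]{Aki.Merc.18} as in the proof of Proposition~\ref{prop-decidable}.

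\emph{The main obstacle.} The gap is exactly this last step: no coincidence condition of the above type is known to hold for \emph{every} irreducible Pisot unimodular substitution. The balanced-pair algorithm is not known to always terminate, the geometric coincidence condition is not known to be always satisfied, and there is no unconditional argument forcing the overlaps to be null --- equivalently, one cannot in general rule out that the covering $\bigcup_{t\in\Lambda}R+t=P$ is genuinely overlapping. Partial confirmations exist for two-letter alphabets (Barge--Diamond, Host, Hollander--Solomyak), for $\beta$-substitutions and related families (Barge, Barge--Kwapisz), and for various algebraically restricted classes, and the reduction above shows the statement to be equivalent to the Pisot substitution conjecture; so a complete proof with the tools of this paper --- or, to current knowledge, with any tools --- is not available, and closing the coincidence step is precisely the open problem.
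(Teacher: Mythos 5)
The statement you were asked about is stated in the paper as a \emph{conjecture}: the paper offers no proof of it, and indeed shows (via Lemma~\ref{lem:pisot:periodic:good} together with \cite[Theorem~3.3]{Aki.Merc.18}) that it is equivalent to the Pisot substitution conjecture, which is open. You correctly recognize this, you do not claim a proof, and your reduction --- goodness of $\sigma^\omega$ follows from a single non-empty interior point of some $W_a(u)$ for $\topo{x}$, which via Lemma~\ref{lem:car:interior} and Corollary~\ref{cor:bounded:cover} amounts to the overlaps being null, which is exactly the unresolved coincidence condition --- is precisely the paper's own framing of the problem. Your intermediate step (null overlaps of the closed sets $R_a\cap R_b$ and $R\cap(R+t)$ yield the required ball by local finiteness and Baire) is sound, so the only gap in your account is the one you explicitly name, and it is the genuine open problem, not an oversight.
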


And a generalization of the Pisot substitution conjecture could be:

\begin{conjecture}[Generalization of the Pisot substitution conjecture]
	Let $S$ be a set of unimodular substitutions. Let $s \in S^\NN$ be a directive sequence such that there exists a totally irrational direction $x \in \P\RR_+^d$
	and a constant $C > 0$ such that for every $k$ and $n \in \NN$, $\norm{\pi_x M_{[k,k+n)}}_1 \leq C e^{-C/n}$.
	Then, $s$ is good.
\end{conjecture}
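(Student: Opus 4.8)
The plan is to verify, for the given directive sequence $s$ and its candidate direction $x$, the four defining conditions of a good directive sequence in Definition~\ref{defG}. Condition~(2) is the hypothesis, and condition~(1) is the case $k=0$ of the uniform exponential bound (read as exponential decay in $n$, e.g.\ $\matrixnorm{\pi_x M_{[k,k+n)}}_1\le Ce^{-n/C}$), which in fact gives a negative limit rather than just a negative $\limsup$. By Lemma~\ref{lem:theta2} the same exponential decay holds at every scale $k$, with rate and constant controlled uniformly in $k$ by the single $C$; by Remark~\ref{rem:cv:cv} this forces $\bigcap_n M_{[k,n)}\RR_+^{d+1}=\RR_+v^{(k)}$, so the directions $x^{(k)}=M_{[0,k)}^{-1}x$ and the Rauzy fractals $R^{(k)},R^{(k)}_a$ are all well defined (Corollary~\ref{cor:bounded:cover}); estimating $R^{(k)}_a\subseteq B\bigl(0,\max_{t\in\Sigma-\Sigma}\norm{t}_1\sum_n\matrixnorm{\pi_{x^{(k)}}M_{[k,k+n)}}_1\bigr)$ with the uniform bound places all of them in one fixed ball, equivalently every $u_k$ is $K$-balanced for a single $K$ independent of $k$ (Proposition~\ref{prop-bal-freq}).

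Conditions~(3) and~(4) must be realised along one common increasing sequence $(k_n)$, so I would extract it by a compactness/renormalization argument: the directions $x^{(k)}$ lie in the compact simplex, the shifted directive sequences $(s_{k+\cdot})$ in $S^\NN$, and the shifted word sequences $(u_{k+\cdot})$ in $(A^\NN)^\NN$, so a diagonal subsequence $(k_n)$ makes all three converge, to $x^\infty$, $s^\infty$, $u^\infty$ respectively; one checks that $u^\infty$ is a fixed point of $s^\infty$ with direction $x^\infty$, and that $s^\infty$ still obeys the uniform exponential hypothesis with the same $C$. Condition~(4) requires $x^\infty$ totally irrational; I expect this to need ruling out, in the limit, a vanishing coordinate of $v^\infty$ or a rational relation among its coordinates, which is not automatic and may force a more careful choice of the $k_n$ — a genuine, if secondary, obstacle.

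The main obstacle is condition~(3): one must show that along $(k_n)$, for every letter $a$, the worm $W_a(u_{k_n})$ has non-empty interior for $\topo{x^{(k_n)}}$ with an inradius bounded below uniformly in $n$. As the remark after Corollary~\ref{cor:bounded:cover} stresses, this is \emph{not} implied by the (already established) non-emptiness of the interiors of the pieces $R^{(k)}_a$: it is the assertion that the super-tiling is a genuine tiling, i.e.\ the uniform $S$-adic analogue of the Pisot substitution conjecture. The natural strategy is to exploit the openness phenomenon underlying Lemmas~\ref{lem:approx} and~\ref{lem:perturb} of Section~\ref{sec-lot-good}: if the limit system $s^\infty$, whose direction $x^\infty$ is totally irrational, has worms $W_a(u^\infty_0)$ with a fixed-size interior bubble, then so do all sufficiently nearby systems, hence so does $W_a(u_{k_n})$ for $n$ large with a uniform radius; desubstituting back a bounded number of steps through a primitive block via Lemma~\ref{lem:O:matrix}, exactly as in the proof of Lemma~\ref{lem:G0:G1}, then spreads the bubble to all letters and yields condition~(3). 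This reduces the conjecture to producing \emph{one} $S$-adic system satisfying the hypotheses whose worms have non-empty interior — but in the non-periodic regime no method is known to establish this, and there the argument stalls; this is precisely why the statement is given as a conjecture rather than a theorem, being essentially the generalized Pisot conjecture itself.
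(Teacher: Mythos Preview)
The statement is a \emph{conjecture}; the paper offers no proof of it, so there is nothing to compare your attempt against. You yourself arrive at this conclusion in your final paragraph, and your diagnosis is accurate: conditions~(1) and~(2) of Definition~\ref{defG} are immediate from the hypotheses (once the typo $Ce^{-C/n}$ is read as $Ce^{-n/C}$, as you rightly do), the uniform boundedness of the Rauzy fractals follows, but condition~(3) --- that the worm pieces $W_a(u_{k_n})$ have non-empty interior with a uniform inradius --- is precisely the content of the generalized Pisot conjecture and cannot be deduced from the stated hypotheses by any known method.

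Two remarks on the strategy you sketch. First, your compactness extraction of $(k_n)$ is sound, but the step ``$x^\infty$ is totally irrational'' is a genuine gap, not a secondary one: nothing in the hypotheses prevents the sequence $x^{(k)}$ from accumulating on a rational hyperplane, and without total irrationality of the limit you lose both condition~(4) and the uniform-balance Lemma~\ref{lem:worm:balance:uniform} that your argument for~(3) would rely on. Second, your proposed reduction --- prove the interior condition for the \emph{limit} system $s^\infty$ and then propagate it back via the openness of Lemmas~\ref{lem:approx}--\ref{lem:perturb} --- is circular: the limit system $s^\infty$ satisfies the same hypotheses as $s$, so establishing condition~(3) for it is the same problem you started with. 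This is exactly why the paper states the result as a conjecture.
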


The conjecture could be even more general:

\begin{conjecture}[Generalization of the Pisot substitution conjecture]
	Let $S$ be a set of unimodular substitutions. Let $s \in S^\NN$ be a directive sequence such that there exists a totally irrational direction $x \in \P\RR_+^d$ such that
	$\sum_n \norm{\pi_x M_{[k,k+n)}}_1$ converges uniformly in $k$.
	Then the subshift associated to $s$ is measurably conjugated to a translation on a torus.
\end{conjecture}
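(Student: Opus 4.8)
\medskip

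\noindent\emph{Proof proposal.} The plan is to reduce the statement to the machinery of Section~\ref{sec:gen:cond}, i.e. to show that $s$ (together with the direction $x$) satisfies the hypotheses of Theorem~\ref{thmC}, which would give even more than claimed. (As elsewhere in the paper we take $S$ finite, so that $S^\NN$ is compact and the Dumont--Thomas numeration of Definition~\ref{def-dt} makes sense; if $S$ is infinite one restricts to the substitutions actually used, or adds this as a hypothesis.) The first step is an audit of the proof of Theorem~\ref{thmC} to see that the uniform summability hypothesis --- read at each scale $k$ through $x^{(k)}=M_{[0,k)}^{-1}x$ as in Lemma~\ref{lem:theta2} --- can replace the strictly negative exponent required by condition~(1) of Definition~\ref{defG}, which may genuinely fail here since a summable series need not decay geometrically. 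That exponent is used in the proof only through Lemma~\ref{lem:theta2}, and only to obtain: (i) in Lemma~\ref{Wdense}, that $\diam\bigl(N_{[k,l)}\,\pi_{x^{(l)}}W_b(u_l)\bigr)\le\matrixnorm{N_{[k,l)}}_1\,\diam\bigl(\pi_{x^{(l)}}W_b(u_l)\bigr)\to 0$ as $l\to\infty$; and (ii) in Lemma~\ref{lem-rapport-L} and Proposition~\ref{bmaj}, that the renormalized pieces eventually have diameter below the fixed radius $r$. Since $\matrixnorm{N_{[k,l)}}_1=\matrixnorm{(\pi_{x^{(k)}}M_{[k,l)})\restrict{P}}_1$ is the general term (index $n=l-k$) of the series $\sum_n\matrixnorm{\pi_{x^{(k)}}M_{[k,k+n)}}_1$, uniform summability forces it to $0$ as $l\to\infty$ uniformly in $k$, while the second factors in (i)--(ii) are uniformly bounded by Lemma~\ref{lem:worm:balance:uniform}; so both survive. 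Likewise Remark~\ref{rem:cv:cv}, Lemma~\ref{lem:primitivity}, Proposition~\ref{prop:R:uniqueness} and Corollary~\ref{cor:bounded:cover} apply at every scale $k$, so each $R(s^{(k)})$ is well defined, compact, tiles $P$ modulo $\Lambda$, has non-empty interior, and the family $\{R(s^{(k)})\}_k$ is uniformly bounded.

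After this audit, the only hypothesis still missing to run Steps~1--3 of Section~\ref{sec:gen:cond} is condition~(3) of Definition~\ref{defG}: a fixed point $u$, an increasing sequence $(k_n)$ and a radius $r>0$ with $\HH\cap\pi_{x^{(k_n)}}^{-1}(B(p_n,r))\subseteq W_a(u_{k_n})$ for every $a\in A$ --- equivalently, by Lemma~\ref{lem:car:interior}, that at the scales $k_n$ the pieces $R_a(s^{(k_n)})$ and the translates $R(s^{(k_n)})+t$, $t\in\Lambda\setminus\{0\}$, stay separated by a ball of fixed size lying inside each $R_a$. No shortcut through the hypotheses of Theorem~\ref{thm-step3} is available, since Steps~1 and~2 themselves invoke these balls (Lemmas~\ref{Wopen}, \ref{Wdense}, \ref{lem-rapport-L}). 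I would try to produce condition~(3) by a compactness-and-perturbation argument modelled on Lemmas~\ref{lem:approx} and~\ref{lem:perturb}: the directions $x^{(k)}$ live in the compact $\P\RR_+^d$ and, by uniform boundedness together with the now uniformly convergent series formula of Corollary~\ref{cor:bounded:cover}, the Rauzy fractals $R(s^{(k)})$ depend continuously on the data and lie in a compact family; so along a subsequence $x^{(k_n)}\to x^\infty$, $s^{(k_n)}\to s^\infty$ and $R(s^{(k_n)})\to R(s^\infty)$ for the Hausdorff distance. If $x^\infty$ is totally irrational and $s^\infty$ still has uniformly summable projections, then $R(s^\infty)$ has genuine interior by Corollary~\ref{cor:bounded:cover}, and a ball witnessing condition~(3) for $s^\infty$ is transported back to the scales $k_n$ exactly as in Lemma~\ref{lem:perturb}. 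One must also settle whether the total irrationality of $x^\infty$ (condition~(4)) follows from the hypotheses --- after discarding a non-generic set of directions --- or has to be added as a standing assumption, and state the dichotomy precisely. Granting condition~(3), $s$ is good and Theorem~\ref{thmC} yields that the coding by $(R_a(s))_{a\in A}$ is a measurable conjugacy between $\Omega_s$ and the translation by $\pi_x(e_0)$ on $P/\Lambda\simeq\TT^d$.

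The \textbf{main obstacle} is precisely condition~(3): it is a geometric coincidence statement of the same nature as, and strictly more general than, the Pisot substitution conjecture. Indeed, for $s=\sigma^\omega$ with $\sigma$ irreducible Pisot unimodular, uniform summability is automatic (geometric decay of $\matrixnorm{\pi_x\ab(\sigma)^n}_1$), and, as recalled in Subsection~\ref{sec:pisot}, ``$s$ good'' is then equivalent to ``$\Omega_{\sigma^\omega}$ is conjugate to a torus translation'', i.e. to $\sigma$ satisfying the Pisot substitution conjecture; so a full proof would settle that problem. Realistically the programme above can only be completed \emph{conditionally}, modulo a suitable $S$-adic geometric-coincidence criterion replacing condition~(3). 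What it does establish unconditionally is the reduction: under the uniform-summability hypothesis (plus, if needed, total irrationality of the accumulation direction), the topological tiling of Step~1, the zero Lebesgue measure of the boundaries of Step~2, the bounded-remainder property and the conjugacy with the subshift are all delivered by Section~\ref{sec:gen:cond} with no change beyond the audit above.
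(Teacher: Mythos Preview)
The statement you are attempting to prove is explicitly a \emph{conjecture} in the paper (Section~\ref{sec:pisot}), not a theorem. The paper provides no proof whatsoever; it states this as an open problem that generalizes the Pisot substitution conjecture. There is therefore no ``paper's own proof'' to compare against.

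You have correctly identified this yourself. Your proposal is not a proof but a programme: you audit Section~\ref{sec:gen:cond} to check which parts survive when the exponential decay of condition~(1) in Definition~\ref{defG} is weakened to uniform summability, and you isolate condition~(3) --- the geometric coincidence --- as the genuine obstacle. You also correctly observe that in the periodic case $s=\sigma^\omega$ the hypothesis is automatic and the conclusion is exactly the Pisot substitution conjecture, so a full proof here would settle that long-standing open problem. This matches precisely the paper's own discussion in Subsection~\ref{sec:pisot}.

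Your audit of Steps~1--3 is reasonable, and your compactness-and-perturbation sketch for manufacturing condition~(3) is a natural first thing to try, but as you note it cannot close the gap: even in the substitutive case no such argument is known to produce the required inner ball. In short, your proposal is an honest and well-organised reduction, not a proof; and that is the best one can currently do, since the statement is open.
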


\section{Thanks}
The authors would like to thank Mélodie Andrieu, Nicolas Bédaride, Jean-François Bertazzon, Julien Cassaigne, Paul Mercat, and Thierry Monteil for their help in preparing this article.

\printnomenclature

\bibliographystyle{abbrv}
\bibliography{biblio-pytheas-tore}

\end{document}